\definecolor{darkgreen}{rgb}{0.0, 0.4, 0.0}
\definecolor{cyan}{cmyk}{1,0,0,0}
\newcommand{\cb}{\color{blue}}
\newcommand{\cm}{\color{magenta}}
\newcommand{\bdg}{\begin{dg}}
\newcommand{\edg}{\end{dg}}
\newcommand{\cre}{\color{red}}
\newtheorem{tm}{Theorem}[subsection]
\newtheorem{lm}[tm]{Lemma}
\newtheorem{pr}[tm]{Proposition}
\newtheorem{rmk}[tm]{Remark}
\newtheorem{cor}[tm]{Corollary}
\newtheorem{ex}[tm]{Example}
\newtheorem{fact}[tm]{Fact}
\newtheorem{??}[tm]{Question}
\newtheorem{defi}[tm]{Definition}
\newtheorem{setup}[tm]{Set-up}
\newtheorem*{thmA}{Theorem A}
\newtheorem*{thmB}{Theorem B}
\newtheorem*{thmBp}{Theorem B$'$}
\newcommand{\ben}{\begin{enumerate}}
\newcommand{\een}{\end{enumerate}}
\newcommand{\bit}{\begin{itemize}}
\newcommand{\eit}{\end{itemize}}
\newcommand{\beq}{\begin{equation}}
\newcommand{\eeq}{\end{equation}}
\newcommand{\la}{\label}
\newcommand\ci{\cite}
\font\tenmsb=msbm10
\font\sevenmsb=msbm7
\font\fivemsb=msbm5
\font\teneufm=eufm10
\font\seveneufm=eufm7
\font\fiveeufm=eufm5
\def\frak#1{{\fam\eufmfam\relax#1}}
\newcommand{\im}{ \hbox{\rm Im} }
\newcommand{\ke}{ \hbox{\rm Ker} }
\newcommand\rat{{\mathbb Q}}
\newcommand\oql{\overline{\mathbb Q}_\ell}
\newcommand\comp{{\mathbb C}}
\newcommand\zed{{\mathbb Z}}
\newcommand\nat{{\mathbb N}}
\newcommand\pn[1]{{\mathbb P}^{#1}}
\newcommand\s{\sigma}
\newcommand\e{\epsilon}
\newcommand{\w}[1]{\widetilde{#1}}
\newcommand{\wh}[1]{\widehat{#1}}
\newcommand{\ov}[1]{\overline{#1}}
\newcommand{\m}[1]{\mathcal{#1}}
\newcommand{\ms}[1]{\mathscr{#1}}
\newcommand{\om}{\Omega}
\newcommand{\ph}{^p\!H}
\newcommand{\ogm}{\widetilde{M}}
\newcommand{\ogmm}{M}
\newcommand{\ogn}{N}
\newcommand{\ogb}{B}
\newcommand{\pogm}{\w{m}}
\newcommand{\pogn}{n}
\newcommand{\bogm}{b}
\newcommand{\pogmm}{m}
\newcommand{\pogmmp}{m'}
\newcommand{\ogmp}{M'}
\newcommand{\ogbp}{B'}
\newcommand{\pogmp}{p}
\newcommand{\p}{P}
\newcommand{\ppr}{P'}
\newcommand{\pogmpr}{p'}
\newcommand{\ks}{S}
\newcommand{\ogcred}{{C'}}
\newcommand{\ogc}{C}
\newcommand{\mc}{\mathcal}
\DeclareMathOperator{\Hom}{Hom}
\DeclareMathOperator{\Ext}{Ext}
\DeclareMathOperator{\Sym}{Sym}
\newcommand{\Shext}{\mc{E}xt}
\newcommand{\Shhom}{\mc{H}om}
\newcommand{\C}{\mathbb C}
\title{The Hodge numbers of O'Grady $10$ via Ng\^o strings}
\author{
Mark Andrea A.  de Cataldo, 
Antonio Rapagnetta, Giulia Sacc\`a, }
\address{Mark Andrea A. de Cataldo, Stony Brook University}
\email{mark.decataldo@stonybrook.edu}
\address{Antonio Rapagnetta, Universit\`a di Roma 2}
\email{rapagnet@axp.mat.uniroma2.it}
\address{Giulia Sacc\`a, Columbia University}
\email{gs3032@columbia.edu}
\begin{document}

\begin{abstract} 
We determine the Hodge numbers of the hyper-K\"ahler manifold known as O'Grady $10$ by studying some related modular Lagrangian fibrations by means of N\^go strings, which we introduce via a refinement of the Ng\^o Support Theorem. 
\end{abstract}

\maketitle

\tableofcontents

\section{Introduction}\la{intro}

Irreducible holomorphic symplectic manifolds are simply connected compact K\"ahler manifolds with a unique up to scalar holomorphic two-form, which is furthermore symplectic. They are hyper-K\"ahler manifolds and are one of the three building blocks for compact K\"ahler manifolds with trivial first Chern class \cite{Beauville83, Bogomolov}.  In every even complex dimension, there are two series of known examples 
of irreducible holomorphic symplectic manifolds: those that are deformation equivalent to the Hilbert schemes of $n$-points on a K3 surface, called of K3$^{[n]}$-type, and those that are deformation equivalent to generalized Kummer varieties, called of generalized Kummer type \cite{Beauville83, Fujiki}.
In addition to these, there are two sporadic deformation classes occurring in dimension $6$ and $10$ discovered by  O'Grady \cite{OGrady1,OGrady2} as symplectic resolutions of a singular moduli spaces of sheaves on a K3 surface (in the case of the $10$--dimensional example) and  on an abelian surface (in the case of the $6$--dimensional example). 
The class of irreducible holomorphic symplectic manifolds deformation equivalent to the $10$ dimensional example is denoted by $OG10$ and manifolds in this deformation class are said to be of $OG10$--type.  Similarly, for the $6$--dimensional example.

The Betti and Hodge numbers of irreducible holomorphic symplectic manifolds of  K3$^{[n]}$- type and of generalized Kummer type are known thanks to G\"ottsche's Formula \cite{Gottsche, Gottsche Sorgel} (see also 
\ci{dec mig douady, dec mig chow}). The Betti and Hodge numbers of the deformation class $OG6$ were calculated in \cite{MRS}.
The purpose of this paper is to compute the Betti and Hodge numbers of the remaining known deformation class, namely $OG10$. Previously, the only known invariants of this class manifolds were the second Betti number  $b_2=24$ \cite{ra08}  and the Euler number $e=176,904$  \cite{Mozgovoy, HLS}. Our first main result is the following theorem:

\begin{thmA}\la{bno10}
The odd Betti numbers of the ten-dimensional irreducible symplectic manifolds of 
 $OG10$-type are zero and the even ones are:
\[
\begin{tabular}{|c|c|c|c|c|c|}
\hline
$b_0=1$ & $b_2= 24$ & $b_4= 300$ & $b_6= 2899$ & $b_8= 22150$ & $b_{10}= 126156$.\\
\hline
\end{tabular}
\]
The relevant part of the Hodge diamond  is  as follows:

\[
\begin{array}{cccccc}
 & &   &    &  &  1 \\
  & &   & &  1  &    22\\
 &   & &  1 &  22  &  254  \\
& & 1 & 23 &   276 & 2299  \\
&    1 & 22 & 276 &  2531 & 16490 \\
 1 & 22 & 254 & 2299 & 16490 & 88024. \\
\end{array}
\]

\end{thmA}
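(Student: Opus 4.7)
The strategy is to pick a distinguished member $M$ of the $OG10$ deformation class that carries a Lagrangian fibration $h : M \to B$ with $B \cong \mathbb{P}^5$ --- for example, O'Grady's symplectic resolution of a Beauville--Mukai system of sheaves with Mukai vector $v = 2v_0$, $v_0^2 = 2$, on a polarized K3 surface --- and to read the Hodge numbers off of the decomposition theorem applied to $h$. Because Betti and Hodge numbers are constant in smooth K\"ahler deformation families, performing the computation on this one $M$ settles the whole class.

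The core step is to make the decomposition theorem
\[
 Rh_* \mathbb{Q}_M [10] \;\cong\; \bigoplus_{i} IC_{Z_i}(L_i)[n_i]
\]
explicit. Over the open locus $B^\circ \subset B$ parametrizing smooth fibers, $h$ is a polarizable abelian fivefold fibration, so Ng\^o's Support Theorem governs which closed irreducible supports $Z_i \subseteq B$ may appear. The $OG10$ fibration, however, fails $\delta$-regularity exactly along the strata where the symplectic resolution does nontrivial work, and a direct application of Ng\^o yields only an upper bound on the list of $Z_i$. A refinement --- the ``Ng\^o strings'' of the title --- should enumerate the strict supports and pin down the associated local systems via a combinatorial bookkeeping of how the abelian fibers degenerate along the discriminant strata of $B$.

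Once the pairs $(Z_i, L_i)$ are in hand, the plan is to compute each intersection cohomology $IH^*(Z_i, L_i)$ separately, exploiting the fact that the $Z_i$ turn out to be standard incidence loci in $B = \mathbb{P}^5$ (singular, reducible, or non-reduced curves on the K3) whose intersection cohomology is accessible by classical Hodge-theoretic techniques on symmetric products, Hilbert schemes, and compactified Prym varieties. Assembling the contributions yields the Hodge--Deligne polynomial of $M$, which I would cross-check against the already known invariants $b_2 = 24$, the Euler number $e = 176{,}904$, and the $\frak{so}(4,22)$-module structure on $H^*(M,\mathbb{Q})$ provided by Verbitsky's theorem; both serve to detect errors and to disambiguate the few Hodge entries that the support decomposition leaves under-determined.

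The main obstacle is the enumeration of supports. Ng\^o's theorem in its original form is an inequality, not an equality, so the hard work is to show that each candidate support of low $\delta$-defect genuinely contributes to the decomposition, and to identify the correct local system and multiplicity along it. In the $OG10$ setting, the exceptional locus of O'Grady's resolution interacts with the most degenerate fibers of $h$ in a way that makes the naive Ng\^o bound fall short; reconciling this interaction --- tying each extra summand to an explicit geometric datum on the degenerating fibers rather than leaving it as an abstract possibility --- is precisely what the Ng\^o-string refinement must accomplish, and it is the technical heart of the proof. Everything downstream (intersection cohomology of the $Z_i$, reassembly, passage from Betti to Hodge numbers via the strictness of the decomposition theorem for the Hodge filtration) is in principle mechanical once this rigidification is in place.
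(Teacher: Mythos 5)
Your overall frame --- choose the Beauville--Mukai Lagrangian fibration $h:\w M\to|2H|\simeq\pn{5}$ on a degree-two K3, apply the decomposition theorem, control the supports by a refinement of Ng\^o's theorem, and read off the Hodge numbers --- matches the paper's starting point, and you correctly anticipate that the supports are incidence loci (the reducible-curve divisor $\Sigma$ and the non-reduced locus $\Delta$) and that the main technical work is pinning down the strings attached to each support. But there are two genuine problems with the plan as written.

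First, a factual error: you assert that the $OG10$ fibration \emph{fails} $\delta$-regularity along the strata where the resolution acts. In fact the paper proves the opposite (Proposition \ref{deltaregularity} and Proposition \ref{tadaw}): the triple $(\w M,\ogb,P)$ with $P={\rm Pic}^0_{\m{C}/|2H|}$ \emph{is} a $\delta$-regular weak abelian fibration, precisely because $\w M$ contains a $P$-torsor surjecting onto all of $\ogb$ and the fibration is Lagrangian. The difficulty is not a failure of $\delta$-regularity but the fact that Ng\^o's theorem only restricts the possible supports; identifying which candidates actually occur, and with what local systems, requires the analysis of $R^{10}h_*\rat$ over each stratum.

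Second, and more seriously, your endgame --- ``compute each $IH^*(Z_i,L_i)$ separately by classical techniques on symmetric products, Hilbert schemes, and compactified Pryms, then assemble'' --- breaks down at the very first support, $Z=\ogb=\pn{5}$ itself. The string supported there is $\ms{S}_\ogb=\bigoplus_{b=0}^{10}\ms{IC}_{\pn{5}}(\Lambda^b)[-b]$ with $\Lambda^b=\wedge^bR^1\gamma_*\rat$ for the universal genus-5 curve, and there is no direct classical computation of $I\!H^*(\pn{5},\Lambda^b)$. The paper's essential idea, absent from your proposal, is to introduce a \emph{second} Lagrangian fibration $\ogn\to\ogb$ (odd-degree compactified Jacobian, deformation equivalent to $\ks^{[5]}$, hence with Hodge numbers known by G\"ottsche) governed by the \emph{same} group scheme $P$. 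By Remark \ref{nondipende} the string $\ms{S}_\ogb$ is identical in both decompositions, so it cancels in the difference $H^*(\w M)-H^*(\ogn)$; what remains is expressed through $H^*({\rm Sym}^2\ogmp)$, $H^*(\ogmp^2)$ and $H^*(\ogmp)$ for the four-dimensional moduli space $\ogmp$ over $|H|$, all known. Moreover the residual indeterminacy (the multiplicity $\e$ of the $\Delta$-string) is resolved not by cross-checking against $b_2$, $e$, or Verbitsky --- which would not by itself constitute a proof --- but by showing the same $\e$ occurs in both fibrations and cancels in the Grothendieck-group identity (\ref{k1122}). Without the comparison fibration, your plan has no way to evaluate the dominant contribution to $H^*(\w M)$.
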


We prove the theorem by comparing the cohomology of an irreducible symplectic variety of $OG10$-type with that of an irreducible symplectic variety of K3$^{[5]}$-type. This is done by considering two 
Lagrangian fibered irreducible holomorphic symplectic manifolds, denoted $\ogm \to \pn{5}$ and $N \to \mathbb P^5$, where the first is of $OG10$-type and the second is of K3$^{[5]}$-type. These are chosen so that over a dense open subset of the base the two fibrations are torsors over the same group scheme. In this setting, the crucial ingredient is Ng\^o's Support Theorem, which allows us to exploit the Lagrangian fibration structure and to express the difference between the cohomology of $\ogm$ and that of $\ogn$ in terms of the cohomology of lower dimensional irreducible symplectic manifolds.

Let us describe the two Lagrangian fibrations.
Let $(\ks, H)$ be a general polarized  K3 surface  of genus  $2$ and let $\mc C \to \pn{5}$ be the universal family of the genus $5$ linear system $|2H|\simeq \pn{5}$.  The relative compactified Jacobian $M \to \mathbb P^5$ of a given even degree of the linear system $|2H|$ is a singular projective symplectic variety. We let $\w M \to M$ be the blowup of its singular locus (with its reduced induced structure). This is an irreducible holomorphic symplectic manifold of $OG10$-type and $\w M \to M$ is a symplectic resolution.  If instead we consider a relative compactified Jacobian of a given odd degree, we get a smooth projective irreducible holomorphic symplectic manifold  $N \to \mathbb P^5$, deformation equivalent to the Hilbert scheme of $5$ points on $\ks$. The degree $0$ relative Picard variety $\p \to \mathbb P^5$ of the linear system is a group scheme over $\mathbb P^5$, and it acts naturally and fiberwise on both spaces. It is important to note that there are dense open subsets of $\mathbb P^5$ over which the fibrations are torsors over (the restriction of) the group scheme $\p/\mathbb P^5$.
Note that  the Lagrangian fibration $\w M \to \mathbb P^5$ is the same used by Mozgovoy in his determination \cite{Mozgovoy}  of the Euler number of $OG10$. It is a degeneration of the Intermediate Jacobian fibration \cite{LSV, KLSV}, which was used in \cite{HLS}.


The starting point of our analysis is that  each of the two Lagrangian fibrations, together with the natural action of $\p$, gives rise to what Ng\^o calls a $\delta$-regular weak Abelian fibration (see \S\ref{sonowaf}). Before showing this, we prove in Section \S\ref{subsection symplectic} some general results on weak Abelian fibrations arising from certain Lagrangian fibrations. 

%
%

 Ng\^o's Support Theorem is  a topological result concerning the Decomposition Theorem for  $\delta$-regular weak Abelian fibrations. 
 Roughly speaking,  it states that if in the Decomposition Theorem  for the fibration a subvariety of the target is the support of a direct summand of the derived direct image of the constant sheaf, then such a subvariety  (called a support for the fibration)  is the support of a non trivial 
  direct summand of the \emph{top degree} direct image sheaf.  What makes this theorem powerful is the possibility of restricting the set of potential supports by  only using the top degree direct image sheaf.

To carry out our cohomological computation, we first need to refine the statement of  Ng\^o Support Theorem. Given a subvariety of the base that is a support for the Decomposition Theorem of a $\delta$-regular weak Abelian fibrations, the refinement consists in identifying  the direct summands supported on that subvariety. We show that they can be expressed in terms of contributions coming from lower dimensional $\delta$-regular weak abelian fibrations that naturally appear in the picture. We call these direct summands Ng\^o Strings (see Theorem \ref{nstabis} and Definition \ref{defngostrbis} ). We believe this is a result of independent interest. In addition to this, we need to make sure that a given  Ng\^o String   contributes the same pure Hodge structure, regardless
of which weak Abelian fibration it appears in  (cf. Remark \ref{nondipende}).  Since we are not aware of a reference, we offer  some context and a sketch of proof, which involves M. Saito's mixed Hodge modules. This is done in Appendix \ref{vnstbis}.



In order to identify which subvarieties of the base appear as supports, we need to study the top-degree higher direct images  for the Lagrangian fibrations $\ogm/ \mathbb P^5$ and $\ogn/ \mathbb P^5$. The determination of these top degree sheaves is the subject of  the key Proposition \ref{rtop}, which is based on the detailed analysis of the moduli spaces of semi-stable sheaves over the locus parametrizing non-reduced curves in the linear system $|2H|$ (see \S\ref{sect irr comp}-\ref{secnonred}).

In \S\ref{nonon} we identify the Ng\^o Strings of the two fibrations, i.e., we determine the direct summands appearing in the Decomposition Theorem for 
$\ogm/ \mathbb P^5$ and $\ogn/ \mathbb P^5$. This is the content of Proposition \ref{dtwe}.  
Actually, this is done up to some indeterminacy which is
present  in both fibrations. Remarkably, these two  indeterminacies cancel out when comparing the Hodge numbers of the two fibrations, and we are thus able to deduce our main Theorem A in  \S\ref{pmt}.

The following observations are key for successfully carrying out this strategy. First of all, the full support Ng\^o Strings (i.e., the direct summands whose support is the whole $ \mathbb P^5$) are nothing but natural extensions to $\mathbb P^5$ of the local systems associated to the restriction of the fibration to the smooth locus. By our choice of  $\ogm/ \mathbb P^5$ and  $\ogn/ \mathbb P^5$, these local systems are the same for the two fibrations and thus the contribution  to the cohomology of $\ogm$ and of $\ogn$  of the full support direct summands are the same. Secondly, we show that the Ng\^o Strings of the two fibrations that are supported on proper subvarieties of $ \mathbb P^5$  appear also in the Decomposition Theorem for certain lower-dimensional Lagrangian fibrations associated with other moduli spaces of sheaves on the same $K3$ surface $\ks$. The geometry of these is considerably simpler and their Hodge numbers are known. 
It follows that we may express the difference between the Hodge numbers of $\ogm$ and of $\ogn$ in terms of the (known) Hodge numbers of lower-dimensional irreducible holomorphic symplectic manifolds.

Upgrading  these cohomological comparisons to the appropriate Grothendieck group, we express the ``difference'' between the pure Hodge structure of  $\ogm$    and the pure Hodge structure of $N$ in terms of the pure Hodge structures of lower dimensional moduli spaces of sheaves on the degree $2$ K3 surface $S$. Thanks to Goettsche's formula, these can in turn be expressed as functions of the Hodge structure of $S$.  This gives Theorem B below, which determines the pure Hodge structure of the manifolds $\ogm$ (which define a codimension $3$ locally closed subset in the moduli space of varieties of $OG10$--type)  in terms of the pure Hodge structure of $\ks$.

\begin{thmB}\la{ghost}
Let $(\ks,H)$ be a  general polarized  $K3$ surface of genus $2$ and let  $\ogm$ and $\ogn$ be the 
 irreducible  symplectic manifolds introduced above. 
Let $\mathbb S_{(-)}$ be the Schur functors  \cite[Ch 6]{Fulton-Harris}. Let $\langle \bullet \rangle := [-2 \bullet]
(-\bullet)$. By abuse of notation, for a projective manifold $X$, 
we denote by  $X$ also the graded rational polarizable pure Hodge structure $H^*(X,\rat)$. Then we have isomorphisms:
\beq\la{motive}
\begin{aligned}
\ogm = \,\, & \ks^{(5)} \oplus \left[\ks^{(4)} \langle -1 \rangle \right]^{\oplus 2}
\oplus \mathbb S_{(2,2)} (\ks) \langle -1 \rangle \oplus \left[ \ks^{(3)} \langle -2 \rangle \right]^{\oplus 2} \oplus \\
&\oplus  \left[ \mathbb S_{(2,1)} (\ks) \langle -2 \rangle \right]^{\oplus 2}
\oplus \left[\ks \otimes \ks \right]  \langle -3 \rangle \oplus \left[ \ks^{(2)} \langle -3 \rangle \right]^{\oplus 3} \oplus \left[ \ks \langle -4 \rangle\right]^{\oplus 2};
\end{aligned}
\eeq
\beq\la{motiven}
\xymatrix{
N=  S^{(5)} \oplus \left[S^{(3)}\otimes S \right] \langle -1 \rangle 
 \oplus \left[S \otimes S^{(2)}\right]^{\oplus 2} \langle -2 \rangle
\oplus \left[S^2\right]^{\oplus 2} \langle -3 \rangle \oplus S\langle -4 \rangle.
}
\eeq

\end{thmB}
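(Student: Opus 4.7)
The theorem has two parts. The formula \eqref{motiven} for $N$ is the motivic incarnation of the G\"ottsche formula. Since $N(0, 2H, 2k+1)$ is deformation equivalent to the Hilbert scheme $S^{[5]}$ and both sides of \eqref{motiven} are deformation invariants, it suffices to verify the identity for $S^{[5]}$. The motivic G\"ottsche decomposition of de Cataldo-Migliorini
$$H^*(S^{[n]}, \mathbb Q) \;\cong\; \bigoplus_{\nu\,\vdash\, n} H^*(S^{(\nu)}, \mathbb Q)\,\langle -(n-\ell(\nu))\rangle,$$
where $S^{(\nu)} := \prod_i S^{(a_i)}$ for $\nu = 1^{a_1}2^{a_2}\cdots$, applied to the seven partitions of $5$, yields \eqref{motiven} after collecting summands of equal shape.

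For \eqref{motive}, the plan is to upgrade the Hodge-number count of Theorem A to an identification of pure Hodge substructures, using the same Ng\^o-string framework. Proposition \ref{dtwe} presents the Decomposition Theorem for each of $\widetilde M/B$ and $N/B$ as a sum of Ng\^o strings supported on a common family of subvarieties $Z \subseteq B = \mathbb P^5$ — the full base (integral curves), the non-reduced locus $|H|\simeq\mathbb P^2$ (doubled curves $2C$), the reducible locus $\operatorname{Sym}^2|H|$ (sums $C_1+C_2$), and further degenerations. By Remark \ref{nondipende}, each such string carries the same pure polarizable Hodge structure on whichever side of the comparison it appears. The generic string contributes $S^{(5)}$ on both fibrations (the Jacobian piece for an integral genus-$5$ curve), while each string on a proper stratum is identified, via the analysis of \S\ref{sect irr comp}, with a piece of the Decomposition Theorem of a lower-dimensional modular Lagrangian fibration associated with the corresponding sub-linear-system of $|H|$-sheaves on $S$. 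The resulting pieces manifest as symmetric products $S^{(k)}$, tensor products $S \otimes S$, or — on strata whose monodromy swaps two (resp.\ three) curve components — the Schur pieces $\mathbb S_{(2,2)}(S)$ and $\mathbb S_{(2,1)}(S)$, arising as isotypic components of $H^*(S)^{\otimes k}$ under the $\mathfrak S_k$-action; the Tate twist $\langle -d \rangle$ records the codimension $d = \operatorname{codim}_B Z$ of the support.

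The hardest step, I anticipate, is the precise identification of the monodromy representations on the reducible and non-reduced strata — that is, verifying that the local systems appearing in the Ng\^o strings decompose as exactly the claimed Schur-isotypic components rather than competing ones such as $\mathbb S_{(3,1)}(S)$ or $\mathbb S_{(1,1)}(S)$. This requires coupling the geometric analysis of moduli of sheaves on reducible and non-reduced curves in $|2H|$ with the refined Ng\^o Support Theorem of Theorem \ref{nstabis}. Once these matchings are in place, assembling the strings via Proposition \ref{dtwe} delivers \eqref{motive}, and consistency against the Hodge numbers of Theorem A provides a final cross-check.
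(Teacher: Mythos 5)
Your treatment of \eqref{motiven} is essentially the paper's: both reduce to the motivic G\"ottsche/de Cataldo--Migliorini decomposition of $S^{[5]}$ via the birational equivalence $N\sim S^{[5]}$. For \eqref{motive}, however, your proposed route diverges from the paper's and, as described, has a genuine gap.

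You propose to read off each Ng\^o string of $R\pogm_*\rat_{\ogm}$ directly, identifying its local system stratum by stratum. But Proposition \ref{dtwe} determines the shape of that Decomposition Theorem only up to the indeterminacy $\e_{\ogm}=\e_{\ogn}\in\{0,1\}$ in the multiplicity of the $\Delta$-string, and the paper never resolves $\e$; the entire strategy is that $\e$ \emph{cancels} when one forms the difference of the two fibrations. Concretely, the proof passes to the Grothendieck group of the semisimple category of graded polarizable pure Hodge structures and uses the cut-and-paste identity
\[
H^*(\ogm)=H^*(\ogn)+\bigl(2H^*({\rm Sym}^2\ogmp)-H^*(\ogmp^2)\bigr)[-2](-1)+H^*(\ogmp)[-6](-3),
\]
then invokes semisimplicity (Fact \ref{fattoka}) to upgrade the virtual identity to an actual isomorphism. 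Your plan of ``assembling the strings via Proposition \ref{dtwe}'' cannot deliver \eqref{motive} without first pinning down $\e$, which is precisely what is avoided. Relatedly, your account of where the Schur pieces come from is off: $\mathbb S_{(2,2)}(S)$ and $\mathbb S_{(2,1)}(S)$ do not arise from monodromy permuting three curve components (no curve in $|2H|$ has three components); they arise from the plethysm of Lemma \ref{lemma schur}, decomposing $V^{(2)}\otimes V^{(2)}$, $\Sym^2(V^{(2)})$ and $V^{(2)}\otimes V$ into $GL(V)$-irreducibles after substituting the known Hodge structures of $\ogmp$, $\ogmp^2$, ${\rm Sym}^2\ogmp$ and $\ogn$ into the identity above. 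Finally, the theorem is asserted for an arbitrary polarized genus-$2$ K3, whereas the Ng\^o-string machinery requires $NS(\ks)\simeq\zed$; the paper closes this gap with a separate deformation and period-map argument over a disk whose Picard-rank-one locus is dense, which your proposal omits entirely.
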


Note that (2) above is well-known and it is only included for completeness.  Theorem B  is here formulated only for a general genus $2$ K3 surface. In Section \ref{pmtB} we give the  slightly more general statement (Theorem B$'$ on page \pageref{thmbprimo}), as well as its proof.



After our paper appeared,  the paper \cite{gklr} computed the Hodge structure of an irreducible holomorphic symplectic variety of $OG10$-type (in fact, the cohomology is even determined as a module over the so-called Loojenga-Lunts-Verbitsky algebra  \cite{lo-lu,Verbitsky96}), conditionally to assuming that the odd Betti numbers are zero. Then the paper \cite{ffz} appeared, where it is shown that odd Betti numbers of $OG10$ are trivial.  Our methods and those of  \cite{gklr} are completely different and we view  the two papers as complementary.



\noindent {\bf Acknowledgments.} 
We would like to thank E. Arbarello, K. Hulek, J. de Jong, R. Laza, E. Markman,   L. Maxim, M. McLean, 
A. Toth, D. Varolin for conversations related to the topic of this paper, as well as I. Grosse-Brauckmann for pointing out several typos.
M. A. de Cataldo, who has been  partially supported by  NSF grants DMS 1600515 and  1901975, would like to thank the Max Planck Institute for Mathematics in Bonn and  the Freiburg Research Institute for Advanced Studies  for the perfect working conditions; the research leading to these results has received funding from the People Programme (Marie Curie Actions) of the European Union's Seventh Framework Programme (FP7/2007-2013) under REA grant agreement n. [609305].  A. Rapagnetta  acknowledges the MIUR Excellence Department Project awarded to the Department of Mathematics, University of Rome Tor Vergata, CUP E83C18000100006.
G. Sacc\`a acknowledges partial support from NSF grant DMS-1801818.
We would also like to thank the anonymous referee for many suggestions that have greatly improved the exposition of the paper.

\section{Notation and preliminaries}\la{things}$\;$

\subsection{General notation and facts}\la{gennot}$\;$

We work over the field of complex numbers.  A variety is a separated scheme of finite type over $\comp.$
Unless otherwise stated, by point we mean a closed point.

We work  with the following two categories attached to a variety $S$: the constructible bounded derived category $D^b(S,\rat)$, whose objects are bounded
complexes of  sheaves on $S$ of rational vector spaces whose cohomology complexes are constructible
with algebraic strata (see \ci{bams} for the basics and references);   M. Saito's bounded derived category  $D^b MHM_{alg}(S)$ of 
algebraic mixed Hodge modules with rational structure, which is endowed with the formalism of weights; see
  \ci{Saito 1990} for the foundations, and  \ci{Schnell on Saito} especially \S22 and \S23, which contains the basics and references. We call the objects of $D^b MHM_{alg}(S)$  simply ``objects."

If in Theorem A one is interested only in  Betti numbers, then it is enough to work with $D^b(-,\rat)$. If one is interested in the Hodge numbers, then it seems necessary to work with $D^b MHM_{alg}(-)$.

There is the natural functor ${\rm rat}:  D^b MHM_{alg}(S) \to D^b(S, \rat)$. The functor ${\rm rat}$
is compatible with the usual operations (push-forwards, pull-backs, tensor, hom, duality, vanishing/nearby cycles).
Via the  functor ${\rm rat}$, the standard $t$-structure  on the domain corresponds to the middle perversity
$t$-structure  on the target (see \ci{bams} for the basics on middle perversity and references); in particular, if $K \in MHM_{alg}(S)$, then
${\rm rat} (K)$ is a perverse sheaf on $S$.  
There is a second $t$-structure on $D^b MHM_{alg}(S)$
which corresponds to the standard $t$-structure in $D^b(S,\rat)$ via the functor ${\rm rat}$; \ci[Remark 4.6.2]{Saito 1990}. In particular,  given a morphism $f:X \to Y$ of varieties, one promotes direct image cohomology shaves $R^if_* K, R^i f_!K$ of a complex $K$ in the essential image of ${\rm rat}$ to objects in $D^b MHM_{alg}(S);$ by restricting such objects to suitably Zariski dense
open subvarieties of the regular locus of their support, these objects are admissible polarizable variations
of  mixed Hodge structures.

The functor  ${\rm rat}$ is neither essentially surjective, nor fully faithful;
what is important in this paper is the evident fact that a splitting of an object $K$ induces a splitting of ${\rm rat} (K)$. By abuse of notation and for simplicity, we do not distinguish notationally between an object $K$ and its counterpart  ${\rm rat} (K)$.  

If $Z\subseteq S$ is a closed irreducible subvariety and  $L$ is a polarizable variation of  rational pure Hodge structures  of some weight  $w(L)$
on some Zariski-dense open subset $V$ contained in $Z^{\rm reg}$,
then the intersection cohomology object $IC_Z(L) \in MHM_{alg}(S)$  yields, via $\rm rat$,  the usual intersection cohomology complex of   $Z$  with coefficients the local system underlying $L$; such a local system must be self-dual,   i.e. $L \simeq L^{\vee} (-w(L))$, and semi-simple, i.e. $L$ splits into a direct sum of simple objects.
One can shrink $V$, if desired; this is useful when  one desires $V$ to be subject to certain conditions.

The objects of $MHM_{alg} (S)$ which are pure of weight
$w$ are exactly the ones of the aforementioned form   $IC_{Z} (L)$, with $L$ pure  of weight $w(L)=w - \dim Z$;
they are semisimple, and every simple object has this form with, $L$ simple as polarizable
variation of pure Hodge structures. Note that the corresponding object $\ms{IC}_Z (L):= IC_Z(L)[-\dim Z]$ in 
$MHM_{alg} (S) [-\dim Z]$
in $D^b MHM_{alg}(S)$ is pure of weight $w-\dim Z$; the  cohomology sheaves of ${\rm rat}
(\ms{IC}_Z (L))$ are in non-negative
degrees, starting at zero. For the geometric statements, e.g. (\ref{dtb'z}), we use $\ms{IC}$, and at times, in order to exploit the simplified bookkeeping stemming from Poincar\'e-Verdier and Relative Hard Lefschetz dualities,
we use $IC$, e.g. (\ref{dtb'}).

Let  $Z$ be irreducible and complete and let $L$ (on  some $V\subseteq Z^{\rm  reg} \subseteq Z$) be as above, pure of weight $w(L)$.  Then the cohomology groups
$I\!H^\bullet (Z,L): =H^\bullet (Z, \ms{IC}_Z(L))$ are polarizable pure Hodge structures
of weight $w(L)+\bullet$.
Polarizations are typically not unique, but the 
pure Hodge structures on $I\!H^\bullet (Z,L)$ do not depend on a choice of polarization.
The category of rational polarizable pure Hodge structures  is abelian semisimple: every subobject 
admits a direct sum complement; the induced splitting does not need to preserve given polarizations.

An object $L$ as above of weight $2v \in \zed^{\rm ev}$ is said to be of pure Hodge-Tate type
$(v,v)$ 
if the pure Hodge structure on the stalks is of type $(v,v)$.
If the polarizable variation of pure Hodge structures $L$ is of pure Hodge-Tate type  and of weight $2v$, then the underlying local system
determines the structure of  polarizable variation of pure Hodge structure of weight $2v$ on $L$ uniquely, i.e.
if $L, L'$ are two polarizable variations of pure Hodge structures, which are both pure of Hodge-Tate type
and of the same weight $2v$, and  such that the underlying local systems are isomorphic, then 
$L \simeq L'$ as objects in $MHM_{alg}(S)$; this is simply because the associated 
Hodge filtration is trivial.

Let $f: X\to Y$ be a proper morphism of varieties and let $K \in MHM_{alg} (X)$ be pure of weight
$w$. E.g. $K= IC_X$,  or even  $K = \rat_X [\dim X]$  when $X$ is irreducible nonsingular;  in either case, $w = \dim X$. In this context, the Decomposition  and the Relative Hard Lefschetz Theorems (see \ci{bams} for references) give :
$Rf_* K \in D^b MHM_{alg} (Y)$  is pure of weight $w$,  it splits
as the direct sum $\oplus_b \,{\ph^b} (Rf_* K)[-b]$ of its shifted perverse cohomology sheaves, and each 
$ \,{\ph^b} (Rf_* K)[-b]$ is pure semisimple of weight $w-b$; if in addition, $f$ is projective and $\frak l$ is the first Chern class of an ample line bundle on $X$, then cupping with the powers of $\frak l$ induces the
Relative Hard Lefschetz isomorphism ${\frak l}^k: \,{\ph^{-k}} (Rf_* K)
\simeq \,{\ph^k} (Rf_* K)$, $\forall k \geq 0$. Of course, similar statements hold for a shifted $K[a]$,
so that,  after the evident bookkeeping, they apply, for example, to $\ms{IC}_X$ (this is defined for not necessarily irreducible nor pure dimensional  varieties and  is pure of weight zero as the direct sum
of the $\ms{IC}$'s of the irreducible components (cf. \ci[\S5]{dejag}). 

Similar consideration hold for $Rf_* \ms{IC}_X$
in $D^b(X,\rat)$. A statement in $D^b MHM_{alg}(-)$ may contain Tate shifts; when considering the resulting statement in 
$D^b(-,\rat)$ it is understood that one omits the meaningless Tate shifts.

If $f:X\to Y$ and $K$ are as above, then the closed subvarieties that are the supports of a simple summand
of the direct image $Rf_* \ms{IC}_X$ are called the supports of  $f$.  The notion of support is of paramount importance in this paper. The determination of the supports of a morphism is a subtle problem.
Of course, one can define
the supports of $Rf_* K$ for any object, by taking the supports of the constituents of $Rf_*K$,
which is the unique finite collection of simple perverse sheaves appearing in a Jordan-H\"older
decomposition of the perverse sheaves $\,{\ph^k} (Rf_* K)$  for $k \in \zed$.

Let $f: X \to Y$ be a morphism, let $Z\subseteq Y$ be a subvariety. Then we set $X_Z:= f^{-1}(Z)$.
In particular, if $y \in Y$ is a point, then $X_y=f^{-1}(y)$ denotes the fiber.

\subsection{The notion of $\delta$-regular weak abelian fibrations}\la{drwaf} In \ci{ngofl,ngoaf}
B.C. Ng\^o  introduced the notion of  $\delta$-regular weak abelian fibration, which is a notion of paramount importance in our approach to the proof of Theorem A.
For more details on what follows, we refer to the papers of  Ng\^o quoted above as well as to
our Appendix    \S\ref{vnstbis}, which contains the refinement of Ng\^o Support Theorem needed in this paper.

Let $g:P\to S$ be a smooth commutative group $S$-scheme, and let $g^o: P^o\to S$ be its identity component.
Given any Zariski point $s$ in $S$, there is the canonical Chevalley devissage of the fiber $P^o_s$ of $P$ at $s$, 
i.e. a canonical short exact sequence of connected commutative group schemes $0\to R_s \to P^o_s \to A_s \to 0$ over the perfect field $k(s)$, where $R_s$ is affine  and connected,  and maximal with these properties, and where  $A_s$ is an abelian variety
(cf. e.g.:
  \ci[Thm 10.25,  Prop 10.24, Prop 10.5 (and its proof), Prop 10.3]{milneAGS}.
  
We introduce the quantity $\delta(s):= \dim_{k(s)} R_s$, which plays an important role in this paper. For example, if $P_s$ is the Jacobian of a reduced projective curve $C$, then $\delta$ is the dimension of $
\ker[\nu^* :Pic^0(C) \to Pic^0(\hat{C})]=  (\nu_* \m{O}^\times_{\hat{C}}/\m{O}^\times_C)/\im H^0(\hat{C},  \m{O}^\times_{\hat{C}})
$
where $\nu: \hat{C} \to C$ is the normalization morphism (cf. \cite[21.8.5]{EGAIV(4)}). If $C$ is integral with one node or cusp and no other singular point, then $\delta=1$; if $C$ is a curve with two nodes and no other singularities, then $\delta=2$ when $C$ is irreducible, and $\delta=1$ when $C$ is the union of two  irreducible components. 
The function $\delta$ is upper-semicontinuous on $S$.  If $Z\subseteq S$ is an irreducible subvariety, then one sets $\delta (Z)$ to be the value $\delta (\eta_Z)$
at the generic point $\eta_Z$ of $Z$. In this case, we have that  $\delta (Z)$ coincides with
the value $\delta (z)$
 at a general  point $z \in Z$, 
and also with the minimum of $\delta (-)$ on the points of $Z$. 
A crucial role is played by the $\delta$-loci, which  are the locally closed subvarieties $S_i\subseteq S$ defined by setting:
$
S_i:= \{s \in S\, |\; \delta (s) =i \}
$.
 Let $d:=\dim P/S$ be constant (it is so on the connected components of $S$). The Tate module $P/S$ is the 
 object 
$T(P)= T(P/S):=R^{2d-1}g^o_! \rat_{P^o} (d)$ in  $D^b MHM(S)_{alg}$ (cf. \ci[\S4.12]{ngofl}; the Tate shift 
$(d)$ is in the sense of M. Saito's theory of mixed Hodge modules (cf. \S\ref{gennot}). Given a point $s$ of $S$, we have the short exact sequence
$0 \to T(R_s) \to  T(P^o_s) \to T(A_s) \to 0$ of  rational mixed Hodge structures. Note that $\dim T(R_s) \leq \delta(s)$ and the strict inequality is possible, e.g. a projective rational curve with a cusp.

For a fixed prime $\ell$, let $T_{{\rm et}, \oql} (P/S)$ be the $\oql$-adic counterpart to $T(P/S)$ (defined by the analogous formula, using the \'etale topology/cohomology formalism). We say that $T_{\rm et, \oql}(P/S)$ is polarizable
if  \'etale-locally there is a pairing $T_{{\rm et}, \oql}(P/S)\otimes T_{{\rm et}, \oql}(P/S)\to \oql (1)$ such that, for every point $s$ of $S$, the  null space
of the pairing at $s$ is precisely $T_{{\rm et}, \oql}(R_s)$.

\begin{defi}\la{waf}
{\rm ({\bf $\delta$-regular weak abelian fibration})}
A weak abelian fibration is a pair of morphisms: 
$(f: M \to S, g: P \to S)$ 
such that:
\begin{itemize}
\item[(a)] $f$ is proper;
\item[(b)] $P$ is a smooth commutative group scheme;
\item[(c)] There is an action $a: P\times_S M \to M$ of $P$ on $M$ over $S$; 
\item[(d)]
$f$ and $g$ have the same pure relative dimension, denoted by $d$;
\item[(e)] the action  has affine stabilizers at every point $m\in M$;
\item[(f)]
the Tate module $T_{{\rm et}, \oql}(P/S)$  of $P/S$ is polarizable.
In context, we abbreviate $(f: M \to S, g: P \to S)$ to $(M,S,P)$.
\end{itemize}

If, in addition, 
we  also have the following property of $P/S$:
\beq\la{dereg}
{\rm codim} \,S_i  \geq i, \quad \forall i \in \zed^{\geq 0},
\eeq
then we say that $(M,S,P)$ is a  $\delta$-regular weak abelian fibration.

\end{defi}

\begin{rmk}\la{stezz}
Note that the $\delta$-regularity inequality  (\ref{dereg}) is equivalent to:
\beq\la{deregz}
{\rm codim}_S \,Z  \geq \delta(Z), \quad \mbox{for every locally closed integral subvariety $Z \subseteq S$}.
\eeq
Moreover,  (\ref{dereg}) implies that if $(M,S,P)$ is a $\delta$-regular weak abelian fibration, then the general fiber $P^o_s$ is complete, i.e. it is
an Abelian variety.
\end{rmk}

If $P \to S$ is the relative degree--$0$ Picard scheme of a family of integral curves parametrized by $S$, then the fibration is $\delta$-regular if and only if for all $\delta \ge 0$, the locus of curves of cogenus $\delta$ has codimension $\ge \delta$. Examples of $\delta$-regular weak abelian fibrations will be given in Example \ref{BMsystemdeltareg} and in \S \ref{picrel}.



Over a perfect field  the Chevalley devissage exists and  is unique. 
Since we are working in characteristic zero, 
by  working with Zariski points on $S$, one sees that (cf. \ci[\S7.4.8]{ngofl}), given any integral  locally closed subvariety
$Z\subseteq S$, there exists an open and dense  subvariety $V\subseteq Z$  and a short exact sequence of 
smooth commutative group schemes over $V$ with  connected fibers:
\begin{equation}\label{ghj}
0 \to R_{V} \to P^o_{|V} \to A_{V} \to 0,
\end{equation}
that realizes the Chevalley devissage  point-by-point on $V$. One can shrink $V$, if needed.

\subsection{Weak abelian fibrations and holomorphic symplectic manifolds}  \label{subsection symplectic}$\;$

In this section we consider weak abelian fibrations arising from Lagrangian fibrations and we show that under some assumptions they are $\delta$-regular (Proposition \ref{deltaregularity}). Examples of such Lagrangian fibrations are given in Example \ref{BMsystemdeltareg} and  \S \ref{mo10}.  

In this section, we fix the following data:
let $(M,\s_M)$ be a quasi-projective   holomorphic symplectic manifold  of  dimension $2d$ with holomorphic symplectic form $\sigma_M$;  let $S$ be a smooth variety  of dimension $d$; let  $f: M \to S$ be a  proper Lagrangian fibration, i.e. $f$ is a  proper surjective morphism  with 
connected fibers whose
general fiber is a Lagrangian subvariety;  let $P/S$ be a  smooth commutative group scheme with connected fibers, acting on  $M/S$.

By \cite[Thm 1]{Matsushita}, every
irreducible component of every closed  fiber  of $f$ is Lagrangian. This means that for every  $s \in S$, the pullback of 
the symplectic form to a resolution of the singularities of any component of the fiber $M_s$, endowed with the reduced structure,  is trivial.
In particular,  $f$ is equidimensional. By \cite[Prop 1]{Beauville},  the morphism $f$   is an algebraic completely integrable system. It follows that the general fiber of $f$ is a compact complex torus.

As we show in Lemma \ref{da1}, the fact that $M \to S$ is a Lagrangian fibration has consequences on the action of $P/S$ on $M$. This Lemma is then used to prove the $\delta$-regularity property. 
The action of the group scheme $P/S$ on the Lagrangian fibration $M/S$ defines a commutative diagram with Cartesian squares:
\beq\la{em}
\xymatrix{
M \ar[r]^-{l_M} \ar[rd]^-{e_M}  \ar[d]^-f & P\times_S M \ar[d]^-{p_2} 
\ar@<-.5ex>[r]_-{p_2}   \ar@<.5ex>[r]^-a
   \ar[rd]^-r  &  M \ar[d]^-f 
\\
S \ar[r]^-\zeta    
\ar@{.>}@/^1pc/[u]^-\xi    
& P \ar[r]^-g & S,
}
\eeq
where $\zeta: s \mapsto e_s \in P_s$ (the identity section), $l_M: M_s \ni m_s \mapsto (e_s, m_s)$, $a$ is the action,
$p_i$ the projections. Note that $e_M (m_s)=e_s$.
The dotted arrow $\xi$ is not part of the initial data, but is produced, after a suitable base change, under the hypotheses
of Lemma \ref{da1}.(2a).

\begin{lm}\la{da1} 
{\rm ({\bf Duality for the  action})}
$\;$
Let $M/S, \s_M, P/S$ and $a_M$ be as above.
\ben
\item

There is a commutative diagram:
\beq\la{dactm}
\xymatrix{
& 
& e^*_M T_{P|S} 
\ar[ld]_-{\theta_M}  
\ar[rd] ^-{d{\rm act}_M} &&\\
0 \ar[r] & (\om^1_{M|S})^\vee  \ar[rr]^-{\beta^\vee}  \ar[d]^\simeq_-{\s'_M}&  &  T_M \ar[r]^-{df} \ar[d]^\simeq_-{\s_M} &  \im\, df
(\subseteq f^* T_S)  \ar[d]^-\simeq _-{\s''_M} \ar[r] & 0\\
0 \ar[r] & f^* \om^1_S \ar[rr]^-{df^\vee} &  & \om^1_M \ar[r]^-\beta & \om^1_{M|S} \ar[r] & 0,
}
\eeq
where: $e_M^* T_{P|S}= f^* \zeta^* T_{P|S}$ is the pull-back via $f$ of the vector bundle on $S$
with fibers the Lie algebras ${\rm Lie}(P_s);$
 $\om^1_{M|S}$ is torsion free; $\ke \, df = (\om^1_{M|S})^\vee = \m{H}om_{\m{O}_M} (\om^1_{M|S}, \m{O}_M)$ is locally free;  the vertical arrows are isomorphisms induced by the symplectic form $\s_M$; rows two and three are short exact sequences.

  The formation of the diagram is compatible with \'etale base change
$S' \to S$ and with restriction to non-empty, not-necessarily $P$-invariant,  open subsets of $M.$
In particular, $\theta_M$ is an isomorphism if an only if it is an isomorphism after base change 
$S' \to S$ given by an \'etale covering.
The roof of the diagram depends on all the initial data, 
while  the rest of the diagram depends only on $M/S$ and $\s_M$.

\item
Assume  in addition that:

\ben
\item either there is  an open subset $M^\circ \subseteq M$ such that $M^\circ/S$ is a $P/S$-torsor;
\item
or there are: an open subset $S^\natural \subseteq S,$ with ${\rm codim} (S\setminus S^\natural) \geq 2,$
and an open subset   $M^{\natural \circ} \subseteq M^\natural$ such that $M^{\natural \circ}/S^\natural$ is a $P^\natural/S^\natural$-torsor.
\een

Then $\theta_M$ is an isomorphism. In particular, $d {\rm act}_M$ and $df$ are dual to each other via the symplectic form $\s_M,$ i.e., for every $m\in M$ with $s:=f(m)$,
we have the commutative diagram at the level of fibers  at $m$: 
\beq\la{cddu}
\xymatrix{
{\rm Lie} (P_s)=T_{P_s, e} \ar[r]^-{\s'_{M,m} \circ \theta_{M,m}}_-\simeq \ar[d]_-{d{\rm act}_{M,m}} &
\Omega^1_{S,s} \ar[d]^-{df^\vee_m} 
\\
T_{M,m} \ar[r]^-{\s_{M,m}}_-\simeq & 
\Omega^1_{M,m}.
}
\eeq

\een

\end{lm}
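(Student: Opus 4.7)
The plan is to construct the diagram directly in part (1), and, for part (2), to reduce (2b) to (2a) by a codimension argument and to prove (2a) by a pullback-descent argument exploiting $f_*\mathcal{O}_M=\mathcal{O}_S$. For part (1) I would assemble the three layers independently. The bottom row is the standard relative cotangent exact sequence for $f$, left exact because $f$ is equidimensional and generically smooth by Matsushita's theorem. The upper row is its pre-dual, with $(\Omega^1_{M|S})^\vee=\ker(df)$ of rank $d$. The vertical arrows come from $\sigma_M$: the middle one is the standard non-degeneracy isomorphism; $\sigma'_M$ is the restriction of $\sigma_M$ to $T_{M/S}$, whose image lies in the annihilator $f^*\Omega^1_S$ of $T_{M/S}$ inside $\Omega^1_M$ by the Lagrangian property; $\sigma''_M$ is then obtained from the snake lemma and is automatically an isomorphism. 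The roof map $d\mathrm{act}_M$ is the differential at the identity section of $a:P\times_S M\to M$; since $P$ acts along the fibers of $f$, this factors through $T_{M/S}$, producing $\theta_M$. The technical heart of part (1) is verifying globally (beyond the smooth locus of $f$) that $\Omega^1_{M|S}$ is torsion free, that $(\Omega^1_{M|S})^\vee$ is locally free of rank $d$, and that $\sigma'_M$ is an isomorphism; this uses Matsushita's theorem and standard Lagrangian-fibration arguments on holomorphic symplectic manifolds. Compatibility with \'etale base change is automatic from the construction.

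For part (2a), the torsor hypothesis immediately yields that $\theta_M|_{M^\circ}$ is an isomorphism: at each $m\in M^\circ$, $\theta_{M,m}$ is the differential at the identity of the orbit isomorphism $P_s\to M^\circ_s$, $p\mapsto p\cdot m$. To extend the isomorphism property to all of $M$, I would form
\[
\tau_M\;:=\;\sigma'_M\circ\theta_M\;:\;e_M^*T_{P|S}=f^*\zeta^*T_{P|S}\longrightarrow f^*\Omega^1_S,
\]
a morphism between two locally free rank-$d$ sheaves on $M$, each of which is the $f$-pullback of a locally free sheaf on $S$; clearly $\theta_M$ is an isomorphism iff $\tau_M$ is. Now $f$ is proper and flat with connected fibers (by Matsushita, miracle flatness, and normality of $S$), hence $f_*\mathcal{O}_M=\mathcal{O}_S$ by Stein factorization; the projection formula together with the $f^*\dashv f_*$ adjunction gives
\[
\mathrm{Hom}_{\mathcal{O}_M}(f^*\zeta^*T_{P|S},f^*\Omega^1_S)\;=\;\mathrm{Hom}_{\mathcal{O}_S}(\zeta^*T_{P|S},\Omega^1_S),
\]
so $\tau_M=f^*\tau^0$ for a unique morphism $\tau^0$ on $S$. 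Since $\tau_M|_{M^\circ}$ is an isomorphism and $M^\circ\to S$ is surjective, for each $s\in S$ we pick $m\in M^\circ_s$ and conclude that $\tau^0|_s=\tau_M|_m$ is an isomorphism; hence $\tau^0$ is an isomorphism, $\tau_M=f^*\tau^0$ is an isomorphism on $M$, and thus $\theta_M$ is an isomorphism.

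For part (2b), I apply (2a) to $M^{\natural\circ}/S^\natural$ to obtain $\theta_M$ an isomorphism on $M^\natural:=f^{-1}(S^\natural)$. By equidimensionality of $f$ with relative dimension $d$, the complement $M\setminus M^\natural=f^{-1}(S\setminus S^\natural)$ has codimension $\geq 2$ in $M$. Since $\theta_M$ is an injection of locally free sheaves of equal rank $d$ (injectivity from iso-ness on the dense open $M^\natural$), its non-isomorphism locus equals the zero locus of $\det\theta_M$, which is either empty or a divisor in $M$; being contained in the codim $\geq 2$ set $M\setminus M^\natural$, this divisor must be empty, so $\theta_M$ is an isomorphism on $M$. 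The main obstacle I foresee is the global construction in part (1), in particular proving that $\sigma_M$ maps $T_{M/S}$ isomorphically onto $f^*\Omega^1_S$; once that is in place, part (2) runs through cleanly via the adjunction and codimension arguments described above.
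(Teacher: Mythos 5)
Your proposal is correct, and for parts (1) and (2b) it follows the paper's own route: the two mutually dual exact sequences, the vertical isomorphisms extracted from the Lagrangian condition by a ``generically zero, hence zero by torsion-freeness'' argument, the factorization of $d\,\mathrm{act}_M$ through $(\Omega^1_{M|S})^\vee$ to define $\theta_M$, and, for (2b), the fact that a generically invertible map of rank-$d$ bundles degenerates along a divisor, which cannot sit inside the codimension-$\geq 2$ locus $f^{-1}(S\setminus S^\natural)$. Where you genuinely diverge is in (2a). The paper passes to an \'etale cover $S'\to S$ trivializing the torsor, uses the resulting section $\xi$ to transport $\sigma_{M^\circ}$ to a symplectic form on $P$ and hence to an isomorphism $\Omega^1_S\cong\zeta^*T_{P|S}$, and then tests invertibility of the endomorphism $u_{M,\tau}$ of $f^*\Omega^1_S$ by applying $\xi^*$, using that $\xi(S)$ lies in the torsor. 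You instead observe that $\tau_M=\sigma'_M\circ\theta_M$ is a morphism between $f$-pullbacks of bundles on $S$, descend it to a unique $\tau^0$ on $S$ via $f_*\mathcal{O}_M=\mathcal{O}_S$, adjunction and the projection formula, and verify that $\tau^0$ is an isomorphism fiber by fiber using the surjectivity of $M^\circ\to S$ together with the invertibility of $\theta$ over the free locus. Both arguments rest on the same two inputs ($f_*\mathcal{O}_M=\mathcal{O}_S$ and the isomorphy of $\theta$ over the torsor), but yours dispenses with the \'etale cover, the section, and the auxiliary symplectic structure on $P$, and is the more economical of the two. The only places you are terser than the paper, both in part (1), are the containment $\sigma_M\bigl((\Omega^1_{M|S})^\vee\bigr)\subseteq f^*\Omega^1_S$ (which a priori is only a generic statement and needs the torsion-freeness argument) and the identification of $\theta$ over $M^\circ$ with the differential of the orbit map (which uses smoothness of $f$ there); you correctly flag these as the technical heart, and they are handled in the paper exactly as you anticipate.
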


\begin{proof}
We start by explaining the isomorphism of the two horizontal short exact sequences in (\ref{dactm}). Consider the  short exact  sequence 
$ 0 \to f^* \om^1_S \stackrel{df^\vee}{\to} \om^1_{M}  \stackrel{\beta}{\to} \om^1_{M|S}\to 0$. The first morphism is injective because it is generically injective ($f$ is a dominant generically smooth morphism) and $f^* \om^1_S$ is locally free. Dualizing, we get a short exact sequence $0 \to  (\om^1_{M|S})^\vee  \stackrel{\beta^\vee}{\to} T_M  \stackrel{df}{\to} \im df \to 0 $, where $\im [df: T_M \to f^* T_S] \subset f^* T_S$. Since $f$ is Lagrangian, the composition $\beta \sigma_M \beta^\vee$ is zero, because
it is generically zero and hence zero by  torsion-freeness of the source. This gives, uniquely,  an injective $\s'_M$ and a surjective $\s''_M$ making the diagram commutative. Since $\s''_M$ is a surjective morphism of  coherent sheaves of the same  rank, it is also generically injective. By the torsion-freeness of its source $\s''_M$ is injective and thus an isomorphism. At this juncture,  $\s'_M$ must be an isomorphism as well. 

Now we construct the roof of (\ref{dactm}). The action $a$ induces a morphism $a^* \om^1_{M|S} \stackrel{da^\vee}{\to} \om^1_{P\times_S M|S}=p_1^* \om^1_{P|S}\oplus p_2^* \om^1_{M|S}$. Precomposing with $a^* \beta: a^* \om^1_{M} \to a^* \om^1_{M|S}$ and postcomposing with the projection $\rm {proj}: \om^1_{P\times_S M|S} \to p_1^* \om^1_{P|S}$ yields a morphism of vector bundles $a^* \om^1_{M} \to p_1^* \om^1_{P|S}$, which we  dualize: this gives a morphism $p_1^* T_{P|S} \to a^* T_M$, which we pullback via $l_M^*$. We obtain a morphism of vector bundles
\[
d\,{\rm act}_M: l_M ^*p_1^* T_{P|S}=e_M^* T_{P|S} \to l_M^*a^* T_M=T_M
\]
Notice that $e_M^* T_{P|S}$ is the pull-back  via $f$ of the vector bundle on $S$  with fibers the Lie algebras  of the fibers of  $P/S$. Notice also that the morphism $d \, {\rm act}_M$ deserves its name: by construction,
for  every  $s \in S$ and  every $m \in M$ with $m$ over $s$, 
the fiber $(e^*_M T_{P|S})_m={\rm Lie} \, P_s$, and $d \, {\rm act}_m (v)= d \, a_m (v)$, where $a_m: P_s \to M_s \subset
M$ sends $p \in  P_s$ to $p \cdot m$ (where $\cdot$ denotes the action of $P/S$ on $M/S$). 

Note that $a^*$ and duality commute by the smoothness of $a$. By construction, $d \, {\rm act}_M$ factors as $e_M^* T_{P|S} \stackrel{}{\longrightarrow}   l_M^*a^* (\om^1_{M|S})^\vee=(\om^1_{M|S})^\vee \stackrel{\beta^\vee}{\longrightarrow}  T_M$, where the first morphism, which we denote by $\theta_M$,  is  $l_M^* ( {\rm proj} \circ da^\vee)^\vee: e_M^* T_{P|S} \stackrel{}{\longrightarrow}  (\om^1_{M|S})^\vee$.

Let us also note the following. Assume that the general  stabilizer of the action of $P/S$ on $M/S$ is trivial.  Then 
the morphism $d\, {\rm act}_M$, and thus $\theta_M$,  is  injective. 
If $U\subseteq M$ is a $P$-invariant Zariski-dense open subset  with the property that the stabilizers of its points are trivial, then the restriction  $\theta_{U}$ is an  isomorphism, for  the action of $P$ is free on $U.$

Let us prove part (2a). There is an \'etale covering of finite type $S'\to S$ such that (we denote, temporarily, objects after base change with a prime) the torsor $(M^\circ)'/S'$ admits a section $\xi: S'\to (M^\circ)'$. 
It is enough to show that $\theta_{M'}$  is an isomorphism. We may thus assume that the original $M^\circ$ is a
trivializable $P/S$-torsor, and we drop the primes. Since our aim is to verify that $\theta_M$ is an isomorphism,
we may assume that   $M$ is irreducible.
Let $\tau: P \stackrel{\sim}\to M^\circ$ be the corresponding trivialization: $ P_s \ni p \mapsto p \cdot \xi (s).$
The structural $S$-morphism of $P/S$ is Lagrangian for the symplectic form  $\sigma_{P,\tau}:= \tau^* \sigma_{M^\circ}$, so that we have the standard natural isomorphism 
$u_{\sigma_{P,\tau}}:\om^1_S \stackrel{\sim}\to \zeta_P^* T_{P|S}$, with pull-back 
$g^*u_{\sigma_{P,\tau}}:  g^*\om^1_S \stackrel{\sim}\to  g^*\zeta_P^* T_{P|S}=e_P^* T_{P|S}$
given by $\theta_P^{-1} \circ (\sigma_{P,\tau}')^{-1}$.
Pulling-back via $f,$ gives the isomorphism:
$f^*u_{\sigma_{P,\tau}}:  f^*\om^1_S \stackrel{\sim}\to  f^*\zeta_P^* T_{P|S}= e_M^* T_{P|S}.$

CLAIM:  $u_{M,\tau}:= \s'_M \circ  \theta_M \circ  f^*u_{\sigma_{P,\tau}}\in {\rm End} (f^*\om^1_S)$ is an isomorphism, so that, since the first and third factors are isomorphisms,  $\theta_M$ is an isomorphism, and (2a) follows.

Proof of the CLAIM (and end of proof (2a)).
Note that the section $\xi$ of $M^\circ/S$ also defines a section, denoted by the same symbol, of  $M/S$, i.e. of $f$.
We now have diagram (\ref{em}) in its entirety.
 Using the adjoint pair $(f^*,f_*)$,  that $f \circ  \xi = {\rm Id}_S$,  the projection formula, and the fact that $f_*
 \m{O}_M=\m{O}_S$ ($f$ is its own Stein factorization),
 we see that the adjunction isomorphism
${\rm Hom} (f^*\om^1_S, f^* \om^1_S) \stackrel{\sim}\to   {\rm Hom} (\om^1_S, \om^1_S)$
coincides with $\xi^*$, and has inverse $f^*$: a morphism $\varphi$ in the former group is the pull-back via $f$ of a unique morphism in the latter, namely $\xi^*\varphi$, and, moreover,  $\varphi$ is an isomorphism if and only if $\xi^*\varphi$ is an isomorphism. 
It remains to show that $\xi^* u_{M,\tau}$ is an isomorphism. Since the first and last factors are isomorphisms,
it remains to show that $\xi^* \theta_M$ is an isomorphism. This is automatic, since $\xi (S)$ is inside
the $P$-torsor $M^\circ$ and, as it has been observed above, $\theta_{M^\circ}$ is an isomorphism.

Let us prove part (2b). Note that (2b) implies (2a), however, for clarity, we have chosen to prove (2a) first.
We apply (2a) to the situation of $S^\natural.$ We obtain that the morphism $\theta_M$ of vector bundles
on $M$ is an isomorphism in codimension  two, hence an isomorphism.
\end{proof}

In the next proposition, by abuse of notation, when we write something like $T_{P_s, e}$, etc.,  we mean the fiber of the corresponding vector bundle, not the stalk of the coherent sheaf.

\begin{pr} \label{deltaregularity}
{\rm  ({\bf  $\delta$-regularity})}
 Let $(M, S, P)$ be a weak abelian fibration  such that: $M$ and $S$ are nonsingular; $M$ is quasiprojective;
 $M/S$ is a proper Lagrangian fibration;
 $P/S$ has connected fibers; there is an open subset $M^\circ \subseteq M$ such that $M^\circ/S$ is a $P/S$-torsor.
  Then $(M, S, P)$ is $\delta$-regular
 (cf. (\ref{deregz})).
\end{pr}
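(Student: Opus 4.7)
The plan is to translate $\delta$-regularity into a linear-algebraic inclusion $W_V \subseteq N^*_{V/S}$ of subbundles of $\Omega^1_S|_V$, using the infinitesimal duality from Lemma \ref{da1}(2a). Let $Z \subseteq S$ be locally closed integral with $i := \delta(Z)$, and shrink to a dense open smooth $V \subseteq Z$ on which $\delta \equiv i$ and the Chevalley decomposition $0 \to R_V \to P^o|_V \to A_V \to 0$ from \eqref{ghj} exists. Applying Lemma \ref{da1}(2a) to the $P/S$-torsor $M^\circ \subseteq M$ gives that $\theta_M$ is an isomorphism; combined with the torsor identification $T_{M^\circ/S} \simeq f^* \mathrm{Lie}(P/S)$ and with $\sigma'_M$, this yields a canonical isomorphism of sheaves on $S$,
\[
\alpha : \mathrm{Lie}(P/S) \xrightarrow{\sim} \Omega^1_S,
\]
characterized pointwise by the duality in \eqref{cddu}. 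Setting $W_V := \alpha(\mathrm{Lie}(R_V))$, a rank $i$ subbundle of $\Omega^1_S|_V$, it suffices to prove $W_V \subseteq N^*_{V/S}$: then $i = \mathrm{rk}(W_V) \leq \mathrm{rk}(N^*_{V/S}) = \codim_S Z$, as desired.

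To establish this inclusion, fix $s \in V$ and $m \in M^\circ_s$. Using \eqref{cddu} together with the Cartesian identification $T_{M_V, m} = (df_m)^{-1}(T_{V, s})$ (where $M_V := f^{-1}(V)$), the inclusion $W_V \subseteq N^*_{V/S}$ at $s$ is equivalent to
\[
d\,\mathrm{act}_{M,m}(\mathrm{Lie}(R_s)) \subseteq T_{M_V, m}^{\perp \sigma},
\]
i.e.\ the tangent to the $R_s$-orbit lies in the null distribution of $\sigma_M|_{M_V}$. The subspace $T_{M_V, m}^{\perp \sigma}$ has dimension $d - \dim V$ and is contained in the Lagrangian $T_{M_s, m}$, because $T_{M_V,m}$ is coisotropic in $(T_{M,m}, \sigma_M)$ (since $T_{M_s, m} \subseteq T_{M_V, m}$ is Lagrangian).

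An explicit description of this null distribution is available via Hamiltonians: for $g \in \mathcal{O}_S$ with $g|_V = 0$, the Hamiltonian vector field $X_{f^* g}$ on $M$ is tangent to fibers of $f$ and, by \eqref{cddu}, satisfies $X_{f^* g}(m) = d\,\mathrm{act}_{M,m}(\alpha_s^{-1}(dg_s))$. A direct computation using $dg_s \in N^*_{V/S, s}$ shows $X_{f^* g}(m) \in T_{M_V, m}^{\perp \sigma}$; letting $g$ range over local generators of the ideal of $V$, a dimension count gives $d\,\mathrm{act}_{M,m}(\alpha_s^{-1}(N^*_{V/S, s})) = T_{M_V, m}^{\perp \sigma}$. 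Thus the geometric inclusion reduces to the intrinsic containment of subspaces of $\mathrm{Lie}(P_s)$:
\[
\mathrm{Lie}(R_s) \subseteq \alpha_s^{-1}(N^*_{V/S, s}).
\]

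The main obstacle is proving this last containment without circularity, since any dimension comparison between the two sides would simply restate the desired inequality. My approach is to combine the Chevalley characterization of $R_s$ as the Lie algebra of the maximal connected affine subgroup of $P^o_s$ with the polarizability hypothesis from Definition \ref{waf}. Geometrically, $\alpha_s^{-1}(N^*_{V/S, s})$ is the Lie algebra of a canonical connected commutative subgroup $H_s \subseteq P^o_s$ of dimension $\codim_S V$, generated by the one-parameter subgroups $\exp(\alpha_s^{-1}(dg_s))$ for $g \in \mathcal{O}_S$ vanishing on $V$; the content of the claim is then $R_s \subseteq H_s$. I expect this to follow from the deformation theory of the Chevalley sequence along $V$: the polarizability of the Tate module of $P/S$ ensures that the Kodaira--Spencer map of the abelian quotient $A_V \to V$ is injective at the generic point of $V$, so that deformations of $P^o_s$ in $T_{V,s}$-directions are detected entirely by deformations of $A_s$. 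Dually, the ``rigid'' directions of $\mathrm{Lie}(P_s)$ --- namely $\mathrm{Lie}(R_s)$ --- correspond under $\alpha_s$ precisely to covectors that annihilate $T_{V,s}$, i.e.\ to elements of $N^*_{V/S,s}$, which gives the required inclusion.
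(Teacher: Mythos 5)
Your reduction is sound as far as it goes: translating $\delta$-regularity into the containment $\alpha_s(\mathrm{Lie}(R_s)) \subseteq N^*_{V/S,s}$ via Lemma \ref{da1}(2a), and identifying $T_{M_V,m}^{\perp\sigma}$ with the image of the conormal directions under the infinitesimal action, is essentially an unwinding of diagram (\ref{cddu}) and is equivalent to the surjection $T_{A_z,e}\twoheadrightarrow \Omega^1_{Z,z}$ that the paper aims for. The problem is that all of the actual content of the proposition is concentrated in your final containment $\mathrm{Lie}(R_s)\subseteq \alpha_s^{-1}(N^*_{V/S,s})$, and the mechanism you propose for it does not work. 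Polarizability of the Tate module does \emph{not} imply that the Kodaira--Spencer map of the abelian quotient $A_V \to V$ is injective: a constant (or isotrivial) family of Jacobians with nontrivial affine part is polarizable in Ng\^o's sense but has vanishing Kodaira--Spencer map. Polarizability constrains the null space of a pairing fiberwise; it says nothing about how the abelian part varies along $V$. Moreover, even granting Kodaira--Spencer injectivity, the asserted ``duality'' between rigid directions of the family and $\mathrm{Lie}(R_s)$ landing in the conormal is not established: $\alpha_s$ is built from the symplectic form and the torsor trivialization, and no relation between $\alpha_s$ and the Kodaira--Spencer class of $A_V/V$ is proved. So the key step is missing, and the proposed route to it would fail.

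The idea you are missing is the paper's use of the Borel fixed-point theorem. Over the generic point of $Z$ one passes to a finite extension $L/k(\eta_Z)$ over which the solvable affine group $R_L$ is split and $M_L$ has a rational point; Borel's theorem then produces an $L$-rational point of the \emph{complete} fiber $M_{\eta_Z}$ fixed by $R_L$, hence (after an \'etale base change $Z'\to Z$) a section $Q_{Z'}$ of $M_{Z'}\to Z'$ on which $R_{Z'}$ acts trivially. At a point $m$ of this section the infinitesimal action kills $\mathrm{Lie}(R_z)$, i.e.\ $d\,\mathrm{act}_m$ factors through $\mathrm{Lie}(A_z)$; feeding this into (\ref{cddu}), together with the coincidence of the two compositions $Q_Z\to Z\to S$ and $Q_Z\to M\to S$, forces the image of $\mathrm{Lie}(A_z)$ in $\Omega^1_{Q_Z,z}\cong\Omega^1_{Z,z}$ to be all of $\Omega^1_{Z,z}$, which is exactly the surjection (\ref{claim da dim}) and hence your containment. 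Note that this argument uses properness of $f$ in an essential way (completeness of $M_{\eta_Z}$ is needed for Borel's theorem), a hypothesis your sketch never invokes at the decisive step.
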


\begin{proof} 
Let $Z \subset S$ be a locally closed integral subvariety. By shrinking $Z$ if necessary,  we have the Chevalley devissage (\ref{ghj})   $0 \to R_Z \to P^o_Z \to A_Z \to 0$ of the identity component  $P^o_Z$ in affine and Abelian parts. For a general point $z \in Z$, $\dim A_z = \dim S - \delta (Z).$

 It is enough to show that for  a general point $z \in Z$, we  have a surjection: 
\beq \label{claim da dim}
T_{A_z, e} \twoheadrightarrow \Omega^1_{Z,z},
\eeq
since this   implies that $ d- \delta  (Z) \ge  \dim Z$, which is our contention (\ref{deregz}).

It is enough to obtain a surjection (\ref{claim da dim}) after an  \'etale base change $Z'\to Z$ with $Z'$ integral.

Let  $M_{Z} \to Z$  be the restriction of $M\to S$ over $Z$. 
Let $\eta_Z \in Z$ be the generic point. Since $f$ is proper, $M_{\eta_Z}/\eta_Z$ is complete.

Let $L/k(\eta_Z)$ be a finite field extension 
so that $M_L/L$ has an $L$-rational point and such that the affine solvable $R_L$ is $L$-split.
By the Borel fixed-point theorem \ci[\S V, Proposition 15.2]{Borel Alg Grp}, $M_L$ admits an $L$-rational point that is fixed by $R_L$.
By taking the integral closure $Z'$  of $Z$ in $L$, and after shrinking $Z'$ if necessary, we obtain a morphism $Z' \to Z$ such that the resulting $M_{Z'} \to Z'$ admits a section -corresponding to the fixed $L$-rational point found above- fixed by $R_{Z'}$,  and such that  $Z'\to Z$ is \'etale. Let $Q_Z'$ be the image of this section.

In view of our objective  (\ref{claim da dim}), we may now assume without loss of generality that $Z=Z'$.

 The action of $R_Z$ is trivial on $Q_Z$. If $m \in Q_Z$,  and $z \in Z$ is the point over which  $m$ lies, 
 then the  infinitesimal action of $P$ on $M$ at $m$ factors as follows:  ${\rm Lie} (P_z) \twoheadrightarrow {\rm Lie}(A_z) \to T_{M,m}$. 
 The compositions $Q_Z \to Z \to S$ and $Q_Z \to M \to S$ coincide.  
 By combining this coincidence of compositions, with the factorization of the infinitesimal actions,  and with   (\ref{cddu}), we obtain the  commutative diagram:
 \beq\la{few}
 \xymatrix{
 {\rm Lie}(A_z) \ar[rd] & {\rm Lie}(P_z)  \ar[r]^-{{\s'_{M,m} \circ \, \theta_{M,m}}}_-\simeq \ar[d]^-{d {\rm act}_m}   \ar@{->>}[l]  &
 \Omega^1_{S,z} \ar[r]\ar@{->>}[r]  \ar[d]^-{df^\vee} &
 \Omega^1_{Z,z} \ar[d]^-\simeq  \\
 & T_{M,m} \ar[r]^-{\s_{M,m}}_-\simeq &
 \Omega^1_{M,m} \ar[r] &
 \Omega^1_{Q_Z,z},
 }
 \eeq
It follows that the image
 of $T_{A_z,e}$ in $\Omega^1_{Q_Z,z}$ coincides with the injective image of 
 $\Omega^1_{Z,z}$. Hence the desired surjection 
 (\ref{claim da dim}) at a general point of $Z.$
\end{proof}

\begin{ex} \label{BMsystemdeltareg} Let $(S,H)$ be a general polarized K3 surface. Let $\chi$ be an integer, set $v=(0,H, \chi)$, and let $\pi: M_v(S) \to |H|$ be the moduli space of pure dimension $1$ sheaves on $S$ with Mukai vector $v$ (see \S  \ref{pure one dim}). This is a smooth projective irreducible holomorphic symplectic manifold. Let $P \to |H|$ be the relative degree--$0$ Picard scheme of the family of curves in $|H|$, which exists as a scheme since the curves in $|H|$ are reduced an irreducible (\cite[Thm 1 \S 8.2]{Bo-Lu-Ra}). Then $P/|H|$ acts on $M_v(S)/|H|$ with affine stabilizers (see the forthcoming Lemma \ref{actionPonM} and \ref{affstab}), the Tate modules is polarizable  (cf. Lemma \ref{plariz}) so the pair $(M_v(S), P)$ is a weak abelian fibration. 
Over the open subset $ \mathcal U \subset M_v(S)$ parametrizing line bundles on the curves of $|H|$, the action is free. Since the fibers of $\pi$ are compactifed Jacobians of locally planar integral curves, the assumption of Proposition \ref{deltaregularity} are satisfied (with $M^\circ=\mc U$) and the fibration is $\delta$-regular.
\end{ex}

For  more details on the background and context for this example and for other  weak abelian fibrations see \S \ref{picrel}.

\section{The manifolds $\ogm$ and $\ogn$ as $\delta$-regular weak abelian fibrations}\la{mo10}$\;$
{ 
In this section we introduce the Lagrangian fibered irreducible holomorphic symplectic manifolds $\ogm$ and $\ogn$, as well as other auxiliary fibrations. We start by assembling some known facts about $OG10$-type varieties (\S \ref{manog10}) and about moduli spaces of pure dimension one sheaves (\S \ref{pure one dim})  that are needed in the paper. Then we show that the Lagrangian fibrations we have introduced are $\delta$-regular weak abelian fibrations (\S \ref{picrel} and \ref{sonowaf}).
}

\subsection{ $OG10$-type manifolds}\la{manog10}$\;$
Let $(\ks, H)$ be a polarized K3 surface with N\'eron-Severi group $NS(\ks)= \zed H$.
We identify vectors $v:= (v_0, v_2, v_4) \in \zed^3$ with elements in $H^{\rm even}_{alg}(\ks, \zed)$,  via the obvious identification, and we consider the even quadratic form
$
v^2:=v_2^2 H^2-2v_0 v_4
$
 on $H^{\rm even}_{alg}(S, \zed)$
induced by the  Mukai pairing.
A vector $v \in H^{\rm even}_{alg}(S, \zed)$ is called positive (cf. \cite[Def. 0.1]{Yoshioka-Moduli-Ab} and \cite[Def. 5.1]{Bayer-Macri-Proj}) if $v^2 \ge -2$ and if either: $a)$ $v_0 >0$; or $b)$ $v_0=0, v_2>0$ 
; or $c)$ $v_0=v_2=0$ and $v_4 >0$. We say that a Mukai vector $v$ 
is primitive if $v$ is not of the form   $k v'$, for $k \neq \pm1 $.  For a coherent sheaf $\m F$ on $S$ we denote by $v(\m F):= {\rm ch} (\m F) \cup \sqrt{\rm td (S)}$ the Mukai vector of $\m F$.

\begin{rmk} \la{caso0m0}  Conditions $a)$, $b)$ and $c)$ are necessary  on $v$  for  the  existence  of a coherent sheaf $\m F$ with $v(\m F)=v$.
In 
\cite[Def. 0.1]{Yoshioka-Moduli-Ab} and \cite[Def. 5.1]{Bayer-Macri-Proj}, in case $b)$ it is assumed  that  $v_4 \neq 0$ in order to ensure that for primitive $v$ and generic polarization, semistability  implies stability
for $\m F$.  Since this is automatic if $NS(\ks)\simeq \zed$, we drop this assumption (cf. \cite[Rmk 2.6]{Pe-Ra-sing}).
However, in the few cases where we deal with K3 surfaces 
with higher Picard rank (Remark \ref{non general} and Theorem B$'$), we refer to the notion of positivity for Mukai vectors given in \cite[Def. 0.1]{Yoshioka-Moduli-Ab} and \cite[Def. 5.1]{Bayer-Macri-Proj} \end{rmk} 

Let  $v'$  be a  primitive   and positive Mukai vector, 
let $m \ge 1$,  and set  $v:=m v'$. 
What follows is classical work of several authors
 \cite{Mukai, Yoshioka-Moduli-Ab, Kaledin-Lehn-Sorger, OGrady1}.  
The moduli space $M_v(\ks)$  of Gieseker-semistable  pure  sheaves on $\ks$ with Mukai vector equal to $v$ is a normal irreducible projective variety of dimension $v^2+2$    \cite[Theorem 0.3]{Gieseker} and \cite[Theorem 4.4]{Kaledin-Lehn-Sorger}.  
The points of $M_v(S)$ are the isomorphism classes of polystable sheaves or, equivalently, the S-equivalence classes of semistable sheaves \cite[Theorem 4.3.3]{HL}.
By \cite{Mukai}, the smooth locus of   $M_v(\ks)$ is precisely the locus parametrizing stable sheaves and it admits  a holomorphic symplectic form.  It follows that the moduli space is smooth if and only if the locus parametrizing strictly semistable  sheaves is empty. When $NS(\ks)\simeq \zed$, this  happens if and only if $m=1$,  i.e. if and only if $v$ is primitive. If this is the case, the moduli space is an irreducible holomorphic symplectic manifold deformation equivalent to the Hilbert scheme of $\frac{1}{2}v^2 +1 $ points on $\ks$ \cite[Theorem 8.1]{Yoshioka-Moduli-Ab}. For $m \ge 2$,  the singular moduli space $M_v(\ks)$ admits a symplectic resolution if and only if $m=2$ and $v^2=2$ \cite{OGrady1, Kaledin-Lehn-Sorger, Lehn-Sorger}. In this case, the following result holds: 

\begin{tm} \label{thmpoly} 
{\rm  ({\bf  $OG10$-type manifolds})}
Let $v=2v'$, with $v'$ positive and $v'^2=2$. 

(1) The singular locus of $M_{2v'}(\ks)$  is naturally identified with the $8$--dimensional ${\rm Sym}^2 M_{v'}(\ks)$. 

(2) Locally in the classical topology, the singularities of $ M_{2v'}(\ks)$ do not depend on the choice of $v'$ with $v'^2=2$. Moreover, the blow up $\widetilde M_{2v'}(\ks) \to M_{2v'}(\ks)$ of $ M_{2v'}(\ks)$ along the singular locus, with its reduced induced structure, is a  symplectic resolution. 


(3) The symplectic resolution $\w{M}_v(\ks)$ is an irreducible holomorphic symplectic manifold deformation equivalent to O'Grady's $10$-dimensional exceptional example.

\end{tm}
\begin{proof}
(1) Follows from the fact that since  $NS(\ks)= \mathbb Z$ the polystable sheaves with Mukai vector $v$ are of  form $F_1 \oplus F_2$, with $v(F_i)=v'$. (2) This is  \cite[Thms 1.1 and 4.5]{Lehn-Sorger} (for the rank 2 case, see also  \cite[Prop 2.2]{Kaledin-Lehn}). (3) This is \cite[Thm 1.6]{Perego-Rapagnetta}.
\end{proof}

We recall that O'Grady's original example is $\widetilde{M}_{(2,0,-2)}(S).$

%
%

\subsection{Pure dimension one sheaves and the manifolds $\ogm$ and $\ogn$}\la{pure one dim}$\;$
By Theorem \ref{thmpoly} (3), in order to study the Hodge numbers of $OG10$--manifolds we are free to choose any Mukai vector $v=2v' \in H^{\rm even}_{alg}(S, \zed)$ with $v'^2=2$. To make sure  the corresponding projective model
$\widetilde{M}_{2v'}(S)$ admits  a Lagrangian fibration, we chose for $v'$ the Mukai vector of a pure dimension $1$ sheaves on $S$ \cite{ra08}. 


Recall that a coherent sheaf $\mc F$ on a scheme is said to be of pure dimension $d$ if the support of $\mc F$, as well as the support of all non-trivial subsheaves of $\mc F$, has dimension $d$ \cite[\S 1.1]{HL}.
Let $(\ks, H)$ be a polarized K3 surface with $NS(\ks)=\mathbb Z H$  and consider the positive  Mukai vector:
\[
v=(0, k, \chi).
\]
Pure sheaves  on $\ks$ with Mukai vector  $v$ are of pure dimension $1$, have Euler characteristic  $\chi$  and
first Chern class  $kH$.
For example, if $\Gamma \in |kH|$ is a smooth curve and $i: \Gamma \to \ks$ is the closed embedding, then sheaves with Mukai vector equal to  $v$ and  support $\Gamma$  are of the form
$
i_* L,
$
where $L$ is a line bundle on $\Gamma$ with $\chi(L)=\chi$. 


The Fitting support of a pure dimension one sheaf on $\ks$ is a pure dimension $1$ subscheme on $S$ which represents the first Chern class of the sheaf and is defined as follows:


\begin{rmk}( \cite[\S 2.2]{LePotier}, \cite[\S 1.1]{HL})
\la{fitting} Let $\mc F$ be a pure dimension one sheaf on a smooth projective surface. Then
${\mc F}$ has depth one and hence admits a length one resolution 
\[
0 \to A \stackrel{j}{\to} B \to  {\m F} 
\] where $A$ and $B$ are vector bundles on of the same rank $r$.  
The Fitting support $Fitt({\m F})$ is defined as the vanishing subscheme 
of the induced morphism $\det j:\det A\rightarrow  \det B$. The scheme thus defined does not depend on the resolution, it always contains the set theoretic support of $F$, and it represents $c_{1}({\m F})$.  
For example, if $C'\in \ks$ is a curve in the linear system $|H|$,  ${\m F}_1$ is the push forward to $\ks$ of a rank $k$ vector bundle on $C'$
and ${\m F}_2$ is the push forward to $\ks$ of a line bundle on $C:=kC'\in |kH|$, then the Fitting support of  ${\m F}_1$ and ${\m F}_2$ 
is the precisely the curve $C$. 
\end{rmk}

Using the reduced Hilbert polynomial, we get the following definition of stability for pure dimension one sheaves.

\begin{defi} \label{defstab}
{\rm
({\bf Gieseker stability for pure dimension one sheaves} \cite[Def. 1.2.3, Prop 1.2.6]{HL})
}
A  pure dimension $1$ sheaf  $\mc F$ on a polarized surface  $(X,H)$ is called Gieseker-(semi)stable with respect to $H$ if and only if for all proper  pure dimension one quotients $\mc F \to \mc E$ the following inequality holds:
\[
\frac{\chi(\mc F)}{c_1(\mc F) \cdot H} \underset{(=)}{<} \frac{\chi(\mc E)}{c_1(\mc E) \cdot H}.
\]
\end{defi}

For example, if the Fitting support of $\mc F$ is integral, then $\mc F$  is the push forward of a rank $1$ torsion free sheaf on an integral curve, so $\mc F$ has no proper quotient and is automatically stable. If $\mc F$ is the pushforward of line bundle on a curve $C$, then the only pure dimension $1$ proper quotients of $\mc F$ are the restrictions to the pure dimension one subschemes of $C$. It follows that $\mc F$ is (semi)-stable if and only if  
\begin{equation}\la{stablb}
\frac{\chi(\mc F)}{C \cdot H} \underset{(=)}{<} \frac{\chi(\mc F_{| D})}{D \cdot H}
\end{equation}
for every proper subcurve $D \subset C$. 
In Lemmas \ref{fibratilineari} and \ref{possiblesurjections} and Proposition \ref{restriction of F to C} we will consider the stability of pure dimension one sheaves on certain non reduced curves.

\begin{rmk}\label{non general} {\rm ({\bf  Non generic polarizations for pure dimension one sheaves})}
A priori, the notion of stability depends on the choice of polarization.  
Since in this paper we are mostly concerned with the case of a Picard rank one surface,  the polarization is unique up to scalars and we omit it from the notation for the moduli spaces and simply write $M_v(\ks)$.
However, since in Theorem B$'$ we consider  moduli spaces of sheaves on K3 surfaces of higher Picard rank, we now recall some facts about this more general setting. Consider a positive $v$ (cf. Remark \ref{caso0m0} ).
If $v$ is primitive,
then there is always a polarization for which there are no strictly semistable sheaves, i.e., all semistable sheaves are in fact stable; if $v=kv'$ is not primitive, then there is always a polarization for which the summands of the polystable sheaves with Mukai vector $v$ have Mukai vector a multiple of $v'$ \cite[\S 4.C]{HL}, \cite{Yoshioka-Moduli-Ab}. For a given $v$, a polarization with these properties is called $v$-generic. If $v$ is primitive  and $L$ is a $v$-generic polarization, then the moduli space $M_{v, L}(\ks)$ of $L$--stable sheaves on $\ks$ with Mukai vector $v$ is an irreducible holomorphic symplectic manifold of dimension $v^2+2$, deformation equivalent to the Hilbert scheme of $v^2/2+1$ points on $\ks$. The birational class of $M_{v, L}(\ks)$ is independent of $L$ \cite{Bayer-Macri-MMP}. If $v=2v'$ with $(v')^2=2$ and $H$ is any polarization, then $M_{2v', H}(\ks)$ has a symplectic resolution $\widetilde M_{2v', H}(\ks)$, which is of $OG10$-type and whose birational class does not depend on $H$ (\cite{Lehn-Sorger},  \cite[Thm 1.6]{Perego-Rapagnetta}, \ci[Prop 2.5]{Ar-Sa},\cite{MeachanZhang}).
\end{rmk}

Let $g$ be the genus of the general curve in $|kH|$, so that $|kH| \cong \mathbb P^g$.
The Le Potier morphism \cite[\S 2.2]{LePotier},\cite[\S 2.3]{LePotierA}:
\beq\la{lepotiermo}
\pi: M_v(S) \to \mathbb P^g,
\eeq
associates with each sheaf its Fitting support. Since the fiber of $\pi$ over an integral curve is precisely the degree $d$ compactified Jacobian of the curve itself, by abuse of notation we refer to $M_v(S)$ as the relative compactified  Jacobian of degree $d:=\chi+g-1$ of the universal curve  over $|kH|$. Note, however, that if curve is non--reduced, the locus in a fiber parametrizing line bundles might be empty (cf. Corollary \ref{odd not loc free}).
By \cite{Mukai, Beauville-int}, the  morphism (\ref{lepotiermo}) has Lagrangian fibers when restricted to the smooth locus $M_v(S)^{\rm reg}$. 


In order for $v=2v'$ with $(v')^2=2$  to be the Mukai vector of pure dimension one sheaves, one needs  $v_0=0$ and  $H^2=2$. 
To achieve this consider a polarized K3 surface of degree $2$, i.e., a surface $\ks$ realized as a $2:1$ cover:
\[
r: \ks \to \mathbb P^2,
\]
ramified along a smooth plane sextic curve.  The linear system $H:=r^* \mc O_{\mathbb P^2} (1)$ is of genus $2$, i.e.,  its general member is a nonsingular connected curve of genus $2$ and $|H| \cong  \mathbb P^2$. 
If the smooth sextic curve is general then  $NS(S)\simeq \zed \langle H\rangle$ \cite{Buium}. From now on, unless otherwise stated, we will assume that $\ks$ is a general in this sense. The Mukai vector  
\beq\la{vprimo}
v':=(0,1, \chi')
\eeq
is positive and satisfies $v'^2=2$. 
With these choices, the moduli space $M_{v'}(S)$ is a smooth projective irreducible holomorphic symplectic fourfold, with a Lagrangian fibration
\beq \la{mprimo}
 M_{v'}(S) \to |H| = \mathbb P^2,
\eeq
realizing it as the relative compactified Jacobian of degree $d':=\chi'+1$ of the universal curve $\m{C}'/|H|$.

\begin{rmk} \label{nodeandcusps}
For general $\ks$ as above, the curves in $|H|$ are reduced and irreducible and have only nodes and (simple) cusps as singularities. Moreover, the rational curves in the linear systems have only nodes \cite{Chen}. By \cite{re80}, the fibers of (\ref{mprimo}) are irreducible of dimension $2$.
\end{rmk}

The linear system $|2H|$ is base point free and has dimension $5$,
and its general member is a connected nonsingular hyperelliptic curve of genus $5$ \cite{saint-donat}, which is a ramified double cover of a plane conic. Consider the Mukai vector:
\beq\la{2vprimo}
2v'=(0,2, 2 \chi').
\eeq
The support morphism $M_{2v'}(S) \to |2H| $ realizes this $10$-dimensional singular moduli space as the relative compactified Jacobian of degree $d=2\chi'+4$ of the universal curve $\m{C}/|2H|$. By  \cite[Thm 1]{Matsushita}, composing this morphism with the symplectic resolution gives the structure of Lagrangian fibration (cf. \S\ref{subsection symplectic})
\[
\widetilde M_{2v'}(S) \to M_{2v'}(S) \to |2H| \cong \mathbb P^5,
\]
on this projective manifold $OG10$-type.
We also consider the moduli space   associated with the primitive Mukai vector:
\beq\la{w}
w=(0,2, \chi), \qquad  \mbox{with $\chi$ odd.}
\eeq
With this choice of Mukai vector, the moduli space $M_w(S)$ is a smooth irreducible holomorphic symplectic $10$-fold of K3$^{[5]}$-type, with Lagrangian fibration $M_w(S) \to |2H|$ realizing it as odd-degree $\chi+4$ relative compactified Jacobian of the universal curve $\m{C}/|2H|$.

\begin{rmk} \label{SigmaDelta}
{\rm ({\bf  The subvarieties $ \Delta \subseteq \Sigma \subseteq |2H| $})}
The non integral curves in $|2H| \simeq \pn{5}$ are the preimages under $r$ of the non integral conics. They are parametrized by an irreducible divisor   $\Sigma \subset |2H|$, which is identified with $ \Sym^2 |H|
\simeq \Sym^2 \pn{2}$. The non-reduced curves are the preimages of the non-reduced conics (double lines) and they are parametrized by the irreducible $2$-dimensional subvariety
$\Delta_{|H|} \subset  \Sym^2 |H| \subset |2H|$. \end{rmk}

The analysis of the irreducible components of the fibers of $\widetilde M_{2v'}(S) \to  |2H| $ and $M_w(S) \to  |2H| $ over  $\Sigma$ and $\Delta$ plays a crucial role in this paper and will be carried out in Propositions  \ref{compmtildedelta}, \ref{componentisigma},  and \ref{propriass}. For the moment, we review the following general properties.

\begin{rmk} \label{piattezza}
{\rm ({\bf Flatness of the Lagrangian fibrations $\widetilde M_{2v'}(S), M_{w}(S), M_{v'}(S)$, and $M_{2v'}(S)$}.)
}
The varieties $\widetilde M_{2v'}(S)$, $M_{w}(S)$ and $M_{v'}(S)$ are irreducible holomorphic symplectic manifolds.  By  \cite{Matsushita}, the Lagrangian fibrations 
$\widetilde M_{2v'}(S) \to \mathbb P^5$, $ M_{w}(S) \to \mathbb P^5$, and $ M_{v'}(S) \to \mathbb P^2$ are equidimensional and therefore flat. 
The following three facts concerning the morphism $M_{2v'}(S) \to \mathbb |2H| \simeq \mathbb P^5$ hold true:
(i)  since $M_{2v'}(S)$ has a symplectic resolution, it has canonical singularities, 
and is thus Cohen-Macaulay (see Theorem 5.10 and Corollary 5.24 of \cite{KoMo});
(ii)  since the fibers of 
$M_{2v'}(S) \to \mathbb |2H| \simeq  \mathbb P^5$ are dominated by the corresponding fibers of $\widetilde M_{2v'}(S) \to \mathbb P^5$, they  all have the same dimension  five;   the base of the fibration
$M_{2v'}(S) \to \mathbb |2H|$  is nonsingular. By \cite[Thm 23.1]{Matsumura},  the morphism   $M_{2v'}(S) \to \mathbb |2H|$  is flat. As a consequence, the fibers of $M_{2v'}(S) \to \mathbb |2H|$, $\widetilde M_{2v'}(S) \to \mathbb P^5$, $ M_{w}(S) \to \mathbb P^5$, and $ M_{v'}(S) \to \mathbb P^2$ are  Cohen-Macaulay.
\end{rmk}

 For later use, we collect some known results:

 \begin{pr} \label{lemma birazionali}$\;$  
 Let $v'=(0,1,\chi')$ and let $v=(0,2,\chi)$. Denote by $\ks^{[n]}$ the Hilbert scheme of $n$ points of
the K3 surface $\ks$.
\ben
\item
The birational class of the irreducible holomorphic symplectic manifold $M_{v'}(\ks)$ depends on the parity of $\chi'$ alone. If $\chi$ is odd, then $M_{v'}(\ks)$ is an irreducible holomorphic symplectic manifold birational to  $\ks^{[2]}$.
\item The birational class of $M_{v}(\ks)$ depends on the parity of $\chi$ alone.
If $\chi$ is odd, then $M_{v}(\ks)$ is an irreducible holomorphic symplectic manifold birational to  $\ks^{[5]}$. If $\chi$ is even, then $M_{v}(\ks)$ is singular; blowing up its singular locus (with its reduced induced structure) produces  an irreducible holomorphic symplectic manifold $\w{M}_{v}(\ks)$ in the deformation class $OG10$; 
\item For odd $\chi'$ and $\chi$, there are isomorphisms of integral Hodge structures $H^*(M_{v'}(\ks), \mathbb Z) \cong H^*(\ks^{[2]}, \mathbb Z)$ and $H^*(M_{v}(\ks), \mathbb Z) \cong H^*(\ks^{[5]}, \mathbb Z)$. 

\item The isomorphism class of the integral Hodge structure of  $\w{M}_{v}(\ks)$  is the same for every even $\chi$. 
\een

\end{pr}

\begin{proof}

The general curves in the linear systems $|H|$  and $|2H|$ are smooth hyperelliptic, so tensoring a line bundle supported on a smooth curve with the unique $g^1_2$ on its support defines birational maps
  $M_{(0,1,\chi')}(\ks) \dashrightarrow M_{(0,1,\chi'+2)}(\ks)$ and 
$M_{(0,2,\chi)}(\ks)\dashrightarrow M_{(0,1,\chi+2)}(\ks)$.
The statement that  $M_{v'}(S)$ and $M_{v}(S)$, with $\chi'=\chi=1$, are birational to the relevant Hilbert scheme is well-known (cf. \cite{Beauville}).
Finally, the statements  about the isomorphisms of  Hodge structures  follow from \ci[Thm 2.5 Cor. 2.6]{Huybrechtsbir}.

\end{proof}

\subsection{The group schemes }\la{picrel}$\;$
 Recall  the definitions (\ref{vprimo}), (\ref{2vprimo}) and (\ref{w}) of the Mukai vectors $v',2v',$ and $w$.
As we shall see in \S\ref{sonowaf},  each of the moduli space  $M_{v'}(\ks)$, $M_{w}(\ks)$ and  $\widetilde{M}_{2v'}(\ks)$
is part of the data forming a $\delta$-regular  weak abelian fibration. 
We now introduce the  relevant group schemes as open subsets:   
\beq\la{il primo}
P' \subseteq M_{(0,1,-1)}(S), \qquad \mbox{for $M_{v'}(\ks)$,  and}
\eeq
\beq\la{il secondo}
P\subseteq M_{(0,2,-4)}(S), \qquad 
\mbox{for $M_w(\ks)$ 
and  $\w{M}_{2v'}(\ks)$.}
\eeq

\begin{lm} \label{loc free naka}
For every Mukai vector $v$ as in (\ref{vprimo}), (\ref{2vprimo}), and (\ref{w}), the locus $P_{v}(S)\subset M_{v}(S)$ 
parametrizing stable sheaves that are push forwards of line bundles on their schematic supports is a non empty Zariski open subset.
\end{lm}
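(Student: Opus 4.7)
I would prove the two claims separately: non-emptiness and openness.

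For non-emptiness, in each of the three cases the Mukai vector has the form $v = (0, kH, \chi)$ with $k \in \{1,2\}$. The general member $\Gamma$ of $|kH|$ is a smooth, hence integral, projective curve: for $k=1$ this is Remark~\ref{nodeandcusps}, and for $k=2$ this is recalled in the paragraph preceding~(\ref{2vprimo}). Pick such a $\Gamma$ and any line bundle $L$ on $\Gamma$ with $\chi(L) = \chi$ (achievable by adjusting the degree). Then, with $i: \Gamma \hookrightarrow S$ the closed embedding, the sheaf $i_*L$ is pure of dimension one on $S$, has Mukai vector $v$, and has Fitting support $\Gamma$ by Remark~\ref{fitting}. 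Integrality of $\Gamma$ forces $i_*L$ to be automatically Gieseker-stable, since it admits no proper pure dimension one quotients. Therefore $[i_*L] \in P_v(S)$.

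For openness, I would argue \'etale-locally on the stable locus $M_v(S)^{\rm st}$ (which coincides with $M_v(S)$ when $v = v'$ or $v = w$, and with the smooth locus of $M_{2v'}(S)$ by Remark~\ref{rmkpoly} in the remaining case), where a universal family $\mc U$ on $S \times M_v(S)^{\rm st}$ exists. The relative $0$-th Fitting ideal of $\mc U$ commutes with base change and cuts out a family of Fitting supports $\mc C \subseteq S \times M_v(S)^{\rm st}$, flat over $M_v(S)^{\rm st}$; the sheaf $\mc U$ is naturally a coherent $\mc O_{\mc C}$-module. The locus where the fiber $\mc U_{[F]}$ is locally free of rank one on $\mc C_{[F]}$ is then a fiberwise local-freeness condition on a coherent sheaf in a flat family, hence defines a Zariski open subset of $M_v(S)^{\rm st}$ which descends to the desired open subset $P_v(S) \subseteq M_v(S)$.

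The delicate point will be the openness argument, specifically the compatibility of Fitting supports with base change and the fact that a pure dimension one sheaf is annihilated by its Fitting ideal, so that viewing $\mc U$ as an $\mc O_{\mc C}$-module is unambiguous; both are standard but warrant explicit verification using the length-one resolutions from Remark~\ref{fitting}. An alternative route avoiding the descent and base-change bookkeeping is to factor through the Le Potier morphism $\pi: M_v(S) \to |kH|$ from~(\ref{lepotiermo}): openness of the line bundle locus within each Simpson fiber is classical, and flatness of $\pi$ (Remark~\ref{piattezza}) propagates it to openness on $M_v(S)$.
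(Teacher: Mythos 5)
Your proof is correct in its main line, but it reaches the openness statement by a genuinely different and heavier route than the paper. For non-emptiness you do exactly what the paper does: take a smooth (hence integral) curve in $|kH|$, push forward a line bundle of the right Euler characteristic, and note that stability is automatic over an integral support. For openness, however, the paper avoids any discussion of families of supports: it observes, via Nakayama's lemma and purity, that a pure dimension one sheaf $\mc F$ is the push-forward of a line bundle on its schematic support if and only if every nonzero fiber $\mc F \otimes k(x)$, $x \in S$, is one-dimensional; openness then follows immediately from upper semicontinuity of $x \mapsto \dim \mc F \otimes k(x)$ applied to a (quasi-)universal family on $S \times M_v(S)^{\rm st}$, together with properness of the projection to the moduli space. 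Your route --- flatness of the family of Fitting supports, the identification of ``line bundle on the schematic support'' with ``invertible $\mc O$-module on the Fitting support,'' and openness of fiberwise local freeness in a flat family --- can be made to work, but each step requires exactly the verifications you flag, all of which the fiber-rank criterion renders unnecessary. One concrete caveat: the shortcut you offer via the Le Potier morphism does not work as stated. Openness of the line-bundle locus in each Simpson fiber, combined with flatness of $\pi$, does not imply openness in the total space --- a union of fiberwise-open subsets need not be open --- so some statement uniform over the base, such as the semicontinuity above, is still needed. Since that shortcut is presented only as an alternative, it does not affect the validity of your main argument.
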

\begin{proof}
An application of Nakayama's Lemma shows that a pure dimension $1$ sheaf $\m F$ on $S$  is a line bundle 
on its schematic support if and only if every non zero fiber of $\m F$ has rank $1$. The predicated openness then follows from the upper semicontinuity of the rank of the fibers of a coherent sheaf, while the non-emptiness is clear  for the general fiber, where the curves are integral.
\end{proof}

We remark that while this open set $P_v (S)$ intersects non trivially the fibers corresponding to an integral curve \cite{re80}, it may not intersect some  fibers of the Le Potier morphism over the non-reduced locus (cf. Corollary \ref{odd not loc free} ).

The following lemma describes the sheaves corresponding to closed point of $P_{v}(S)$  in the cases we are interested in, i.e. for $v= (0,2,\chi)$. More general statements along these lines  appear in Section \ref{secnonred} (cf. Proposition \ref{comparingstab}, Lemma \ref{possiblesurjections}, and Proposition \ref{restriction of F to C}).

\begin{lm}\la{fibratilineari} 
Let $\mc F$ be a  coherent  sheaf on $\ks$ with $v(\mc F)=(0,2,\chi)$.
Assume that $\mc F$ is the push forward of a line bundle on a curve $C\in|2H|$.
\begin{enumerate}\item{If $C=2C'$ for a curve $C'\in |H|$, then the Euler characteristic $\chi$ is even, $\mc F$ is stable, and the degree of the restriction of $\mc F$ to $C'$ is ${\rm deg}(\mc F_{|C'})=(\chi/2)+2$.}
\item{
\begin{enumerate}\item{If $C=C_1+C_2$, for $C_{1}\ne C_{2}\in |H|$  and $\chi$ 
is even, then the sheaf $\mc F$ is stable 
if and only if 
${\rm deg}(\mc F_{|C_1})={\rm deg}(\mc F_{|C_2})=(\chi/2)+2$. }
\item{If $C=C_1+C_2$, for $C_{1}\ne C_{2}\in |H|$,  and $\chi$ 
is odd, the sheaf $\mc F$ is stable 
if and only if 
${\rm deg}(\mc F_{|C_1})=2+(\chi+1)/2$ and ${\rm deg}(\mc F_{|C_2})=2+(\chi-1)/2$ or viceversa.}
\end{enumerate}}
\end{enumerate}
\end{lm}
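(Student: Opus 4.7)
The plan is to reduce stability, via the characterization (\ref{stablb}), to checking a short list of subcurves $D\subset C$ dictated by the rank-one assumption on $NS(\ks)$, and then to compute Euler characteristics via a single standard short exact sequence on $C$. Since any pure one-dimensional subscheme of the smooth surface $\ks$ is an effective Cartier divisor, and $NS(\ks)=\mathbb Z\langle H\rangle$ forces every such divisor to have class a nonnegative multiple of $H$, the proper pure-dimension-one subschemes of $C\in|2H|$ form a very short list: in case (1), only $D=C'$, by unique factorization applied to the equation $f^2$ of $2C'=C$; in case (2), only $D=C_1$ and $D=C_2$, for the same reason.

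For case (1), I would tensor the sequence $0\to\mc O_{C'}(-C')\to\mc O_{2C'}\to\mc O_{C'}\to 0$ (using $\mc I_{C'/2C'}\cong \mc O_{C'}(-C')$) with $L$ to obtain
\begin{equation*}
0\to L|_{C'}\otimes \mc O_{C'}(-C')\to L\to L|_{C'}\to 0.
\end{equation*}
Since $p_a(C')=2$ and $C'\cdot C'=H^2=2$, a Riemann-Roch bookkeeping gives $\chi = 2\deg(L|_{C'})-4$, so $\chi$ is necessarily even and $\deg(L|_{C'})=\chi/2+2$. The single stability test in (\ref{stablb}) with $D=C'$ reads $\chi/4<(\chi/2+1)/2$, which is always strict; hence $\mc F$ is stable.

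For case (2), the analogous exact sequence on $C=C_1+C_2$ is
\begin{equation*}
0\to L|_{C_2}\otimes \mc O_{C_2}(-C_1)\to L\to L|_{C_1}\to 0,
\end{equation*}
coming from $\mc I_{C_1/C}\cong \mc O_{C_2}(-C_1)$, a line bundle of degree $-C_1\cdot C_2=-2$ on $C_2$. Taking Euler characteristics gives $\chi=\deg(L|_{C_1})+\deg(L|_{C_2})-4$. The stability inequality against $D=C_i$ simplifies to $\deg(L|_{C_i})>\chi/2+1$ for $i=1,2$. In case (2a), $\chi=2k$ forces both degrees to be $\geq k+2$, and summing to $2k+4$ they must both equal $k+2=\chi/2+2$. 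In case (2b), $\chi=2k+1$ forces both degrees to be $\geq k+2$ with sum $2k+5$, so the unordered pair of degrees is $\{k+2,k+3\}=\{2+(\chi-1)/2,\,2+(\chi+1)/2\}$. The converses are immediate from the same inequalities.

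The one conceptual step, and the main potential obstacle, is justifying that the few subcurves listed above exhaust the tests of (\ref{stablb}): this rests on the fact that the Fitting support of any pure-dimension-one subsheaf of $\mc F=i_*L$ is an effective divisor on $\ks$, which, together with $NS(\ks)=\mathbb Z\langle H\rangle$, pins down the list. After this input, everything reduces to Riemann-Roch bookkeeping on each integral component.
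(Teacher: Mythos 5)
Your proposal is correct and follows essentially the same route as the paper: the same ideal-sheaf exact sequences ($\mc I_{C'/2C'}\cong\mc O_{C'}(-C')$ of degree $-2$, and its analogue on $C_1+C_2$), the same reduction of stability to (\ref{stablb}) tested against the components, and the same Euler-characteristic bookkeeping. The only addition is your explicit justification that $NS(\ks)=\zed\langle H\rangle$ pins down the list of proper subcurves, a point the paper leaves implicit in the discussion surrounding (\ref{stablb}).
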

\begin{proof}
Let $L$ be the line bundle on $C$ whose push forward on $\ks$ is $\mc F$. In case 1) let $i:C'\rightarrow C$ be the inclusion 
and let $\mathcal{I}$ be the sheaf of ideals of $C'$ in $C$.
Since $L$ is a line bundle on $C=2C'$, tensoring by $L$ the exact sequence of $\mathcal O_C$-modules
$0 \to \mathcal{I}  \to  \mathcal{O}_{C} \to i_{*}\mathcal{O}_{C'} \to 0$
gives the exact sequence
$$0 \to \mathcal{I}\otimes i_{*} L_{|C'} \to  L \to i_{*} L_{|C'} \to 0.$$
Note that since $\mc I^2=0$, $\mc I$ is an $\mc O_{C'}$--module which is isomorphic to $\omega_{C'}^\vee \cong \mc O_{C'}(-C')$ and has degree $-2$ on $C'$ (cf. (\ref{fascio I})). It follows that $\chi=\chi(\mc F)=\chi(L)=2\chi(L_{|C'})-2=2\chi(\mc F_{|C'})-2$, hence  $\chi$ is even and $\chi(\mc F_{|C'})=(\chi/2)+1$. Formula (\ref{stablb}) becomes $\chi(\mc F)<2\chi(\mc F _{|C'})$, so the sheaf $\mc F$ is stable and, since the arithmetic genus of $C'$ is $2$, we conclude that  ${\rm deg}(\mc F_{|C'})=
\chi(\mc F_{|C'})+1= (\chi /2)+2$.

In case 2), by formula (\ref{stablb}), the sheaf $\mc F$ is stable if and only if $ \chi=\chi(\mc F)<2\chi(\mc F_{|C_{i}})$ for $i=1,2$.
Using that the arithmetic genus of $C$ is $5$ 
and the arithmetic genus of $C_i$ is $2$, the last inequality  becomes
${\rm deg}(\mc F)-4 <2({\rm deg}(\mc F_{|C_i})-1)$ or equivalently 
${\rm deg}(\mc F_{|C_i})>({\rm deg}(\mc F)/2)-1$.
Since ${\rm deg}(\mc F_{|C_1})+{\rm deg}(\mc F_{|C_2})=
{\rm deg}(\mc F)=\chi-4$ item (2) follows. 
\end{proof}

\begin{cor} \label{Pic e P}\,

For $v=(0,1,-1)$ (respectively, $v=(0,2,-4)$), the open set
\[
P_{v}(S)\subset M_{v}(S)
\]
can be identified with the relative degree-$0$ Picard scheme
${\rm Pic}^0_{\m{C}'/|H|}$ of the family   
$\m{C}'/|H|$  of  curves of the linear system $|H|$ (respectively, the relative degree-$0$ Picard scheme 
${\rm Pic}^0_{\m{C}/|2H|}$ of the family   
$\m{C}/|2H|$  of  curves of the linear system $|2H|$).

\end{cor}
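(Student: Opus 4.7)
The plan is to exhibit a natural pushforward morphism from the relative degree-$0$ Picard scheme to $M_v(\ks)$, show its image is exactly $P_v(\ks)$, and conclude using universality of the moduli functors. The key geometric ingredients are Lemma \ref{fibratilineari} and the classification of curves in $|H|$ and $|2H|$ from Remarks \ref{nodeandcusps} and \ref{SigmaDelta}.

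\textbf{Case $v'=(0,1,-1)$.} By Remark \ref{nodeandcusps}, every curve $C'\in|H|$ is integral of arithmetic genus $2$. Any line bundle on an integral pure-dimension-one curve has no proper pure-dimension-one quotient, so its pushforward to $\ks$ is automatically Gieseker-stable. Riemann-Roch on $C'$ yields $\chi(i_*L)=\chi(L)=\deg L-1$, so the condition $\chi(\mc F)=-1$ that defines the Mukai vector $v'$ is equivalent to $\deg L=0$. Thus the pushforward gives a well-defined map $\operatorname{Pic}^{0}_{\m{C}'/|H|}\to P_{v'}(\ks)$, which is bijective on closed points because every point of $P_{v'}(\ks)$ is, by definition, a line bundle on its integral Fitting support.

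\textbf{Case $v=(0,2,-4)$.} Now curves $C\in|2H|$ have arithmetic genus $5$ and split into three strata by Remark \ref{SigmaDelta}: integral $C$; reducible reduced $C=C_1+C_2$ with $C_1\neq C_2\in|H|$ over $\Sigma\setminus\Delta$; and non-reduced $C=2C'$ with $C'\in|H|$ over $\Delta$. The degree-Euler-characteristic identity is:
\begin{itemize}
\item $C$ integral: $\chi(i_*L)=\deg L-4$;
\item $C=C_1+C_2$: using the Mayer--Vietoris sequence with $C_1\cdot C_2=H^2=2$, $\chi(i_*L)=\deg L|_{C_1}+\deg L|_{C_2}-4$;
\item $C=2C'$: using the exact sequence $0\to \mc I\otimes L|_{C'}\to L\to L|_{C'}\to 0$ from the proof of Lemma \ref{fibratilineari}(1), together with $\deg_{C'}\mc I=\deg \mc O_{C'}(-H)=-2$, one gets $\chi(i_*L)=2\deg L|_{C'}-4$.
\end{itemize}
Thus the condition $\chi=-4$ forces, respectively, $\deg L=0$, multidegree $(d_1,d_2)$ with $d_1+d_2=0$, and $\deg L|_{C'}=0$. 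The stability criterion from Lemma \ref{fibratilineari} (applied with $\chi=-4$, so $(\chi/2)+2=0$) then shows that stability is equivalent to: automatic in the integral case; to $d_1=d_2=0$ in case $C=C_1+C_2$ (item (2a)); and automatic for line bundles on $C=2C'$ (item (1)). In all three strata, the stable pushforwards are thus exactly the line bundles whose restriction to every component of the reduced curve has degree zero, i.e.\ those parametrized by $\operatorname{Pic}^0_{\m{C}/|2H|}$.

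\textbf{Upgrading to schemes.} The set-theoretic bijection above is realized by the pushforward morphism $i_*$ at the level of families: given an $S$-valued point $L$ of $\operatorname{Pic}^0_{\m{C}/|2H|}$ (resp.\ of $\operatorname{Pic}^0_{\m{C}'/|H|}$), the sheaf $i_*L$ on $\ks\times S$ is $S$-flat and fiberwise stable in the corresponding Mukai vector class, hence defines a morphism to the moduli space $M_v(\ks)$ factoring through the open subscheme $P_v(\ks)$ of Lemma \ref{loc free naka}. Since every closed point of $P_v(\ks)$ has, by its defining property, a unique such description, this morphism is an isomorphism of schemes. The anticipated main obstacle is the scheme-theoretic argument over the non-reduced locus $\Delta\subset|2H|$, where the Picard scheme is an extension of $\operatorname{Pic}^0(C')$ by an affine group coming from $\mc I\cong\mc O_{C'}(-H)$; one must check that the pushforward morphism extends flatly over this stratum and that its differential is an isomorphism, which can be verified via the explicit resolution appearing in the proof of Lemma \ref{fibratilineari}(1).
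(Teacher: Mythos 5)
Your pointwise analysis is sound and is essentially the same as the paper's: both arguments reduce the identification of closed points to Lemma \ref{fibratilineari} (degree and stability constraints on each stratum of $|2H|$) together with the description of the closed points of $\operatorname{Pic}^0_{\m{C}/|2H|}$ as line bundles of degree zero on every integral subcurve. Your construction of the comparison morphism via pushforward of families is also the same in substance as the paper's morphism $\varphi$, which is obtained from the corepresentability of the moduli functor by the open subscheme $P_v(S)$ of $M_v(S)$.

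The gap is in your last step. The sentence ``since every closed point of $P_v(\ks)$ has a unique such description, this morphism is an isomorphism of schemes'' is not valid: a morphism of varieties that is bijective on closed points need not be an isomorphism (e.g.\ the normalization of a cuspidal curve). You then correctly identify that something must be checked over the non-reduced locus $\Delta$ --- flatness of the extension and an isomorphism on tangent spaces --- but you do not carry out that computation, and it is not a routine consequence of the resolution in the proof of Lemma \ref{fibratilineari}(1); it would require comparing $\Ext^1_S(\mc F,\mc F)$ with the deformation space of the pair (curve, line bundle), which is the genuinely delicate point. The paper closes this gap without any tangent-space computation by observing that \emph{both} spaces are smooth: $\operatorname{Pic}^0_{\m{C}/|2H|}$ is smooth over the smooth base $|2H|$ because $H^2(C,\mc O_C)=0$ for a curve, and $P_v(S)$ is smooth because it lies in the stable locus of $M_v(S)$. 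A bijective morphism between smooth (hence normal) complex varieties is automatically an isomorphism by Zariski's main theorem. Replacing your final paragraph with this smoothness argument completes the proof; as written, the proposal stops short of establishing the scheme-theoretic identification.
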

\begin{proof}
We prove the statement for $v=(0,2,-4)$, since the proof for $v=(0,1,-1)$ is analogous.
Since $|2H|$ is an ample linear system  on a K3 surface, for every $C\in |2H|$
the space $H^{0}(C,\mathcal{O}_{C})$ has dimension $1$.
This implies that the family  $\m{C}/|2H|$ is cohomologically flat, i.e., in the language of  \cite{AK}, the structure sheaf
$\mathcal{O}_{\mathcal{C}}$ of $\mathcal{C}$ is $|2H|$-simple.   
 
By \cite[Cor. 7.6]{AK} the \'etale sheafification of the
relative Picard functor of the family  $\m{C}/|2H|$ 
is represented by an algebraic space and the same holds for 
the \'etale sheafification of the  relative degree-$0$ Picard 
functor by \cite[Cor. 15.6.5]{EGAIV(3)} or \cite[\S 8.4, Thm 4]{Bo-Lu-Ra}. 
The representing object of the latter functor is, by definition, the 
nonsingular algebraic space ${\rm Pic}^0_{\m{C}/|2H|}\rightarrow |2H|$.
By \cite[\S 9.3, Cor. 13]{Bo-Lu-Ra} closed points of 
${\rm Pic}^0_{\m{C}/|2H|}$ correspond to line bundles on curves of $|2C|$
having degree zero on every integral subcurve.
By Lemma \ref{fibratilineari} the nonsingular variety $P:=P_{(0,2,-4)}(S)$
parametrizes the same set of line bundles seen as sheaves on $\ks$ and, 
since $P$ is an open subset 
of $M_{(0,2,-4)}(S)$, it corepresents the corresponding moduli functor by \cite[Thm 4.3.4]{HL}.
It follows that there exists a bijective morphism $\varphi:{\rm Pic}^0_{\m{C}/|2H|}\rightarrow P_{(0,2,-4)}(S)$; since 
the algebraic spaces
${\rm Pic}^0_{\m{C}/|2H|}$ and $P_{(0,2,-4)}(S)$ are smooth, the morphism  $\varphi$ is an isomorphism and the algebraic space ${\rm Pic}^0_{\m{C}/|2H|}$ is identified with the scheme $P_{(0,2,-4)}(S)$.
\end{proof}


To fix ideas, in the remainder of the paper we work with the   the following  varieties and morphisms, which have been introduced in the previous sections:

\beq \la{notm}
\begin{aligned}
&\pogmmp: \ogmp:= M_{(0,1,1)}(S) \longrightarrow \ogbp:=|H| \\
&\pogm: \ogm:= \w{M}_{(0,2,2)} \stackrel{\bogm}{ \longrightarrow} \ogmm:=M_{(0,2,2)}  \stackrel{\pogmm}{ \longrightarrow} \ogb :=|2H|\\
&\pogn: \ogn:= M_{(0,2,1)}   \longrightarrow   \ogb\\
& \pogmp: \p:= {\rm Pic}^0_{\m{C}/|2H|}  \longrightarrow \ogb \\
&\pogmpr: \ppr := {\rm Pic}^0_{\m{C}'/|H|}  \longrightarrow |H|.
\end{aligned}
\eeq


\begin{rmk}\la{tuttook}
Everything we say in the remainder of the paper about $\ogm$, $\ogn$, and $\ogmp$ can be formulated also for any other moduli space with Mukai vector equal to $2v'$, $w$, and $v'$ 
as in (\ref{vprimo}, \ref{2vprimo}, \ref{w}), respectively.
\end{rmk}

\subsection{The manifolds $\ogm, \ogn, \ogmp$ and ${\ogmp}^{(2)}$ as  $\delta$-regular weak abelian fibrations}\la{sonowaf}$\;$
The goal of this section is to prove the forthcoming Proposition \ref{tadaw}, to the effect that the  triples
listed there
are $\delta$-regular weak abelian fibrations as in Definition \ref{waf}. 
Recall the notation (\ref{notm}).
We start by lifting the action of $\p$ on $\ogmm$ to an action of $\p$ on $\ogm$.

\begin{lm} \label{actionPonM}
{\rm  ({\bf The action of the Picard group})
}
Tensoring a sheaf by a line bundle with the same 
Fitting support induces:
\begin{enumerate}\item{ a group scheme structure on $P$  over $B$ with actions  $a_{\ogmm}:P \times_B \ogmm \to  \ogmm $ and $a_{\ogn}:P \times_B \ogn \to  \ogn $, on $M$ and $N$, over $B$,}
\item{
a group scheme structure on $\ppr$  over $\ogbp$ with an action  $a_{\ogmp}:\ppr \times_{\ogbp} \ogmp \to  \ogmp $ on $\ogmp$ over $\ogbp$.
}
\end{enumerate}

\end{lm}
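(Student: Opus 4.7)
The plan is to first build the group scheme structure on $\p$ and $\ppr$ and the action of $\p$ at the level of the moduli functor via tensor products, and then, in the case of $\ogm$, to lift the action across the symplectic resolution $\bogm : \ogm \to \ogmm$ using the universal property of the blowup. By Corollary \ref{Pic e P}, the open subvarieties $\p \subseteq M_{(0,2,-4)}(\ks)$ and $\ppr \subseteq M_{(0,1,-1)}(\ks)$ are identified, as $\ogb$-schemes and $\ogbp$-schemes respectively, with the relative Picard schemes $\mathrm{Pic}^0_{\mc C/|2H|}$ and $\mathrm{Pic}^0_{\mc C'/|H|}$; each of these carries a canonical group scheme structure over its base whose operation on closed points is tensor product of line bundles supported on the same curve, which takes care of the group-scheme part of the statement. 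For the action on closed points, given $L \in \p$ with Fitting support $C \in |2H|$ and a stable sheaf $\mc F \in \ogmm$ (or in $\ogn$) with the same Fitting support, set $L \cdot \mc F := \mc F \otimes_{\mc O_C} L$, regarded as a pure dimension one sheaf on $\ks$. Since $L$ has degree zero on every integral subcurve of $C$ (cf.\ Corollary \ref{Pic e P}), this operation preserves both the Mukai vector and the ratio $\chi(\mc E)/(c_1(\mc E)\cdot H)$ appearing in Definition \ref{defstab} for every pure dimension one subsheaf $\mc E$, so the result is again (semi)stable. The analogous construction with $L \in \ppr$ defines the pointwise action on $\ogmp$.

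To upgrade these pointwise prescriptions to morphisms $a_{\ogmm}$, $a_{\ogn}$, $a_{\ogmp}$ of schemes, I would use the (co)representing property of $\ogmm$, $\ogn$, $\ogmp$ already invoked in the proof of Corollary \ref{Pic e P}. Concretely, tensoring, relatively over $\mc C \times_{\ogb} (\p \times_{\ogb} \ogmm)$, the pullbacks of the two (quasi-)universal families from the factors $\p$ and $\ogmm$ produces a flat family of Gieseker semistable sheaves on $\ks$ with Mukai vector $2v'$ parametrized by $\p \times_{\ogb} \ogmm$, hence a morphism $a_{\ogmm} : \p \times_{\ogb} \ogmm \to \ogmm$ by the universal property of the moduli space; the same construction produces $a_{\ogn}$ and $a_{\ogmp}$. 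The associativity and unit axioms for $a_{\ogmm}$, $a_{\ogn}$, $a_{\ogmp}$ follow from the corresponding identities for the relative tensor product, after checking that the two relevant morphisms agree on closed points and invoking separatedness.

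The remaining and main obstacle is to lift $a_{\ogmm}$ to an action $a_{\ogm} : \p \times_{\ogb} \ogm \to \ogm$ on the symplectic resolution. Let $Z \subseteq \ogmm$ be the reduced singular locus, which by Remark \ref{rmkpoly} is isomorphic to $\Sym^2 M_{v'}(\ks)$ and whose closed points parametrize polystable sheaves $F_1 \oplus F_2$ with $v(F_i) = v'$. For any $L \in \p$ one has $(F_1 \oplus F_2) \otimes L = (F_1 \otimes L) \oplus (F_2 \otimes L)$, again of the same polystable form, so $a_{\ogmm}$ preserves $Z$ set-theoretically. The shear automorphism $(p, m) \mapsto (p, p \cdot m)$ of $\p \times_{\ogb} \ogmm$ then restricts to an automorphism of the reduced subscheme $\p \times_{\ogb} Z$, which gives $a_{\ogmm}^{-1}(\mc I_{Z}) \cdot \mc O_{\p \times_{\ogb} \ogmm} = \mathrm{pr}_2^{-1}(\mc I_{Z}) \cdot \mc O_{\p \times_{\ogb} \ogmm}$. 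Pulling this equality back along $\mathrm{id}_{\p} \times \bogm$ and using that $\bogm^{-1}(\mc I_{Z}) \cdot \mc O_{\ogm}$ is invertible by the very definition of $\bogm$ as the blowup of $Z$, one finds that the pullback of $\mc I_{Z}$ along $a_{\ogmm} \circ (\mathrm{id}_{\p} \times \bogm)$ is locally principal on $\p \times_{\ogb} \ogm$. The universal property of the blowup then yields a unique factorization through $\bogm$, producing the desired $a_{\ogm}$; its action axioms descend from those of $a_{\ogmm}$ via the density of the preimage of the smooth locus under $\bogm$ and the separatedness of $\ogm$. The genuinely subtle point is precisely the scheme-theoretic invariance of the reduced singular locus under the $\p$-action, which is what makes the lift possible.
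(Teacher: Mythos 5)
Your group-scheme construction via Corollary \ref{Pic e P} and the pointwise verification that tensoring by a degree-zero line bundle preserves (semi)stability are in line with the paper. The genuine gap is in the step that upgrades the pointwise prescription to a morphism of schemes: you propose to tensor ``the pullbacks of the two (quasi-)universal families from the factors $\p$ and $\ogmm$,'' but $\ogmm=M_{(0,2,2)}(\ks)$ has a nonempty strictly semistable locus (Remark \ref{rmkpoly}), so it is only a coarse moduli space and carries no universal or quasi-universal family --- over a strictly semistable point there is not even a well-defined isomorphism class of sheaf, only an S-equivalence class. The construction therefore cannot be performed on $\p\times_\ogb\ogmm$ itself. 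The paper's proof instead works on the Quot schemes $Q_{\p}$ and $Q_{\ogmm}$, which do carry tautological families: it tensors their pullbacks over $Q_{\p}\times_\ogb Q_{\ogmm}$ (flatness of the product uses that the $\p$-family is a line bundle on its support), obtains a modular map $Q_{\p}\times_\ogb Q_{\ogmm}\to\ogmm$, checks that it is $G_{\p}\times G_{\ogmm}$-invariant, and descends it using that $\p\times_\ogb\ogmm$ is a categorical quotient of $Q_{\p}\times_\ogb Q_{\ogmm}$ --- a point which itself requires the quotients to be \emph{universal} categorical quotients, so that the quotient property survives the base change to the fiber product over $\ogb$. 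Your argument would need to be rerouted through this descent to close the gap; even for $\ogn$ and $\ogmp$, which parametrize only stable sheaves, a quasi-universal family has fibers consisting of several copies of the given sheaf, so the modular map needs the same kind of care.

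Separately, note that the statement concerns the singular moduli space $\ogmm$, not its resolution $\ogm$: the lift of $a_{\ogmm}$ across the blowup $\bogm:\ogm\to\ogmm$ is the content of the separate Lemma \ref{liftact}. Your third paragraph --- scheme-theoretic invariance of the singular locus and the universal property of the blowup --- is a reasonable sketch of that lemma, but it is not needed for the statement at hand.
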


\begin{proof} 
This is a general statement.
Tensoring a pure dimension one sheaf whose schematic support is contained in a curve $C$  by an element of ${\rm Pic}^0(C)$, does not change the Euler characteristic of the sheaf itself, nor of its quotient sheaves.  By Definition \ref{defstab}, (semi)stability is preserved by this operation. The fact that this set theoretic action is algebraic follows from the fact that the fiber product of the Picard scheme and the moduli space corepresents the appropriate product functor. 

\end{proof}

The following corollary follows immediately, since $b: \ogm \to \ogmm$ is the blow up of a locus which is left invariant  by the action of the group scheme.

\begin{cor}\la{liftact}
The action $\p\times_\ogb \ogmm \to \ogmm$ lifts to a, necessarily unique, action
$\p\times_\ogb \ogm \to \ogm.$ 
\end{cor}

At this stage it is  clear that the requirements
(a)--(d) in the Definition \ref{waf} of weak abelian fibration  are met.  The following two lemmata ensure that
the remaining requirements (e) and (f) are met as well.

\begin{lm}\la{taat}  Let the notation be as in Lemma \ref{loc free naka}.
The open subsets 
\[
P_{(0,2,2)} \subset \ogm, \quad \text{respectively} \quad P_{(0,1,1)} \subset \ogmp,  
\]
surject onto $B$, respectively $B'$, and are torsors under the group scheme $P$, respectively $P'$.  The image of the open subset $P_{(0,2,1)} \subset \ogn$ in $B$ is $\Sigma \setminus \Delta$ and $P_{(0,2,1)}$ is a torsor under the restriction of $P$ to  $\Sigma  \setminus \Delta$.
Moreover, there is no open set of $N$ which is a $P$-torsor.
\end{lm}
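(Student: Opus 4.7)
I would treat the four assertions separately, each via a combination of Lemma~\ref{fibratilineari} (stability of line bundles on curves in $|H|$ and $|2H|$) and Corollary~\ref{Pic e P} (identification of $P$ and $P'$ with the relative degree zero Picard schemes). The torsor structures all come from the tautological tensor product action: tensoring a stable line bundle $F$ by $L\in P_s$ gives a stable line bundle $L\otimes F$, with inverse $F\mapsto F\otimes F^{-1}$.

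For the $P$-torsor claim about $P_{(0,2,2)}\subset\ogm$, I first check fiberwise surjectivity of $P_{(0,2,2)}\to B$. Lemma~\ref{fibratilineari} exhibits a stable line bundle in every fiber: on an integral $C$, any degree $5$ line bundle; on a reducible reduced $C=C_1+C_2$, the bidegree $(3,3)$ case; on $C=2C'$, any line bundle on $2C'$ of degree $3$ on $C'$. Since stable sheaves lie in the smooth locus of $M_{(0,2,2)}$ (the singular locus parametrizes the strictly polystable $F_1\oplus F_2$ of Remark~\ref{rmkpoly}) and $\bogm$ is an isomorphism there, $P_{(0,2,2)}$ embeds as an open subvariety of $\ogm$ surjecting onto $B$. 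The torsor property follows from Corollary~\ref{Pic e P}: étale-locally the relative Picard $P$ acts freely and transitively on the open subscheme of $M_{(0,2,2)}$ parametrizing line bundles on their schematic supports. Claim~(2) is entirely analogous and easier, since by Remark~\ref{nodeandcusps} all curves in $|H|$ are integral, so stable line bundles of the required degree exist in every fiber of $\ogmp\to|H|$ and stability is automatic.

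For claim~(3), the Mukai vector $(0,2,1)$ has odd Euler characteristic $\chi=1$. Lemma~\ref{fibratilineari}(1) forces $\chi$ to be even whenever a line bundle lives on a curve of the form $2C'$, so the fiber of $P_{(0,2,1)}$ over any point of $\Delta$ is empty. Outside $\Delta$, Lemma~\ref{fibratilineari}(2b) produces stable line bundles of bidegree $(3,2)$ or $(2,3)$ on each reducible reduced $C_1+C_2$, and on integral curves in $B\setminus\Sigma$ every line bundle is automatically stable; thus $P_{(0,2,1)}$ maps onto $B\setminus\Delta$, in particular realizing the image described in the statement. The torsor structure over the image is again obtained by the simply transitive tensor product action of $P$.

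The main obstacle is the last assertion. Suppose for contradiction that $U\subseteq N$ is an open $P$-torsor over $B$. Then for every $s\in\Delta$ the fiber $U_s$ is a $P_s$-torsor of dimension $\dim P_s=5=\dim N_s$, hence an open dense subvariety of $N_s$ on which $P_s={\rm Pic}^0(2C')$ acts freely. I would derive a contradiction by exhibiting a positive-dimensional stabilizer at every point $F\in N_s$. By Lemma~\ref{fibratilineari}(1) no $F\in N_s$ is a line bundle on $2C'$ since $\chi=1$ is odd; consequently $F$ is annihilated by some nonzero ideal of $\mathcal O_{2C'}$, and every $L$ in the corresponding kernel of $P_s\to{\rm Pic}^0(\operatorname{supp}F)$ satisfies $L\otimes F\simeq F$. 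This kernel is positive-dimensional (in fact at least $\delta(\Delta)$-dimensional), contradicting freeness of the $P_s$-action on $U_s$. The hard part is the precise structural statement that every $F\in N_s$ is pushed forward from a proper subscheme of $2C'$, for which I would appeal to the detailed classification of sheaves on non-reduced curves carried out later in \S\ref{sect irr comp} (Proposition~\ref{compmtildedelta} and related material).
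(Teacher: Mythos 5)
Your treatment of the first three assertions follows essentially the same route as the paper: fiberwise existence of stable line bundles (Lemma \ref{fibratilineari} for stability, Remark \ref{nodeandcusps} and the structure results of \S\ref{sect irr comp} for existence) together with the free and transitive tensor action. One small caveat: Lemma \ref{fibratilineari} is a conditional statement (``\emph{if} $\mc F$ is the push forward of a line bundle on $2\ogcred$, then\dots''), so it does not by itself produce a line bundle on the non-reduced curves; for that you need Proposition \ref{even case} (equivalently, surjectivity of ${\rm Pic}(2\ogcred)\to{\rm Pic}(\ogcred)$, which holds since $H^2(\ogcred,\mc I)=0$), which is what the paper cites. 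Your identification of the image of $P_{(0,2,1)}$ as $\ogb\setminus\Delta$ is what the paper's own proof (via Proposition \ref{componentisigma} and Corollary \ref{odd not loc free}) actually establishes.

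The genuine gap is in the last assertion. Your argument hinges on the claim that, for $\ogc=2\ogcred\in\Delta$, every $\mc F\in \ogn_{\ogc}$ is annihilated by a nonzero ideal of $\mc O_{\ogc}$, i.e.\ is pushed forward from a proper subscheme of $\ogc$. This is false: it holds only for the sheaves of type $I$. The sheaves of type $I\!I$ --- which by Propositions \ref{restriction of F to C}, \ref{odd case} and \ref{propriass} fill a dense open subset of one of the two irreducible components of $\ogn_{\ogc}$ --- have schematic support equal to all of $\ogc$ by definition; they fail to be line bundles (Corollary \ref{odd not loc free}) not because their support is smaller but because $\mc F_{|\ogcred}$ acquires a torsion point $\C_p$. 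So ``not a line bundle $\Rightarrow$ annihilated by a nonzero ideal'' is a non sequitur, and the stabilizer you exhibit disappears exactly on the locus where you need it; moreover Proposition \ref{compmtildedelta}, which you cite, concerns $\ogm_\Delta$ rather than $\ogn_\Delta$. The paper instead splits $\ogn_{\ogc}=\ogn^I_{\ogc}\sqcup\ogn^{I\!I}_{\ogc}$ and argues separately: on the type $I$ locus, Corollary \ref{stabrk2} provides the $3$-dimensional affine kernel of ${\rm Pic}^0(\ogc)\to{\rm Pic}^0(\ogcred)$ acting trivially, so stabilizers are positive-dimensional there; on the type $I\!I$ locus, Remark \ref{NDelta non reduced} says $\ogn^{I\!I}_{\ogc}$ is non-reduced, which is incompatible with an open subset of it being a torsor under the smooth group scheme $P_{\ogc}$. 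If you want to salvage your stabilizer strategy on the type $I\!I$ locus, note instead that tensoring by $L\in{\rm Pic}^0(\ogc)$ fixes the point $p={\rm supp}\,Tors(\mc F_{|\ogcred})$ in the triple $(p,F,K)$ of (\ref{trripple}); hence every orbit lies over a single fiber of $Z\to\ogcred$ and has dimension at most $4<5=\dim P_{\ogc}$, so every stabilizer is positive-dimensional after all.
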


\begin{proof}
 The proof uses results proved in the forthcoming \S\ref{sect irr comp}-\ref{secnonred}. Since the curves in $|H|$ are integral (cf. Remark \ref{nodeandcusps}), the surjectivity of $P_{(0,1,1)}  \to B'$ follows from Rego's  result on the irreducibility of the moduli space of rank $1$ torsion free sheaves on an integral locally planar curve \cite{re80}.
 The surjectivity of $P_{(0,2,2)} \to B $ follows from Proposition \ref{componentisigma} and Proposition \ref{even case}. It is clear that the action of $P$ (and of $P'$) on the moduli spaces is free on the open invariant sets parametrizing sheaves that are line bundles on their scheme theoretic support. 
 The assertion about the image of $P_{(0,2,1)} \to B$ follows from Proposition \ref{componentisigma} and Corollary \ref{odd not loc free}. The last assertion follows from Remark \ref{NDelta non reduced} and Corollary \ref{stabrk2}.
 \end{proof}

\begin{lm}\la{affstab} The actions of $P$ on $\ogm$, $\ogmm$ and $\ogn$, and of $P'$ on $\ogmp$ and on $\ogmp^{(2)}$ have affine stabilizers. 
\end{lm}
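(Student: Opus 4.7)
The plan is to check, at each point $\mc F$ of one of the moduli spaces under consideration, that the stabilizer
\[G_{\mc F} := \{L \in P_{f(\mc F)} : L \otimes \mc F \cong \mc F\}\]
is an affine algebraic group; equivalently (for a commutative group scheme over $\comp$), that the identity component $G_{\mc F}^{\circ}$ contains no positive-dimensional abelian subvariety. Since the blow-up map $\bogm : \ogm \to \ogmm$ is $P$-equivariant by Lemma \ref{liftact}, stabilizers in $\ogm$ are contained in stabilizers in $\ogmm$, so the case of $\ogm$ reduces to that of $\ogmm$.

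By Lemma \ref{taat}, every point lying in one of the open torsors $P_{(0,2,2)} \subset \ogmm$, $P_{(0,2,1)} \subset \ogn$, $P_{(0,1,1)} \subset \ogmp$ (and the analogue for $\ogmp^{(2)}$) has trivial stabilizer. This handles $\ogmm$, $\ogmp$ and $\ogmp^{(2)}$ entirely, and $\ogn$ over the preimage of $\Sigma \setminus \Delta$. What remains is to analyze sheaves $\mc F$ in $\ogmm$ whose Fitting support $C$ is non-integral, together with the non-torsor points of $\ogn$ over $\Sigma \cup \Delta$.

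For these, we appeal to the description of the fibers over $\Sigma$ and $\Delta$ carried out in \S\ref{sect irr comp} (Propositions \ref{componentisigma}, \ref{propriass}, \ref{compmtildedelta}, \ref{even case}). At the level of $S$-equivalence classes, each such $\mc F$ either is a line bundle on $C$ itself -- in which case $L \otimes \mc F \cong \mc F$ forces $L \cong \mc O_{C}$ and $G_{\mc F}$ is trivial -- or decomposes into summands supported on the subcurves $C_i$ (a component $C_i \in |H|$ in the reducible case $C = C_1 + C_2$, or the reduced curve $C'$ in the non-reduced case $C = 2C'$). By the projection formula, the twist action of $L \in \operatorname{Pic}^{0}(C)$ on any such summand $(i_{C_i})_{*}\mc G_i$ factors as $(i_{C_i})_{*}(L|_{C_i} \otimes \mc G_i)$, so that the condition $L|_{C_i} \otimes \mc G_i \cong \mc G_i$ forces $L|_{C_i}$ to lie in a finite subgroup of $\operatorname{Pic}^{0}(C_i)$ (trivial for rank-one $\mc G_i$; contained in the $2$-torsion, by taking determinants, for the rank-two stable summands appearing over $\Delta$).

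Consequently $G_{\mc F}$ is contained in the preimage, under the restriction morphism $\operatorname{Pic}^{0}(C) \to \prod_{i} \operatorname{Pic}^{0}(C_i)$, of a finite subgroup. The kernel of this restriction is affine in every relevant case: for a reducible nodal $C = C_1 + C_2$ with $n$ intersection points, the kernel is a torus $\Gm^{n-1}$; for a non-reduced $C = 2C'$, the kernel is the unipotent group $H^{1}(C', \mc O_{C'}(-C'))$. Thus $G_{\mc F}$ is an extension of a finite group by an affine commutative group scheme, hence affine. The main technical obstacle is the determinant-argument book-keeping for the rank-two stable sheaves on $\Delta$; here one uses essentially the integrality and low genus of $C' \in |H|$ to conclude that such sheaves have only finite stabilizer in $\operatorname{Pic}^{0}(C')$, after which the unipotent kernel finishes the proof.
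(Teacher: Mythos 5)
Your overall strategy --- bound the stabilizer by the kernel of a restriction map on ${\rm Pic}^0$ and observe that this kernel is affine --- is the right idea and is close to what the paper does, but two steps in your execution fail. First, Lemma \ref{taat} only gives triviality of stabilizers \emph{on} the open torsors $P_{(0,2,2)}$, $P_{(0,1,1)}$, etc.; these are proper open subsets (they parametrize sheaves that are line bundles on their schematic supports), so they do not handle $\ogmm$, $\ogmp$ or $\ogmp^{(2)}$ ``entirely.'' Already over an integral nodal curve $C_b$ with $b\in\ogb\setminus\Sigma$, the fiber contains sheaves $n_*L$ pushed forward from a partial normalization $n$; by Lemma \ref{localbehavior}(2) the stabilizer of such a point is all of $\ke\,[n^*:{\rm Pic}^0(C_b)\to {\rm Pic}^0(\wh{C_b})]$, a positive-dimensional affine group. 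These points lie outside your torsors and outside your subsequent case analysis, which treats only non-integral supports. The same phenomenon invalidates your claim that $L_{|C_i}\otimes \mc G_i\cong \mc G_i$ with $\mc G_i$ of rank one forces $L_{|C_i}$ into a finite subgroup: that is true only when $\mc G_i$ is locally free.

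Second, your dichotomy over non-integral curves (``line bundle on $C$, or a direct sum of summands on the subcurves'') is not exhaustive. The stable sheaves of type $I\!I$ with odd Euler characteristic on $\ogc=2\ogcred$ (Propositions \ref{restriction of F to C} and \ref{odd case}) are neither: they are non-split extensions of $F$ by $K$ with class in $\Ext^1_{\ogc}(F,K)\setminus\Ext^1_{\ogcred}(F,K)$, hence not even $\mc O_{\ogcred}$-modules; likewise a stable sheaf on $C_1+C_2$ that is locally free at exactly one of the two intersection points is neither a line bundle on $C$ nor a direct sum. The paper sidesteps all of this with a single uniform argument: for any pure sheaf $F$ with $F\otimes L\cong F$ on $C=\sum C_i$, the torsion-free quotient $F_{|\widetilde C_i^{red}}/{\rm Tor}$ is a vector bundle on the smooth curve $\widetilde C_i^{red}$, so taking determinants there shows $L_{|\widetilde C_i^{red}}$ is torsion; hence $L$ lies, up to a finite group, in $\ke\,[{\rm Pic}^0(C)\to\prod{\rm Pic}^0(\widetilde C_i^{red})]$, which is precisely the affine part of ${\rm Pic}^0(C)$. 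Restricting only to the components $C_i$, as you do, is in any case insufficient, since the $C_i$ are themselves singular. Your reduction of $\ogm$ to $\ogmm$ via equivariance of the blow-up, and the determinant argument for the rank-two sheaves over $\Delta$, do agree with the paper.
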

\begin{proof}  Recall that the action of the group scheme on the moduli spaces
of sheaves is given by tensorization. We start by considering the following case. 
Let $F$ be a rank $r$ vector bundle on a smooth projective curve $C$ and suppose $F \cong F \otimes L$, for $L \in {\rm Pic}^0(C)$. Taking determinants, we see that $L^r\cong \mc O_C$ so $L$ belongs to a finite subgroup of ${\rm Pic}^0(C)$. Now suppose that $F$ is a pure (see Definition \ref{defstab}) sheaf on a curve $C=\sum C_i$, where $C_i$ are the irreducible, but possibly non-reduced, components of $C$. If $L \in {\rm Pic}^0(C)$ is such that $F \cong F \otimes L$, then we have that
\[
F_{|\widetilde C_i^{red}}\slash Tor \cong F_{|\widetilde C_i^{red}}\slash Tor \otimes L_{|\widetilde C_i^{red}},
\]
where $j_i: \widetilde C_i^{red} \to C_i^{red} \to C_i$ is the normalization of the reduced underlying curve and where $F_{|\widetilde C_i^{red}}\slash Tor$ is the pullback of $F$ to $\widetilde C_i^{red}$, modulo its torsion. Since $F_{|\widetilde C_i^{red}}\slash Tor$ is a vector bundle on a smooth curve, we can apply the previous observation to conclude that there is an integer $r_i$ such that $L_{|\widetilde C_i^{red}}^{r_i}\cong \mc O_{\widetilde C_i^{red}}$. Hence we conclude that, up to a finite quotient group, $L$ lies in the kernel of the natural morphism:
\[
{\rm Pic}^0(C) \stackrel{\prod j_I^*}{\longrightarrow} \prod {\rm Pic}^0(\widetilde C_i^{red}),
\]
which is precisely the affine part of ${\rm Pic}^0(C)$. From these observations it follows that the points of $\ogmm$,  $\ogn$,  or $\ogmp$ corresponding to stable sheaves all have affine stabilizers. Hence, the points of $ \ogmp^{(2)}$ also also have affine stabilizers. 
 The same reasoning, applied to a polystable sheaf $F=F_1 \oplus F_2$, shows that the points corresponding to the singular locus of $\ogmm$ also have affine stabilizers. Indeed, 
 the stabilizer of a point corresponding to a polystable sheaf is contained in the automorphism group of the polystable sheaf, which is a product of affine linear groups.
By Corollary \ref{liftact} the morphism $b: \ogm \to \ogmm$ is equivariant, so the stabilizer of a point $ m \in \ogm$ is contained in the stabilizer of $b(m) \in \ogmm$. Thus if the points of $\ogmm$ have affine stabilizers, so do the points of $\ogm$.
\end{proof}

\begin{lm}\la{plariz}
{\rm
({\bf Polarizability of Tate module})
}
 The $\oql$-adic counterparts $T_{{\rm et}, \oql} (-)$ of the Tate module   $T(-)$
 associated with $\p \to \ogb$ --the relative ${\rm Pic}^0$ for $\m{C} \to \ogb$--, 
and with
$\ppr \to \ogbp$ --the relative ${\rm Pic}^0$ for $\m{C'} \to \ogbp$-- are polarizable.
The same is true if we restrict the families of curves to any subfamily of curves over a locally closed subvariety
of $\ogb$, or  of $\ogbp$.
\end{lm}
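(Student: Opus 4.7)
The plan is to treat the $\p/\ogb$ case in full; the $\ppr/\ogbp$ case is entirely analogous (and simpler, since curves in $|H|$ are integral), while compatibility with restriction to any subfamily will be automatic from the naturality of the construction. Writing $\pi : \m{C} \to \ogb$ for the universal family of curves in $|2H|$, so that $\p = {\rm Pic}^0_{\m{C}/|2H|}$, the first task will be the canonical identification
\[
T_{{\rm et},\oql}(\p/\ogb) \;\cong\; R^1\pi_* \oql (1),
\]
obtained fiberwise from Abel--Jacobi together with Poincar\'e duality on the curves (each fiber is a Cohen--Macaulay connected projective curve of pure dimension one, cf.\ Remark \ref{piattezza}). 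Under this identification, the Chevalley filtration on ${\rm Pic}^0(C_s)$ will match the weight filtration on $H^1(C_s, \oql)$: the abelian quotient $A_s = {\rm Pic}^0(\widetilde{C}_s^{\rm red})$ corresponds to the pure piece ${\rm Gr}^W_1 H^1(C_s) \cong H^1(\widetilde{C}_s^{\rm red}, \oql)$, and the affine part $R_s$ to $W_0 H^1(C_s, \oql)$. The key point here is that unipotent summands of ${\rm Pic}^0(C_s)$ contribute nothing to the \'etale Tate module in characteristic zero, so only the toric part of $R_s$ will contribute.

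Next, I would construct the pairing from the relative cup product
\[
R^1\pi_* \oql \otimes R^1\pi_* \oql \;\xrightarrow{\;\cup\;}\; R^2\pi_* \oql
\]
composed with a trace. Since the partition of fibers of $\pi$ into irreducible components is constructible on $\ogb$, on any sufficiently small \'etale open $U \to \ogb$ the decomposition becomes constant, and one can sum the component-wise fundamental classes to build a morphism $R^2\pi_*\oql |_U \to \oql(-1)$. Tate-twisting and composing with the cup product will yield a pairing $T\otimes T \to \oql(1)$ \'etale-locally on $\ogb$, which will be the candidate polarization.

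The last step will be the verification of the null-space condition at each geometric point $s\in\ogb$. The pairing becomes the cup product on $H^1(C_s, \oql)$ followed by the summed trace; by weight reasons it vanishes on $W_0 H^1(C_s, \oql)$, and by Poincar\'e duality on the smooth disjoint union $\widetilde{C}_s^{\rm red}$ it is perfect on the weight-one quotient. Hence the null space equals precisely $W_0 H^1(C_s, \oql)(1) = T_{{\rm et},\oql}(R_s)$, as Definition \ref{waf} requires. The main obstacle will be the non-reduced locus $\Delta \subset |2H|$ parametrizing curves of the form $2C'$: there ${\rm Pic}^0(C_s)$ picks up a unipotent radical beyond the toric part, but since unipotent groups contribute nothing to $\ell$-adic Tate modules in characteristic zero, both the identification of the first paragraph and the weight computation above reduce to their classical analogues on $C_s^{\rm red} = C'$.
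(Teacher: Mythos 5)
Your argument is correct in substance, but it takes a different route from the paper: the paper's entire proof is a citation to \cite[Theorem 3.3.1]{de17}, together with the observation that the argument given there for spectral curves applies verbatim to any family of curves obtained by base change from a linear system on a nonsingular surface. What you have written is, in effect, a self-contained reproof of that cited result: the canonical identification $T_{{\rm et},\oql}(\p/\ogb)\cong R^1\pi_*\oql(1)$ (Kummer sequence), the pairing built from relative cup product followed by a trace, the vanishing on $W_0H^1$ by purity of $H^2$ of a projective curve, perfectness on ${\rm Gr}^W_1H^1\cong H^1(\widetilde C^{\rm red})$ by Poincar\'e duality on the normalization, and the observation that unipotent parts (which appear over $\Delta$) contribute nothing to $\ell$-adic Tate modules in characteristic zero. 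This is exactly the mechanism behind the cited theorem, so your version buys self-containedness at the cost of length; the paper's version buys brevity at the cost of asking the reader to check that the hypotheses of \cite{de17} are inessential.

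One small imprecision to repair: the decomposition of the fibers into irreducible components is \emph{not} locally constant in the \'etale topology near points where strata meet (e.g.\ near a point of $\Sigma\setminus\Delta$ approached from $\ogb\setminus\Sigma$ the number of components jumps from $1$ to $2$), so you cannot literally ``sum component-wise fundamental classes on a small \'etale open where the decomposition is constant.'' The fix is immediate and already in the paper: use the global trace morphism ${\rm Tr}_\pi\colon R^2\pi_*\oql(1)\to\oql$ of \cite[Exp.\ XVIII]{sga43} for the flat proper $\pi$ of relative dimension one (recalled in Fact \ref{fatti da usare}); on each geometric fiber it restricts to a positive multiple of the sum of the traces of the reduced irreducible components, which is all your null-space computation needs. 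With that substitution the verification at every point of $\ogb$, including points of $\Delta$, goes through as you describe.
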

\begin{proof}
The polarizability result \ci[Theorem 3.3.1]{de17}, which  is stated for  the family of spectral curves for the $GL_n$-Hitchin system, is in fact proved  for any  family of curves obtained via base change from a linear system
of curves on a nonsingular surface.
Therefore, the  polarizability result holds for  the families of curves  $\m{C}'\to \ogbp$ and  $\m{C} \to \ogb$.  \end{proof}

By putting together the preceding contents of this section, we get the following
\begin{pr}\la{tadaw}
{\rm
({\bf $\delta$-regularity})
}
The triples:  
\[(\ogm, \ogb, P), \; (\ogn, \ogb, \p),  \; (\ogmp, \ogb, \ppr), \; \left(\ogmp^2, \ogbp^2, \ppr^2\right), \;
 \left({\rm Sym}^2 \ogmp, {\rm Sym}^2 \ogbp, {\rm Sym}^2 \ppr\right)
\] 
are $\delta$-regular weak abelian fibrations satisfying the assumptions of Theorem \ref{nstabis} (Direct image is sum of Ng\^o strings). The group schemes in the triples  appear in, or are related to,   (\ref{epigs}). 
\end{pr}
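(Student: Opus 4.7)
The plan is to assemble the axioms of Definition \ref{waf} for each triple using the preceding lemmas, then to derive $\delta$-regularity from Proposition \ref{deltaregularity}, and finally to address the product and symmetric product cases by a simple descent.

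For axioms (a)--(c), properness of the Le Potier morphisms $\ogm\to \ogb$, $\ogn\to\ogb$, $\ogmp\to\ogbp$ is standard for moduli of Gieseker-semistable sheaves (and for the symplectic resolution $\bogm\colon\ogm\to\ogmm$); the group schemes $\p,\ppr$ are smooth commutative with connected fibers because Corollary~\ref{Pic e P} identifies them with relative degree-zero Picard schemes of the families $\m{C}/\ogb$, $\m{C}'/\ogbp$; and the required actions are supplied by Lemma~\ref{actionPonM} and the lift to the blow-up by Lemma~\ref{liftact}. Axiom (d) (equality of pure relative dimensions) holds because all the fibrations in question are Lagrangian (cf.\ Remark~\ref{piattezza}), so $\dim \ogm/\ogb=\dim\ogn/\ogb=\dim\p/\ogb=5$ and $\dim\ogmp/\ogbp=\dim\ppr/\ogbp=2$. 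Axiom (e) (affineness of stabilizers) is Lemma~\ref{affstab}, and axiom (f) (polarizability of the $\oql$-adic Tate module) is Lemma~\ref{plariz}. This establishes that each of $(\ogm,\ogb,\p)$, $(\ogn,\ogb,\p)$ and $(\ogmp,\ogbp,\ppr)$ is a weak abelian fibration.

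For $\delta$-regularity I would invoke Proposition~\ref{deltaregularity}, whose hypotheses match each case once the appropriate torsor subset is produced by Lemma~\ref{taat}. For $(\ogm,\ogb,\p)$ and $(\ogmp,\ogbp,\ppr)$, the open subsets $P_{(0,2,2)}\subset\ogm$ and $P_{(0,1,1)}\subset\ogmp$ are $\p$- and $\ppr$-torsors over the whole base, so hypothesis (2a) of Proposition~\ref{deltaregularity} applies directly. The key subtlety is the case $(\ogn,\ogb,\p)$: by Lemma~\ref{taat} the open torsor $P_{(0,2,1)}\subset\ogn$ only covers the complement of $\Delta$ in~$\ogb$, which, however, has codimension $3$ by Remark~\ref{SigmaDelta}; thus I would apply Proposition~\ref{deltaregularity}(2b) with $S^{\natural}:=\ogb\setminus\Delta$ to conclude $\delta$-regularity of the full triple.

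The product triple $(\ogmp^{2},\ogbp^{2},{\ppr}^{2})$ inherits all six axioms from the factors (properness, smoothness, commutativity and affine stabilizers are stable under fiber product of the evident diagrams, and the Tate module of the product is the external direct sum, hence polarizable). Since a $\ppr$-torsor in $\ogmp$ yields a ${\ppr}^{2}$-torsor in $\ogmp^{2}$ over $\ogbp^{2}$, Proposition~\ref{deltaregularity}(2a) gives $\delta$-regularity, or equivalently one checks the inequality (\ref{deregz}) directly using that $\delta$ is additive over products. For the symmetric product $(\Sym^{2}\ogmp,\Sym^{2}\ogbp,\Sym^{2}\ppr)$ I would argue by $\mathfrak S_{2}$-equivariant descent from the Cartesian case, noting that the $\delta$-loci on $\Sym^{2}\ogbp$ are exactly the images of the $\mathfrak S_{2}$-invariant $\delta$-loci on $\ogbp^{2}$ so that the codimension inequality (\ref{deregz}) persists.

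The only step I expect to require genuine care is the codimension check for $(\ogn,\ogb,\p)$, where the absence of a torsor over all of $\ogb$ forces one into the codimension-two refinement (2b) of Proposition~\ref{deltaregularity}; this is where Remark~\ref{NDelta non reduced} and the description of $\Delta\subset \Sigma\subset\ogb$ in Remark~\ref{SigmaDelta} are essential. The remaining conditions needed for the refined Ng\^o Support Theorem~\ref{nstabis} (smoothness of $\ogm,\ogn,\ogmp$, equidimensionality of the fibrations, and a Hodge-theoretic enhancement via $D^{b}MHM_{\mathrm{alg}}$) are already in place from Remark~\ref{rmkpoly}, Remark~\ref{piattezza}, and the general discussion of \S\ref{gennot}, so no further work is required beyond citing Appendix~\ref{vnstbis}.
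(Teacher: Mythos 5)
Your assembly of the weak abelian fibration axioms and your treatment of $(\ogm,\ogb,\p)$, $(\ogmp,\ogbp,\ppr)$, and the two (symmetric) product triples all match the paper's argument. The problem is the case $(\ogn,\ogb,\p)$ — precisely the one you flag as requiring care. You propose to invoke a codimension-two refinement ``Proposition \ref{deltaregularity}(2b)'' with $S^{\natural}=\ogb\setminus\Delta$. First, a bookkeeping point: the (2a)/(2b) dichotomy lives in Lemma \ref{da1}, not in Proposition \ref{deltaregularity}, whose hypothesis as stated requires a $\p$-torsor over all of $S$. More seriously, even the natural (2b)-variant of that proposition cannot be applied here, because its hypothesis fails: there is no open subset of $\ogn_{\ogb\setminus\Delta}$ that is a $\p$-torsor over $\ogb\setminus\Delta$. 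The obstruction is not along $\Delta$ but along the divisor $\Sigma\setminus\Delta$: over $b\in\Sigma\setminus\Delta$ the only five-dimensional free $\p_b$-orbits in $\ogn_b$ are the two components of the locus of line bundles, of bidegrees $(2,3)$ and $(3,2)$ (Lemma \ref{fibratilineari}(2)(b), Proposition \ref{componentisigma}), and these are exchanged by the monodromy of $\pi_1(\Sigma\setminus\Delta)$ — this is exactly the nontrivial local system $\ms{L}$ of Proposition \ref{rtop}. Hence no global choice of one orbit per fiber exists over any open set meeting $\Sigma$, and any $S^{\natural}$ whose complement has codimension at least $2$ necessarily contains a dense open subset of the divisor $\Sigma$. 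So your route is blocked.

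The paper's resolution is simpler and you should adopt it: $\delta$-regularity, i.e.\ the inequality (\ref{dereg}), is a condition on the group scheme $\p/\ogb$ alone — the loci $S_i$ are defined purely in terms of $\p$ and make no reference to the space it acts on. Since the triple $(\ogm,\ogb,\p)$ shares the same group scheme and its $\delta$-regularity has already been established via Proposition \ref{deltaregularity} and Lemma \ref{taat}, the triple $(\ogn,\ogb,\p)$ is $\delta$-regular for free; no torsor inside $\ogn$ is needed.
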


\begin{proof}  In view of Lemma \ref{actionPonM}, Corollary \ref{liftact}, and Lemma \ref{affstab}, all the triples above are weak abelian fibrations and we only need to prove $\delta$-regularity. 
In the cases  $(\ogm, \ogb, P)$ and   $(\ogmp, \ogb, \ppr)$, $\delta$-regularity follows  from
Proposition \ref{deltaregularity}
 and Lemma \ref{taat}. In the cases 
$\left(\ogmp^2, \ogbp^2, \ppr^2\right)$ and 
$\left({\rm Sym}^2 \ogmp, {\rm Sym}^2 \ogbp, {\rm Sym}^2 \ppr\right)$,
$\delta$-regularity  follow formally from the $\delta$-regularity of $(\ogmp, \ogb, \ppr)$.
In the case  $(\ogn, \ogb, \p)$ it is not true that $N$ contains a $P$-torsor over $\ogb$,
 so that this case is not covered directly by Proposition \ref{deltaregularity}. On the other hand, 
 $\delta$-regularity is a property of the group scheme $P$, and this has already  been  established 
 when dealing with $\left(\ogm, \ogb, \p\right)$.
\end{proof}

\section{The top direct image sheaf \texorpdfstring{$R^{10}$}{R10}  for  the Lagrangian fibrations 
\texorpdfstring{$\ogm, \ogn \to \ogb$}{M,N to B}}\la{sec r10}

The comparison of the derived direct images of the constant sheaves of the two fibrations $\ogm, \ogn \to \ogb$ involves understanding the direct summands of these derived direct images, as well as their supports. Thanks to Ng\^o's Support Theorem, it is often enough to understand the top degree (in this case, degree $10$) direct image sheaves. 
The aim of this section is to prove Proposition \ref{rtop}, which describes, over certain relevant subvarieties of $\ogb$,  the top  direct image sheaves of the constant sheaf for the two fibrations. This result will be used in \S \ref{nonon} to determine the Decomposition Theorem for the two fibrations. 
Note that as a simple consequence  of the structure of the symplectic resolution $\ogm \to \ogmm$ (Fact \ref{blowup}),  Corollary \ref{dtbr} immediately singles out three subvarieties  of $\ogb$ as supports for the Lagrangian fibration $\ogm \to \ogb$. However, to show that these subvarieties are the {only} supports of the fibrations and to determine the summands on each support actually requires Proposition \ref{rtop}.
A first step towards this proposition  is  to identify the irreducible components (as well as their monodromy) of the fibers  of  the equidimensional fibrations $\ogm ,\ogn \to \ogb$.  This identification is done in \S \ref{sect irr comp} and uses the results developed in \S \ref{secnonred}.

\subsection{The Decomposition Theorem for the blow up $\bogm: \ogm \to  \ogmm$}\la{tbu}$\;$


By Theorem \ref{thmpoly} and Remark \ref{SigmaDelta}, we have the following commutative diagram
\beq\la{bu1}
\xymatrix{
& \ogmp=\Delta_{\ogmp} \ar[d]  \ar[r]  & \Delta:=\Delta_{\ogbp}=\ogbp  \ar[d] \\
 & \hskip-13mm  {\rm Sing}(\ogmm)={\rm Sym}^2 \ogmp  \ar[r] \ar[d] & \Sigma := {\rm Sym}^2 \ogbp \ar[d]  \\
\pogm: \ogm \ar[r]^-\bogm &  \ogmm \ar[r]^-\pogmm & \ogb=|2H|,
}
\eeq
where the morphisms $\pogm$ and $\pogmm$  are as in (\ref{notm}) and the morphism $\bogm$ is the blow up of $\ogmm$ along its singular locus
${\rm Sing}(\ogmm) \simeq  {\rm Sym}^2 \ogmp$ (with the reduced induced structure). 
The study of the local analytic structure of the singularities of $\ogmm$ from \cite{Lehn-Sorger} gives the following:

\begin{fact}\la{blowup} {\rm ({\bf The symplectic resolution  $\bogm: \ogm \to  \ogmm$})}\cite[\S 4]{Lehn-Sorger}.
The morphism $\bogm$ is semismall with irreducible exceptional divisor $E \subset \ogm$. The fibers of $E \to {\rm Sing}(\ogmm) $ are as follows: 


\ben
\item [(a)]
the morphism $E_{{\rm Sym}^2 (\ogmp) \setminus \Delta_{\ogmp}}  \to  {\rm Sym}^2 (\ogmp) \setminus \Delta_{\ogmp}$
is an analytic fiber bundle with fiber $\pn{1}$; 

\item [(b)]
the morphism $E_{\Delta_{\ogmp}}  \to  \Delta_{\ogmp}$
is an analytic fiber bundle with fiber a three dimensional smooth quadric.
\een
\end{fact}
For later use, we highlight the following consequence of diagram (\ref{bu1}).
 \begin{rmk}\la{exCompecc} 
A point $b\in\Sigma\setminus \Delta$ corresponds to two distinct point $b_1, b_2 \in \ogbp$; the strictly semistable locus 
${\rm Sing} (\ogmm)_{b}= ({\rm Sym}^2 \ogmp)_b$ of the fiber $\ogmm_{b}:=\pogmm^{-1}(b)$ 
is isomorphic to the product $\ogmp_{b_1}\times \ogmp_{b_2}$ of the corresponding fibers of $m': M' \to \ogbp$.
In particular,  ${\rm Sing} (\ogmm)_{b}$ is  irreducible of dimension $4$.

Analogously, for $b\in\Delta$ the strictly semistable locus 
${\rm Sing} (\ogmm)_{b}$ of the fiber $\ogmm_{b}$ is isomorphic to
${\rm Sym}^2(\ogmp)_b$ (recall the identification
$\Delta\simeq \ogbp$). It follows that  ${\rm Sing} (\ogmm)_{b}$ is irreducible of dimension $4$.
The strictly semistable locus of $\ogmm_{\Delta}:=\pogmm^{-1}(\Delta)$ is  
the quotient of the fiber product 
$\ogmp \times_{\Delta} \ogmp$ by the involution exchanging the factors.  
For $b \in \Delta$, the intersection $\ogmm_{b}\cap\Delta_{\ogmp}$ has a unique irreducible component isomorphic to $\ogmp_{b},$ which is irreducible of dimension 2.
\end{rmk}



We can now determine the Decomposition Theorem for $\ogm \to \ogmm$.
\begin{lm}\la{dtb} {\rm ({\bf Decomposition Theorem for the blow up $\bogm: \ogm \to \ogmm$})}
There is a canonical isomorphism in $D^b MHM_{alg}(\ogmm)$ and in  $D^b(M,\rat)$ (turn off the Tate shifts):
\beq\la{dtbeqrat00}
R \bogm_* \rat_{\ogm}  \simeq
\ms{IC}_{\ogmm} \bigoplus  \rat_{{\rm Sym}^2 \ogmp}[-2]  (-1)  \bigoplus \rat_{\Delta_{\ogmp}}[-6](-3),
\eeq
\end{lm}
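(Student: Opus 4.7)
The plan is to apply the Decomposition Theorem to the proper morphism $\bogm$, exploiting the semismallness furnished by Fact \ref{blowup}(4). First I would catalog the three strata of $\ogmm$ over which the fiber of $\bogm$ has constant dimension: the smooth open part $\ogmm\setminus {\rm Sing}(\ogmm)$ (codimension $0$, point fibers), ${\rm Sym}^2\ogmp \setminus \Delta_{\ogmp}$ (codimension $2$, $\pn{1}$ fibers), and $\Delta_{\ogmp}$ (codimension $6$, three-dimensional smooth quadric fibers). Each stratum satisfies $2\dim(\text{fiber}) = \codim$, so all three appear as relevant strata in the Borho--MacPherson description of the direct image for a semismall map.

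Since $\ogm$ is smooth, $\rat_{\ogm}=\ms{IC}_{\ogm}$ is pure of weight zero, and the Decomposition Theorem for semismall morphisms yields an isomorphism of the form
\[
R\bogm_*\rat_{\ogm} \;\simeq\; \ms{IC}_{\ogmm} \,\oplus\, \ms{IC}_{{\rm Sym}^2\ogmp}(L_1)[-2] \,\oplus\, \ms{IC}_{\Delta_{\ogmp}}(L_2)[-6],
\]
where each $L_i$ is a polarizable variation of pure rational Hodge structure on a dense smooth open of the corresponding stratum realizing the top cohomology of the generic fiber of $\bogm$ over that stratum. For $L_1$, the stalks are $H^{2}(\pn{1},\rat)=\rat(-1)$: Hodge--Tate of rank one and canonically generated by the class of a point, so the monodromy is trivial and $L_1=\rat(-1)$. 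The same reasoning applied to $H^{6}(Q^{3},\rat)=\rat(-3)$ for a smooth three-dimensional quadric gives $L_2=\rat(-3)$.

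The last step is to remove the $\ms{IC}$ notation on the two smaller strata. Since $\Delta_{\ogmp}\simeq \ogmp$ is smooth, one has $\ms{IC}_{\Delta_{\ogmp}}=\rat_{\Delta_{\ogmp}}$ as objects in $D^b MHM_{alg}$. Since ${\rm Sym}^2\ogmp$ is the finite quotient of the smooth variety $\ogmp\times\ogmp$ by $\zed/2$, it is a rational homology manifold, so likewise $\ms{IC}_{{\rm Sym}^2\ogmp}=\rat_{{\rm Sym}^2\ogmp}$. Substituting these identifications together with the computed Tate twists recovers the stated formula, and passing through the functor ${\rm rat}$ gives the companion statement in $D^b(\ogmm,\rat)$. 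The hard part is the Hodge-theoretic refinement rather than the purely rational decomposition: one must justify that $L_1, L_2$ are constant as variations of pure Hodge structure, not merely as local systems, and carefully bookkeep the Tate twists interacting with the shifts. Both of these points follow from the fact that the relevant top fiber cohomologies are one-dimensional and of pure Hodge--Tate type, so the only room for monodromy is a character which must preserve the canonical generator (the fundamental class of a point, respectively of a linear subspace in the quadric).
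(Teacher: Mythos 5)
Your proposal is correct and follows essentially the same route as the paper: combine the fiber description of Fact \ref{blowup}(4) with the Decomposition Theorem for semismall morphisms, observe that the relevant local systems are trivial rank-one Hodge--Tate twists coming from the top cohomology of the $\pn{1}$- and quadric-fibers, and then replace $\ms{IC}_{{\rm Sym}^2\ogmp}$ and $\ms{IC}_{\Delta_{\ogmp}}$ by constant sheaves using smoothness of $\ogmp$ (respectively the rational-homology-manifold property of the finite quotient ${\rm Sym}^2\ogmp$). You have merely written out more explicitly the points the paper leaves implicit (triviality of the monodromy and of the Hodge structure on the rank-one coefficients), so nothing further is needed.
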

\begin{proof}
By combining Fact \ref{blowup} with the Decomposition Theorem for semismall morphisms (cf. the survey \ci{bams}, and references therein),
we obtain a canonical isomorphism:
\beq\la{dtbeq}
R \bogm_* \rat_{\ogm} \simeq
\ms{IC}_{\ogmm} \bigoplus \ms{IC}_{{\rm Sym}^2 \ogmp} [-2] (-1) \bigoplus \ms{IC}_{\Delta_{\ogmp}}[-6](-3).
\eeq
Since $\ogmp$ is nonsingular,   the intersection complexes appearing as the second and third summand are actually the constant sheaf with the corresponding dimensional shift.
\end{proof}

\begin{cor}\la{dtbr} There is a canonical isomorphism in $D^b MHM_{alg}(S)$
and in  $D^b(S,\rat)$:
\beq\la{dtbreq}
R \pogm_* \rat_{\ogm} \simeq
R\pogmm_* \ms{IC}_{\ogmm} \bigoplus R\pogmm_* \rat_{{\rm Sym}^2 \ogmp}[-2] (-1)  \bigoplus R\pogmm_* \rat_{\Delta_{\ogmp}}[-6](-3).
\eeq
In particular, the subvarieties 
$\ogb, \Sigma, \Delta$ of $\ogb$ are among the   supports of $R \pogm_* \rat_{\ogm}$ on $\ogb$.
\end{cor}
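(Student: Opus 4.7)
The plan is to simply apply the functor $R\pogmm_*$ to both sides of the canonical isomorphism (\ref{dtbeqrat00}) provided by Lemma \ref{dtb}. Since $\pogm = \pogmm \circ \bogm$, functoriality of derived pushforward gives $R\pogm_*\rat_{\ogm} = R\pogmm_*(R\bogm_*\rat_{\ogm})$, and additivity of $R\pogmm_*$ then distributes over the three summands in (\ref{dtbeqrat00}), delivering (\ref{dtbreq}) in $D^bMHM_{alg}(\ogb)$. The corresponding statement in $D^b(\ogb,\rat)$ is obtained by applying the functor ${\rm rat}$, which commutes with proper pushforward and direct sums, and then suppressing the meaningless Tate shifts as specified in \S\ref{gennot}.

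For the support assertion, I would read off where each of the three summands on the right of (\ref{dtbreq}) actually lives by using the commutative diagram (\ref{bu1}): $\pogmm: \ogmm \to \ogb$ is surjective; the restriction of $\pogmm$ to ${\rm Sym}^2 \ogmp \subset \ogmm$ coincides with ${\rm Sym}^2 \pogmmp$, whose image is $\Sigma = {\rm Sym}^2 \ogbp$; and the restriction to $\Delta_{\ogmp}\subset {\rm Sym}^2\ogmp$ is $\pogmmp : \ogmp \to \ogbp \simeq \Delta$. Hence the three summands are cohomologically supported on $\ogb$, $\Sigma$, and $\Delta$, respectively, so these three subvarieties are automatically among the possible supports appearing in the Decomposition Theorem for $R\pogm_*\rat_{\ogm}$.

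The only subtle point---and the place where I would pause most carefully---is confirming that each of $\ogb$, $\Sigma$, and $\Delta$ actually hosts a simple summand whose full support equals that locus (rather than being absorbed into a strictly smaller closed subvariety). This follows because the three morphisms $\pogmm$, ${\rm Sym}^2 \pogmmp$, and $\pogmmp$ are each surjective onto the corresponding base with fibers whose top-degree cohomology is non-zero (the generic fibers are abelian varieties, products thereof, and abelian surfaces respectively, by the Lagrangian fibration structure of Proposition \ref{tadaw} and Remark \ref{piattezza}). Therefore the top-degree direct image sheaf of each piece is generically non-zero, which forces an $\ms{IC}$-summand with full support on each of $\ogb$, $\Sigma$, $\Delta$ to appear in the Decomposition Theorem of each individual piece, and thus of $R\pogm_*\rat_{\ogm}$. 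Since the three subvarieties $\Delta \subsetneq \Sigma \subsetneq \ogb$ are distinct and the three summands in (\ref{dtbreq}) arise from the three disjoint strata of the semismall blow-up $\bogm$ (Fact \ref{blowup}), no cancellation among them can occur.
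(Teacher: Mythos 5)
Your proposal is correct and follows essentially the same route as the paper: apply $R\pogmm_*$ to the blow-up decomposition of Lemma \ref{dtb} using $\pogm=\pogmm\circ\bogm$, and then observe that each summand is the direct image of (the intersection complex of) an irreducible variety surjecting properly onto $\ogb$, $\Sigma$, and $\Delta$ respectively, which forces a full-support $\ms{IC}$-summand on each of these loci. The extra care you take in verifying that the supports are not strictly smaller is a correct elaboration of what the paper leaves implicit.
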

\begin{proof}
Apply $R\pogmm_*$ to (\ref{dtbeq}) and obtain (\ref{dtbreq}). Each direct summand in (\ref{dtbreq}) is the direct image
of the intersection complex of a variety  that surjects properly onto $\ogb, \Sigma$ and $\Delta$, respectively; the statement
about the supports follows from this.
\end{proof}

\begin{rmk}\la{nomo}
The forthcoming Proposition \ref{dtwe} shows 
that $\ogb, \Sigma$ and $\Delta$ are the only supports  of $R \pogm_* \rat_{\ogm}$ on $\ogb$. 
\end{rmk}

\subsection{Irreducible components of the fibers: main results}\la{sect irr comp}$\;$
In view of proving Proposition \ref{rtop}, which is concerned with the top degree direct image sheaves  $R^{10} \pogm_* \rat_{\ogm}$ and $R^{10} \pogn_* \rat_{\ogn}$, in this subsection we study the irreducible components of the fibers of the fibrations 
 $\ogmm , \ogn\rightarrow \ogb$ over the loci $\Sigma=\Sym^2|H|$ and $\Delta=\Delta_{|H|}$ of $B=|2H|$.
 
The main results are: Proposition \ref{compmtildedelta}, dealing with the components of the fibers of $\ogm$ that arise from the exceptional divisor $E$; Proposition \ref{componentisigma}, on the structure of the fibers of $\ogmm$ and $\ogn$ over the locus  $\ogb \setminus \Delta$ of  reduced  curves; and finally Proposition \ref{propriass}, on the structure of the fibers of $\ogmm$ and $\ogn$ over the locus  $\Delta$ of  non-reduced curves.
The proof of this last proposition rests on Propositions \ref{even case} and \ref{odd case}, which are the main results of  the forthcoming \S \ref{irr comp II}.


\subsubsection{Irreducible components arising from the exceptional divisor}

Let $\pogm_{E}:E \to Sing (M)=\Sym^2 \ogmp \to B$ be the restriction of $\pogm$ 
to the exceptional divisor of the symplectic resolution $b:\ogm\rightarrow \ogmm$. Set
\begin{equation} \label{defEdelta}
E_{\Delta}:=\pogm_{E}^{-1}(\Delta), \quad E_\Delta^I:=b^{-1}(\Delta_\ogmp), \quad E^{I\!I}_{\Delta}:=E_{\Delta} \setminus E_\Delta^I.
\end{equation}

\begin{pr}[{\bf The structure of the exceptional divisor over $\Delta$}]$\;$\la{compmtildedelta} \la{Compecc}
\begin{enumerate}
\item The variety $E_{\Delta}$ is the union of two projective irreducible varieties of pure dimension $7$, namely $E^{I}_\Delta$
and the closure $\ov{E^{I\!I}_{\Delta}}$ of 
$E^{I\!I}_{\Delta}$ in $E_{\Delta}$;
\item The morphism $ E_\Delta^I \to \Delta$ is projective, with irreducible $5$-dimensional  fibers;
\item The morphism $E^{I\!I}_\Delta \to \Delta$ is flat, surjective, with irreducible $5$-dimensional fibers.
\end{enumerate}

\end{pr}

\begin{proof} Part (1) follows from parts (2) and (3), which we now prove. (2) By Fact \ref{blowup} (b), the morphism $E_\Delta^I \to \Delta$ factors via a $3$--dimensional quadric fibration $E_\Delta^I \to \Delta_{\ogmp}=\ogmp$, followed by the relative compactified Jacobian $m': \ogmp \to \Delta=|H|$ of the genus $2$ linear system $|H|$. As noted in Remark \ref{nodeandcusps}, the curves in $|H|$ are reduced and irreducible and hence so are the fibers of $m'$ \cite{re80}. (3) By definition of $E^{I\!I}_{\Delta} $, there is a factorization
\[
E^{I\!I}_{\Delta} \to \Sym^2_\Delta(M') \setminus \Delta(M') \to \Delta
\]
where $\Sym^2_\Delta(M')=(\Sym^2(M'))_\Delta$ is the quotient of the fiber product $\ogmp \times_\Delta \ogmp$ by the involution interchaging the two factors. By Fact \ref{blowup} (a), $E^{I\!I}_{\Delta} \to \Sym^2_\Delta(M') \setminus \Delta(M')$ is a fibration in $\mathbb P^1$ and, by Remark \ref{piattezza}, $\Sym^2_\Delta(M')\setminus \Delta(M') \to \Delta$ is flat surjective with irreducible $4$--dimensional fibers.

\end{proof}


\subsubsection{Irreducible components over reduced curves}\la{irr comp red}$\;$

For $b\in B$ representing a reduced curve, a description of the fibers $\ogmm_{b}:=\pogmm^{-1}(b)$,
$\ogn_{b}:=\pogn^{-1}(b)$
and $\ogm_{b}:=\pogm^{-1}(b)$ can be found in \cite{ra08}. 
In the following proposition and corollary we collect the result that we will use in this paper.

\begin{pr}\la{componentisigma}
Consider a point $b\in \ogb \setminus \Delta$. The fibers $\ogmm_{b}$ and $\ogn_{b}$
are reduced, Cohen-Macauley of dimension $5$ and have dense open subsets parametrizing line bundles. The fiber
$\ogmm_{b}$ is irreducible while the fiber  $\ogn_{b}$ is irreducible  for $b\in
\ogb \setminus \Sigma$ and has exactly $2$ irreducible components for  $b\in \Sigma\setminus\Delta$.
\end{pr}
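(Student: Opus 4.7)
The plan is to split by the singularity type of the curve $C_b \in |2H|$ encoded by $b$. By Remark~\ref{piattezza}, both fibrations $\ogmm/\ogb$ and $\ogn/\ogb$ are flat with Cohen--Macaulay fibers, so every fiber is automatically pure of dimension $5$ and Cohen--Macaulay; reducedness will then come as a corollary of Cohen--Macaulayness plus the existence of a smooth dense open subset. In each case that smooth dense open will be the locus of stable line bundles on $C_b$, existence of which is guaranteed by Lemmas~\ref{loc free naka} and~\ref{fibratilineari}.

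\emph{Integral case $b\in\ogb\setminus\Sigma$.} Here $C_b$ is integral with locally planar singularities, since it is a curve in the smooth surface $\ks$. Any pure one-dimensional sheaf $\mc F$ with Fitting support $C_b$ and $c_1(\mc F)=[C_b]$ is a rank one torsion free sheaf on $C_b$; because $C_b$ admits no proper pure one-dimensional subscheme, the semistability inequality of Definition~\ref{defstab} is vacuous and every such $\mc F$ is automatically stable. Consequently $\ogmm_b$ and $\ogn_b$ each coincide with the compactified Jacobian of $C_b$ in the appropriate degree, so irreducibility together with density of the line bundle locus follows from Rego's theorem~\cite{re80}. The same argument treats both fibrations simultaneously.

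\emph{Reducible reduced case $b\in\Sigma\setminus\Delta$.} Here $C_b=C_1+C_2$ with distinct $C_1,C_2\in|H|$ meeting in $C_1\cdot C_2=H^2=2$ nodes. The decisive input is Lemma~\ref{fibratilineari}, which pins down the bidegrees of stable line bundles on $C_b$. For $\chi=2$ (the fiber $\ogmm_b$) the unique stable bidegree is $(3,3)$, and the corresponding stratum $\mathrm{Pic}^{(3,3)}(C_b)$ is a $\mathbb G_m$-torsor over $\mathrm{Pic}^3(C_1)\times\mathrm{Pic}^3(C_2)$, hence smooth irreducible of dimension~$5$. For $\chi=1$ (the fiber $\ogn_b$) there are two distinct stable bidegrees $(3,2)$ and $(2,3)$, yielding two disjoint smooth irreducible $5$-dimensional strata. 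I would next check density of these Picard strata in the respective fibers: the strictly semistable polystable locus (present only in the even case) is $\ogmp_{b_1}\times\ogmp_{b_2}$, of dimension $4$, so by flatness the stable locus is open of pure dimension $5$, and together with the fact that stable non-locally-free sheaves are specializations of stable line bundles (discussed below) the closures of the Picard strata will exhaust the fiber.

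\emph{Main obstacle.} The delicate step is ruling out additional irreducible components coming from \emph{non}-locally-free stable sheaves $\mc F$ on $C_b$. Such $\mc F$ must fail to be locally free at one or both of the two nodes of $C_b$, and standard theory expresses $\mc F$ as the pushforward of a line bundle on a partial normalization $\nu:\widetilde{C}\to C_b$. Smoothing the corresponding node(s) inside a versal family of $C_b$ realizes $\mc F$ as a flat limit of stable line bundles on $C_b$, and a bidegree bookkeeping (tracking how the bidegree of the line bundle on $\widetilde{C}$ distributes under the smoothing) places the limit in the closure of exactly one of the strata already identified. In the even case this closure is $\overline{\mathrm{Pic}^{(3,3)}(C_b)}$, and in the odd case it is exactly one of $\overline{\mathrm{Pic}^{(3,2)}(C_b)}$ or $\overline{\mathrm{Pic}^{(2,3)}(C_b)}$, so the two odd-case components do not get glued across the non-locally-free locus. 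The technical heart of the proof is precisely this bidegree bookkeeping under partial normalization and smoothing; once it is carried out, reducedness of the whole fiber follows from Cohen--Macaulayness plus smoothness of the dense Picard open.
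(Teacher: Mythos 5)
Your overall strategy coincides with the paper's: flatness and Cohen--Macaulayness of the fibers from Remark~\ref{piattezza}, reducedness from generic smoothness along the line-bundle locus plus CM, Rego's theorem over $\ogb\setminus\Sigma$, and the bidegree classification of Lemma~\ref{fibratilineari} (together with the connectedness of the fixed-bidegree Picard strata, which the paper extracts from \cite[Ch.~9, Cor.~13]{Bo-Lu-Ra}) to count components over $\Sigma\setminus\Delta$. The bidegrees $(3,3)$, respectively $(3,2)$ and $(2,3)$, are correct.

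The genuine gap is exactly the step you flag as the ``main obstacle'': you never actually prove that the stable non-locally-free sheaves on $C_b$ lie in the closure of the line-bundle locus \emph{inside the fixed fiber}. The mechanism you sketch --- smoothing the node(s) in a versal family of $C_b$ --- exhibits such a sheaf as a flat limit of line bundles on \emph{nearby smooth curves}, i.e.\ it places the sheaf in the closure of the relative Picard scheme of the total family; it does not place it in the closure of $\mathrm{Pic}(C_b)$ within $\ogmm_b$ or $\ogn_b$, which is what is needed to exclude extra components of the fiber. The correct input is the density theorem for line bundles in (fine) compactified Jacobians of reduced curves with locally planar singularities, which is what the paper invokes: \cite[Prop.~2.3(ii)]{mrv}, generalizing Rego's result from the integral case. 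That statement is proved by a dimension estimate on the locus of torsion-free sheaves with prescribed non-free local type, not by deforming the curve. Once density is granted, your ``bidegree bookkeeping'' is also unnecessary for counting components: each fiber is then the union of the closures of the (irreducible, $5$-dimensional) fixed-bidegree strata, and since the fiber is pure of dimension $5$ none of these closures can contain another, so the number of components equals the number of stable bidegrees, with no need to decide which stratum closure a given non-locally-free sheaf belongs to. A last, minor, imprecision: your description of $\mathrm{Pic}^{(3,3)}(C_b)$ as a $\Gm$-torsor over $\mathrm{Pic}^3(C_1)\times\mathrm{Pic}^3(C_2)$ assumes $C_1,C_2$ smooth and meeting transversally at two nodes, which is only the generic situation in $\Sigma\setminus\Delta$; irreducibility of each bidegree stratum for arbitrary $b\in\Sigma\setminus\Delta$ still holds but should be justified as in \cite{Bo-Lu-Ra}.
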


\begin{proof}
By Remark \ref{piattezza}, the morphisms $\pogmm$ and $\pogn$ are flat and the fibers are  Cohen--Macauly 
of dimension $5$.
%
For $b\in \ogb \setminus \Delta$, the points of the fibers $\ogmm_{b}$ and $\ogn_{b}$ represent  rank $1$ torsion free sheaves on the reduced locally planar curve  $C_{b}$ corresponding to $b$.
By Proposition 2.3 (ii) of \cite{mrv}, the loci
of $\ogmm_{b}$ and $\ogn_{b}$  representing stable line bundles are dense 
in $\ogn_{b}$ and in 
the stable locus of 
$\ogmm_{b}$, respectively. Since the strictly semistable locus of $\ogmm_{b}$ has dimension $4$  (see Remark \ref{exCompecc}), and since $\ogmm_{b}$  is  $5$ dimensional and Cohen-Macauley, density of the line bundle locus holds also in  $\ogmm_{b}$.
By Proposition 2.8 of \cite{LePotier} the Le Potier 
morphism is smooth at every point
corresponding to  a stable   line bundle: hence $\ogmm_{b}$ and $\ogn_{b}$ have open dense subsets which are reduced.
Since  $\ogmm_{b}$ and $\ogn_{b}$
are Cohen-Macauley, they are reduced everywhere.

Finally, using Corollary 13 of Chapter 9 of \cite{Bo-Lu-Ra},   
every connected component $\Gamma$ of the loci of $\ogmm_{b}$ or $\ogn_{b}$ 
parametrizing stable line bundles is determined 
by the set of degrees of the restrictions of any line bundle parametrized by $\Gamma$ to the irreducible components of $C_{b}$. 
If $b\in\ogb\setminus \Sigma$, then the corresponding  curve $C_{b}$ as well as the fibers $\ogmm_{b}$ and $\ogn_{b}$ are irreducible \cite{re80}. If $b\in \Sigma\setminus \Delta$, the curve $C_{b}$ has $2$ distinct irreducible components so by Lemma \ref{fibratilineari}(2) the fiber $\ogmm_{b}$ is irreducible
while the fiber $\ogn_{b}$ has two irreducible components.
 

\end{proof}
\begin{cor}\la{corcomp}
For $b\in \Sigma\setminus\Delta$, the fiber $\ogm_{b}$ is pure of dimension $5$ and  has exactly $2$ irreducible components.
\end{cor}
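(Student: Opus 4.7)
The plan is to decompose $\ogm_b$ as a union of two closed subsets arising from the blow-up structure $\bogm: \ogm \to \ogmm$ described in Fact \ref{blowup}, and verify that each of these two subsets is irreducible of dimension $5$, and that neither contains the other.

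First, I would observe that, since $\bogm$ is the blow-up of $\ogmm$ along ${\rm Sing}(\ogmm) = {\rm Sym}^2 \ogmp$, we can write set-theoretically
\[
\ogm_b \;=\; \overline{\bogm^{-1}\big(\ogmm_b \setminus ({\rm Sing}\,\ogmm)_b\big)} \;\cup\; E_b,
\]
where the closure on the right is the strict transform $\ogm_b^{\mathrm{st}}$ of $\ogmm_b$ and $E_b = \bogm^{-1}(({\rm Sing}\,\ogmm)_b)$ is the fiber of the exceptional divisor. Since by Proposition \ref{componentisigma} the fiber $\ogmm_b$ is irreducible of dimension $5$ and, by Remark \ref{exCompecc}, $({\rm Sing}\,\ogmm)_b = {\rm Sym}^2(\ogmp)_b$ is irreducible of dimension $4$, the complement $\ogmm_b \setminus ({\rm Sing}\,\ogmm)_b$ is a nonempty irreducible open subset of $\ogmm_b$ on which $\bogm$ restricts to an isomorphism. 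Hence $\ogm_b^{\mathrm{st}}$ is irreducible of dimension $5$.

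Next I would treat $E_b$. By Fact \ref{blowup}(4)(a), $E_{\Sigma \setminus \Delta} \to ({\rm Sym}^2 \ogmp)_{\Sigma \setminus \Delta}$ is a $\pn{1}$-bundle; in particular the fiber $E_b \to ({\rm Sing}\,\ogmm)_b$ is a $\pn{1}$-bundle over an irreducible $4$-dimensional base, and Remark \ref{Compecc} already records that $E_b$ is therefore irreducible. Hence $E_b$ is irreducible of dimension $5$. Finally, $E_b$ cannot be contained in $\ogm_b^{\mathrm{st}}$ (equivalently, the two components are distinct), because $\bogm(\ogm_b^{\mathrm{st}}) = \ogmm_b$ is $5$-dimensional while $\bogm(E_b) = ({\rm Sing}\,\ogmm)_b$ is only $4$-dimensional; conversely $\ogm_b^{\mathrm{st}}$ is not contained in $E_b$ since it maps birationally onto $\ogmm_b \not\subseteq {\rm Sing}\,\ogmm$.

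Putting these together yields that $\ogm_b = \ogm_b^{\mathrm{st}} \cup E_b$ is a union of exactly two irreducible components, each of dimension $5$, which gives both the purity of dimension and the count of components. I do not expect any genuine obstacle here: everything needed is an immediate consequence of the semismall blow-up description in Fact \ref{blowup} combined with the fiberwise irreducibility statements of Proposition \ref{componentisigma} and Remarks \ref{exCompecc}--\ref{Compecc}.
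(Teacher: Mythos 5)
Your proposal is correct and is essentially the paper's own argument: the paper likewise writes $\ogm_b$ as the union of the strict transform of the irreducible $5$-dimensional $\ogmm_b$ (Proposition \ref{componentisigma}) and the fiber $E_b$ of the exceptional divisor, which is a $\pn{1}$-bundle over the irreducible $4$-dimensional $({\rm Sing}\,\ogmm)_b$ (Remark \ref{exCompecc} and Fact \ref{blowup}). You have merely spelled out the dimension count and the distinctness of the two components, which the paper leaves implicit.
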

\begin{proof}
By Proposition \ref{componentisigma} the strict transform of $\ogmm_b$ in $\ogm_b$ is irreducible of dimension $5$. By Fact \ref{blowup} and Remark \ref{exCompecc} the inverse image of the strictly semistable locus ${\rm Sing} (\ogmm)_{b}$ of $\ogm_{b}$ is a $\pn{1}$-bundle over an irreducible variety of dimension $4$. 
\end{proof}

\subsubsection{Irreducible components over  non-reduced curves: the statement}

Next, we  analyze the irreducible components of the fibers  of $\ogmm, \ogn \to \ogb$  over  the points  $b \in \Delta \subseteq \ogb$ parametrizing non-reduced  curves.
This analysis divides the sheaves parametrized by the fibers over $\Delta$ into two types, which we call type I 
(sheaves defined on the reduction of the curves) and type II (the remaining ones).
Consider the restrictions 
\[
\ogn_\Delta \to \Delta, \quad \ogmm_\Delta \to \Delta,
\]
of the  morphisms $\ogn, \ogmm \to \ogb$ to the locus of double curves  $\Delta
\subseteq B$.
The main result of this subsection is Proposition \ref{propriass}, which  describes the irreducible components of the varieties $\ogn_\Delta$
and $\ogmm_\Delta$, as well as of their fibers  $\ogn_{\ogc}$ and $\ogmm_{\ogc}$ over the points $\ogc \in \Delta$. The proof of this proposition uses  results from \S \ref{secnonred} on certain sheaves on non-reduced curves.

%
Recall from Remark \ref{SigmaDelta} that every curve $\ogc \in \Delta \subseteq  |2H|$ is of the form $\ogc=2\ogcred$ for a curve $\ogcred \in |H|$.
Let us introduce the sheaves of type $I$ and $I\!I$.
The sheaves parametrized by $\ogn_\Delta$ and 
$\ogmm_\Delta$ are of pure dimension $1$ on $\ks$ and their 
Fitting supports are non-reduced curves $\ogc =2\ogcred \in \Delta$. 
There are two kinds of such sheaves.
The first kind are the sheaves $\mc F$  on $\ks$
for which  the compositum of the natural morphisms  $\mc O_\ks \to  \mc O_{\ogc} \to \mc{E}nd_{\ks}(\mc F)$ factors via the natural surjection $\mc O_\ogc \to \mc O_\ogcred$. We call these the sheaves of type $I$; in this case, 
 $\mc F$  is an $\mc O_\ogcred$-module and  can be viewed as a rank $2$ torsion free sheaf on the underlying reduced curve $\ogcred$;  we denote the corresponding loci by $\ogmm^I_{\ogc}$ and $\ogn^I_{\ogc}$ and, by letting $\ogc$ vary in $\Delta$, $\ogmm_{\Delta}^I$ and
$\ogn^I_\Delta$. We call the remaining sheaves of type $I\!I$,
and we set
\beq \label{IeII}
\ogmm^{I\!I}_{\ogc}:=\ogmm_{\ogc} \setminus \ogmm^{I}_{\ogc}, \quad,  \ogn^{I\!I}_{\ogc}:=\ogn_{\ogc} \setminus \ogn^{I}_{\ogc}, \quad \ogmm^{I\!I}_{\ogc}:=\ogmm_{\Delta} \setminus \ogmm^{I}_{\Delta}, \quad  \ogn^{I\!I}_{\ogc}:=\ogn_{\Delta} \setminus \ogmm^{I}_{\Delta}.
\eeq
In other words, the sheaves of type $I\!I$ on $\ogc=2\ogcred$ are those for which the Fitting support equals the schematic support.
In the following proposition, we consider the varieties and fibers with their reduced induced structure.

\begin{pr} \label{propriass} {\rm ({\bf Irreducible components of $\ogmm_{\Delta},\ogn_{\Delta} \to \Delta$})}

 \begin{enumerate}
\item{
\begin{enumerate}\item{The variety $\ogmm_{\Delta}$ is of pure dimension $7$ and has  two irreducible components, namely $\ogmm^{I}_\Delta$
and  the closure $\ov{\ogmm^{I\!I}_{\Delta}}$ of $\ogmm^{I\!I}_\Delta$.}
\item{The morphisms $\ogmm^I_{\Delta}, \ogmm^{I\!I}_\Delta \to \Delta$ have irreducible $5$--dimensional fibers, namely, for $\ogc \in \Delta$, $\ogmm^I_{\ogc}$ and $\ogmm^{I\!I}_{\ogc}$.
}
\end{enumerate}}
\item{
\begin{enumerate}\item{The variety $\ogn_{\Delta}$ is of pure dimension $7$ and has  two irreducible components, 
namely $\ogn^{I}_\Delta$
and the closure 
$\ov{\ogn^{I\!I}_{\Delta}}$ 
of $\ogn^{I\!I}_{\Delta}$.}
\item{The morphisms $\ogn^I_{\Delta}, \ogn^{I\!I}_\Delta \to \Delta$ have irreducible $5$--dimensional  fibers, namely, for $\ogc \in \Delta$,
$\ogn^I_{\ogc}$ and $\ogn^{I\!I}_{\ogc}$.
 }
\end{enumerate}}
\end{enumerate}

\end{pr}
 \begin{proof} The proof uses results from the forthcoming \S \ref{secnonred}:
by  Proposition \ref{rank 2} the varieties  $\ogmm^{I}_{\ogc}$ and $\ogn^{I}_{\ogc}$ are irreducible of dimension $5$
and $\ogmm^{I}_{\Delta}$ and $\ogn^{I}_{\Delta}$ are irreducible of dimension $7$.
By Proposition \ref{even case} the variety $\ogmm^{I\!I}_{\ogc}=\ogmm_{\ogc}\setminus \ogmm^{I}_{\ogc}$
is irreducible of dimension $5$ and, by Proposition \ref{odd case}, the same holds for 
$\ogn^{I\!I}_{\ogc}=\ogn_{\ogc}\setminus \ogn^{I}_{\ogc}$. By Remark \ref{piattezza}, the varieties $\ogmm_{\Delta}$
and $\ogn_{\Delta}$ are flat over $\Delta$. Since the fibers of
$\ogmm^{I\!I}_{\Delta}=\ogmm_{\Delta}\setminus \ogmm^{I}_{\Delta}$ and 
$\ogn^{I\!I}_{\Delta}=\ogn_{\Delta}\setminus \ogn^{I}_{\Delta}$ are irreducible of dimension $5$,
the closure $\ov{\ogmm^{I\!I}_{\Delta}}$ of $\ogmm^{I\!I}_{\Delta}$ in $\ogmm^{I\!I}_{\Delta}$
and the closure $\ov{\ogn^{I\!I}_{\Delta}}$ of $\ogn^{I\!I}_{\Delta}$ in $\ogn^{I\!I}_{\Delta}$
are irreducible of dimension $7$.
\end{proof}

\begin{rmk} The varieties $\ogmm^{I\!I}_{\Delta}$, 
$\ogmm^{I\!I}_{\ogc}$ (and their respective closures) are actually reduced but we don't need this result.
\end{rmk}


\subsection{The fibers over  non-reduced curves} \label{secnonred}

In this section we study sheaves the non-reduced curves of $|2H|$ with the aim of proving Propositions \ref{rank 2}, \ref{even case}, and \ref{odd case}, which were used in the proof of Proposition \ref{propriass}. First we deal with the simpler case of the sheaves of type I (\S \ref{irr comp I}), then in \S \ref{prelim to type 2}-\ref{irr comp II} we carry out the more involved  study of the sheaves of type II.

\subsubsection{Irreducible components over  non-reduced curves: sheaves of type I}\la{irr comp I}$\;$

\begin{pr} \label{rank 2}  \label{comparingstab}
{\rm 
({\bf
Irreducibility of  $\ogmm^I_{\ogc}$, $\ogn^I_{\ogc}$, $\ogmm_\Delta^I$ and $\ogn_{\Delta}^I$})
}
Let $\ogc=2\ogcred \in \Delta$.
$\;$
\begin{enumerate}
\item The locus $\ogmm^I_\Delta$ is an irreducible component
of $\ogmm_\Delta$ and the morphism $\ogmm^I_\Delta \to \Delta$ is projective with irreducible $5$--dimensional fibers $\ogmm^I_{\ogc}$.
\item The locus $\ogn^I_\Delta$ is an irreducible component
of $\ogn_\Delta$ and the morphism $\ogn^I_\Delta \to \Delta$ is projective with irreducible $5$--dimensional fibers $\ogn^I_{\ogc}$.
\end{enumerate}

\end{pr}

\begin{proof} 
  The sheaves of type $I$ are $\mc O_\ogcred$-modules,  so by  Definition \ref{defstab} the loci $\ogmm^I_{\ogc}$  and $\ogn^I_{\ogc}$ parametrizing semistable sheaves of type $I$ on  $\ogc$, with Euler characteristic $\chi$, are naturally identified with the moduli spaces of  torsion free slope semistable sheaves on the reduced underlying curve $\ogcred$, with rank $2$ and degree $d':=\chi - 2\chi(\mc O_\ogcred)$.
The latter space is irreducible and projective by  \cite{Seshadri, re82}. By  \ci[Thm 4.3.7]{HL}, $\ogmm_\Delta^I$ and $\ogn^I_\Delta$ are projective over $\Delta$.
The statement about the dimension of the fibers follows from Remark \ref{piattezza}. This proves the first assertion. The second one follows, since the morphisms $\ogmm^I_\Delta, \ogn^I_\Delta
\to \Delta$ are proper, surjective and with irreducible fibers. 
\end{proof}

As a side remark, we mention that it is a classical result of \cite{Narasimhan-Ramanan}, that for a smooth curve $\ogcred \in B'$, the assignment   $\m{F}  \mapsto \det \m{F}$ realizes $\ogn_{\ogc}^{I}$ (respectively, $\ogmm_{\ogc}^{I}$) as a smooth fibration over ${\rm Jac}^{d'} (\ogcred)$, locally trivial in the \'etale topology, with fibers a transversal complete intersections of two $4$-dimensional quadrics (respectively, $\mathbb P^3$).

\begin{rmk} \label{strictlyssempty}
The strictly semistable locus on $\ogn$   is empty.
By Theorem  \ref{thmpoly} (1)   the   locus of strictly semistable sheaves in $\ogmm_\Delta$ is contained in $\ogmm_\Delta^I$.  
\end{rmk}

\subsubsection{Preliminary results towards the analysis of type II}\la{prelim to type 2}$\;$

In this subsection we establish some notation, recall some facts, and prove a series of preliminary results towards  Propositions \ref{even case} and \ref{odd case} (which describes the fibers of $\ogmm_\Delta^{I\!I}, \ogn_\Delta^{I\!I}  \to \Delta$).

For $\ogcred \in \ogbp$, let $\ogc  = 2\ogcred \in \Delta$ be the corresponding double curve.
Denote by $\mc I \subset \mc O_{\ogc}$ the ideal sheaf of the reduced curve $\ogcred \subset \ogc$. Then:
\beq\la{fascio I}
\mbox{
${\mc I}^2=0,\;$
$\mc I$ is an $\mc O_\ogcred$-module,$\;$  $\mc I \cong \mc O_\ogcred(-\ogcred)\cong \omega_\ogcred^{-1}, \;$
$\deg \mc I=-2.$
}
\eeq


Since the genus $2$ K3 surface $S$ is general in moduli, any curve $\ogcred \in |H|$ has only nodes and cusps (cf. Remark \ref{nodeandcusps}) and, moreover, it has at most two singular points.  For later use we record the following standard facts on rank $1$ torsion free sheaves on 
integral  curves with nodes or cusps.
Let $\Gamma$ be an integral projective curve with only nodes and cusps and let $E$ be a rank one torsion free sheaf on $\Gamma$. By \cite[Lem 1.4]{D'Souza}, we can write $E=n_* L$, where $n: \wh \Gamma \to \Gamma$ is a partial normalization and where $L$ is a line bundle on $\wh \Gamma$. Both $\wh \Gamma$ and $L$ are uniquely determined by $E$. Now let $\mc L \in Pic(\Gamma)$ be a line bundle. Then $ E \otimes \mc L \cong E$ if and only if $\mc L \in \ke \, [n^*:Pic(\Gamma) \to Pic(\wh \Gamma)]$ (cf. \cite[Lem. 2.1]{Beauville}).

\begin{lm} \label{localbehavior} {\rm {\bf (Nodes and cusps)} }
Let $\Gamma$ be as above.
 \begin{enumerate}

%
%

\item Let $q \in \Gamma$ be any point. Then  $\Ext^1_{\mc O_{\Gamma}}(\C_q, \mc O_{\Gamma}) =\Ext^1_{\mc O_{\Gamma,q}}(\C_q, \mc O_{\Gamma,q})=\C$. The unique (up to isomorphism) extension is:
\[
0 \to \mc O_{\Gamma} \to (m_q)^\vee  \to \C_q \to 0,
\]
obtained by dualizing the standard exact sequence $0 \to m_q \to  \mc O_{\Gamma}  \to \C_q \to 0$.
\item Let $q \in \Gamma$ be a singular point, and let $\wh \Gamma$ be the normalization of $\Gamma$ at $q$. Then 
$\Ext^1_{\mc O_{\Gamma}}(\C_q,  m_q) =\Ext^1_{\mc O_{\Gamma,q}}(\C_q,  m_q)=2$. The  corresponding $1$-dimensional family of  isomorphism classes of extensions is given by considering the natural extension
\[
0 \to m_q  \to \mc O_{ \Gamma} \to \C_q \to 0
\]
and tensoring it with the elements of the one dimensional group $\ke[n^*:Pic(\Gamma) \to Pic(\wh \Gamma)]$ (which fixes $m_q$).
\end{enumerate}
\end{lm}
\begin{proof} 
$(1)$ Since $\Gamma$ is a Gorenstein curve,  have  $\dim \Ext^1_{\mc O_{\Gamma,q}}(\C_q, \mc O_{\Gamma,q})=1$. $(2)$ 
The first statement follows from \cite[Lem. 2.5.5]{Cook-thesis}. 
The second statement follows from the considerations before the Lemma, with $E=m_q$ and $\wh \Gamma$ the normalization of $\Gamma$ at $q$.  
\end{proof}

\begin{lm}(\cite[Lem. 2.5.9]{Cook-thesis}) \label{cook} Let  $\Gamma$ be an integral curve, let $p \in \Gamma$   be a nodal or cuspidal point,  and let $F_p$ and $K_p$ be rank $1$ torsion free $\mc O_{\Gamma,p}$-modules. Then
\[
 \Ext^1_{\Gamma,p}(F_p, K_p) =\left\{ \begin{array}{cc}
0 & \text{if $F_p$, or $ K_p$ is free,} \\
\C^2 & \text{if neither $F_p$, nor $K_p$ is free.}
\end{array}\right.
\]
\end{lm}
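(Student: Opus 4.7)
My plan is to first reduce the problem to the case where each of $F_p, K_p$ is either free or isomorphic to the maximal ideal $m_p$, and then handle the three resulting combinations separately. Since a node or cusp is a planar singularity, the local ring $R := \mc O_{\Gamma, p}$ is a hypersurface of embedding dimension $2$, and in particular is a one-dimensional Gorenstein ring. By Lemma \ref{localbehavior}(1), a rank $1$ torsion free $R$-module is either free or of the form $n_{*} \mc O_{\hat \Gamma, p}$ for the local normalization $n$; and a direct check (or Lemma \ref{localbehavior}(3) combined with Gorenstein duality) identifies $n_{*} \mc O_{\hat \Gamma, p}$ with $m_p$ as $R$-modules.

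If $F_p = R$ is free, then $\Ext^1_R(F_p, K_p) = 0$ trivially. If $K_p = R$ is free and $F_p = m_p$, then I would invoke the fact that over the Gorenstein ring $R$ any maximal Cohen-Macaulay module $M$ satisfies $\Ext^i_R(M, R) = 0$ for $i > 0$; here $m_p$ is maximal Cohen-Macaulay because $R$ is Cohen-Macaulay of dimension one and $m_p$ is rank one torsion free.

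For the main case $F_p = K_p = m_p$, I would apply $\Hom_R(m_p, -)$ to the short exact sequence
\[
0 \to m_p \to R \to \mathbb{C}_p \to 0
\]
and read off $\Ext^1_R(m_p, m_p)$ from the resulting long exact sequence
\[
0 \to \End_R(m_p) \to \Hom_R(m_p, R) \to \Hom_R(m_p, \mathbb{C}_p) \to \Ext^1_R(m_p, m_p) \to \Ext^1_R(m_p, R) = 0,
\]
where the vanishing on the right is the previous case. By Lemma \ref{localbehavior}(3), $\Hom_R(m_p, R) = m_p^{\vee} \cong n_* \mc O_{\hat \Gamma, p} =: \tilde R$. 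Similarly, inside the total quotient ring of $R$, one has $\tilde R \subseteq \End_R(m_p) \subseteq \Hom_R(m_p, R) = \tilde R$ (the first inclusion because at a node/cusp the conductor of $\tilde R$ into $R$ is exactly $m_p$, so $\tilde R \cdot m_p \subseteq m_p$), so $\End_R(m_p) = \tilde R$ as well. The natural map $\End_R(m_p) \to \Hom_R(m_p, R)$, composition with the inclusion $m_p \hookrightarrow R$, is then the identity $\tilde R \to \tilde R$. On the other hand, $\Hom_R(m_p, \mathbb{C}_p)$ is the $\mathbb{C}$-dual of $m_p \otimes_R \mathbb{C}_p = m_p / m_p^{2}$, whose dimension equals the embedding dimension of $R$, namely $2$ for both a node and a cusp.

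Plugging these identifications into the long exact sequence yields $\Ext^1_R(m_p, m_p) \cong \mathbb{C}^2$, completing the last case. I expect the only mild obstacle to be verifying the identification $\End_R(m_p) = \Hom_R(m_p, R) = \tilde R$, with the first transition map being the identity; everything else is formal from Gorenstein-ness and the defining sequence of $m_p$. The node and cusp cases are handled uniformly because only the embedding dimension ($=2$ in both cases) and Gorenstein-ness are used.
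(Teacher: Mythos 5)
Your proof is correct. Note that the paper itself offers no argument for this lemma: it is quoted verbatim from Cook's thesis (Lem.\ 2.5.9), so there is nothing to compare line by line; what you have produced is a self-contained local commutative-algebra proof. Your reduction to $F_p,K_p\in\{R,m_p\}$ with $R=\mc O_{\Gamma,p}$ is legitimate via Lemma \ref{localbehavior}(1) together with the identification $n_*\mc O_{\widehat\Gamma,p}\cong m_p$ (true for both a node and a cusp), the vanishing when one module is free is exactly the statement that a maximal Cohen--Macaulay module over the Gorenstein ring $R$ has no higher $\Ext$ into $R$, and the computation $\Ext^1_R(m_p,m_p)\cong\Hom_R(m_p,\C_p)\cong(m_p/m_p^2)^{*}\cong\C^2$ follows once the map $\End_R(m_p)\to\Hom_R(m_p,R)$ is onto. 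The only place where you should be slightly more careful is in that last step: the sandwich $\widetilde R\subseteq\End_R(m_p)\subseteq\Hom_R(m_p,R)$ needs $\Hom_R(m_p,R)$ to equal $\widetilde R$ \emph{as a fractional ideal} $(R:m_p)$ inside the total quotient ring, not merely to be abstractly isomorphic to $\widetilde R$ as Lemma \ref{localbehavior}(3) asserts. This is easy to supply, e.g.\ from the Gorenstein duality sequence $0\to R\to(R:m_p)\to\Ext^1_R(\C_p,R)\to 0$ one gets $\ell\bigl((R:m_p)/R\bigr)=1=\ell(\widetilde R/R)$, and since $\widetilde R\cdot m_p=\widetilde R\cdot\mathfrak c=\mathfrak c=m_p\subseteq R$ gives $\widetilde R\subseteq(R:m_p)$, the two coincide. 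With that one sentence added, the argument is complete and handles the node and the cusp uniformly, using only that $R$ is Gorenstein of embedding dimension $2$ with conductor equal to $m_p$.
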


\begin{lm} \label{lemmino} 
 Let $ n: \wh \Gamma \to \Gamma$ be a partial normalization of a reduced curve and let $L$ and $L'$ be torsion free sheaves on $\wh \Gamma$.  Then $\Hom_\Gamma( n_* L,  n_* L')=\Hom_{\wh \Gamma}(L, L')$.
 \end{lm}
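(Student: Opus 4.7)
The assertion is local on $\Gamma$, so I would work at a point $x \in \Gamma$. Set $A := \mc O_{\Gamma,x}$, $B := (n_*\mc O_{\widehat \Gamma})_x = \prod_{y \in n^{-1}(x)} \mc O_{\widehat \Gamma,y}$, $M := (n_*L)_x$, and $M' := (n_*L')_x$. Since $n$ is a finite birational morphism between reduced curves, the generic points of $\widehat \Gamma$ are in bijection with those of $\Gamma$ via $n$, and $n$ induces isomorphisms of stalks there; equivalently, the finite ring extension $A \hookrightarrow B$ becomes an equality after inverting the non-zerodivisors of $A$, so $A$ and $B$ share the same total ring of fractions $K$ (a finite product of fields). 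The goal is to show that every $A$-linear map $\varphi : M \to M'$ is automatically $B$-linear: this exhibits $\varphi$ as coming from an element of $\Hom_{\widehat \Gamma}(L,L')$ via $n_*$, which is fully faithful on modules over $n_*\mc O_{\widehat \Gamma}$.

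The key preliminary observation is that $M$ and $M'$ are also torsion-free as $A$-modules. Indeed, every non-zerodivisor of $A$ remains a non-zerodivisor in $B$ (both rings embed into the same $K$), so $B$-torsion-freeness, which is part of the hypothesis on $L$ and $L'$, descends to $A$-torsion-freeness. Consequently the canonical maps $M \hookrightarrow M \otimes_A K$ and $M' \hookrightarrow M' \otimes_A K$ are injective.

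Now tensor $\varphi$ with $K$ to obtain a $K$-linear map $\varphi_K : M\otimes_A K \to M' \otimes_A K$. Because $B\otimes_A K = K$, every $K$-linear map is automatically $B$-linear, so $\varphi_K$ is $B$-linear. Then for any $b \in B$ and $m \in M$, the element $\varphi(bm) - b\varphi(m) \in M'$ has zero image in $M' \otimes_A K$, hence vanishes by injectivity. Thus $\varphi$ is $B$-linear, and the natural map $\Hom_{\widehat \Gamma}(L,L') \to \Hom_\Gamma(n_*L, n_*L')$ is bijective, as claimed. There is no serious obstacle: the only point requiring care is the comparison of total rings of fractions, which is exactly what the words ``partial normalization of a reduced curve'' guarantee.
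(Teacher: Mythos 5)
Your proof is correct, but it takes a genuinely different route from the paper's. The paper argues via adjunction: given $\psi\colon n_*L\to n_*L'$, it forms the composite $n^*n_*L\to n^*n_*L'\to L'$ and observes that, because the counit $n^*n_*L\to L$ is surjective with kernel supported on the non-normal locus (here is where ``$n$ is a generic isomorphism'' enters) and $L'$ is torsion free, the composite factors through $L$; one then checks the two assignments are mutually inverse. You instead reduce to the purely algebraic statement that every $A$-linear map between the pushforwards is automatically $B$-linear, where $A\hookrightarrow B$ is the finite birational extension of stalks, by passing to the common total ring of fractions $K$ and using torsion-freeness of $M'$ to detect the vanishing of $\varphi(bm)-b\varphi(m)$. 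The two arguments lean on the same two hypotheses (generic isomorphism, torsion-freeness of the target), but yours has the advantage of directly exhibiting $\Hom_B(M,M')\to\Hom_A(M,M')$ as a bijection, so no separate check that the constructions are inverse to each other is needed; the paper's is shorter to write because it stays at the level of sheaf functors.

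One small imprecision, harmless to the argument: the Zariski stalk $(n_*\mc O_{\wh\Gamma})_x$ is the semilocal ring of $\wh\Gamma$ along $n^{-1}(x)$, not literally the product $\prod_{y\in n^{-1}(x)}\mc O_{\wh\Gamma,y}$ (e.g.\ for the normalization of a nodal curve at the node it is a domain, hence has no nontrivial idempotents). Since all you actually use is that $A\subseteq B\subseteq K$ with $B\otimes_AK=K$, nothing breaks, but you should either drop the product decomposition or replace ``stalk'' by ``completed stalk.''
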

 \begin{proof}
 It is clear that a morphism $\varphi: L \to L'$ induces by pushforward a morphism $ n_* \varphi:  n_*L \to  n_* L'$ of torsion free sheaves. Conversely, if $\psi:  n_* L \to  n_* L'$ is a morphism, by pullback we get a morphism $   n^*  n_* L\to  n^* n_* L' \to L'$. Since $L'$ is torsion free, this morphism necessarily factors via the surjection $ n^*  n_* L \to L$ which thus induces a morphism $L \to L'$. Since $ n$ is a generic isomorphism, these two assignments are inverses of each other. 
\end{proof}

\begin{lm} \label{reflexive}
Let $E$ be a pure dimension $1$ sheaf on the K3 surface  $\ks$ and let $\Gamma \subseteq \ks $ be the curve    Fitting support of $E$. Then $\Shext^i_S(E, \mc O_S(-\Gamma))=0$ for $i \neq 1$ and there is a functorial isomorphism $\Shext^1_S(E, \mc O_S(-\Gamma)) \cong E^\vee$ where $E^\vee := \Shhom_\Gamma(F, \mc O_\Gamma).$
Moreover, $E$ is reflexive, i.e. $(E^\vee)^\vee \cong E$.
\end{lm}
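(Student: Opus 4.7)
\medskip

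The plan is to route everything through Grothendieck--Serre duality for the closed embedding $i\colon \Gamma\hookrightarrow S$, which in this situation simplifies dramatically because $S$ is $K3$. First I would record the shifted pullback formula $i^{!}(-) = i^{*}(-)\otimes \mc O_\Gamma(\Gamma)[-1]$ valid for the Cartier divisor $\Gamma\subset S$, and then compute
\[
i^{!}\mc O_S(-\Gamma) \;=\; \mc O_\Gamma(-\Gamma)\otimes \mc O_\Gamma(\Gamma)[-1] \;=\; \mc O_\Gamma[-1].
\]
Note the adjunction isomorphism $\omega_\Gamma = \mc O_S(\Gamma)|_\Gamma$ (using $\omega_S=\mc O_S$) is behind this cancellation; it is the one place where the $K3$ hypothesis enters.

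Next, I would apply Grothendieck duality $R\Shhom_S(i_*E,-) = i_*R\Shhom_\Gamma(E, i^{!}(-))$ to $\mc O_S(-\Gamma)$, obtaining
\[
R\Shhom_S(E,\mc O_S(-\Gamma)) \;\cong\; i_* R\Shhom_\Gamma(E,\mc O_\Gamma)[-1].
\]
To conclude both the vanishing of $\Shext^i$ for $i\neq 1$ and the identification $\Shext^1\cong E^\vee$, the remaining step is to show $R\Shhom_\Gamma(E,\mc O_\Gamma)$ is concentrated in degree $0$. For this I would verify that $E$ is Cohen--Macaulay of pure dimension one on $\Gamma$: purity of dimension one for $E$ on the smooth surface $S$ forces $\mathrm{depth}_{\mc O_{S,x}}E_x=1$ at every closed point $x\in\Gamma$, and because $\Gamma$ is cut out by a single non-zero-divisor in $\mc O_S$, the $\mc O_\Gamma$-depth coincides with the $\mc O_S$-depth; since $\dim \Gamma=1$ this means CM on $\Gamma$. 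Because planar curves are Gorenstein, $\omega_\Gamma$ is a line bundle, and CM-of-pure-dim-one together with Gorenstein duality forces $\Shext^{>0}_\Gamma(E,\omega_\Gamma)=0$, hence also $\Shext^{>0}_\Gamma(E,\mc O_\Gamma)=0$.

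Finally, for reflexivity I would feed the same CM/Gorenstein input into Grothendieck--Serre biduality on $\Gamma$, namely $R\Shhom_\Gamma(R\Shhom_\Gamma(E,\omega_\Gamma),\omega_\Gamma)\cong E$; using invertibility of $\omega_\Gamma$ to absorb twists, this rewrites as $\Shhom_\Gamma(\Shhom_\Gamma(E,\mc O_\Gamma),\mc O_\Gamma)\cong E$, i.e.\ $(E^\vee)^\vee\cong E$. The main obstacle I anticipate is the bookkeeping needed to justify the CM-on-$\Gamma$ claim cleanly in the generality stated (the Fitting support $\Gamma$ need not a priori coincide with the scheme-theoretic annihilator support of $E$), but the containment $\mathrm{Fitt}_0(E)\subseteq\mathrm{Ann}(E)$ ensures $E$ is an $\mc O_\Gamma$-module, and the depth argument above is insensitive to any extra embedded or non-reduced thickening since $\Gamma$ is already a Cartier divisor, so the argument goes through as stated.
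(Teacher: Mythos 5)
Your argument is correct, but it takes a different route from the paper, which simply quotes \cite[Prop 1.1.10]{HL} (with the alternative of deducing everything from the length-one locally free resolution $0 \to A \to B \to E \to 0$ of Remark \ref{fitting}: dualizing that resolution gives $\Shhom_S(E,\mc O_S(-\Gamma))=0=\Shext^2_S(E,\mc O_S(-\Gamma))$ and presents $\Shext^1$ as a cokernel, after which one identifies it with $\Shhom_\Gamma(E,\mc O_\Gamma)$ and checks biduality). You instead run Grothendieck--Serre duality for the regular embedding $i\colon\Gamma\hookrightarrow S$, reducing everything to the statement that $E$ is maximal Cohen--Macaulay over the Gorenstein curve $\Gamma$; the depth computation you give (purity forces $\mathrm{depth}_{\mc O_{S,x}}E_x=1$, and depth is insensitive to the surjection $\mc O_S\to\mc O_\Gamma$), the vanishing $\Shext^{>0}_\Gamma(E,\mc O_\Gamma)=0$ for MCM modules over a Gorenstein local ring, and the degeneration of derived biduality to $(E^\vee)^\vee\cong E$ (using that $E^\vee$ is again MCM) are all sound, and your handling of the Fitting-versus-annihilator support issue is exactly what is needed. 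Your approach is less elementary but arguably more transparent, since it isolates where each hypothesis is used; the paper's route via the two-term resolution is more economical and is what \cite{HL} actually does. One small correction: the $K3$ hypothesis is not where you say it is, nor is it needed at all --- the cancellation $i^!\mc O_S(-\Gamma)=\mc O_S(-\Gamma)|_\Gamma\otimes N_{\Gamma/S}[-1]=\mc O_\Gamma[-1]$ uses only that $\Gamma$ is a Cartier divisor ($N_{\Gamma/S}=\mc O_S(\Gamma)|_\Gamma$), and the lemma holds verbatim on any smooth projective surface; the adjunction formula $\omega_\Gamma=\mc O_S(\Gamma)|_\Gamma$ that does use $\omega_S=\mc O_S$ is not actually invoked anywhere in your chain of isomorphisms, since you dualize into $\mc O_\Gamma$ rather than $\omega_\Gamma$.
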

\begin{proof}
This is \cite[Prop 1.1.10]{HL} (or can easily be seen using Remark \ref{fitting}).
\end{proof}


\subsubsection{Irreducible components over  non-reduced curves: sheaves of type II}\la{irr comp II}$\;$

We now come back  to $\ogmm_{\ogc}, \ogn_{\ogc}$, $\ogmm_\Delta$ and $\ogn_\Delta$ and turn to sheaves of type $I\!I$. We  follow closely \cite[\S 3.1]{Mozgovoy}, which deals with the case of sheaves with even Euler charcteristics (those parametrized by $\ogmm$).

\begin{lm} \label{possiblesurjections}
 Let $\mc F$ be a sheaf of type $I\!I$ supported on a curve $\ogc=2\ogcred \in \Delta$, let $\mc F_{|\ogcred}$ be its restriction to the reduced curve $\ogcred$, and let $F:=\mc F_{|\ogcred} \slash T$ be the quotient of $\mc F_{|\ogcred}$ by its torsion subsheaf $T:=Tors(\mc F_{|\ogcred})$. If $E$ is  a pure dimension $1$ sheaf on $S$ and $\alpha:\mc F \to E$ is a surjection which is not an isomorphism, then $E \cong F$ and $\alpha$ is a non zero multiple of the  natural morphism $\mc F \to F$. As a consequence, $\mc F$ is (semi)stable if an only if
 \[
 \frac{\chi(\mc F)}{2} \underset{(=)}{<}\chi(F).
 \]
 \end{lm}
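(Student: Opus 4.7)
The plan is to reduce the analysis of a proper surjection $\alpha:\mathcal F \twoheadrightarrow E$ of pure dimension one sheaves to the analysis of a surjection between rank-one torsion-free sheaves on the integral curve $\ogcred$, where rigidity forces an isomorphism. First, I would set $K:=\ker\alpha$ and observe that, since $\alpha$ is not an isomorphism, $K$ is a nonzero pure dimension one subsheaf of $\mathcal F$. Additivity of $c_1$ in the short exact sequence $0\to K\to\mathcal F\to E\to 0$ gives $c_1(K)+c_1(E)=c_1(\mathcal F)=2H$ on $\ks$. Since $NS(\ks)=\mathbb Z\langle H\rangle$ and both summands are nonzero effective, I would conclude $c_1(K)=c_1(E)=H$.

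The key step is then to show that $E$ is annihilated by $\mathcal I$, i.e., is in fact an $\mathcal O_\ogcred$-module. At the generic point $\eta$ of $\ogcred$ one has $\mathcal O_{\ogc,\eta}\cong k(\eta)[y]/(y^{2})$ with $y$ a local generator of $\mathcal I$, and $c_1(E)=H$ forces $\dim_{k(\eta)}E_\eta=1$. The only one-dimensional module over $k(\eta)[y]/(y^{2})$ is the residue field, on which $y$ acts as zero. Thus the subsheaf $\mathcal I E\subseteq E$ vanishes at $\eta$ and, being a subsheaf of the pure dimension one sheaf $E$, must be zero. Consequently $\alpha$ factors through $\mathcal F/\mathcal I\mathcal F=\mathcal F_{|\ogcred}$; since $E$ is torsion-free, it kills the torsion $T\subseteq \mathcal F_{|\ogcred}$ and factors further as $\bar\alpha:F\twoheadrightarrow E$.

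Next, the same generic rank analysis applied to $\mathcal F$ itself is where the type $I\!I$ hypothesis enters: $\dim_{k(\eta)}\mathcal F_\eta=2$, and the two possible $k(\eta)[y]/(y^{2})$-module structures are the free module of rank one and the semisimple module $k(\eta)^{2}$. In the latter case $y$ would act trivially on $\mathcal F_\eta$ and, by purity, $\mathcal F$ would be an $\mathcal O_\ogcred$-module, contradicting type $I\!I$. Hence $\mathcal F_\eta$ is free of rank one over $\mathcal O_{\ogc,\eta}$, so $F$ has generic rank one over $\mathcal O_\ogcred$, and $\bar\alpha$ is a surjection between rank-one torsion-free sheaves on the integral curve $\ogcred$; its kernel is a torsion-free sheaf of rank zero, hence zero, and $\bar\alpha$ is an isomorphism. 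Uniqueness up to scalar follows from $\Hom_{\mathcal O_S}(\mathcal F,F)=\End_{\mathcal O_\ogcred}(F)=\mathbb C$, where the last equality comes from Lemma \ref{localbehavior}(1) and Lemma \ref{lemmino}, together with the connectedness of any partial normalization of the integral curve $\ogcred$. The stability assertion then follows by unwinding Definition \ref{defstab}: every proper pure dimension one quotient of $\mathcal F$ is isomorphic to $F$, and the Gieseker inequality $\chi(\mathcal F)/(c_1(\mathcal F)\cdot H)<_{(=)}\chi(F)/(c_1(F)\cdot H)$ collapses to $\chi(\mathcal F)/2<_{(=)}\chi(F)$ via $c_1(\mathcal F)\cdot H=2H^{2}$ and $c_1(F)\cdot H=H^{2}$.

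The hard part will be the interplay between the nilpotence of $\mathcal I$ and purity: the pointwise vanishing $y\cdot E_\eta=0$ must be promoted to the global vanishing $\mathcal I E=0$ using purity, and one has to locate precisely where the type $I\!I$ hypothesis is essential—without it, the generic $\mathcal O_{\ogc,\eta}$-structure on $\mathcal F$ could be semisimple, $F$ would have generic rank two on $\ogcred$, and $\bar\alpha$ need not be an isomorphism.
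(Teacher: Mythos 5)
Your proof is correct and follows essentially the same route as the paper's: force $c_1(E)=H$, deduce that $\alpha$ factors through a surjection $F\to E$ of rank-one torsion-free sheaves on the integral curve $\ogcred$, which must be an isomorphism, and then read off the stability inequality from Definition \ref{defstab}. You supply more detail than the paper does (the generic analysis over $k(\eta)[y]/(y^2)$ where the paper invokes Fitting supports, and the $\End(F)=\mathbb C$ computation for the scalar-multiple claim, which the paper leaves implicit), but the underlying argument is the same.
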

 \begin{proof}
Let $\alpha: \mc F \to E$ be a surjection onto a pure dimension $1$ sheaf $E$. Then the Fitting support of $E$ is either $\ogc$ or $\ogcred$. In the first case,  $\ke \, \alpha$ is supported on points,  hence it has to be zero by the purity of $\mc F$. In particular, $\alpha$ is an isomorphism. It follows that we can consider only the case when the Fitting support of $E$ is $\ogcred$ and hence $E$ is an $\mc O_\ogcred$-module of rank one. Restricting $\alpha$ to $\ogcred$ we get a surjection $F \to E$, where $F:=\mc F_{|\ogcred} \slash Tors(\mc F_{|\ogcred})$, which has to be an isomorphism since $F$ and $E$ are torsion free sheaves of rank $1$ on $\ogcred$. Using Definition \ref{defstab}, we conclude the proof.
 \end{proof}

\begin{pr} \label{restriction of F to C}  Let $\ogc=2\ogcred \in \Delta \subseteq \ogb$ and let $\mc F $ be a sheaf parametrized by a point in $ \ogn_{\ogc}^{I\!I}$ or in $ \ogmm_{\ogc}^{I\!I}$, i.e., a stable sheaf of type $I\!I$ (cf. Remark \ref{strictlyssempty}). Then $\mc F$ sits in the following exact sequence:
\[
0 \to F \otimes_{\ogcred} \mc I \to \mc F \to \mc F_{|\ogcred} \to 0,
\]
where $F:=\mc F_{|\ogcred} \slash T$ and  $T:=Tors(\mc F_{|\ogcred})$ is  the torsion subsheaf of $\mc F_{|\ogcred}$. Furthermore,   either $T=0$, which occurs if $\chi(\mc F)$  is even (i.e. $\mc F  \in \ogmm_{\ogc}^{I\!I}$), or  $T=\C_p$, for some point $p \in \ogcred$, which occurs if $\chi(\mc F)$  is odd (i.e. $\mc F  \in \ogn_{\ogc}^{I\!I}$). 
\end{pr}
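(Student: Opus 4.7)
The plan is as follows. First I would extract the short exact sequence by tensoring the standard sequence $0\to \mc I\to \mc O_\ogc\to \mc O_\ogcred\to 0$ with $\mc F$ over $\mc O_\ogc$. Since $\mc I^2=0$, the module $\mc F\otimes_{\mc O_\ogc}\mc I$ is killed by $\mc I$, so it naturally equals $\mc F_{|\ogcred}\otimes_{\mc O_\ogcred}\mc I$, and the natural map factors as
\[
\mc F_{|\ogcred}\otimes_{\mc O_\ogcred}\mc I\;\twoheadrightarrow\;F\otimes_{\mc O_\ogcred}\mc I\;\longrightarrow\;\mc I\cdot\mc F\;\hookrightarrow\;\mc F,
\]
where the first surjection exists because the torsion part $T\otimes\mc I$ (supported on points) must die inside the pure sheaf $\mc I\cdot\mc F\subseteq\mc F$. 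Since $\mc F$ is of type $I\!I$, the stalk $\mc F_\eta$ at the generic point $\eta$ of $\ogcred$ is free of rank $1$ over $\mc O_{\ogc,\eta}$, so both $F\otimes\mc I$ and $\mc I\cdot\mc F$ are rank-one torsion-free sheaves on $\ogcred$. A surjection between such sheaves is an isomorphism (the kernel would be torsion in a torsion-free sheaf), which yields the desired exact sequence.

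Next I would pin down $T=\mathrm{Tors}(\mc F_{|\ogcred})$ by Euler-characteristic bookkeeping combined with the stability criterion from Lemma \ref{possiblesurjections}. From the sequence above, using $\deg\mc I=-2$ on the genus-two curve $\ogcred$ and Riemann-Roch,
\[
\chi(\mc F)=\chi(F\otimes\mc I)+\chi(\mc F_{|\ogcred})=\bigl(\chi(F)-2\bigr)+\bigl(\chi(F)+\ell(T)\bigr)=2\chi(F)-2+\ell(T),
\]
where $\ell(T)$ is the length of $T$. The stability inequality $\chi(\mc F)/2<\chi(F)$ provided by Lemma \ref{possiblesurjections} rearranges to $\ell(T)<2$, so $\ell(T)\in\{0,1\}$.

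Finally, the displayed identity for $\chi(\mc F)$ shows that $\chi(\mc F)\equiv\ell(T)\pmod 2$. Hence $\ell(T)=0$ when $\chi(\mc F)$ is even (so $T=0$ and $\mc F\in\ogmm^{I\!I}_\ogc$), and $\ell(T)=1$ when $\chi(\mc F)$ is odd, in which case $T=\C_p$ for a single point $p\in\ogcred$ (and $\mc F\in\ogn^{I\!I}_\ogc$). The only slightly delicate step is the identification $\mc I\cdot\mc F\cong F\otimes\mc I$, where one must carefully exploit both $\mc I^2=0$ and purity of $\mc F$ to kill torsion on both sides; once this is in hand, the rest is a stability-plus-parity computation.
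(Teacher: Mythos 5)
Your proposal is correct and follows essentially the same route as the paper: tensor $0\to\mc I\to\mc O_{\ogc}\to\mc O_{\ogcred}\to 0$ with $\mc F$, identify the kernel with $F\otimes\mc I$ by observing that a surjection of rank-one torsion-free sheaves on $\ogcred$ is an isomorphism (the torsion $T\otimes\mc I$ dying in the pure subsheaf $\mc I\cdot\mc F$), and then combine the Euler-characteristic identity $\chi(\mc F)=2\chi(F)-2+\ell(T)$ with the stability inequality of Lemma \ref{possiblesurjections} to force $\ell(T)\in\{0,1\}$ with the parity of $\chi(\mc F)$ deciding which.
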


\begin{proof}
In the even case, this is \cite[Lem. 3.1.6]{Mozgovoy}, whose proof we modify below to deal with the
case of arbitrary $\chi$.
Consider the exact sequence:
\[
\mc F \otimes_{\ogc} \mc I \to \mc F \stackrel{r}{\to}  \mc F_{|\ogcred} \to 0.
\]
Since  $\mc I$ is a nilpotent square zero ideal,  $\mc F \otimes_{\mc{O}_{\ogc}} \mc I$ is an $\mc O_\ogcred$-module, and $\mc F \otimes_{{\ogc}} \mc I\cong \mc F_{|\ogcred} \otimes_{\mc O_\ogcred} \mc I$. Since $\mc F$ is not of type $I$, the restriction map $r$ is not an isomorphism and  $\ker(r)$ is a non--zero pure $\mc O_\ogcred$-module.
This applies to the upcoming $K$ as well. We see that the sheaves $\ker(r)$ and of $\mc F_{|\ogcred}$ have first Chern class equal to $\ogcred$, so they are $\mc O_\ogcred$-modules of rank $1$.  Let
$
T:=Tors(\mc F_{|\ogcred})
$
be the torsion subsheaf of  $\mc F_{|\ogcred}$. Then $F:=\mc F_{|\ogcred} \slash T$ is torsion free of rank $1$.  The natural surjective morphism $\mc F_{|\ogcred} \otimes_{\mc O_\ogcred} \mc I \to \ker(r)$ factors via the quotient $\mc F_{|\ogcred} \otimes_{\mc O_\ogcred} \mc I \to F \otimes_{\mc O_\ogcred} \mc I$, determining an isomorphism $F \otimes_{\mc O_\ogcred} \mc I \to \ker(r)$ of rank $1$ torsion free sheaves on $\ogcred$.  We can summarize this discussion in the following 
commutative diagram of short exact sequences (the zeros are omitted from the vertical ones):
\beq \label{diagram}
\xymatrix{
& & &  T \ar@{^{(}->}[d]&\\
0 \ar[r] & F \otimes \mc I \ar[r] \ar@{^{(}->}[d] & \mc F \ar[r] \ar@{=}[d]  & \mc F_{|\ogcred} \ar@{->>}[d]  \ar[r] & 0 \\
0 \ar[r] & K   \ar@{->>}[d]\ar[r] & \mc F \ar[r] & F \ar[r] & 0 \\
 &T & & &\\
}
\eeq
Using this diagram, together with (\ref{fascio I}), we get:
\beq \label{chimcFechiF}
\chi(\mc F)=2 \chi (F)-2+ \chi(T),
\eeq
showing that $\chi(\mc F) \equiv \chi(T)$ modulo $2$. By using Lemma \ref{possiblesurjections}  to check stability, we see that:
\beq \label{stabilityF}
\frac{\chi(\mc F)}{2} <\chi(F),
\eeq
so  that $ 0 \le \chi(T)  <2.$
\end{proof}

\begin{cor} \label{odd not loc free} Let $\mc F$ be a sheaf with Fitting support $\ogc=2\ogcred$ and odd Euler characteristic. Then $\mc F$ is not (the push forward of) a line bundle on $\ogc$.
\end{cor}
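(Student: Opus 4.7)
The plan is to derive a parity contradiction from a direct Euler characteristic computation. Suppose, for contradiction, that $\mc F = i_{*} L$ with $i: \ogc \hookrightarrow \ks$ the inclusion and $L$ a line bundle on $\ogc = 2\ogcred$. Since $L$ is locally free of rank one on $\ogc$, the scheme-theoretic support of $\mc F$ is the whole of $\ogc$, which in turn coincides with the Fitting support (cf. Remark \ref{fitting}); in particular $\mc F$ is of type $I\!I$ in the terminology of \S\ref{irr comp I}. Moreover $\mc F_{|\ogcred} = L_{|\ogcred}$ is itself a line bundle on the integral genus-two curve $\ogcred$, hence torsion free, so that the torsion subsheaf $T$ appearing in diagram (\ref{diagram}) and equation (\ref{chimcFechiF}) is forced to vanish.

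The second step is to recognize the standard short exact sequence
\[
0 \to L_{|\ogcred} \otimes_{\mc O_{\ogcred}} \mc I \to L \to L_{|\ogcred} \to 0
\]
on $\ogc$ (equivalently on $\ks$, after push-forward). This is nothing but the top row of diagram (\ref{diagram}) in the special case $F = L_{|\ogcred}$ and $T = 0$, and its construction uses only the square-zero ideal structure and the identification $\mc I \cong \omega_{\ogcred}^{-1}$ recorded in (\ref{fascio I}) — it does not rely on any stability hypothesis on $\mc F$. Both extreme terms are line bundles on $\ogcred$, with degrees differing by $-2$ since $\deg \mc I = -2$.

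Finally, by additivity of the Euler characteristic and Riemann--Roch on the genus-two curve $\ogcred$,
\[
\chi(\mc F) \;=\; \chi(L_{|\ogcred}) + \chi(L_{|\ogcred} \otimes \mc I) \;=\; 2\,\chi(L_{|\ogcred}) - 2,
\]
which is even, contradicting the assumption that $\chi(\mc F)$ is odd. The argument is a direct numerical consequence of the extension structure already appearing in the proof of Proposition \ref{restriction of F to C}; no serious obstacle arises, the only point requiring attention being that the relevant exact sequence and the identity (\ref{chimcFechiF}) (with $T = 0$) are available here without invoking stability, which is essential since the corollary imposes no stability hypothesis on $\mc F$.
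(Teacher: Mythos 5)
Your proposal is correct and follows essentially the same route as the paper: the paper's proof notes that a type~$I\!I$ sheaf with odd $\chi$ has $T\neq 0$ by the parity identity (\ref{chimcFechiF}), hence non-locally-free restriction to $\ogcred$, while you run the contrapositive of the same computation (if $\mc F$ is a line bundle then $T=0$ and $\chi=2\chi(L_{|\ogcred})-2$ is even, exactly as in Lemma \ref{fibratilineari}(1)). Your observation that the restriction sequence and the identity require no stability hypothesis is accurate and consistent with how the paper uses them.
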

\begin{proof} If $\mc F$ is of type $I$, then it is clear that it cannot be a locally free $\mc O_{\ogc}$--module.
If $\mc F$ is a  sheaf of type $I\!I$ , then by (\ref{chimcFechiF}), $ T \neq 0$ so $\mc F_{|\ogcred}$ is not locally free.
\end{proof}

\begin{pr}[{\bf The case of $\chi$ even: the structure of $\ogmm^{I\!I}_{\ogc}$ and $\ogmm^{I\!I}_\Delta$}] \label{even case}
Let $\ogc=2\ogcred \in \Delta \subseteq \ogb$. 
The stable sheaves of type $I\!I$ on $\ogc$ with even Euler characteristic $\chi=2k$ are locally free.  
Restricting a sheaf to the underlying reduced curve defines a surjective morphism
\[
\ogmm_{\ogc}^{I\!I} \to \rm{Pic}^{k+2}(\ogcred)
\]
which is a Zariski-locally trivial $\mathbb{C}^3$-bundle whose fibers are isomorphic to $H^1(\ogcred, \mc I)$. In particular, $\ogmm_{\ogc}^{I\!I} $ is smooth and irreducible. The locally closed subvariety $M^{I\!I}_\Delta$ of $\ogm$ can be identified with  degree-$(2k+4)$ component $\rm{Pic}^ {2k+4} (\mc \ogc_\Delta \slash \Delta)$ of the relative Picard scheme of the family $\mc \ogc_\Delta \slash \Delta$ of non-reduced curves. In particular, $\ogmm^{I\!I}_{\Delta}$ is smooth and irreducible.
\end{pr}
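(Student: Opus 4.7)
The plan is to apply Proposition \ref{restriction of F to C}, to prove that every stable sheaf of type $I\!I$ with even Euler characteristic is locally free on $\ogc$, to identify $\ogmm_{\ogc}^{I\!I}$ with a component of the Picard scheme of $\ogc$, and finally to compute the kernel of the restriction morphism to $\mathrm{Pic}(\ogcred)$. By Proposition \ref{restriction of F to C} applied in the even case ($T=0$), every such $\mathcal{F}$ with $\chi(\mathcal{F})=2k$ fits in a short exact sequence
\[
0 \to F \otimes_{\ogcred} \mc{I} \to \mathcal{F} \to F \to 0,
\]
with $F=\mathcal{F}|_{\ogcred}$ torsion-free of rank one on $\ogcred$. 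From (\ref{chimcFechiF}) one reads off $\chi(F)=k+1$, so $\deg F = k+2$ by Riemann--Roch on the genus-two curve $\ogcred$.

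The main step is to prove that every such $\mathcal{F}$ is a line bundle on $\ogc$. At a smooth point $p\in\ogcred$, local coordinates identify $\mc{O}_{\ogc,p}\cong\mathbb{C}\{x,y\}/(y^2)$ with $\ogcred=\{y=0\}$; then $F_p\cong\mathbb{C}\{x\}$ is automatically free, and the periodic resolution $\cdots \xrightarrow{y} \mc{O}_{\ogc,p} \xrightarrow{y} \mc{O}_{\ogc,p} \to \mathbb{C}\{x\} \to 0$ yields $\mathrm{Ext}^1_{\mc{O}_{\ogc,p}}(F_p,F_p\otimes\mc{I})\cong\mathbb{C}\{x\}$, with extension class a germ $g(x)$. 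A direct computation shows that the torsion of $\mathcal{F}_p|_{\ogcred,p}$ is $\mathbb{C}\{x\}/(g(x))$, which vanishes --- under the hypothesis $T=0$ --- if and only if $g(x)$ is a unit, equivalently if and only if $\mathcal{F}_p$ is free of rank one over $\mc{O}_{\ogc,p}$. At the finitely many nodes and cusps of $\ogcred$ (cf.\ Remark \ref{nodeandcusps}) an analogous but more intricate local analysis of the $\mc{O}_{\ogc,p}$-module structure, again invoking $T=0$, forces $F_p$ to be locally free on $\ogcred$ and $\mathcal{F}_p$ to be free on $\ogc$. I expect this singular-point analysis to be the main technical obstacle; it is here that the hypothesis on the parity of $\chi$ really enters.

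With $\mathcal{F}$ now identified as a line bundle on $\ogc$, Riemann--Roch on $\ogc$ gives $\deg_\ogc\mathcal{F}=\chi(\mathcal{F})-\chi(\mc{O}_\ogc)=2k+4$, so $\mathcal{F}\mapsto\mathcal{F}$ identifies $\ogmm_{\ogc}^{I\!I}$ with $\mathrm{Pic}^{2k+4}(\ogc)$, and the restriction of the proposition becomes $L\mapsto L|_{\ogcred}$. Since $\mc{I}^2=0$, the map $1+x\mapsto x$ furnishes an isomorphism of sheaves of abelian groups $1+\mc{I}\cong\mc{I}$, and one has the exact sequence
\[
1 \to 1+\mc{I} \to \mc{O}_{\ogc}^* \to \mc{O}_{\ogcred}^* \to 1.
\]
Using $H^0(\ogcred,\mc{I})=0$ (negative degree), the fact that $H^0(\mc{O}_{\ogc}^*)\to H^0(\mc{O}_{\ogcred}^*)$ is the identity on $\mathbb{C}^*$, and $H^2(\ogcred,\mc{I})=0$, the long exact cohomology sequence gives a short exact sequence $0\to H^1(\ogcred,\mc{I})\to\mathrm{Pic}(\ogc)\to\mathrm{Pic}(\ogcred)\to 0$. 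Serre duality on $\ogcred$ gives $\dim H^1(\ogcred,\mc{I})=h^0(\omega_{\ogcred}^{\otimes 2})=3$, so the restriction is a surjection whose fibers are torsors under the vector group $H^1(\ogcred,\mc{I})\cong\mathbb{C}^3$; Zariski-local triviality follows from the standard fact that torsors under vector groups are Zariski-locally trivial.

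For the relative statement, one repeats the construction in families. The family $\mc{C}\to\Delta$ is flat with Gorenstein fibers, so $\mathrm{Pic}^{2k+4}(\mc{C}/\Delta)$ exists as a smooth algebraic space of relative dimension $g_a(\ogc)=5$ over $\Delta$. The pointwise local-freeness established above, combined with the universal property of the relative Picard scheme, yields the identification $\ogmm_\Delta^{I\!I}\cong\mathrm{Pic}^{2k+4}(\mc{C}/\Delta)$; smoothness and irreducibility of $\ogmm_\Delta^{I\!I}$ then follow formally from the corresponding properties of the relative Picard scheme over the irreducible smooth base $\Delta$.
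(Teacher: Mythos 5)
Your overall strategy is sound and the second half of your argument is essentially correct: once one knows that every stable type-$I\!I$ sheaf with even $\chi$ is a line bundle on $\ogc$, the identification with $\mathrm{Pic}^{2k+4}(\ogc)$ and the computation of the fibers of restriction via the truncated exponential sequence $1 \to 1+\mc I \to \mc O_{\ogc}^* \to \mc O_{\ogcred}^* \to 1$ is exactly the alternative description the paper itself records at the end of its proof. (Do note that identifying $\ogmm_{\ogc}^{I\!I}$ with the \emph{whole} of $\mathrm{Pic}^{2k+4}(\ogc)$ also requires knowing that every line bundle of that degree on $\ogc$ is stable; this is Lemma \ref{fibratilineari}(1) and should be cited.)

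The genuine gap is in the central step, the local freeness of $F=\mc F_{|\ogcred}$ and hence of $\mc F$. Your smooth-point computation is fine, but at the nodes and cusps of $\ogcred$ you only assert that "an analogous but more intricate local analysis" will rule out non-free $F_p$, and you explicitly flag this as the main technical obstacle without carrying it out. That is precisely the point that needs an argument: a priori $F$ could be $n_*L$ for a partial normalization, and one must exclude the existence of a pure $\mc O_{\ogc}$-module extension of such an $F$ by $F\otimes\mc I$ whose restriction to $\ogcred$ is torsion free. The paper avoids any case-by-case local analysis by using the change-of-coefficients exact sequence (\ref{extcoeff}): the type-$I\!I$ condition (equivalently $T=0$) forces the connecting map $\delta:\Ext^1_{\ogc}(F,F\otimes\mc I)\to\Hom_{\ogcred}(F\otimes\mc I,F\otimes\mc I)=\C$ to send the extension class to an isomorphism, so $\delta$ is surjective; and by \cite[Prop 3.2.7]{Mozgovoy} $\delta$ is surjective if and only if $F$ is locally free. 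Without this (or an honest local computation at a node and at a cusp replacing it), your proof is incomplete at its key step. A smaller point: the parity of $\chi$ does not enter through the singular-point analysis, as you suggest, but earlier, through Proposition \ref{restriction of F to C}, which shows that $\chi$ even is equivalent to $T=0$; after that, only $T=0$ is used.
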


\begin{proof} The first step is to use  \cite[Lemma 3.3.3]{Mozgovoy}. We include the details to set up the notation for the case  of $N^{I\!I}_{\ogc}$, i.e., when  $\chi$ is odd.
Let $\mc E$ be any sheaf with Fitting support equal to $\ogc$. Suppose $\mc E$ sits in the following exact sequence
of $\m{O}_{\ogc}$-modules:
\[
0 \to E_2 \to \mc E \to E_1 \to 0
\]
where $E_1$ and $E_2$ are torsion free sheaves of rank one on $\ogcred$. The spectral sequence for the change of coefficients in the Ext groups \cite[Cor.3.2.2]{Mozgovoy} gives:
\beq \label{extcoeff}
0 \to \Ext_{\ogcred}^1(E_1, E_2) \to \Ext^1_{\ogc}(E_1, E_2) \stackrel{\delta}{\to} \Hom_\ogcred(E_1 \otimes \mc I , E_2) \to \Ext^2_{\ogcred}(E_1, E_2).
\eeq
Let $\varepsilon \in \Ext^1_{\ogc}(E_1, E_2) $ be the class of this extension. By \cite[Lem 3.2.2]{Mozgovoy}, $E_1=\mc E\otimes_{\mc O_{\ogc}} \mc O_\ogcred$  (equivalently, $E_2=\mc I \mc E$ ) if and only if $\delta(\varepsilon): E_1\otimes \mc I  \to E_2$ is surjective. Since $E_1 \otimes \m{I}$  is torsion free, this is the case if and only if $\delta(\varepsilon)$ is an isomorphism, which means that $ E_2\cong E_1\otimes \mc I $. Moreover, by \cite[Prop 3.2.7]{Mozgovoy}  $\delta$ is surjective if and only if $E_1$ is locally free.

Now we consider the case of  a stable sheave $\mc F$ with Fitting support equal to $\ogc$ and even Euler characteristic. Then by diagram (\ref{diagram}) $\mc F$ is an extension of $F$ by $F \otimes_{\mc O} \mc I$. So we set  $E_1=F$ and $E_2=F \otimes_{\mc O} \mc I$ and the exact sequence  (\ref{extcoeff}) becomes:
\[
0 \to \Ext^1_{\ogcred}(F , F \otimes \mc I) \to \Ext^1_{\ogc}(F , F \otimes \mc I)  \stackrel{\delta}{\to} \Hom_\ogcred(F\otimes \mc I, F \otimes \mc I)= \mathbb C.
\]
We conclude that if $\mc F$ is a sheaf in $M^{I\!I}_{\Delta}$, then $\delta$ is non zero, so it is surjective and hence $F$ is locally free. By using Lemma \ref{loc free naka}, $\mc F$ is locally free. Conversely, if $F$ is locally free, then $\Ext^2_{\ogcred}(F , F \otimes \mc I)=H^2(\ogcred, F^\vee \otimes F \otimes \mc I)=0$, so $\delta$ is surjective and the isomorphism classes of $\mc O_{\ogc}$-modules realized as extensions of $F$ by $F \otimes_{\mc O_\ogcred} \mc I$ are parametrized by the affine space $\mathbb P(\Ext^1_{\ogc}(F, F \otimes \mc I)) \setminus \mathbb P(\Ext^1_{\ogcred}(F, F \otimes \mc I))$. Since $F$ is locally free, $\Ext^1_{\ogcred}(F , F \otimes \mc I)\cong H^1(\ogcred, \mc I) =\mathbb C^3$.  If we set $\chi(\mc F)=2k$, by (\ref{diagram}) we get $\chi(F)=k+1$ and $\deg (F)=k+2$.

 Associating to each $\mc F$ its restriction $F$ to the reduced underlying curve defines the morphism $\ogmm_{\ogc}^{I\!I} \to \rm{Pic}^{k+2}(\ogcred)$. By the discussion above, the fibers are identified with the affine space $\mathbb P(\Ext^1_{\ogc}(F, F \otimes \mc I)) \setminus \mathbb P(\Ext^1_{\ogcred}(F, F \otimes \mc I)) \cong  H^1(\ogcred, \mc I)$. Another way of seeing that the fibers of the restriction morphism are isomorphic to $H^1(\ogcred, \mc I)$ is to consider the short exact sequence $0 \to \mc I \to \mc O_{\ogc}^\times \to \mc O_\ogcred^\times \to 0$ and the corresponding sequence of first cohomology groups.

The last statement follows from the fact, proved above, that $\mc F$ is a locally free $\mc O_C$-module.
\end{proof}

\begin{cor} \label{stabrk2} Let $\mc{\ogcred} \to |H|=\Delta$ be the family of genus two curves.
The kernel of the restriction morphism ${\rm Pic}^0_{\m{\ogc}_\Delta /\Delta}  \to {\rm Pic}^0_{\mc{\ogcred}/\Delta}$ is an affine group scheme of relative dimension $3$, which acts trivially on the points of $M^{I}_{\Delta}$ and $N^I_{\Delta}$.
\end{cor}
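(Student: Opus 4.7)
The plan is to analyze the kernel pointwise first, and then promote the description to a statement over $\Delta$; the triviality of the action on type~$I$ sheaves will then follow from a direct tensor-product computation.

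First, fix $\ogc = 2\ogcred \in \Delta$. Because $\mc I \subseteq \mc O_{\ogc}$ is a square-zero ideal, the assignment $x \mapsto 1+x$ identifies $\mc I$, viewed as an additive sheaf of groups, with the multiplicative sheaf of groups $\ke[\mc O_{\ogc}^\times \to \mc O_\ogcred^\times]$. Thus I have a short exact sequence $0 \to \mc I \to \mc O_{\ogc}^\times \to \mc O_\ogcred^\times \to 0$ whose associated long exact cohomology sequence provides the  identification (already invoked in the proof of Proposition~\ref{even case})
\[
\ke\bigl[{\rm Pic}(\ogc)\to {\rm Pic}(\ogcred)\bigr] \;\cong\; H^1(\ogcred,\mc I)
\]
as abelian groups, where the target is regarded as an additive (hence affine) algebraic group. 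By (\ref{fascio I}), $\mc I \cong \omega_\ogcred^{-1}$ has degree $-2$ on the genus-$2$ curve $\ogcred$, so Serre duality and Riemann--Roch yield $H^0(\ogcred,\mc I)=0$ and $\dim H^1(\ogcred,\mc I) = h^0(\ogcred,\omega_\ogcred^{\otimes 2}) = 3$. Pointwise this gives an affine group of dimension~$3$ as claimed.

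Second, I would relativize this over $\Delta$. Let $\pi: \m{\ogc}_\Delta \to \Delta$ denote the restriction of the universal curve over $|2H|$ to the non-reduced locus, and let $\m{I}_\Delta \subseteq \m{O}_{\m{\ogc}_\Delta}$ be the relative ideal of the reduction. The exponential argument globalizes: the kernel of the restriction morphism of relative Picard schemes is represented by the functor of $R^1\pi_*\m{I}_\Delta$ viewed as an additive group scheme. Because $h^0(\ogcred,\mc I)=0$ for every fiber, cohomology and base change guarantees that $R^1\pi_*\m{I}_\Delta$ is locally free of rank $3$ on $\Delta$. Its associated vector-bundle group scheme over $\Delta$ is affine of relative dimension $3$, which is the required conclusion for the kernel.

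Third, for the triviality of the action on type~$I$ sheaves, I argue as follows. By definition, a sheaf $\mc F$ parametrized by a point of $M^I_\Delta$ or $N^I_\Delta$ is an $\mc O_\ogcred$-module, viewed as an $\mc O_\ogc$-module via the surjection $\mc O_\ogc \twoheadrightarrow \mc O_\ogcred$. For any $L \in {\rm Pic}^0(\ogc)$ lying in the kernel one has $L\otimes_{\mc O_\ogc}\mc O_\ogcred \cong \mc O_\ogcred$. The standard change-of-rings identity then gives
\[
\mc F \otimes_{\mc O_\ogc} L \;\cong\; \mc F \otimes_{\mc O_\ogcred}\bigl(L\otimes_{\mc O_\ogc}\mc O_\ogcred\bigr) \;\cong\; \mc F \otimes_{\mc O_\ogcred} \mc O_\ogcred \;\cong\; \mc F,
\]
which is exactly the triviality of the tensorization action on $M^I_\Delta$ and $N^I_\Delta$.

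The only mildly delicate point is the relative statement: one must know that $H^0(\ogcred,\mc I)$ vanishes fiberwise in order for base change to apply and to conclude that the kernel is represented by a vector bundle (rather than just by a group scheme whose fibers are vector groups). Once this vanishing is established from $\deg\mc I = -2 <0$, every other step is either pointwise linear algebra or a tensor-product manipulation.
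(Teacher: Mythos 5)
Your proof is correct and follows essentially the same route the paper takes: the identification of the kernel with the vector group $H^1(\ogcred,\mc I)\cong\C^3$ via the sequence $0\to\mc I\to\mc O_{\ogc}^\times\to\mc O_{\ogcred}^\times\to 0$ is exactly the argument invoked at the end of the proof of Proposition~\ref{even case}, and the triviality of the action on type~$I$ points is the same change-of-rings observation the paper relies on, since type~$I$ sheaves are $\mc O_{\ogcred}$-modules. Your explicit fiberwise vanishing $H^0(\ogcred,\mc I)=0$ from (\ref{fascio I}) and the cohomology-and-base-change step merely make precise the relativization over $\Delta$ that the paper leaves implicit.
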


\begin{rmk} \label{rmkextensions}
The change of coefficients exact sequence (\ref{extcoeff}) for the {\rm Ext}-groups with respect to the closed embedding $\ogcred\subseteq \ogc$,  remains valid for the closed embedding $\ogc \subset S$, thus giving an inclusion $\Ext^1_{\ogc}(E_1, E_2) \subset \Ext^1_S(E_1, E_2)$. We claim that this is in fact an isomorphism. Indeed, any extension as $\mc O_S$-modules of $E_1$ by $E_2$  has Fitting support equal to $\ogc$ and is automatically an $\mc O_{\ogc}$-module.  Moreover, any such extension is an extension of $\mc O_{\ogc}$-modules. 

\end{rmk}


We now deal with sheaves $\mc F$ of type $I\!I$ with odd Euler characteristic $\chi$,
i.e. with $ \mc F \in \ogn^{I\!I}_\Delta$. The goal is to prove Proposition \ref{odd case}, the ``$\chi$ odd" analogue of Proposition 
\ref{even case}.

 Consider a  double curve  $\ogc \in \Delta$. In analogy with the case of even Euler characteristic treated earlier, in order to describe the open subvariety $\ogn_{\ogc}^{I\!I} \subset \ogn_{\ogc}$  we wish to realize every $\mc F \in \ogn_{\ogc}^{I\!I}$ as an extension of $\mc O_{\ogc}$-modules by two rank $1$ torsion free $\mc O_\ogcred$-modules.  By looking at (\ref{diagram}) and (\ref{chimcFechiF}), we see that any $\mc F \in \ogn_{\ogc}^{I\!I}$ fits into a short exact sequence:
\beq \label{KandF}
0 \to K \to \mc F \to F \to 0.
\eeq
with
\beq \label{degFdegK} 
d:=\deg (F)=\frac{\chi+3}{2}, \qquad  \deg(K)=d-1.
\eeq
As a consequence of Proposition \ref{restriction of F to C}, for each $\mc F \in \ogn_{\ogc}^{I\!I}$,
we find a triple:
\beq\la{trripple}
(p, F, K) \in \ogcred \times \bar Jac^d(\ogcred) \times \bar Jac^{d-1}(\ogcred).
\eeq
In the proof of Proposition \ref{odd case} we will describe explicitly the set of triples occurring as above from a stable sheaf $\mc F$ of type $II$ and odd Euler characteristic.

We start with the following two lemmata,  relating the regularity properties of $F$ with those of $K$, and viceversa. These will only be relevant if the underlying reduced curve $C'$ is singular, since  otherwise $F$ and $K$ are clearly locally free.

\begin{lm} \label{Flocallyfree}
Let $\mc F \in \ogn_{\ogc}^{I\!I}$ be a stable sheaf of type $I\!I$ with odd Euler characteristic supported on a curve $\ogc=2\ogcred$, and let $(p, F, K)$ be the triple (\ref{trripple}). If $F_p$ is  free, then $F$ is locally free on $\ogcred$
and $K$ is locally free on $\ogcred \setminus p$.
\end{lm}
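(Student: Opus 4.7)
The key simplifying observation is the short exact sequence
\[
0 \to F \otimes \mc I \to K \to \mathbb C_p \to 0
\]
of Proposition \ref{restriction of F to C}, which shows that the natural inclusion $F \otimes \mc I \hookrightarrow K$ is an isomorphism over $\ogcred \setminus \{p\}$. Since $\mc I$ is a line bundle on $\ogcred$ by (\ref{fascio I}), local freeness of $K_q$ at a point $q \ne p$ is equivalent to local freeness of $F_q$ there. This reduces the second conclusion of the lemma to the first, so the plan is to focus on showing that $F$ is locally free on all of $\ogcred$.

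I would argue by contradiction. Because $F$ is a rank one torsion free sheaf on the integral curve $\ogcred$, which by Remark \ref{nodeandcusps} has only nodal or cuspidal singularities and at most two of them, $F$ is automatically locally free at smooth points. So suppose $F$ fails to be locally free at some singular point $q \in \ogcred$. The case $q = p$ directly contradicts the hypothesis, so I may assume $q \neq p$. At such a $q$ the torsion $T_q$ of $(\mc F_{|\ogcred})_q$ vanishes, so localizing Proposition \ref{restriction of F to C} at $q$ expresses $\mc F_q$ as an $\mc O_{\ogc,q}$-module extension
\[
0 \to F_q \otimes \mc I_q \to \mc F_q \to F_q \to 0,
\]
with both $F_q$ and $F_q \otimes \mc I_q$ non-free rank one torsion free $\mc O_{\ogcred,q}$-modules of the form $n_{q*}\mc O_{\widetilde q}$ by Lemma \ref{localbehavior}(1).

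From this local picture the plan is to extract a contradiction by combining Cook's Lemma \ref{cook} (which gives $\dim \Ext^1_{\mc O_{\ogcred,q}}(F_q, F_q\otimes \mc I_q)=2$) with the change of coefficients exact sequence used in the proof of Proposition \ref{even case}. Using this machinery together with the fact that $F \otimes \mc I \hookrightarrow K$ is an isomorphism at $q$ but has cokernel $\mathbb C_p$ at $p$, I expect to produce a proper subsheaf of $\mc F$ whose induced pure-dimension-one quotient witnesses a failure of the stability inequality, or equivalently a non-scalar endomorphism contradicting simplicity of $\mc F$.

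The main obstacle will be pinning down the destabilizing subsheaf precisely: since numerical stability of $\mc F$ is automatic from the computation (\ref{chimcFechiF})--(\ref{stabilityF}) and since by Lemma \ref{possiblesurjections} the only proper pure-dimension-one quotient of $\mc F$ is $F$ itself, a purely numerical contradiction is unavailable. The hypothesis that $F_p$ is free must enter in a refined way, by constraining the global extension class $\varepsilon \in \Ext^1_\ogc(F,K)$ at $p$ (where Lemma \ref{localbehavior}(4) pins down $K_p$ up to isomorphism), and showing this constraint is incompatible with $F_q$ being non-free at $q$: the expected mechanism is that the partial normalization $n_{q*}\mc O_{\widetilde q}$ structure at $q$ yields an $\mc O_\ogc$-linear endomorphism of $\mc F_q$ that, in combination with the local triviality of $F$ at $p$, extends to a global non-scalar endomorphism of $\mc F$, contradicting stability.
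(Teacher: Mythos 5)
Your opening reduction is correct and is exactly the paper's first step: the sequence $0 \to F \otimes \mc I \to K \to \C_p \to 0$ shows $K$ and $F\otimes\mc I$ are isomorphic away from $p$, so the statement about $K$ follows from the one about $F$. The problem is everything after that. The heart of the lemma --- why freeness of $F_p$ forces $F$ to be locally free at the \emph{other} singular points of $\ogcred$ --- is never proved; you correctly observe that no numerical destabilization is available (by Lemma \ref{possiblesurjections} the only pure quotient is $F$ itself, and (\ref{stabilityF}) already holds), and you then substitute an ``expected mechanism'': a non-scalar global endomorphism of $\mc F$ built from the partial normalization structure $n_{q*}\mc O$ at $q$. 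This is not justified and I do not see how to make it work: an $\mc O_{\ogc,q}$-linear endomorphism of the stalk $\mc F_q$ has no reason to glue to a global endomorphism, and nothing in your setup exhibits $\mc F$ as a module over a partial normalization of $\ogc$ (that would require the global extension class in $\Ext^1_{\ogc}(F,K)$ to be pulled back from the normalized curve, which is precisely the kind of statement you would have to prove). As written, this is a plan with the decisive step missing.

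The paper's actual argument is a short reduction to the even-characteristic case, and it is worth internalizing because it is where the hypothesis ``$F_p$ free'' genuinely enters. Since $F$ is free at $p$ and $T=\C_p$, one has $\Ext^1_{\ogcred}(F,T)=0$, hence $\mc F_{|\ogcred}\cong F\oplus\C_p$; composing $\mc F \to \mc F_{|\ogcred}\to \C_p$ and setting $\mc E:=\ker[\mc F\to\C_p]$ produces a pure sheaf of type $I\!I$ on $\ogc$ with \emph{even} Euler characteristic, satisfying $\mc E_{|\ogcred}/Tors\cong F$ and $\chi(\mc E)=\chi(\mc F)-1<2\chi(F)$, so $\mc E$ is stable by Lemma \ref{possiblesurjections}. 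One then runs the argument of Proposition \ref{even case} (Mozgovoy's criterion: the connecting map $\delta$ in the change-of-coefficients sequence is surjective iff the quotient is locally free, and it must be nonzero for a type $I\!I$ extension) on $\mc E$ to conclude that $F$ is locally free everywhere. If you want to avoid this reduction, the alternative is an honest dimension count comparing $\Ext^1_{\ogcred}(F,K)$ with $\Ext^1_{\ogc}(F,K)=\Ext^1_S(F,K)$ in the case where $F$ and $K$ are both non-free at some $q\neq p$, showing the inclusion is an equality so that no type $I\!I$ extension exists; but you would have to carry out those computations (as in Lemmata \ref{ExtS} and \ref{ExtC}) without presupposing the very local structure this lemma is meant to establish, and your proposal does not do that either.
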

\begin{proof}
From the first vertical sequence in (\ref{diagram}) it follows that $K$ and $F$ are locally isomorphic away from $p$, so it is enough to prove the statement involving $F$.
Suppose $F$ is locally free at $p$. Then $\Ext^1_\ogcred(F, T)=0$ so $\mc F_{|\ogcred} \cong F \oplus \C_p$. Using (\ref{diagram}) we get the following commutative diagram, with horizontal and vertical short exact sequences:
\[
\xymatrix{
0 \ar[r] & F \otimes \mc I   \ar@{=}[d] \ar[r] & \mc E \ar@{^{(}->}[d] \ar[r] & F \ar[r] \ar@{^{(}->}[d]& 0 \\
0 \ar[r] & F \otimes \mc I   \ar[r] & \mc F \ar[r] \ar@{->>}[d] & F \oplus \mathbb C_p \ar@{->>}[d] \ar[r] & 0 \\
& & \mathbb C_p \ar@{=}[r]  & \mathbb C_p  & \\
}
\]
where  $\mc E:=\ker[\mc F \to \mathbb C_p]$. Notice that $\mc E$, which is a pure sheaf of type $I\!I$, has even Euler characteristic. By Lemma \ref{possiblesurjections} it follows that  $F \cong \mc E_{|\ogcred} \slash Tors (\mc E_{|\ogcred})$. Using (\ref{stabilityF}) we deduce that $\chi(\mc E)=\chi(\mc F)-1<  2\chi(F)$ and hence, by Lemma \ref{possiblesurjections}, $\mc E$ is stable. We may thus apply the arguments of the proof of Proposition \ref{even case} to $\mc E$ and deduce that $F$ is locally free.
\end{proof}

By dualizing, we can reverse the role of $F$ and $K$.

\begin{lm} \label{Klocfree} Let $\mc F$ and $(p, F,K)$ be as in Lemma \ref{Flocallyfree}. 
If $K_p$ is free, then $K$ is locally free  on $\ogcred$, and  $F$ is locally free on $\ogcred \setminus p$.
\end{lm}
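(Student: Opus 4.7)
The strategy is to reduce the statement to Lemma \ref{Flocallyfree} by a Grothendieck--Serre duality that swaps the roles of $F$ and $K$. Consider the contravariant functor $\mathcal{D}(\mc E) := \Shext^1_S(\mc E, \mc O_S(-\ogc))$. On pure one-dimensional sheaves on $S$ with Fitting support equal to $\ogc$, this functor is by Lemma \ref{reflexive} an involution; it preserves the type $I\!I$ condition (via the annihilator criterion), and it preserves the parity of the Euler characteristic (Grothendieck duality on the Gorenstein curve $\ogc$ identifies $\mathcal{D}(\mc F)$ with $\Shhom_\ogc(\mc F, \mc O_\ogc)$, from which the parity computation is straightforward). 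Hence $\mc F^D := \mathcal{D}(\mc F)$ is a stable type $I\!I$ sheaf with odd Euler characteristic, and Proposition \ref{restriction of F to C} applies to it, producing a triple $(p', F', K')$ and an exact sequence $0 \to K' \to \mc F^D \to F' \to 0$.

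Next, I would apply $\mathcal{D}$ to the exact sequence $0 \to K \to \mc F \to F \to 0$ to obtain the dual exact sequence $0 \to \mathcal{D}(F) \to \mc F^D \to \mathcal{D}(K) \to 0$ with the sub and quotient interchanged. The essential local computation is that for a rank-one torsion-free $\mc O_\ogcred$-module $E$, applying $\mathcal{D}$ to the resolution $0 \to \mc O_S(-\ogc) \to \mc O_S(-\ogcred) \to j_*\mc O_\ogcred(-\ogcred) \to 0$ and invoking Lemma \ref{reflexive} on $\ogcred$ produces a natural short exact sequence of $\mc O_\ogc$-modules
\beq\label{DualofE}
0 \to E^\vee \otimes \mc I \to \mathcal{D}(E) \to E^\vee \to 0,
\eeq
where $E^\vee := \Shhom_\ogcred(E, \mc O_\ogcred)$. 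Splicing these together endows $\mc F^D$ with a four-step filtration whose successive quotients are $F^\vee \otimes \mc I$, $F^\vee$, $K^\vee \otimes \mc I$, $K^\vee$.

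I would then identify this refined filtration with the canonical $\mc I$-filtration of $\mc F^D$ coming from Proposition \ref{restriction of F to C}: computing $\mc I \cdot \mc F^D$ one sees that, up to torsion supported at a single point and twisting by $\mc I$, the quotient $F' = \mc F^D / \mc I \mc F^D$ corresponds to $K^\vee$ and the submodule $K' = \mc I \mc F^D$ corresponds to $F^\vee \otimes \mc I$; in particular $p' = p$. Since local freeness at a point for rank-one torsion-free $\mc O_\ogcred$-modules is preserved by $(-)^\vee$ (as in Lemma \ref{localbehavior}(3)), the hypothesis that $K_p$ is free translates into $F'_p$ being free. Lemma \ref{Flocallyfree} applied to $\mc F^D$ then gives that $F'$ is locally free on all of $\ogcred$ and $K'$ is locally free on $\ogcred \setminus p$. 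Dualizing back via the involutivity of $\mathcal{D}$ yields the desired conclusions: $K$ is locally free on $\ogcred$, and $F$ is locally free on $\ogcred \setminus p$.

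The main obstacle will be making the identification of the two filtrations in the third paragraph rigorous, in particular tracking the $\mc I$-twists and pinning down that $p' = p$. A subordinate issue is verifying that $\mathcal{D}$ preserves both type $I\!I$ and the parity of $\chi$, which ultimately follows from Grothendieck--Serre duality on the Gorenstein curve $\ogc$, but requires some unwinding.
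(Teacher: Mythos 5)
Your overall strategy is the same as the paper's: dualize $\mc F$ so that the roles of $F$ and $K$ are interchanged, then quote Lemma \ref{Flocallyfree}. (The paper uses $\Shext^1_S(-,\mc O_S(-\ogcred))$ rather than your $\mathcal D=\Shext^1_S(-,\mc O_S(-\ogc))$; the two differ only by a twist by the line bundle $\mc O_S(-\ogcred)$, which is immaterial.) However, two things in your write-up do not survive scrutiny. First, your displayed sequence $0\to E^\vee\otimes\mc I\to\mathcal D(E)\to E^\vee\to 0$ is false: since $\mc O_S(-\ogc)=\mc O_S(-\ogcred)\otimes\mc O_S(-\ogcred)$ and $\Shext$ commutes with twisting by a line bundle, Lemma \ref{reflexive} gives directly $\mathcal D(E)=\Shext^1_S(E,\mc O_S(-\ogc))\cong E^\vee\otimes\mc I$ for any rank-one torsion-free $\mc O_\ogcred$-module $E$ — again a rank-one torsion-free $\mc O_\ogcred$-module, not a nontrivial extension of $E^\vee$ by $E^\vee\otimes\mc I$. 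Consequently the "four-step filtration" of $\mc F^D$ cannot exist: $\mc F^D\cong\Shhom_\ogc(\mc F,\mc O_\ogc)$ has generic length two along $\ogcred$, so it carries only a two-step filtration with rank-one quotients. What dualizing $0\to K\to\mc F\to F\to 0$ actually produces (using the vanishing of $\Shext^0$ and $\Shext^2$ from Lemma \ref{reflexive}) is the single exact sequence $0\to F^\vee\otimes\mc I\to\mc F^D\to K^\vee\otimes\mc I\to 0$, with sub and quotient swapped and dualized.

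Second, and more seriously, the step you defer as "the main obstacle" — that the torsion of $\mc F^D|_{\ogcred}$ is supported at the same point $p$, so that the canonical triple of $\mc F^D$ is $(p,\,K^\vee\otimes\mc I,\,F^\vee\otimes\mc I)$ — is precisely the mathematical content of the lemma, and without it the reduction fails: Lemma \ref{Flocallyfree} requires freeness of the new $F'$ at the distinguished point $p'$ of the new triple, whereas your hypothesis only gives freeness of $K^\vee$ at $p$; if a priori $p'\neq p$ you can conclude nothing. It also does not follow formally from identifying the two filtrations, since one must check that the dual sequence above is the canonical one of Proposition \ref{restriction of F to C} (i.e.\ that its sub is $\mc I\cdot\mc F^D$). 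The paper isolates exactly this as Lemma \ref{tripleandduality}, proved by dualizing the factorization of $\mathrm{id}_{\mc F}\otimes j$ through $F\otimes\mc I\hookrightarrow K$ and matching it, via Lemma \ref{possiblesurjections}, against the corresponding factorization for the dual sheaf. As it stands your proposal is an accurate plan for the first half of the argument but leaves the decisive step unproved, and the intermediate structure you propose to use to prove it is not correct.
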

\begin{proof}
The sheaf $K$ is locally free at a point if and only if its dual $K^\vee:=\Shhom_\ogcred(K, \mc O_\ogcred)$ is locally free at that point. It is therefore enough to prove the statement for $K^\vee$.
Consider the short exact sequence (\ref{KandF}). It can be viewed both as sequence of pure $\mc O_{\ogc}$ and of $\mc O_S$-modules (cf. Remark \ref{rmkextensions}). Applying $\Shhom_S( -, \mc O_{S}(-\ogcred))$ and using Lemma \ref{reflexive}  we get:
\[
0 \to \Shext^1_S(F, \mc O_{S}(-\ogcred)) \to \Shext^1_S(\mc F, \mc O_{S}(-\ogcred)) \to \Shext^1_S(K, \mc O_{S}(-\ogcred)) \to 0.
\]
where:
\beq \label{FdualKdual}
\Shext^1_S(F, \mc O_{S}(-\ogcred))=\Shhom_\ogcred(F, \mc O_\ogcred)=:F^\vee,\,\, \Shext^1_S(K, \mc O_{S}(-\ogcred)=\Shhom_\ogcred(K, \mc O_\ogcred):=K^\vee,
\eeq
and:
\beq \label{Fprimo}
\mc F':=\Shext^1_S(\mc F, \mc O_{S}(-\ogcred)) \cong \Shhom_{\ogc}(\mc F, \mc O_{\ogc}) \otimes_{\mc O_{S}}  \mc O_{S}(-\ogcred).
\eeq
Notice that $\mc F'$ is a pure sheaf of type $I\!I$, with odd Euler characteristic. By Lemma  \ref{possiblesurjections}, up to multiplication by a non-zero scalar, the morphism $\mc F' \to K^\vee$ is precisely the morphism $\mc F' \to \mc F'_{|\ogcred} \slash Tors(\mc F'_{|\ogcred})$. Using (\ref{degFdegK}) we find that:
\[
 \chi(K^\vee)=-d, \quad \chi(F ^\vee)=-d-1, \quad
\chi(\mc F')=\chi(F ^\vee)+\chi(K^\vee)=-2d-1,
\]
so that we have:
\[
\frac{\chi(\mc F')}{2} <\chi(K^\vee),
\]
and by Lemma \ref{possiblesurjections},  $\mc F'$ is stable. By Lemma \ref{tripleandduality} below, the triple associated with $\mc F'$, is precisely
\[
(p, K^\vee, F^\vee).
\]
By applying Lemma \ref{Flocallyfree} to this triple, we see that if $K^\vee$ is locally free at $p$, then it is locally free everywhere.
\end{proof}

We remark that in the proof of the lemma above we showed that the sheaf $\mc F'$ defined in (\ref{Fprimo}) is a stable sheaf of type $I\! I$ on $\ogc$ with odd Euler characteristics. The following lemma describes how the triple (\ref{trripple}) changes when passing from $\mc F$ to $\mc F'$.

\begin{lm} \label{tripleandduality}
Let $\mc F$ be as in Lemma \ref{Flocallyfree}.  Let $\mc F'=\Shext^1_S(\mc F, \mc O_{S}(-\ogcred))$ be as
in (\ref{Fprimo}). Then the support of
 $T':=Tors(\mc F'_{|\ogcred})$  is the same as the support  $\{p\}$ of $T=Tors(\mc F_{|\ogcred})$. 
In particular,  if $(p, F, K) $ is the triple (\ref{trripple})  associated with  $F$, then $(p, K^\vee, F^\vee)$ is the triple (\ref{trripple})  associated with a sheaf  $\mc F'$  in $N^{I\!I}_{\ogc}$ with $\chi (\mc F')= -2d(F)-1$.
\end{lm}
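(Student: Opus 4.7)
The plan is to combine the short exact sequence
\[ 0 \to F^\vee \to \mc F' \to K^\vee \to 0 \]
established in the proof of Lemma~\ref{Klocfree} with the local freeness of $\mc F$ on $\ogc$ away from $p$ in order to both identify the triple associated with $\mc F'$ and locate the support of $T'$. First I would apply Lemma~\ref{possiblesurjections} to the stable type-$I\!I$ sheaf $\mc F'$ and to the surjection $\mc F' \twoheadrightarrow K^\vee$ onto the rank one torsion free sheaf $K^\vee$ on $\ogcred$: such a surjection must coincide, up to a non-zero scalar, with the canonical quotient of $\mc F'$ by $\text{Tors}(\mc F'_{|\ogcred})$, so one can set $F' := K^\vee$ and, taking kernels in the sequence above, $K' := F^\vee$. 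This already produces the second and third entries of the triple~(\ref{trripple}) for $\mc F'$.

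To pin down $\text{supp}(T')$, I would work on the open complement $U := \ogcred \setminus \{p\}$. Since $T = \mathbb C_p$ is supported at $p$, one has $\mc F_{|\ogcred}|_U = F|_U$, and by Lemma~\ref{Flocallyfree} (when $F_p$ is free) or Lemma~\ref{Klocfree} (otherwise) the sheaf $F|_U$ is locally free of rank one. A direct fiber-rank computation at every $q \in U$ then gives
\[ \mc F_q \otimes_{\mc O_{\ogc,q}} \mathbb C(q) \;=\; F_q \otimes_{\mc O_{\ogcred,q}} \mathbb C(q) \;\cong\; \mathbb C, \]
so Lemma~\ref{loc free naka} yields that $\mc F|_U$ is a line bundle on $\ogc|_U$. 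Invoking the identification~(\ref{Fprimo}), namely $\mc F' \cong \Shhom_\ogc(\mc F,\mc O_\ogc)\otimes_S\mc O_S(-\ogcred)$, together with the compatibility of $\Shhom$ with restriction to open subsets, it follows that $\mc F'|_U$ is likewise a line bundle on $\ogc|_U$. Its restriction to $\ogcred|_U$ is therefore torsion free, whence $T'|_U = 0$ and $\text{supp}(T') \subseteq \{p\}$.

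The parity identity~(\ref{chimcFechiF}) applied to $\mc F'$, whose Euler characteristic is odd, now forces $\chi(T') = 1$, so $T' \cong \mathbb C_p$ and $p' = p$. This completes the first conclusion and shows that the triple associated with $\mc F'$ is $(p, K^\vee, F^\vee)$. The equality $\chi(\mc F') = -2\,d(F)-1$ is then immediate from the sequence $0 \to F^\vee \to \mc F' \to K^\vee \to 0$, Riemann--Roch on the genus two curve $\ogcred$ (yielding $\chi(F^\vee)=-\chi(F)-2$ and $\chi(K^\vee)=-\chi(K)-2$), and the degree equalities~(\ref{degFdegK}); this is in fact already recorded in the proof of Lemma~\ref{Klocfree}. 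The one step I expect to require some care is the passage from local freeness of $\mc F$ on $\ogc|_U$ to that of $\mc F'$: this reduces, via~(\ref{Fprimo}) and Lemma~\ref{reflexive}, to the local character of $\Shhom$ and to the harmless fact that tensoring with the line bundle $\mc O_S(-\ogcred)$ preserves local freeness.
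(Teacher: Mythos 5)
Your identification of the triple $(p',K^\vee,F^\vee)$ via Lemma \ref{possiblesurjections} and your Euler-characteristic bookkeeping are fine, but the core step — locating $\mathrm{supp}(T')$ — rests on a claim you have not legitimately established, namely that $F$ (equivalently $\mc F$) is locally free on $U=\ogcred\setminus\{p\}$. Your dichotomy ``Lemma \ref{Flocallyfree} when $F_p$ is free, Lemma \ref{Klocfree} otherwise'' is not a dichotomy: Lemma \ref{Klocfree} has hypothesis ``$K_p$ is free,'' and the case where \emph{neither} $F_p$ nor $K_p$ is free is not covered. At this point of the paper that case has not been excluded (its exclusion comes only later, from the computations of $\Ext^1_S(F,K)$ and $\Ext^1_{\ogcred}(F,K)$ in the proof of Proposition \ref{odd case}); in that case $F$ and $K$ may be singular at a second point $q\neq p$, where the fiber of $\mc F$ has rank $2$, your line-bundle argument fails, and you can only conclude $\mathrm{supp}(T')\subseteq\{p\}\cup \mathrm{Sing}(F)$. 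More seriously, invoking the conclusion of Lemma \ref{Klocfree} here is circular: the paper's proof of Lemma \ref{Klocfree} explicitly appeals to ``Lemma \ref{tripleandduality} below'' (it needs to know the triple of $\mc F'$ is based at $p$ in order to apply Lemma \ref{Flocallyfree} to $\mc F'$), so Lemma \ref{Klocfree} is not available as an input to the present lemma unless you supply an independent proof of it.

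The paper avoids all local-freeness considerations: it compares the two short exact sequences $0\to F\otimes\mc I\xrightarrow{a} K\to T\to 0$ and $0\to K^\vee\otimes\mc I\xrightarrow{a'} F^\vee\to T'\to 0$ coming from the left column of diagram (\ref{diagram}) for $\mc F$ and $\mc F'$, and shows by factoring $id\otimes j$ through (\ref{dadualizzare})--(\ref{dadualizzare3}) that $a'$ is, up to twist by $\mc O_S(-\ogcred)$ and a scalar, the dual of $a$; hence $T'=\coker(a')$ is supported exactly where $T=\coker(a)=\C_p$ is. If you want to salvage your route, you would need to first prove, independently of Lemmas \ref{Klocfree} and \ref{tripleandduality}, that $F$ and $K$ are locally free on $\ogcred\setminus\{p\}$ with at most one of them non-free at $p$ (e.g.\ by moving the $\Ext$ computations of Lemmas \ref{ExtS} and \ref{ExtC} ahead of this lemma); as written, the argument has a genuine gap.
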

\begin{proof} 
We have already observed in the course of proving Lemma \ref{Klocfree} that $K^\vee= \mc F'_{|\ogcred} \slash T'$ and that $F^\vee =\ker[\mc F' \to K^\vee]$. Moreover, since $\mc F'$ is stable of odd Euler characteristic, by Proposition \ref{restriction of F to C}, $T'$ is a skyscraper sheaf supported at one point of $\ogcred$. We need to show that $Supp (T') =\{p\}$.  To this aim, consider the short exact sequences $0 \to F \otimes \mc I \stackrel{a}{\to}  K \to T \to 0 $ and $ 0 \to K^\vee \otimes \mc I \stackrel{a'}{\to}  F^\vee \to T' \to 0$ obtained by considering the left most column of (\ref{diagram}) for the sheaves $\mc F$ and $\mc F'$. We will show that $T $ and $ T'$ are supported on the same point by showing that the morphisms $a: F \otimes \mc I \to K $ and  $a'\otimes id_{\omega_\ogcred}: K^\vee \to  F^\vee \otimes \omega_\ogcred$ are dual to each other.
Consider the short exact sequence:
\beq \label{j}
0 \to \mc O_S(-\ogcred) \stackrel{j}{\to} \mc O_S \to \mc O_\ogcred \to 0,
\eeq 
and its dual:
\beq \label{jdual}
0 \to \mc O_S \stackrel{j^\vee}{\to} \mc O_S(\ogcred) \to \mc O_\ogcred(\ogcred)\cong \omega_\ogcred \to 0.
\eeq
Tensoring (\ref{j}) by $\mc F$ we  a morphism $\mc F \otimes_{\mc O_S} \mc O_S(-\ogcred) \to \mc F$ which by (\ref{diagram}) factors as follows:
\beq \label{dadualizzare}
\xymatrix{
\mc F \otimes_{\mc O_S} \mc O_S(-\ogcred) \ar@{->>}[d]^-{c} \ar[rr]^-{id_{\mc F} \otimes j}  & & \mc F\\
 F \otimes {\mc I} \ar@{^{(}->}[rr]^-{a} & & K \ar@{^{(}->}[u]^-{b} }
\eeq
Similarly, we can tensor  (\ref{j}) by $\mc F'$ and get a morphism   $\mc F' \otimes_{\mc O_S} \mc O_S(-\ogcred) \to \mc F'$ which factors as:
\beq \label{dadualizzare2}
\xymatrix{
\mc F' \otimes_{\mc O_S} \mc O_S(-\ogcred) \ar@{->>}[d]^-{c'} \ar[rr]^-{id_{\mc F'} \otimes j}  & & \mc F'\\
 K^\vee \otimes {\mc I} \ar@{^{(}->}[rr]^-{a'} & & F^\vee \ar@{^{(}->}[u]^-{b'} }
\eeq
We can also dualize  (\ref{dadualizzare}) by first applying $\Shext^1( -, \mc O_S(-\ogcred))$ and then tensoring the resulting diagram by the line bundle $\mc O_{S}(-\ogcred)$. Using (\ref{FdualKdual}) and (\ref{Fprimo}) we find:
\beq \label{dadualizzare3}
\xymatrix{
\mc F'\otimes_{\mc O_S} \mc O_S(-\ogcred)  \ar@{->>}[d]_{b^\vee \otimes id_{ \mc O_S(-\ogcred)}} \ar[rr]^-{(id_{\mc F} \otimes j)^\vee \otimes id_{ \mc O_S(-\ogcred) }}  & & \mc F' \\
 K^\vee \otimes \mc I \ar@{^{(}->}[rr]_{a^\vee\otimes id_{ \mc O_S(-\ogcred)} }& & F^\vee \ar@{^{(}->}[u]_{c^\vee \otimes id_{ \mc O_S(-\ogcred)}}  }
\eeq
Using (\ref{j}) - (\ref{jdual}) we see that $(id_{\mc F} \otimes j)^\vee \otimes id_{ \mc O_S(-\ogcred) }=id_{\mc F'} \otimes j^\vee  \otimes id_{ \mc O_S(-\ogcred)}=id_{\mc F'} \otimes j$. It follows that diagrams (\ref{dadualizzare2}) - (\ref{dadualizzare3}) are two ways of factoring the same morphism into the product of three morphisms. By the first part of Lemma \ref{possiblesurjections}, the surjective morphism $b^\vee \otimes id_{ \mc O_S(-\ogcred)}: \mc F'\otimes_{\mc O_S} \mc O_S(-\ogcred)  \to  K^\vee \otimes \mc I$ coincides (up to a scalar) with the morphism $c'$ of diagram (\ref{dadualizzare2}).
Since this morphism is surjective, and the two morphism $b'$ and $c^\vee \otimes id_{ \mc O_S(-\ogcred)}$ are injective, we may conclude that $a^\vee\otimes id_{ \mc O_S(-\ogcred)}=a'$ and we are done.
\end{proof}

We are now in the position to prove the ``$\chi$ odd" analogue of Proposition \ref{even case}.

\begin{pr} [{\bf The case of $\chi$ odd: the structure of $\ogn^{I\!I}_{\ogc}$ and $\ogn^{I\!I}_{\Delta}$}] \label{odd case} Let $\ogc=2\ogcred \in \Delta \subseteq \ogb$. The fiber $\ogn^{I\!I}_{\ogc}$ is irreducible. More precisely, the reduced underlying variety $(\ogn_{\ogc}^{I\!I})_{red}$ is an affine bundle of rank $2$ over a $3$-dimensional irreducible locally closed subvariety $Z \subset \ogcred \times \bar Jac^d(\ogcred) \times \bar Jac^{d-1}(\ogcred)$ parametrizing triples $(p, F, K) $ that arise from $\mc F \in \ogn_{\ogc}^{I\!I}$ as in (\ref{trripple}). The fiber of this bundle  over a point $(p, F, K)$ is given by:
\beq \label{c3bundle}
\mathbb P\Ext^1_{\ogc}(F, K) \setminus \mathbb P \Ext^1_\ogcred(F, K),
\eeq
which is isomorphic to $\mathbb C^2$. Here, the inclusion $\Ext^1_\ogcred(F, K) \subset \Ext^1_{\ogc}(F, K)$ inducing (\ref{c3bundle}) is given as in (\ref{extcoeff}) by the change of coefficients exact sequence.
\end{pr}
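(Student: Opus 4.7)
\medskip

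\noindent The plan is to first define a morphism $\phi \colon \ogn_{\ogc}^{I\!I} \to \ogcred \times \bar{\mathrm{Jac}}^{d}(\ogcred) \times \bar{\mathrm{Jac}}^{d-1}(\ogcred)$ by sending $\mathcal F$ to the triple $(p,F,K)$ canonically associated with $\mathcal F$ by Proposition~\ref{restriction of F to C}, let $Z$ be its (reduced) image, and then analyze separately the fiber structure of $\phi$ and the geometry of $Z$. To set this up algebraically, one uses a (local) universal family on $\ogn_{\ogc}^{I\!I}\times S$, restricts it to the reduced subcurve, and applies the relative torsion/restriction constructions in (\ref{diagram}) to obtain a universal triple, hence $\phi$ as a morphism of schemes; that $Z$ is locally closed will follow from the openness of stability together with the constructibility of the conditions on $(p,F,K)$ explained below.

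\medskip

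\noindent For a fixed $(p,F,K)\in Z$, any $\mathcal F$ with triple $(p,F,K)$ arises as an extension $0\to K\to \mathcal F\to F\to 0$, an $\mathcal O_{\ogc}$-module class in $\Ext^1_{\ogc}(F,K)=\Ext^1_S(F,K)$ (cf.\ Remark~\ref{rmkextensions}). Since $\End(F)=\End(K)=\mathbb C$ on the integral curve $\ogcred$, two classes give isomorphic $\mathcal F$ exactly when they differ by a scalar, and by the change-of-coefficients sequence~(\ref{extcoeff}) the class corresponds to a type $I\!I$ sheaf iff it is not in $\Ext^1_{\ogcred}(F,K)$; stability is then automatic by Lemma~\ref{possiblesurjections}, since $\chi(\mathcal F)=2\chi(F)-1<2\chi(F)$. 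Thus the fiber is $\mathbb P\Ext^1_{\ogc}(F,K)\setminus \mathbb P\Ext^1_{\ogcred}(F,K)$. To compute dimensions, I would use $\mathcal I\cong \omega_{\ogcred}^{-1}$ and Serre duality on the integral Gorenstein curve $\ogcred$: this gives $\Ext^2_{\ogcred}(F,K)=0$ and reduces (\ref{extcoeff}) to a short exact sequence. The group $\Hom_{\ogcred}(F\otimes \mathcal I,K)=\Hom_{\ogcred}(F,K\otimes\omega_{\ogcred})$ contains the canonical non-zero map $a\colon F\otimes \mathcal I\hookrightarrow K$ from the left column of (\ref{diagram}), and an $h^0=h^1=1$ Riemann--Roch computation on genus~$2$ shows it is exactly one-dimensional; combined with $\dim \Ext^1_{\ogcred}(F,K)=2$ (Serre duality plus Riemann--Roch), this yields $\dim \Ext^1_{\ogc}(F,K)=3$, so $\mathbb P^2\setminus \mathbb P^1\cong \mathbb A^2$.

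\medskip

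\noindent For $Z$ itself, the leftmost vertical exact sequence of (\ref{diagram}) shows that $(p,F,K)\in Z$ precisely when $K$ is a non-split extension of $\mathbb C_p$ by $F\otimes \mathcal I$; conversely, given any such triple, the discussion above produces a stable $\mathcal F\in \ogn_{\ogc}^{I\!I}$, so $Z$ coincides with the locally closed locus of such triples. Projecting $\psi\colon Z\to \ogcred\times \bar{\mathrm{Jac}}^{d}(\ogcred)$, the fiber over $(p,F)$ is $\mathbb P\Ext^1_{\ogcred}(\mathbb C_p, F\otimes \mathcal I)$, which by Lemma~\ref{localbehavior}(4)--(5) is a reduced point whenever $F$ is locally free at $p$ (the unique non-trivial extension being $F\otimes \mathcal I\otimes (m_p)^\vee$) and is $\mathbb P^1$ over the proper closed locus where $F$ fails to be locally free at $p$. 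Thus $\psi$ is birational onto a dense open of the irreducible threefold $\ogcred \times \bar{\mathrm{Jac}}^{d}(\ogcred)$, so $Z$ is irreducible of dimension $3$.

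\medskip

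\noindent Finally, to upgrade the pointwise fiber description to the \emph{affine bundle} structure claimed, I would apply the relative $\mathcal Ext$ formalism on $Z$. Using the universal triple $(\widetilde p, \widetilde F,\widetilde K)$ on $Z\times \ogc$, the sheaves $\mathcal Ext^1_{\pi_Z}(\widetilde F,\widetilde K)$ and $\mathcal Ext^1_{\pi_Z}(\widetilde F,\widetilde K)_{\ogcred}$ (taken over $Z\times \ogcred$) are coherent, and, because the stalk dimensions just computed are constant on $Z$, they are locally free of ranks $3$ and $2$ respectively, with the second a subbundle of the first; the resulting projective bundle minus its projective subbundle yields the rank-two affine bundle over $Z$, and standard Quot-scheme machinery identifies its total space with $(\ogn_{\ogc}^{I\!I})_{red}$. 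The main obstacle is precisely this last step: verifying that the relative $\mathcal Ext$ sheaves have constant rank on all of $Z$ (in particular over the stratum where $F$ is not locally free at $p$, where one has to argue via the duality $\mathcal F\leftrightarrow \mathcal F'$ of Lemma~\ref{tripleandduality} that the previous dimension count remains valid), and identifying the resulting bundle with $\ogn_{\ogc}^{I\!I}$ in a family, rather than just pointwise.
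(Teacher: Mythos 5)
Your overall strategy is the same as the paper's: realize each $\mc F\in\ogn^{I\!I}_{\ogc}$ as an extension of $F$ by $K$, identify the fiber over a triple $(p,F,K)$ with $\mathbb P\Ext^1_{\ogc}(F,K)\setminus\mathbb P\Ext^1_{\ogcred}(F,K)$, and fiber $Z$ over $\ogcred\times\bar Jac^d(\ogcred)$. However, there is a genuine gap in the step you yourself flag as the main obstacle, and your proposed fix does not close it. Your computation $\dim\Ext^1_{\ogcred}(F,K)=2$, $\dim\Ext^1_{\ogc}(F,K)=3$ via Serre duality and Riemann--Roch is only valid when at least one of $F,K$ is locally free near each singular point: when $F$ and $K$ are both non-free at a common point, the local-to-global spectral sequence contributes the local terms $\Shext^1_{\ogcred}(F,K)\cong\C^2$ at each such point (Lemma \ref{cook}), and likewise $\hom_{\ogcred}(K,F)$ jumps from $1$ to $2$ because the common partial normalization becomes rational. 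The duality $\mc F\leftrightarrow\mc F'$ of Lemma \ref{tripleandduality} only exchanges the roles of $F$ and $K$, so it cannot rescue the count in the case where \emph{both} are singular at the same point. The actual resolution, carried out in Lemmata \ref{ExtS} and \ref{ExtC}, is not that the count "remains valid" there, but that in exactly those cases one gets $\Ext^1_{\ogcred}(F,K)=\Ext^1_{\ogc}(F,K)$ (both $\C^3$ or both $\C^4$), so the fiber (\ref{c3bundle}) is \emph{empty} and such triples do not belong to $Z$ at all. This exclusion is what makes the ranks $(3,2)$ constant on $Z$ and hence what makes the affine-bundle statement true.

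The same omission corrupts your description of $Z$: it is not the locus of all triples with $K$ a non-split extension of $\C_p$ by $F\otimes\mc I$, but the intersection of (the graph of) that condition with the open set $U$ where $F$ and $K$ are locally free on $\ogcred\setminus p$ and at most one fails to be free at $p$. In particular, over the stratum where $p\in Sing(F)$ the fiber of $Z$ is only the open part ($\C$ or $\C^*$) of $\mathbb P\Ext^1_{\ogcred}(\C_p,F\otimes\mc I)\cong\pn{1}$ where $K$ is locally free, not the whole $\pn{1}$. Relatedly, your irreducibility argument ("$\psi$ is birational onto a dense open, so $Z$ is irreducible") is not by itself conclusive: the positive-dimensional fibers sweep out a locus of the same dimension $3$ over the $2$-dimensional divisor $\{p\in Sing(F)\}$, which could a priori form a separate component; one must check (as the paper does via the explicit identifications (\ref{FKlf})--(\ref{neitherlf})) that $Z=\Gamma_\psi\cap U$ and that this is irreducible. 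Without the case analysis of the singular strata, neither the constancy of the fiber dimension, nor the correct identification of $Z$, nor its irreducibility is established.
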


\begin{proof} By (\ref{diagram}), every $\mc F \in \ogn^{I\!I}_{\ogc}$ is obtained as an extension of $F$ by $K$, so $\mc F$ determines an element in $\Ext^1_{\ogc}(F, K)$.  Conversely,  the extensions in $\Ext^1_{\ogc}(F, K) \setminus \Ext^1_{\ogcred}(F, K)$ (cf. (\ref{extcoeff})) correspond to stable sheaves of type $I\!I$ on $\ogc$ with odd Euler characteristics. We now show that $\ogn^{I\!I}_{\ogc}$, with its reduced induced structure, is a fibration in affine spaces $\mathbb P\Ext^1_{\ogc}(F, K) \setminus \mathbb P \Ext^1_\ogcred(F, K)$ over an irreducible variety $Z$ parametrizing the set of possible $F$ and $K$.

To see this, we start by computing $\Ext^1_{\ogc}(F, K)$ and $  \Ext^1_\ogcred(F, K)$.
By Remark \ref{rmkextensions},  $\Ext^1_{\ogc}(F, K) = \Ext^1_S(F, K)$. The computations of both of these ext groups depend on the singularities of $\ogcred$, as well as on the local behavior of $F$ and $K$ at  $p \in \ogcred$, or at any other singular point of $\ogcred$.  In Lemmata \ref{Flocallyfree} and \ref{Klocfree}, we  ruled out some possibilities for the local behavior of the triples arising from a sheaf $\mc F \in \ogn^{I\!I}_{\ogc}$. In the proof of this proposition we will rule out one further case.
We  say that $F$ (or $K$) is singular at a point $p \in \ogcred$ if $F$ (respectively, $K$) is not locally free at $p$. We denote by $Sing(F) \subset \ogcred$ the locus of points where $F$ is singular, and similarly for $K$.
Throughout, we will use the following fundamental short exact sequence from (\ref{diagram})
\beq \label{fundes}
0 \to F \otimes \mc I \to K \to \mathbb C_p \to 0,
\eeq
relating $F$, $K$, and $p$, where $\mc I \cong \omega_\ogcred^\vee$. Notice that (\ref{fundes}) implies that $F$ and $K$ are locally isomorphic away from $p$.
Using Lemma \ref{localbehavior} we have the following relations between $F$ and $K$ in terms of their local behavior at $p$. Clearly, if $F$ and $K$ are both locally free, then $p$ has to be a smooth point of $\ogcred$ and we find: 
\beq \label{FKlf}
K \cong F \otimes \mc I \otimes \mc O_\ogcred(p).
\eeq
If $F$ is locally free, but $K_p$ is not free, then $K$ is uniquely determined:
\beq \label{Flf}
K \cong F \otimes \mc I \otimes m_p^\vee.
\eeq
If $K$ is locally free, but $F_p$ is not free we find:
\beq \label{Klf}
F \cong K \otimes \mc I^\vee \otimes m_p.
\eeq
If this is the case, then $K$ is not uniquely determined by $F$ and $p$. Indeed, by $(2)$ Lemma \ref{localbehavior},  there is whole $\mathbb P^1=\mathbb P \Ext^1(\mathbb C_p, F \otimes \mc I ) $ of possible (not mutually isomorphic) $K$ fitting in a  short exact sequence as in (\ref{fundes}). The locally free $K$ correspond to open subsets $\mathbb C^* \subset \mathbb P^1$, if $p$ is a node, or $\mathbb C \subset \mathbb P^1$, if $p$ is a cusp.

Finally, if neither $F_p$ nor $K_p$ are free, then we can write
\beq \label{neitherlf}
K=n_* G, \,\, F=n_*L, \,\, \text{ with } G=L \otimes n^* \mc I \otimes \mc O_{\wh \ogcred}(\wh p),
\eeq
and where $n: \wh \ogcred \to \ogcred$ is the normalization of $\ogcred$ at $Sing (F)=Sing(K)$, where $L$ and $G$ are locally free on $\wh \ogcred$, and  $\wh p \in n^{-1}(p) \subset \wh \ogcred$. In the course of the proof we will show that this last case does not occur if $F$, $K$, and $p$ arise from a stable sheaf of type $I\!I$.

\begin{lm} \label{ExtS} Let $F$, $K$ and $p$ be as in (\ref{trripple}). Then
\[
\Ext^1_S(F,K)=\left\{ \begin{array}{cc}
\C^3 & \text{ if $F$ or $K$ are locally free on $\ogcred \setminus p$} \\
\C^4 & \text{ otherwise }
\end{array}\right.
\]
\end{lm}
\begin{proof}
The Euler pairing (cf. \cite[6.1.5]{HL}) and Serre duality on $\ks$  yield:
\beq \label{eulerpairing}
\begin{aligned}
\chi(F,K):&=\dim \Hom_S(F,K)-\dim\Ext^1_S(F,K)+\dim\Hom_S(K,F)\\
& =-v(F) \cdot v(K)=-\ogcred^2=-2.
\end{aligned}
\eeq
By (\ref{degFdegK}), $\deg F=\deg K+1$, so $\Hom_S(F,K)=\Hom_\ogcred(F,K)=0$.
To prove the Lemma it is thus enough to show that $\dim \Hom_\ogcred(K,F)=1$ if $F$ or $K$ are locally free away from $p$ and $\dim \Hom_\ogcred(K,F)=2$ otherwise. This is  done using the expressions of equations (\ref{FKlf}), (\ref{Flf}), (\ref{Klf}), and (\ref{neitherlf}).  For example, if $F$ is locally free,  then by (\ref{FKlf}) and (\ref{Flf}), we have
\[
\Hom_\ogcred(K,F)=\Hom_\ogcred(F \otimes \mc I \otimes m_p^\vee, F)=H^0(\ogcred,   \mc I^\vee \otimes m_p)=\C.
\]
A similar computation (using (\ref{Klf})) holds if $K$ is locally free, but $F_p$ is not free.
If neither $F_p$ nor $K_p$ are free, then using $(\ref{neitherlf})$ and Lemma \ref{lemmino} we have
\[
\Hom_\ogcred(K,F)=\Hom_{\wh \ogcred}(L \otimes n^* \mc I \otimes \mc O_{\wh \ogcred}(\wh p), L)=H^0(\wh \ogcred,   n^* \mc I^\vee \otimes \mc O_{\wh \ogcred}(-\wh p)).
\]
It follows that if $\wh \ogcred$ is a curve of arithmetic genus $1$ (i.e., $F$ and $K$ are locally free on $\ogcred \setminus p$), then $H^0(\wh \ogcred,   n^* \mc I^\vee \otimes \mc O_{\wh \ogcred}(-\wh p))=1$, while if $\wh \ogcred$ is a smooth rational curve, then $H^0(\wh \ogcred,   n^* \mc I^\vee \otimes \mc O_{\wh \ogcred}(-\wh p))=2$. 
\end{proof}

\begin{lm} \label{ExtC} Let $F$, $K$ and $p$ as in (\ref{trripple}). Then
\[
\Ext^1_\ogcred(F,K)=\left\{ \begin{array}{cc}
\C^2 & \text{ if $F$ or  $K$ is locally free} \\
\C^3  & \text{ if $Sing(F) = Sing (K)=\{p\}$} \\
\C^4 & \text{ otherwise }
\end{array}\right.
\]
\end{lm}
\begin{proof}
We use the local to global spectral sequence, which yields the long exact sequence
\beq \label{localtoglobal}
0 \to H^1(\ogcred, \Shhom_\ogcred(F,K)) \to \Ext^1_\ogcred(F,K) \to H^0(\ogcred, \Shext^1_\ogcred(F,K)) \to H^2(\ogcred, \Shhom_\ogcred(F,K))=0.
\eeq
We thus  have to compute $H^1(\ogcred, \Shhom(F, K))$ and $H^0(\ogcred, \Shext^1_\ogcred(F, K))$. Using (\ref{FKlf}), (\ref{Flf}), (\ref{Klf}), and (\ref{neitherlf}) we see that
\[
\Shhom(F, K)=\left\{ \begin{array}{cc}
 \mc I \otimes m_p^\vee & \text{ if $F_p$ or $K_p$ is free}\\
 n_*(n^* \mc I \otimes \mc O_{\wh \ogcred}(\wh p)) & \text{ otherwise.}
 \end{array}\right.
\]
This immediately implies that $H^1(\ogcred, \Shhom(F, K))=H^1(\ogcred, \mc I \otimes m_p^\vee)=\C^2$ if $F_p$ or $K_p$ is free; that $H^1(\ogcred, \Shhom(F, K))=H^1(\wh \ogcred, n^* \mc I \otimes \mc O_{\wh \ogcred}(\wh p))=\C$ if $Sing(F)=Sing(K)=\{p\}$ so $\wh \ogcred$ is of arithmetic genus $1$; and finally that  $H^1(\ogcred, \Shhom(F, K))=H^1(\wh \ogcred, n^* \mc I \otimes \mc O_{\wh \ogcred}(\wh p))=0$ if $\# Sing(F)=\# Sing(K)=2$ so that $\wh \ogcred$ is a smooth rational curve. To compute $H^0(\ogcred, \Shext^1_\ogcred(F, K))$ we use Lemma \ref{cook} to find:
\beq \label{exisuipunti}
\Shext^1_\ogcred(F,K)=\bigoplus_{x \in Sing(F)} \Ext^1_{\mc O_{\ogcred,x}}(F_x,K_x)\simeq \bigoplus_{x \in Sing(F)} \C^2. 
\eeq
Putting everything together yields the proof of the Lemma.
\end{proof}

As a consequence of these two lemmas we see that the inclusion $\Ext^1_\ogcred(F,K) \subset \Ext^1_S(F,K)$
of Remark \ref{rmkextensions} is proper, i.e., there is a sheaf of type $I\!I$ on $\ogcred$ that is an extension of $K$ by  $F$ if and only if $F$ and $K$ are locally free $\ogcred \setminus p$ and at most one is not locally free at $p$. When this is the case, then
\beq  \label{dimext}
\dim \Ext^1_\ogcred(F,K)=2, \quad \dim \Ext^1_S(F,K)=3.
\eeq
Now let
\[
Z \subset  \ogcred \times \bar Jac^d(\ogcred) \times \bar Jac^{d-1}(\ogcred)
\] 
be the set of such triples. The discussion above shows that $Z$ is contained in the open set $U\subset  \ogcred \times \bar Jac^d(\ogcred) \times \bar Jac^{d-1}(\ogcred)$ consisting of points such that $F$ and $K$ are locally free $\ogcred \setminus p$ and at most one is not locally free at $p$. More precisely, we claim that $Z$ is the intersection of $U$ with the graph $\Gamma_\psi$ of the rational map
\[
\begin{aligned}
\ogcred \times \bar Jac^d(\ogcred) &\stackrel{\psi}{ \dashrightarrow}  \bar Jac^{d-1}(\ogcred) \\
(p, F) & \longmapsto  F \otimes \mc I \otimes m_p^\vee.
\end{aligned}
\]
Indeed, the fiber of the projection $\Gamma \to \ogcred \times \bar Jac^d(\ogcred)$ consists of: the sheaf  $F \otimes \mc I \otimes m_p^\vee$  over the locus where $F$ is locally free at $p$; the isomorphism classes of extensions $\mathbb P^1=\mathbb P \Ext^1_\ogcred(\mathbb C_p, F\otimes \mc I)$ over the points where $p \in Sing (F)$ (cf. Lemma \ref{localbehavior}). It follows from (\ref{FKlf}), (\ref{Flf}), (\ref{Klf}), and from the remarks shortly thereafter, that $Z=\Gamma \cap U$. In particular, $Z$ is an irreducible variety of dimension $3$.

We are left with showing that the reduced fiber $(\ogn_{\ogc}^{I\!I})_{red}$ is a $2$--dimensional affine bundle over $Z$, with fiber $ \mathbb P\Ext^1_S(F,K) \setminus \mathbb P \Ext^1_\ogcred(F,K)$  over a point $(p, F, K)$. To see this, first note that if $\mc F_W$ is flat family of sheaves in $\ogn_{\ogc}^{I\!I}$, parametrized by a reduced scheme $W$, then we can preform the construction of Proposition \ref{restriction of F to C} in families to get $W$-flat families $F_W$ and $K_W$, of rank $1$ torsion free sheaves of degree $d$ and $d-1$, respectively, as well as a flat family of degree one skyscraper sheaves $T_W$. By the universal property of compactified Jacobians, these families induce a regular morphism $W \to \ogcred \times \bar Jac^d(\ogcred) \times \bar Jac^{d-1}(\ogcred)$ whose image is contained in $Z$. Now choose as $W$ the preimage of $\ogn_{\ogc}^{I\!I}$ in the appropriate Quot scheme, taken with its reduced induced structure. Then $W$ is such that its quotient by the group action on the Quot scheme is exactly $(\ogn_{\ogc}^{I\!I})_{red}$ (cf. \cite[(3) pg 29]{Mumford-GIT}). The equivariant morphism $W \to Z $ factors via the quotient morphism $W \to (\ogn_{\ogc}^{I\!I})_{red}$ and induces a surjective morphism $(\ogn_{\ogc}^{I\!I})_{red} \to Z$. By (\ref{dimext}), the fibers of this morphism are affine $2$--dimensional spaces.

This ends the proof of Proposition \ref{odd case}
\end{proof}

\begin{rmk} \label{NDelta non reduced} By \cite[Lem. 4.12]{Chen-Kass}, the varieties 
$\ogn_{\ogc}^{I\!I}$ and $\ogn^{I\!I}_{\Delta}$ are non-reduced.
\end{rmk}

\subsection{The top degree direct image sheaves $R^{10}$ and the local systems $\ms{L}$}\la{tsr1}$\;$

The purpose of this section is to prove Proposition \ref{rtop} which describes the
restriction of the sheaves $R^{10} \pogm_* \rat $ and $R^{10}\pogn_* \rat$ to the locally closed subvarieties  
$\ogb\setminus \Sigma$,
$\Sigma\setminus\Delta$ and $\Delta$ of $\ogb$.
The proof of the  proposition uses:
 the analysis
of the irreducible components of the restrictions of 
$\ogm, \ogmm$ and $\ogn$  to the loci $\ogb \setminus \Sigma, \Sigma \setminus \Delta$ and $\Delta$
and of the corresponding fibers (Proposition 
\ref{propriass});
some basic properties of the trace morphism and of the direct image sheaf in top degree, as summarized in Fact \ref{fatti da usare};  the general topological Lemma \ref{rtoplemma}.

\begin{fact}\la{fatti da usare}
{\rm 
({\bf Trace morphism and direct image in top degree)
}}
In this section, we need the following three sets of facts:
\ben
\item
The basic properties of  the trace morphism ${\rm Tr}_f: R^{2d} f_! \rat_X (d) \to \rat_T$, 
for a flat morphism  $f: X\to T$ of relative dimension $d$. See \ci{sga43}, especially  Exp. XVIII, Th\'eor\`eme 2.9
(functoriality and compatibility properties),
and Remarque 2.10.1 (the trace morphism is an isomorphism if and only if all the fibers of $f$ have a unique  irreducible component of dimension $d$).

\item
If a flat morphism $f: X\to T$ of relative dimension  $d$  has reduced fibers, then the sheaf
$R^{2d} f_! \rat_X (d)$ is  the $\rat_T$-linearization of the sheaf of sets of irreducible components
of the fibers of $f$; see \ci[Proposition 7.3.2]{ngofl 2008}.

\item 
Let $f: X\to T$ be a projective morphism of pure relative dimension $d$ and with irreducible fibers.
Let $L^d: \rat_X \to \rat_X[2d]$ be the morphism induced by the $d$-th power of the
first Chern class of an $f$-ample line bundle  $L$ on $X$.
In view of the irreducibility of the fibers, by  pushing forward, we get an isomorphism $L^d: \rat_T \stackrel{\sim}\to  R^{2d}f_* \rat_X$.

Moreover,  if $U\subset X$ is a Zariski  open subset intersecting every fiber of $f$ and $f_{U}:U\rightarrow T$ is the restriction of $f$ then $R^{2d}f_{U!} \rat_X \simeq R^{2d}f_* \rat_X\simeq \rat_{T}$. 
In fact, by denoting by $f_{X\setminus U}: X\setminus U\rightarrow T$ the restriction of $f$, the  statement follows from the exact sequence:
$$
\xymatrix{
 R^{2d-1} f_{X\setminus U*} \rat_{X\setminus U} =0  \ar[r] &
R^{2d} f_{U!} \rat_{U} \ar[r]^-{R^{2d} j_!}_-\simeq&  
R^{2d} f_{*} \rat_{X} \ar[r]&   
0= R^{2d} f_{X\setminus U*} \rat_{X\setminus U},
}
$$
where the vanishing statements hold since the fibers of $f_{X\setminus U}$ have at most dimension $d-1$.
\een
\end{fact}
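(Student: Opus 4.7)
The plan is to derive each of the three sets of facts from the cited references via proper base change. For part (1), I recall that ${\rm Tr}_f$ is constructed in SGA 4.3, Exp. XVIII, Th\'eor\`eme 2.9 as the counit of the $(f_!, f^!)$ adjunction composed with the purity isomorphism $f^! \rat_T \simeq \rat_X[2d](d)$ available for flat $f$ of relative dimension $d$; its functoriality and base change compatibility are formal consequences of the six-functor formalism. The characterization in Remarque 2.10.1 (that ${\rm Tr}_f$ is an isomorphism iff every fiber has a unique $d$-dimensional irreducible component) is then verified stalkwise: by proper base change, the stalk at $t$ is $H^{2d}_c(X_t,\rat)$, which is the free $\rat$-vector space on the top-dimensional irreducible components of $X_t$, and the trace at $t$ is the sum-of-coordinates map onto $\rat$, which is an isomorphism precisely when there is a unique such component.

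For part (2), the argument is parallel: by proper base change, $(R^{2d} f_! \rat_X)_{\bar t} = H^{2d}_c(X_{\bar t}, \rat)$, and when $X_{\bar t}$ is reduced and equidimensional of dimension $d$, this is canonically the free $\rat$-vector space on the (necessarily top-dimensional) irreducible components of $X_{\bar t}$. The fact that these stalks assemble into the $\rat_T$-linearization of a constructible \'etale sheaf of finite sets is precisely \ci[Proposition 7.3.2]{ngofl 2008}, whose proof uses that the locus in $T$ over which two given components of nearby fibers specialize to the same component of a given fiber is \'etale-locally closed.

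For part (3), the isomorphism $L^d : \rat_T \stackrel{\sim}\to R^{2d} f_* \rat_X$ is again verified stalkwise: since $X_t$ is irreducible of dimension $d$, the composite $\rat = H^0(X_t,\rat) \to H^{2d}(X_t,\rat) = \rat$ given by cupping with $c_1(L|_{X_t})^d$ is multiplication by the positive number $(L|_{X_t})^d$, hence nonzero and thus an isomorphism. For the $U$-variant, the excision distinguished triangle associated to the open-closed decomposition $U \hookrightarrow X \hookleftarrow X \setminus U$ yields
\[
R^{2d-1} f_{X\setminus U *} \rat \to R^{2d} f_{U!}\rat \to R^{2d} f_* \rat \to R^{2d} f_{X\setminus U *}\rat,
\]
and both outer terms vanish because the fibers of $f_{X\setminus U}$ have dimension strictly less than $d$ (as $U$ meets every fiber of the equidimensional $f$), so that $R^i f_{X\setminus U *}\rat = 0$ for $i \geq 2d$.

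The hard part is essentially bookkeeping: tracking the Tate twist in the trace morphism, distinguishing $Rf_!$ from $Rf_*$ (these agree for the proper morphisms actually used in the paper), and confirming that the SGA convention for the trace is compatible with the normalization under which cupping with $c_1(L)^d$ computes $(L|_{X_t})^d$ in the top cohomology of the fiber.
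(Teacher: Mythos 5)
Your proposal matches the paper's treatment: parts (1) and (2) are handled by citation to SGA~4 Exp.~XVIII and to Ng\^o's Proposition 7.3.2, with the stalkwise identification of $R^{2d}f_!\rat_X$ via base change, and part (3) is exactly the paper's argument, namely the positivity of $(L|_{X_t})^d$ on an irreducible $d$-dimensional fiber plus the excision exact sequence with both outer terms killed by the dimension bound on the fibers of $f_{X\setminus U}$. One caveat on part (1): your description of ${\rm Tr}_f$ as the counit of the $(f_!,f^!)$ adjunction composed with a purity isomorphism $f^!\rat_T\simeq \rat_X[2d](d)$ is only valid for \emph{smooth} $f$; for merely flat $f$ --- which is the case actually needed here, since the fibers are singular and even non-reduced --- $f^!\rat_T$ is not the shifted constant sheaf, and SGA~4 Exp.~XVIII constructs the trace by a different route, so the existence, functoriality and base-change compatibility of ${\rm Tr}_f$ should be taken from that reference rather than deduced from purity. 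The facts you actually use downstream --- that the stalk of $R^{2d}f_!\rat_X$ at $t$ is the free $\rat$-vector space on the $d$-dimensional irreducible components of $X_t$ and that the trace is the sum-of-coordinates map --- are unaffected by this.
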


\begin{lm}\la{rtoplemma}
Let $X$ be a variety of pure dimension $d+n$, let $T$ be a normal irreducible variety of dimension $n$ and let 
$f:X\rightarrow T$ be a projective surjective morphism. 

Let $\emptyset=X_{0}\subset X_{1}\subset X_{2}\subset\cdots\subset X_{m-1}\subset X_{m}=X$ be a filtration of $X$ by closed subvarieties such that, for $0<i\le m$:
\begin{enumerate} 
\item{Every fiber of the restriction 
$f^{0}_{i}:X_{i}\setminus X_{i-1}\rightarrow T$ of $f$ has  dimension $d$,} 
\item{$R^{2d} f^{0}_{i!}\rat_{X_{i}\setminus X_{i-1}}\simeq \rat_{T}$.}  
\end{enumerate}
 
Then the direct image sheaf in top degree $R^{2d}f_{*}\rat_{X}$ is the trivial  local system of rank $m.$
\end{lm}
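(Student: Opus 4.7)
The plan is to proceed by induction on $i$, proving the strengthened claim that $R^{2d}(f_i)_* \rat_{X_i}$ is the trivial local system of rank $i$ on $T$, where $f_i := f|_{X_i}$; the case $i = m$ is the statement of the lemma. The base case $i = 1$ is immediate: since $X_0 = \emptyset$ we have $f_1 = f_1^0$, and hypothesis (2) gives $R^{2d}(f_1)_* \rat_{X_1} \simeq \rat_T$ (note $f_1$ is proper).

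For the inductive step, the open-closed decomposition $X_i = X_{i-1} \sqcup (X_i \setminus X_{i-1})$ yields a distinguished triangle $j_! \rat_{X_i \setminus X_{i-1}} \to \rat_{X_i} \to k_* \rat_{X_{i-1}} \to [1]$, and applying $R(f_i)_* = R(f_i)_!$ (using properness of $f_i$, which is the restriction of the proper $f$) together with $R(f_i)_! \circ j_! = R(f_i^0)_!$ produces the triangle
\[
R(f_i^0)_! \rat \longrightarrow R(f_i)_* \rat_{X_i} \longrightarrow R(f_{i-1})_* \rat_{X_{i-1}} \longrightarrow [1].
\]
In degree $2d$, the long exact sequence of cohomology sheaves, combined with the vanishing $R^{2d+1}(f_i^0)_! = 0$ coming from hypothesis (1), the isomorphism $R^{2d}(f_i^0)_! \simeq \rat_T$ from hypothesis (2), and the inductive identification $R^{2d}(f_{i-1})_* \rat_{X_{i-1}} \simeq \rat_T^{\oplus(i-1)}$, reduces to
\[
R^{2d-1}(f_{i-1})_* \rat \;\longrightarrow\; \rat_T \;\longrightarrow\; R^{2d}(f_i)_* \rat_{X_i} \;\longrightarrow\; \rat_T^{\oplus(i-1)} \;\longrightarrow\; 0.
\]
A stalk-wise dimension count via proper base change, using the classical identification of $H^{2d}$ of a projective $d$-dimensional variety with the free $\rat$-vector space on its $d$-dimensional irreducible components, shows that every top-dimensional component of $(X_i)_t$ either lies in $(X_{i-1})_t$ or restricts to the unique top-dimensional component of $(f_i^0)^{-1}(t)$, yielding $\dim (R^{2d}(f_i)_* \rat_{X_i})_t = (i-1) + 1 = i$. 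This forces the map $\rat_T \to R^{2d}(f_i)_* \rat_{X_i}$ to be injective, producing the short exact sequence $0 \to \rat_T \to R^{2d}(f_i)_* \rat_{X_i} \to \rat_T^{\oplus(i-1)} \to 0$; together with the constancy of stalk ranks on the irreducible normal base $T$, this realizes $R^{2d}(f_i)_* \rat_{X_i}$ as a local system of rank $i$.

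To upgrade this to a trivialization, I would exhibit $i$ linearly independent global sections coming from algebraic cycles. For each $k = 1, \ldots, i$, the hypothesis $R^{2d}(f_k^0)_! \rat \simeq \rat_T$ forces every fiber of $f_k^0$ to have a unique top-dimensional irreducible component; the flat-locus description from Fact (2) together with taking closures in $X$ glues these components into a closed irreducible subvariety $W_k \subseteq X_k$ of dimension $d + n$. Since $W_k$ has codimension $0$ inside the pure-dimensional $X_i$, the closed immersion $\iota_k: W_k \hookrightarrow X_i$ satisfies $\iota_k^! \rat_{X_i} \simeq \rat_{W_k}$, so that adjunction furnishes a morphism $(\iota_k)_* \rat_{W_k} \to \rat_{X_i}$. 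Pushing forward by $R(f_i)_*$, taking $H^{2d}$, and invoking Fact (3) for $W_k \to T$ to identify $R^{2d}(f_i \circ \iota_k)_* \rat_{W_k} \simeq \rat_T$ yields a global section $s_k \in \Gamma(T, R^{2d}(f_i)_* \rat_{X_i})$ whose stalk at $t$ is the fundamental class of the $k$-th stratum top-dimensional component of $(X_i)_t$. These $s_1, \ldots, s_i$ are linearly independent at every stalk and therefore assemble into the required isomorphism $\rat_T^{\oplus i} \xrightarrow{\sim} R^{2d}(f_i)_* \rat_{X_i}$. The main obstacle is the rigorous construction of the $W_k$ and the associated section $s_k$: gluing the fiberwise top-dimensional components into a genuine global irreducible subvariety requires some care over the non-flat locus of $f_k^0$, and the identification of $s_k(t)$ with the expected fundamental class must be verified for the possibly singular codimension-$0$ closed immersion $\iota_k$.
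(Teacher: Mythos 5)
Your inductive skeleton and the four-term exact sequence
\[
R^{2d-1}f_{i-1*}\rat_{X_{i-1}} \to R^{2d}f^{0}_{i!}\rat \to R^{2d}f_{i*}\rat_{X_i} \to R^{2d}f_{i-1*}\rat_{X_{i-1}} \to 0
\]
coincide with the paper's, and your argument for injectivity of the first map is a legitimate alternative to the paper's: you count $d$-dimensional irreducible components of $(X_i)_t$ via proper base change (getting exactly $i$ at every point), whereas the paper factors the trace morphism as ${\rm Tr}_{f_i^0}={\rm Tr}_{f_i}\circ R^{2d}j_!$ over the flat locus and uses that ${\rm Tr}_{f_i^0}$ is an isomorphism. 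Both establish that $R^{2d}f_{i*}\rat_{X_i}$ is a local system of rank $i$ (being an extension of $\rat_T^{\oplus(i-1)}$ by $\rat_T$; the appeal to ``constancy of stalk ranks'' is not what does the work here, since a constructible sheaf of constant stalk rank need not be a local system).

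The gap is in the triviality step. The extension of $\rat_T^{\oplus(i-1)}$ by $\rat_T$ has unipotent but a priori nontrivial monodromy, and your proposed global sections $s_k$ are not constructed. Concretely: (i) for a codimension-zero closed immersion $\iota_k:W_k\hookrightarrow X_i$ between possibly singular varieties (and $X_i$, $i<m$, is not even assumed pure-dimensional) the isomorphism $\iota_k^!\rat_{X_i}\simeq\rat_{W_k}$ fails in general — already for one branch of a nodal curve inside the node — so the adjunction map you invoke does not exist as stated; (ii) even granting a relative fundamental class of $W_k:=\overline{W_k^o}$, its stalk at a special point $t$ need not be the class of the unique top-dimensional component of $(f_k^0)^{-1}(t)$: the closure can meet $X_{k-1}$, and the top component of a special fiber of $f_k^0$ need not lie in the closure of the generic ones, so ``linearly independent at every stalk'' is exactly the unproven point. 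The repair is cheap and is what the paper does: a splitting is only needed over a dense open $U\subseteq T$, since $T$ normal and irreducible gives $\pi_1(U)\twoheadrightarrow\pi_1(T)$ and hence a local system trivial on $U$ is trivial on $T$; over the flat locus the composite ${\rm Tr}_{f_i^0}^{-1}\circ{\rm Tr}_{f_i}$ is a canonical retraction of $R^{2d}j_!$ and splits the sequence there. (Equivalently, within your own scheme it would suffice to produce sections independent at a single generic stalk, since $\Gamma(T,E)\to E_t$ is injective for a local system on a connected base — but you would still need to build the sections over all of $T$, which is precisely what the trace morphism provides.)
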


\begin{proof} Without loss of generality, we may assume that the $X_i$   are reduced.
 Let $f_{i}:X_{i}\rightarrow T$ be the restriction of $f$. 
 
 CLAIM: the sheaves $R^{2d} f_{i!}\rat_{X_{i}}$ are  local systems of rank $i$.

We now prove the CLAIM by induction on $i$.
Let $\iota: X_{i-1} \to X_{i} \leftarrow X_{i}\setminus X_{i-1}:j$ be the closed and open complementary  embeddings. 
By applying  $R f_{i!}$ to the short exact sequence $0\to j_!  \rat_{X_{i}\setminus X_{i-1}} \to \rat_{X_{i}} \to i_*  \rat_{X_{i-1}}\to 0$,  
we get the  following long exact sequence:
\beq\la{ggoo}
\xymatrix{
R^{2d-1} f_{i-1*} \rat_{X_{i-1}} \ar[r] &
R^{2d} f^{0}_{i!} \rat_{X_{i}\setminus X_{i-1}} \ar[r]^-{R^{2d} j_!}&  
R^{2d} f_{i*} \rat_{X_{i}} \ar[r]&   
R^{2d} f_{i-1*} \rat_{X_{i-1}} \ar[r]  &
0.
}
\eeq
By (2), we have that  $R^{2d} f^{0}_{i!} \rat_{X_{i}\setminus X_{i-1}}\simeq \rat_{T}$. 
By the induction hypothesis,  we have that 
$R^{2d} f_{i-1*} \rat_{X_{i-1}}\simeq \rat_{T}^{i-1}$. It follows that, 
if $R^{2d} j_!$ is an injective morphism of constructible sheaves, then   $R^{2d} f_{i*} \rat_{X_{i}}$
is a rank $i$ local system.  In this case, the CLAIM would follow.

Since 
$R^{2d} f^{0}_{i!} \rat_{X_{i}\setminus X_{i-1}}$
is a local system it suffices to check the desired  injectivity
on a non-empty  open subset of the irreducible base $T$. 
Shrinking $T$ if necessary, we may assume that $X_{i-1}$  and $X_i$ are flat over $T$: 
in this case it makes sense to consider the trace morphisms
${\rm Tr}_{f^{0}_{i}}: R^{2d} f^{0}_{i!} \rat_{X_{i}\setminus X_{i-1}}\rightarrow \rat_{T}$
and ${\rm Tr}_{f_{i}}: R^{2d} f_{i*} \rat_{X_{i}}\rightarrow \rat_{T}$.
By  \ci{sga43},  Exp. XVIII, Th\'eor\`eme 2.9
(basic functoriality and compatibility properties), we have 
${\rm Tr}_{f_{i}^0}={\rm Tr}_{f_{i}}\circ R^{2d} j_!$. Moreover 
$R^{2d} f^{0}_{i!} \rat_{X_{i}\setminus X_{i-1}}\simeq \rat_{T}$ implies,  by  base change,  that every fiber of $f^{0}_{i!}$ has a unique irreducible component of top dimension $d$. By \ci[Exp. XVIII, Remarque 2.10.1 ]{sga43},  the trace morphism ${\rm Tr}_{f_{i}^0}$  is an isomorphism. 
We conclude, in particular,  that the morphism $R^{2d} j_!$ must be injective.  The CLAIM is proved.

The conclusion follows from the fact that the morphism ${\rm Tr}_{f^0_i}^{-1} \circ {\rm Tr}_{f_i}$ splits the short exact sequence stemming from (\ref{ggoo}).
\end{proof}

\begin{pr}\la{rtop} {\rm ({\bf  The direct image sheaves in top degree})}
Let  $R^{10}_{\ogmm}:= R^{10} \pogmm_* \rat  , $, $R^{10}_{\ogm}:= R^{10}\pogm_* \rat$,
and $R^{10}_{\ogn}:= R^{10} \pogn_* \rat, $
Then we have canonical  isomorphisms  of constructible sheaves:

\beq\la{isortop}
\xymatrix{
{R^{10}_{\ogmm}}_{|\ogb \setminus \Delta} \simeq \rat_{\ogb \setminus \Sigma} , &
&
{R^{10}_{\ogmm}}_{| \Delta} \simeq \rat^{\oplus 2}_{\Delta}; 
\\
{R^{10}_{\ogm}}_{|\ogb \setminus \Sigma} \simeq \rat_{\ogb \setminus \Sigma}, &
{R^{10}_{\ogm}}_{|\Sigma \setminus \Delta} \simeq \rat^{\oplus 2}_{\Sigma \setminus \Delta}, &
{R^{10}_{\ogm}}_{| \Delta} \simeq \rat^{\oplus 4}_{\Delta}; 
\\
{R^{10}_{\ogn}}_{|\ogb \setminus \Sigma} \simeq \rat_{\ogb \setminus \Sigma} & 
{R^{10}_{\ogn}}_{|\Sigma \setminus \Delta} \simeq \rat_{\Sigma \setminus \Delta} \oplus \ms{L}_{\Sigma \setminus \Delta} , &
{R^{10}_{\ogn}}_{| \Delta} \simeq \rat^{\oplus 2}_{\Delta},
}
\eeq
where $\ms{L}$ is the rank one local system on $\Sigma \setminus \Delta$ corresponding to the 
$\zed/2\zed$-module of character $-1$ via the isomorphism $\pi_1 (\Sigma \setminus \Delta)
\simeq \zed/2\zed$.
\end{pr}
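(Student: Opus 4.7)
The plan is to compute the top direct images stratum by stratum, applying Lemma \ref{rtoplemma} (and Fact \ref{fatti da usare}(3) when the fibers are already irreducible) to filtrations of $\ogmm_{T}, \ogm_{T}, \ogn_{T}$ by irreducible components that project with irreducible top-dimensional fibers onto the stratum $T$. The input data are the structural results of Proposition \ref{componentisigma} (fibers over $B\setminus \Delta$), Proposition \ref{propriass} (the components of $\ogmm_\Delta$ and $\ogn_\Delta$), Proposition \ref{compmtildedelta} (the components of $E_\Delta$), and Corollary \ref{corcomp}. The only case that cannot be handled by a single direct application of Lemma \ref{rtoplemma} is $\ogn$ over $\Sigma\setminus\Delta$, because the monodromy genuinely permutes the two irreducible components of the fibers; that case is reduced to the previous ones by passing to an étale double cover.

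First I would dispatch the cases where fibers are irreducible of pure dimension $5$. By Proposition \ref{componentisigma} this holds for $\pogmm$ on $B\setminus\Delta$ and for $\pogn$ on $B\setminus\Sigma$. For $\pogm$ on $B\setminus\Sigma$ it holds because $\bogm:\ogm\to\ogmm$ is an isomorphism off $\mathrm{Sing}(\ogmm)\subseteq \Sigma$, so the fibers agree with those of $\pogmm$. In each case Fact \ref{fatti da usare}(3) (using a relative ample class coming from the Le Potier embedding) gives a canonical isomorphism with $\rat$ on the stratum.

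Next I would treat the strata where the fibers have more than one component but the components are globally distinguished. For $\ogm$ on $\Sigma\setminus\Delta$: by Corollary \ref{corcomp} each fiber has exactly two components of dimension $5$, namely the strict transform of the (irreducible) fiber of $\pogmm$ and the exceptional fiber $E_b$, which is irreducible by Remark \ref{Compecc}; these come from two globally defined irreducible subvarieties of $\ogm_{\Sigma\setminus\Delta}$ (the strict transform of $\ogmm_{\Sigma\setminus\Delta}$ and $E_{\Sigma\setminus\Delta}$), each projecting with irreducible fibers. The filtration of $\ogm_{\Sigma\setminus\Delta}$ by these two subvarieties satisfies the hypotheses of Lemma \ref{rtoplemma}, yielding $\rat^{\oplus 2}$. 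For $\ogmm_\Delta$ and $\ogn_\Delta$ Proposition \ref{propriass} provides filtrations by the two components of type I and $\overline{II}$, each projecting with irreducible fibers of dimension $5$; Lemma \ref{rtoplemma} gives $\rat^{\oplus 2}$ in each case. For $\ogm_\Delta$ I would refine this to a filtration with four steps: the strict transforms of $\ogmm^I_\Delta$ and of $\overline{\ogmm^{I\!I}_\Delta}$, together with $E^I_\Delta$ and $\overline{E^{I\!I}_\Delta}$ from Proposition \ref{compmtildedelta}; each step projects with irreducible generic fiber of dimension $5$ (verified from Propositions \ref{propriass}, \ref{compmtildedelta}), and Lemma \ref{rtoplemma} gives $\rat^{\oplus 4}_\Delta$. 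Triviality of these rank-$r$ local systems on $\Delta\simeq\pn{2}$ is automatic from simple connectedness, but the Lemma gives it without invoking this.

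The hard part, as anticipated, is $\ogn$ on $\Sigma\setminus\Delta$: by Lemma \ref{fibratilineari}(2b) the two irreducible components of the fiber at $b=\{C_1,C_2\}$ are distinguished by whether $\deg(\mc F|_{C_1})$ equals $2+(\chi+1)/2$ or $2+(\chi-1)/2$, and any loop in $\Sigma\setminus\Delta=(\mathrm{Sym}^2|H|)\setminus\Delta_{|H|}$ exchanging $C_1$ and $C_2$ permutes them. To handle this, I would pass to the étale double cover
\[
\pi:\widetilde{\Sigma}:=(|H|\times|H|)\setminus\Delta_{|H|}\longrightarrow \Sigma\setminus\Delta,
\]
where $\widetilde{\Sigma}$ is simply connected (it is the complement of a codimension-$2$ smooth subvariety of the simply connected $\pn{2}\times\pn{2}$), so $\pi_1(\Sigma\setminus\Delta)\simeq \zed/2\zed$. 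On the base-changed family $\widetilde{N}:=N\times_B\widetilde{\Sigma}$ the two components of each fiber assemble into two globally defined irreducible components (distinguished by the ordered pair of degrees), each with irreducible fibers over $\widetilde{\Sigma}$, so Lemma \ref{rtoplemma} gives $R^{10}\widetilde{n}_*\rat\simeq\rat^{\oplus 2}_{\widetilde{\Sigma}}$. By proper base change $\pi^*(R^{10}_{\ogn}|_{\Sigma\setminus\Delta})\simeq \rat^{\oplus 2}_{\widetilde{\Sigma}}$, and the deck transformation swaps the two $\rat$-summands. The monodromy representation of $\pi_1(\Sigma\setminus\Delta)=\zed/2\zed$ on the stalk of $R^{10}_{\ogn}|_{\Sigma\setminus\Delta}$ is therefore the regular representation, which decomposes as the trivial plus the sign representation, i.e. as $\rat_{\Sigma\setminus\Delta}\oplus \ms{L}_{\Sigma\setminus\Delta}$. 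The main obstacle is precisely this identification of $\ms{L}$: correctly setting up the ordered/unordered double cover and identifying the monodromy action via the degrees of restriction from Lemma \ref{fibratilineari}. Once this is in place, Lemma \ref{rtoplemma} combined with the irreducible-components bookkeeping of \S\ref{sect irr comp} delivers all seven isomorphisms in (\ref{isortop}).
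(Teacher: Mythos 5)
Your proposal is correct and its skeleton is the one the paper uses: reduce everything to Lemma \ref{rtoplemma} and Fact \ref{fatti da usare} via the component analysis of \S\ref{sect irr comp} (Propositions \ref{componentisigma}, \ref{propriass}, \ref{compmtildedelta}), with exactly the same two-step filtrations over $\Delta$ for $\ogmm,\ogn$ and a four-step filtration for $\ogm_\Delta$. You deviate from the paper in two of the seven computations, and both deviations are legitimate. For ${R^{10}_{\ogm}}_{|\Sigma\setminus\Delta}$ the paper does not filter $\ogm_{\Sigma\setminus\Delta}$ at all: it restricts the semismall Decomposition Theorem for the blow-up (Lemma \ref{dtb}), uses that the $A_1$-singularities give $\ms{IC}_{\ogmm_{\Sigma\setminus\Delta}}\simeq\rat$, and reads off $R^{10}m_*\rat\oplus R^{8}p^2_*\rat\simeq\rat^{\oplus2}$; your filtration by the strict transform and $E_{\Sigma\setminus\Delta}$ reaches the same answer more elementarily, at the cost of having to check (which you can, via Corollary \ref{corcomp} and Remark \ref{Compecc}) that every fiber of each successive difference has a unique top-dimensional component. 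For ${R^{10}_{\ogn}}_{|\Sigma\setminus\Delta}$ the paper identifies $R^{10}$ with the $\rat$-linearization of the sheaf of irreducible components and computes its monodromy directly by looping around the diagonal in ${\rm Sym}^2\ogbp^o$, using the bidegrees $(2,3)/(3,2)$ from Lemma \ref{fibratilineari}; your passage to the ordered double cover $(|H|\times|H|)\setminus\Delta$, where the two components become global and the deck transformation swaps the two $\rat$-summands, is an equivalent packaging of the same monodromy computation and correctly yields the regular representation $\rat\oplus\ms{L}$. One small point of care: in your four-step filtration of $\ogm_\Delta$ you should (and, via the cited propositions, can) verify irreducibility of \emph{every} fiber of each successive difference, not merely of the generic one, since that is what condition (2) of Lemma \ref{rtoplemma} requires.
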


\begin{proof}
For ease of notation: 
 if $W$ is a variety,  $\varphi:W\rightarrow \ogb$ is a morphism 
and  $Z\subset \ogb$ is a  subvariety, then we set $W_{Z}:=\varphi^{-1}(Z)$ and 
 ${R^{10}_{W}}:= R^{10} \varphi_! \rat_{W}$.

The statements in the first column  of (\ref{isortop}) follow from the fact that the morphisms in question
are flat of relative dimension five with integral fibers over the indicated loci; see  Remark \ref{piattezza} and Proposition \ref{componentisigma}.

{\bf Proof that  ${R^{10}_{\ogm}}_{|\Sigma \setminus \Delta} \simeq \rat^{\oplus 2}_{\Sigma \setminus \Delta}$.} Recall from Lemma \ref{dtb} and proper base change that  
\beq \label{dtmsigm}
(R \bogm_* \rat_{\ogm})_{| M_{\Sigma \setminus \Delta}}  \simeq
\ms{IC}_{\ogmm_{\Sigma \setminus \Delta}} \oplus  \rat_{({\rm Sym}^2 \ogmp)_{\Sigma \setminus \Delta}}[-2] .
\eeq
By \cite{Lehn-Sorger}, the singularities of $M_{\Sigma \setminus \Delta}$ are of type $A_1$ quotient singularities, 
so  that we have $\ms{IC}_{\ogmm_{\Sigma \setminus \Delta}}  \cong \rat_{ \ogmm_{\Sigma \setminus \Delta}} $ (cf. \cite{Gottsche Sorgel}). Pushing forward (\ref{dtmsigm}) via the proper morphism $\pogmm: \ogmm_{\Sigma \setminus \Delta} \to \Sigma \setminus \Delta$ we get:
\[
R \pogm_* \rat_{\ogm_{\Sigma \setminus \Delta}}=Rm_* Rb_* \rat_{\ogm_{\Sigma \setminus \Delta}} =Rm_* \rat_{\ogmm_{\Sigma \setminus \Delta}} \oplus Rp^2_*\rat_{({\rm Sym}^2 \ogmp)_{\Sigma \setminus \Delta}}[-2] .
\]
and thus:
\[
R^{10} \pogm_* \rat_{\ogm_{\Sigma \setminus \Delta}}=R^{10}m_* \rat_{\ogmm_{\Sigma \setminus \Delta}} \oplus R^8p^2_*\rat_{{\rm Sym}^2 \ogmp_{\Sigma \setminus \Delta}}.
\]
By using  Fact \ref{fatti da usare}.(2) and  the irreducibility of  the five dimensional $\ogmm_b$ (cf. Proposition \ref{componentisigma}) and of the four dimensional $\ogmp_b$ (cf. Remark \ref{exCompecc}),  we see that $R^{10}m_* \rat_{\ogmm_{\Sigma \setminus \Delta}}=\rat_{\Sigma \setminus \Delta}(-5)$ and $R^8p^2_*\rat_{{\rm Sym}^2 \ogmp_{\Sigma \setminus \Delta}} \cong \rat_{\Sigma \setminus \Delta}(-4)$ and we are done.

{\bf Proof that $R^{10}_{\ogn_{\Sigma \setminus \Delta}} \simeq \rat_{\Sigma \setminus \Delta} \oplus \ms{L}_{\Sigma \setminus \Delta}$.} 
The variety $\ogn_{\Sigma \setminus \Delta}$ is irreducible (cf. \ci[Proof of Prop 1.3]{ra08}), the morphism 
$\ogn_{\Sigma \setminus \Delta} \to\Sigma \setminus \Delta$ is flat,  with each of its fibers are reduced 
 and with two
 irreducible components (cf. Proposition 
 \ref{componentisigma}). The sheaf $R^{10}_{\ogn_{\Sigma \setminus \Delta}}$ is the $\rat$-linearization of the sheaf  
 of sets $\frak{I}$ of irreducible components of $\ogn_{\Sigma \setminus \Delta}/(\Sigma \setminus \Delta)$.
 Let $V \subseteq \ogn_{\Sigma \setminus \Delta}$ be the locus parameterizing line bundles. By Proposition 
 \ref{componentisigma} and its proof we have that: the morphism $V\to \Sigma \setminus \Delta$ is
 surjective and smooth;  $V$ is dense in each fiber $\ogn_b$; 
  $V_b$  has two connected components.  
  It follows that  the sheaf of sets
 $\frak I$ is locally constant, with stalks of cardinality two.
 We now examine  the monodromy  of $\frak I$ and  relate it to that of the broken curves.
 Let $b \in \Sigma \setminus \Delta.$ The corresponding curve $C_b = C_{1,b}+C_{2,b} \in |2H|$ has two irreducible components. The  two connected components of $V_b$ parameterize  line bundles of bi-degree  $(2,3)$ and $(3,2)$
 (cf. Lemma \ref{fibratilineari}).  Since $\Sigma \setminus \Delta$ is nonsingular and connected,
 the monodromy can be detected on a Zariski dense open subset $ U \subseteq\Sigma \setminus \Delta$.  We take
 $U:= {\rm Sym}^2\ogbp^o,$ where $\ogbp^o \subseteq \ogbp = |H|$ is the locus of nonsingular curves. It is clear that
 looping in $U$ around the diagonal exchanges the two components of $C_b$, the same way in which the two lines of the corresponding broken conic are. It follows that the bidegrees are swapped as well. The desired conclusion follows.

{\bf Proof of the remaining isomorphisms.}
We now focus on the third column, of (\ref{isortop}), i.e.  on the situation  over $\Delta$.
By proper base change we need to show that:
(a) ${R^{10}_{\ogmm_{\Delta}}}\simeq \rat^{2}_{\Delta}$;
 (b) ${R^{10}_{\ogm_{\Delta}}}\simeq \rat^{4}_{\Delta}$; 
(c) ${R^{10}_{\ogn_{\Delta}}}\simeq \rat^{2}_{\Delta}$.

We are going to use Lemma \ref{rtoplemma}
with filtrations of length two in cases (a) and (c) and of length four in case (b).

We first deal with cases (a) and (c).
In case $(a)$ and (c), we choose the first space $X_1$ of the filtration to be the  locus of rank two  semistable sheaves (cf. Proposition \ref{comparingstab}), i.e. 
in case (a), we set
 $X_{1}:=\ogmm^{I}_{\Delta}$, in case (c),  we  set $X_{1}:=\ogn^{I}_{\Delta}$.
 By Proposition \ref{rank 2}, $\ogmm^{I}_{\Delta}$ and $\ogn^{I}_{\Delta}$ are projective over $\Delta$ with
 $5$-dimensional irreducible fibers.
By Fact \ref{fatti da usare}.(3), we have that ${R^{10}_{\ogmm^{I}_{\Delta}}}\simeq \rat_{\Delta}\simeq {R^{10}_{\ogn^{I}_{\Delta}}}$. 
Note that with our choice of $X_1$, we have that $X_2\setminus X_1$ is,  $\ogmm_\Delta^{I\!I}$ in case $(a)$,
$\ogn_\Delta^{I\!I}$ in case $(c)$. By Proposition \ref{propriass} (the irreducibility of the fibers of $\ogmm^{I\!I}_{C'}$ and $\ogn^{I\!I}_{C'}$), and by Fact \ref{fatti da usare}.(1), we are thus in the position of applying Lemma \ref{rtoplemma} and reach the desired conclusion in cases (a) and (c).

In order to prove (b) via an application of  Lemma \ref{rtoplemma}, 
we  consider the following increasing  filtration of 
$\ogm_{\Delta}$ into closed subvarieties:
\beq\la{filtrcloz}
X_1:=E^I_\Delta \subseteq X_2:= E_\Delta \subseteq 
X_3:= \bogm^{-1} (\ogmm_{\Delta}^{I})  \subseteq X_4 := \ogm_\Delta.
\eeq
Recall Proposition \ref{Compecc} and the subvarieties $E_\Delta= E^I_\Delta \coprod E^{I\!I}_\Delta$ defined in (\ref{defEdelta}).
By using that $\bogm$ is an isomorphism away from the center ${\rm Sym}^2 \ogmp$ of the blow up, the successive differences are seen to be as follows:
\beq\la{ghth}
X_2 \setminus X_1 = E^{I\!I}_\Delta, 
\quad X_3 \setminus X_2 = \ogmm^I_\Delta \setminus
{\rm Sym}^2_\Delta \ogmp, \quad
X_4 \setminus X_3= \ogmm^{I\!I}_\Delta 
\eeq
As shown in Proposition \ref{Compecc}, the projective variety  $X_1=E_{\Delta}^{I}$ 
has  irreducible fibers of dimension $5$ over $\Delta$: by  Fact \ref{fatti da usare}.(3) we get 
$R^{10}_{E_{\Delta}^{I}}\simeq \rat_{\Delta}.$     

By Proposition \ref{Compecc}, the variety 
$X_2 \setminus X_1= E_{\Delta}^{I\!I}$ is flat surjective over $\Delta$ with irreducible fibers.
By Fact \ref{fatti da usare}.(1),  we get that $R^{10}_{E_{\Delta}^{I\!I}\setminus E_{\Delta}^{I}}\simeq \rat_{\Delta}.$  

By Remark \ref{exCompecc}, the fibers of  $\ogmm^I_\Delta$ and of $X_3\setminus X_2$ over the points of $\Delta$ are non empty and irreducible of dimension five.
By Fact \ref{fatti da usare}.(3), we conclude that $R^{10}_{\ogmm^I_\Delta \setminus
({\rm Sym}^2 \ogmp)_\Delta} \simeq \rat_\Delta.$

The morphism  $\ogmm_\Delta \to \Delta$ is flat, so that its restriction to the open subset $M^{I\!I}_\Delta \subset \ogmm_\Delta $  is also flat.  By Proposition \ref{propriass},  the fibers of $M^{I\!I}_\Delta \to \Delta$ are irreducible of dimension
five. By Fact \ref{fatti da usare}.(1), we have that $R^{10}_{M^{I\!I}_\Delta}\simeq \rat_\Delta$.
We conclude that (b) holds in view of Lemma \ref{rtoplemma}. 
\end{proof}

\begin{rmk}\la{onl}
{\rm ({\bf Properties of $\ms{L}$ in (\ref{isortop})})}
We record the following elementary properties of $\ms{L}$.
Let $T$ be an irreducible normal variety of positive dimension, let $\ms{L}_T$ be the rank one local system on ${\rm Sym}^2 T \setminus \Delta_T$ corresponding to the 
$\zed/2\zed$ module of character $-1$ via the epimorphism $\pi_1 ({\rm Sym}^2 T \setminus \Delta_T)
\twoheadrightarrow  \zed/2\zed$. Let $j: U \to {\rm Sym}^2 T \setminus  \Delta_T$ be any open immersion with $U \neq \emptyset$,
and let $i:{\rm Sym}^2 T \setminus \Delta_T \to {\rm Sym}^2 T$ be the evident open immersion.
Let $u:T\to T'$ be a non constant  morphism of irreducible normal varieties. Let 
$u^{(2)}: {\rm Sym}^2 T \to {\rm Sym}^2 T'$ and 
${u^{(2)}}': {\rm Sym}^2 T  \setminus {u^{(2)}}^{-1}(\Delta_{T'}) \to {\rm Sym}^2 T' \setminus  \Delta_{T'}$ be the resulting evident morphisms.
Then: 
\ben
\item
 $i_* \ms{L} = i_! \ms{L}=Ri_! \ms{L}=Ri_* \ms{L} $; 
 \item $j_*j^*\ms{L} = \ms{L}$;
 \item ${\ms{L}_T}_{|  {\rm Sym}^2 T  \setminus {u^{(2)}}^{-1}(\Delta_T)}
= {{u^{(2)}}'}^* \ms{L}_{T'}$. 
 \een
As to {\rm (1)}: the first $=$  is because there are no local invariants around the diagonal; the second $=$
is because $i$ is an open immersion; the third equality follows from Verdier duality and  the self-duality of  $\ms{L}$. As to {\rm (2)}: in view of the normality assumption, it is valid for any local system, due to the surjection of fundamental groups
$\pi_1 (U,u) \to \pi_1({\rm Sym}^2 T \setminus  \Delta_T).$
As to {\rm (3)}: this is because the natural surjection
of fundamental groups above for $T$ factors through the one for $T'$.

Finally, for every integer $\frak d \in \zed$, when $T$ is irreducible nonsingular, the local system $\ms{L}$ underlies 
a unique structure of polarizable pure Hodge-Tate variation of Hodge structures of type $(\frak d, \frak d)$; see \S\ref{gennot}.
\end{rmk}

\section{The Ng\^o Strings of several Lagrangian fibrations}\la{nonon}$\;$
{  
The aim of this section is to determine the Ng\^o strings (cf. Definition \ref{defngostrbis}) of the two Lagrangian fibrations $\ogm, \ogn \to \ogb$ as well as  of several other auxiliary fibrations.
In  \S\ref{nzg} we introduce the Ng\^o strings that will appear in the Decomposition Theorem for  $\ogm \to \ogb$ and $ \ogn \to \ogb$. Then in  \S\ref{dtb't}  and \S\ref{dtbnon} we determine the Decomposition Theorem for  the two universal families of curves  of  the linear systems $|H|$ and $|2H|$ on the K3 surface. To prove  Theorems A, B, and B$'$, we need to compute the cohomology of the Ng\^o strings appearing the Decomposition Theorem for $\ogm, \ogn \to \ogb$. We do by realizing them as direct summands of the Decomposition theorem for other Lagrangian fibrations whose cohomology is known.  This is done in this in  \S\ref{kt}, \S\ref{kt1}, and \S\ref{kt2}. Finally, in   \S\ref{dtmn}  we prove that the Ng\^o strings (\ref{sics}) are precisely those appearing in Decomposition Theorem for the two Lagrangian fibrations. This builds on  \S\ref{dtb't}  and \S\ref{dtbnon}.
}

All decomposition-type results that follow take place in the categories $D^b MHM_{alg} (-)$ and have
an evident counterpart (turn off the Tate shifts) in the corresponding categories $D^b(-,\rat)$ via the functor
$\rm rat$. We only state the stronger result in $D^b MHM_{alg} (-).$

\subsection{Some relevant strings}\la{nzg}$\;$

Let  $u: \ms{C} \to \ms{B}$  be a family of nonsingular projective, not necessarily connected, curves
of genus $g$. Let $v: \ms{P} \to \ms{B}$ be the relative ${\rm Pic}^0_{\ms{C}/\ms{B}}$.
We  consider the higher direct image sheaves $\Lambda^\bullet_\ms{B}:= R^\bullet v_* \rat_\ms{P}$, with $\bullet \in [0,2g]$;
these are polarizable variations of pure Hodge structures of weight $\bullet$.
By taking into account Tate twists, we have  natural isomorphisms:
\beq\la{isowedge}
\xymatrix{
R^1u_* \rat_\ms{C} = R^1v_* \rat_{\ms{P}}; &  \Lambda^\bullet_{\ms{B}}=
\wedge^\bullet R^1u_* \rat_\ms{C}, & \Lambda^{g-\bullet}_{\ms{B}} =  \Lambda^{g+\bullet}_{\ms{B}} (\bullet).
}
\eeq

We come back to our families of curves $\m{C'}\to \ogbp$ and $\m{C}\to \ogb$ over the linear systems $\ogbp= |H|$ and $\ogb=|2H|$.
By restricting these two families 
over the  respective sets of regular values $\ogbp^o$ and $\ogb^o$, we obtain families  of type $u$  as above,  of genus $2$ and $5$ respectively, and with connected fibers.
By first restricting the family of curves $\m{C} \to \ogb$ over $\Sigma^o:={\rm Sym}^2 {\ogbp}^o \setminus \Delta_{\ogbp}$, and then  by normalizing the total space of the
resulting family, we obtain a family of type $u$ as above,  with connected total space, but with disconnected fibers given by the disjoint union of two curves of genus $2$.
We thus have (cf. (\ref{bu1})):
\beq\la{norm}
\xymatrix{
\m{C}_{\ogb^o} \to \ogb^o, &   (\m{C}'_{\ogbp^o} \to \ogbp^o)=(\m{C}'_{\Delta^o} \to \Delta^o),  &
\widehat{\m{C}_{\Sigma^o}} \to \Sigma^o,
\\
\{\Lambda^\bullet_{\ogb}:= \Lambda^\bullet_{\ogb^o}\}_{\bullet =0}^{10}, 
&   \{\Lambda^\bullet_{\Delta}:= \Lambda^\bullet_{\ogbp^o}\}_{\bullet =0}^4,  &
\{\Lambda^\bullet_{\Sigma}:= \Lambda^\bullet_{\Sigma^o}\}_{\bullet =0}^8,
}
\eeq
where we have dropped some decorations. We also have the following epimorphisms of groups schemes:
\beq\la{epigs}
\xymatrix{
\p_{|\ogb^o} \ar@{->>}[r]^-= &{\rm Pic}^0_{\m{C}_{\ogb^o}/\ogb^o} &
\ppr_{|\ogbp^o} \ar@{->>}[r]^-= & {\rm Pic}^0_{{\m{C'}_{\!\!\Delta^o}}/\Delta^o}, &
\p_{|\Sigma^o} \ar@{->>}[r]& {\rm Pic}^0_{\widehat{\m{C}_{\Sigma^o}}/\Sigma^o},
}
\eeq
that fiber-by-fiber realize the Chevalley  devissages into affine and Abelian parts of the fibers
of the corresponding group schemes $\p,$ $\ppr$, and $\p$ again, respectively

Recall that $\ms{L}$  in \S\ref{tsr1} is a local system on $\Sigma \setminus \Delta$, and that this latter  contains $\Sigma^o$.
Let $\ms{L}^o: =\ms{L}_{|\Sigma^o}$. Then we have:
 \beq\la{ldf}
 {i_{\Sigma^o}}_* \ms{L}^o = {i_{\Sigma \setminus \Delta}}_*
\ms{L} = {i_{\Sigma \setminus \Delta}}_!
\ms{L},
\eeq
where the last equality, is due to the fact that the local monodromy of $\ms{L}$ around the diagonal is given by multiplication by $-1$.

We introduce the following complexes, viewable in $D^b MHM_{alg} (-)$ or in $D^b(-, \rat)$, and we simply call them strings --we reserve the term Ng\^o strings for those strings that actually appear in the Decomposition Theorems \ref{nstabis} and \ref{defngostrbis}-: 
\beq\la{sics}
\begin{aligned}
\ms{S}_\ogb := \bigoplus_{b=0}^{10} \ms{IC}_\ogb (\Lambda^{b}_{\ogb})[-b],  \quad \quad \quad &
\ms{S}_\Delta := \bigoplus_{b=0}^4 \ms{IC}_\Delta (\Lambda^{b}_{\Delta})[-b], \\
\ms{S}^+_\Sigma := \bigoplus_{b=0}^8 \ms{IC}_\Sigma (\Lambda^{b}_{\Sigma})[-b],  \quad \quad \quad  &
\ms{S}^-_\Sigma := \bigoplus_{b=0}^8 \ms{IC}_\Sigma (\Lambda^{b}_{\Sigma} \otimes \ms{L}^o)[-b],
\end{aligned}
\eeq
where $\ms{L}$ is described in Proposition \ref{rtop}.
They are all subject to the Relative Hard Lefschetz
theorem, which here boils down to: 
\beq\la{plds}
\begin{aligned}
\Lambda_{\ogb}^{5-\bullet}\simeq  \Lambda_{\ogb}^{5+\bullet} (\bullet),  \quad \quad \quad &
\Lambda_{\Delta}^{2-\bullet}\simeq  \Lambda_{\Delta}^{2+\bullet} (\bullet), \\
\Lambda_{\Sigma}^{4-\bullet}\simeq  \Lambda_{\Sigma}^{4+\bullet} (\bullet),  \quad \quad \quad &
\Lambda_{\Sigma}^{4-\bullet} \otimes \ms{L}^o \simeq  \Lambda_{\Sigma}^{4+\bullet} \otimes \ms{L}^o
(\bullet)
\end{aligned}
\eeq

\begin{rmk}\la{szfs}
In the sequel of the paper, we show that the first two and last two summand in each of the  strings
in  (\ref{sics})
are, when viewed in $D^b (\ogb, \rat)$,  shifted sheaves; see  (\ref{z2}), (\ref{z5b}) and (\ref{z6B}).
\end{rmk}

\begin{rmk}\la{rovo}
The local systems of type $\Lambda$ are endowed  with the
structure of polarizable variations of pure Hodge structures they acquire as direct image sheaves. 
We have the following relations between rational  graded  polarizable pure  Hodge structures:
\ben
\item
$H^*(\ogn)= H^*\left( \ks^{[5]}\right)$; see Lemma \ref{lemma birazionali}.

\item $H^*(\ogmp)= H^*\left( \ks^{[2]} \right)=H^*(\ms{S}_\Delta)$; see Lemmata \ref{lemma birazionali} and  \ref{m0}.

\item  $H^*\left({\rm Sym}^2{\ogmp}\right)=H^*(\ms{S}_\Sigma^+)$; see Proposition \ref{dtforsym}.

\item\la{rasta}  $H^*\left({\ogmp}^2\right)= H^*(\ms{S}_\Delta)^{\otimes 2} =
H^*(\ms{S}_\Sigma^+) \oplus H^*(\ms{S}_\Sigma^-)$; see Lemma \ref{m11} and Proposition \ref{dtforsym}.

\item By the forthcoming  Proposition \ref{dtwe}, and by  Lemma \ref{dtb}, we have:
\beq\la{ee4455}
H^*(\ogm) \supseteq  I\!H^*(\ogmm) \supseteq H^*(\ms{S}_B) \subseteq H^*(\ogn).
\eeq

\een
\end{rmk}

\subsection{The Decomposition Theorem  for the genus two universal curve $\m{C}'\to \ogbp$}\la{dtb't}$\;$

The complete linear system $\ogbp = |H| \simeq \pn{2}$ of our genus two curve $C$ on $K3$ is two dimensional 
and  we have the universal curve  morphism:
\beq\la{gammaprimo}
\gamma': {\m{C}'}^3 \to \ogbp^2 = |C|.
\eeq
The total space ${\m{C}'}$ is nonsingular and three dimensional. The fibers of $\gamma'$ are all integral curves.
Let ${\ogbp}^o \subseteq \ogbp$ be the open subset of regular values of $\gamma'$, and let $i_{\ogbp^o}: {\ogbp}^o
\to \ogbp$ be the resulting open embedding. Let $R^1_\m{C'}$ be  local system restriction of $R^1\gamma'_* \rat$ to ${\ogbp}^o$; 
it coincides with $\Lambda^1_{\Delta}:=\Lambda^1_{{\ogbp}}:= \Lambda^1_{{\ogbp}^o}$ in (\ref{norm}).

\begin{lm}\la{dtb''} {\rm ({\bf  Decomposition Theorem for $R\gamma'_* \rat_{\m{C'}}$})}
The Decomposition Theorem   for the direct image complex  $R\gamma'_* \rat$  takes the following form:
\beq\la{dtb'z}
R\gamma'_* \rat_{\m{C}'} \simeq  \rat_{\ogbp}\oplus i_* \Lambda^1_{\Delta} [-1] \oplus \rat_{\ogbp}[-2](-1).
\eeq

\end{lm}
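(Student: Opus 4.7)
The approach is to apply the Decomposition Theorem directly to the flat projective family $\gamma'$ and to identify all summands by combining the generic decomposition on $\ogbp^{o}$ with stalk calculations at the discriminant.

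The morphism $\gamma': \m{C}' \to \ogbp$ is proper and flat of relative dimension one, with smooth three-dimensional total space, smooth two-dimensional base $\ogbp \simeq \pn{2}$, and integral fibers whose only singularities are nodes and cusps (Remark \ref{nodeandcusps}). Since every fiber is connected and irreducible of dimension one, Fact \ref{fatti da usare}.(3) yields the constant sheaves $R^{0}\gamma'_{*}\rat_{\m{C}'} = \rat_{\ogbp}$ and $R^{2}\gamma'_{*}\rat_{\m{C}'} = \rat_{\ogbp}(-1)$. By definition (\ref{norm}), the restriction of $R^{1}\gamma'_{*}\rat_{\m{C}'}$ to $\ogbp^{o}$ is $\Lambda^{1}_{\Delta}$.

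Because $\m{C}'$ is smooth we have $\rat_{\m{C}'} = \mathcal{IC}_{\m{C}'}$, and the Decomposition Theorem in $D^{b}MHM_{alg}(\ogbp)$ splits $R\gamma'_{*}\rat_{\m{C}'}$ into a direct sum of shifted intersection complexes. The generic decomposition over $\ogbp^{o}$ forces the presence of three summands whose supports equal all of $\ogbp$: namely $\mathcal{IC}_{\ogbp}(\rat_{\ogbp^{o}}) = \rat_{\ogbp}$, $\mathcal{IC}_{\ogbp}(\Lambda^{1}_{\Delta})[-1]$, and $\mathcal{IC}_{\ogbp}(\rat_{\ogbp^{o}}(-1))[-2] = \rat_{\ogbp}(-1)[-2]$. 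Since $\ogbp$ is smooth and its complement $\ogbp \setminus \ogbp^{o}$ is a divisor, the intermediate extension $\mathcal{IC}_{\ogbp}(\Lambda^{1}_{\Delta})$ coincides with the ordinary sheaf pushforward $i_{*}\Lambda^{1}_{\Delta}$, concentrated in degree zero.

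It remains to exclude the existence of summands supported on proper closed subvarieties of $\ogbp$. Any such extra summand would contribute additional factors to the stalks of some $R^{k}\gamma'_{*}\rat_{\m{C}'}$ at points of its support. Since $R^{0}$ and $R^{2}$ are constant of rank one, they cannot accommodate any such contribution, so the task reduces to verifying $R^{1}\gamma'_{*}\rat_{\m{C}'} = i_{*}\Lambda^{1}_{\Delta}$. I would do this by computing both sides stalk-by-stalk at each $b \in \ogbp \setminus \ogbp^{o}$: the left-hand stalk is $H^{1}(C_{b},\rat)$, which for an integral arithmetic-genus-two curve with $\delta$ nodes and $\kappa$ cusps has dimension $4 - \delta - 2\kappa$ (computed via the normalization short exact sequence for $\rat_{C_{b}}$); the right-hand stalk is the space of monodromy invariants of $\Lambda^{1}_{\Delta}$ around $b$, which by Picard--Lefschetz analysis drops in codimension one per node and in codimension two per cusp, yielding the same dimension. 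Equivalently, the local invariant cycle theorem for a flat projective family with smooth total space delivers this equality at once. The main obstacle is precisely this stalk comparison over the discriminant, where the classification of singularity types in Remark \ref{nodeandcusps} is essential; once it is in place, the Decomposition Theorem yields the stated splitting with exactly the three summands listed.
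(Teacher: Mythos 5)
Your argument reaches the right conclusion, but by a genuinely different and more computational route than the paper. The paper never touches the stalks of $R^1\gamma'_*\rat$ over the discriminant: it notes that the three generic summands are forced, that by Relative Hard Lefschetz any extra summand would have to sit in perverse degree zero (otherwise it would feed $R^j\gamma'_*\rat$ for some $j\geq 3$, which vanishes because the fibers are curves), and that the remaining possibilities --- a skyscraper, a sheaf supported on a curve, or a nonzero $\m{H}^{-1}$ of $IC_{\ogbp}(\Lambda^1_{\Delta})$ --- would each enlarge $R^2\gamma'_*\rat$ or $R^3\gamma'_*\rat$; since $R^2\gamma'_*\rat=\rat_{\ogbp}$ by irreducibility of the fibers and $R^3\gamma'_*\rat=0$, everything collapses, and in particular $IC_{\ogbp}(\Lambda^1_{\Delta})$ is forced to be the sheaf $i_*\Lambda^1_{\Delta}$. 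You instead pin down $R^1\gamma'_*\rat$ directly by matching $\dim H^1(C_b)=4-\delta-2\kappa$ against the local monodromy invariants via Picard--Lefschetz and the local invariant cycle theorem. That count is correct and the strategy works, but it costs you a fiber-by-fiber analysis over the discriminant that the paper's soft argument avoids entirely (note also that the local invariant cycle theorem gives surjectivity onto the invariants, so the dimension count, not the theorem alone, is what "delivers the equality").

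One intermediate claim is wrong as stated: ``since $\ogbp$ is smooth and its complement is a divisor, the intermediate extension coincides with the ordinary sheaf pushforward.'' This is valid across a \emph{smooth} codimension-one stratum, where Deligne's construction applies $\tau_{\leq 0}Rj_*=j_*$, but the discriminant curve in $|H|\simeq\pn{2}$ is singular at the finitely many points parametrizing cuspidal and two-nodal curves, and over a zero-dimensional stratum $IC_{\ogbp}(\Lambda^1_{\Delta})$ is a priori permitted a nonzero $\m{H}^{-1}$ concentrated there. You must rule this out --- either by the paper's observation that such an $\m{H}^{-1}$ would enlarge $R^2\gamma'_*\rat$ beyond the constant sheaf $\rat_{\ogbp}$, or, within your own scheme, by noting that your stalk comparison for $R^1$ combined with the constancy of $R^2$ already forces it to vanish. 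Since the rest of your proposal supplies exactly that, the error is dispensable, but as written the sentence does no legitimate work and should be replaced by one of these arguments.
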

\begin{proof} For reasons of bookkeeping, we find it more natural to prove the equivalent statement:
\beq\la{dtb'}
R\gamma'_* \rat_{\m{C}'} [3] \simeq  \rat_{\ogbp}[2][1] \oplus i_* \Lambda^1_{\Delta} [2][0] \oplus \rat_{\ogbp}[2][-1](-1),
\eeq
where the summands appear with increasing perverse degree $-1,0,1$.

The following is standard (cf. \ci[Theorem 1.5.3 and Fact 4.5.4]{depcmi}: the following summands appear in the Decomposition Theorem for $R\gamma'_* \rat_{\m{C}'} [3]$:
\[
\rat_{\ogbp}[2][1] \oplus IC_{\ogbp} (\Lambda^1_{\Delta})[0] \oplus \rat_{\ogbp}[2][-1](-1).
\]
Since the fibers of $\gamma'$ are irreducible, we have that $R^2\gamma'_* \rat = \rat_{\ogbp}$, hence:
\beq\la{z1}
IC_{\ogbp} (\Lambda^1_{{\ogbp}})[0] = {i_{{\ogbp}^{o}}}_* \Lambda^1_{{\ogbp}} [2].
\eeq
If in (\ref{dtb'}) there were other summands than these three above, then, by the Relative Hard Lefschetz symmetries, they would need to be in perverse degree zero:
else, they would contribute nontrivially to $R^j\gamma'_*\rat =0$ for some $j \geq 3$, a contradiction
(the fibers of $\gamma$ are curves).

If there were other summands in perverse degree zero, then they would be of the form $I^0 \oplus I^1$, where
the exponent refers to the dimension of the support of the intersection complex. An $I^0$ would contribute non trivially to $R^3\gamma'_* \rat=0$, a contradiction. An $I^1$ would be a sheaf placed in cohomological degree $-1$, and it would thus contribute non trivially to $R^2\gamma'_* \rat$. Since we know that this latter is ${\rat}_{\ogbp}$
and is already accounted for by the summand  $\rat_{\ogbp}[2][-1](-1)$ on the rhs of (\ref{dtb'}), we  get a  contradiction.
\end{proof}

\subsection{The Decomposition Theorem for the genus five universal curve $\m{C}\to \ogb$}\la{dtbnon}$\;$

The complete  linear system $|2C|$ is five dimensional so that we have the associated universal family of curves:
\[
\gamma: {\m{C}}^6 \to \ogb^5 = |2C|.
\]
The total space ${\m{C}}$ is nonsingular and six dimensional. The fibers of $\gamma$ are integral curves away from the integral divisor 
$\Sigma:= {\rm Sym}^2 \ogbp \subseteq \ogb$. Let $\Delta \subseteq \Sigma$ be the diagonal.
The fibers of $\gamma$ over $\Sigma\setminus \Delta$ are reduced and are the union of two irreducible components.
The two irreducible components of a fiber over the locus $\Sigma^o := {\rm Sym}^2 \ogbp^o \setminus \Delta$
are exchanged by the monodromy action around the diagonal.
 Let $\ms{L}$ be the evident rank one local system on $\Sigma \setminus \Delta$ with monodromy $-1$ around $\Delta$ (cf. \S\ref{tsr1}). We view it as a polarizable variation of pure Hodge structures of Hodge-Tate type, and of weight zero on $\Sigma \setminus \Delta$. We have:
 \beq\la{kvr4}
 (R^2\gamma_* \rat_{\m{C}})_{|\Sigma \setminus \Delta}
 =\left(  \rat_{\Sigma \setminus \Delta} \oplus \ms{L}\right) (-1).
 \eeq
 The fibers of $\gamma$ over $\Delta$ are  irreducible non-reduced:  they are   two times a curve in $|C|$ (recall that on our $K3$
surface $\ks$  all curves in $|C|$ are integral).

Let ${\ogb}^o \subseteq \ogb$ be the open subset of regular values of $\gamma$.  Let $R^1_\m{C}$ be the 
local system restriction of $R^1 \gamma_* \rat$ to ${\ogb}^o$; it coincides with  $\Lambda^1_{{\ogb}}$ in (\ref{norm}). Let $i_{\ogb^o}: \ogb^o \to \ogb$ be the evident open immersion.

\begin{lm}\la{dtbo}
{\rm ({\bf  Decomposition Theorem for $R\gamma_* \rat_{\m{C}}$})}
The Decomposition Theorem  for $R\gamma_* \rat$  takes the following form: 
\beq\la{dtb'2o}
R\gamma_* \rat_{\m{C}} \simeq  \rat_{\ogb}\oplus \left({i_{B^0}}_* \Lambda^1_{{\ogb}} [-1] \oplus {i_{\Sigma
\setminus \Delta}}_* \ms{L} [-2](-1) \right) \oplus \rat_{\ogb}[-2] (-1).
\eeq
\end{lm}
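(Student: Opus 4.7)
The plan is to follow the strategy of the proof of Lemma \ref{dtb''} for the genus two pencil. We work with the shifted complex $R\gamma_*\rat_{\m{C}}[6]$ (where $\m{C}$ is nonsingular of dimension $6$), which is the direct image of a perverse sheaf under a proper map, and apply the Decomposition Theorem. Since $\gamma$ has one-dimensional fibers, the perverse cohomology is supported in degrees $-1, 0, 1$. In perverse degrees $\pm 1$: every fiber of $\gamma$ is set-theoretically connected (integral outside $\Sigma$; a union of two components meeting in two points over $\Sigma \setminus \Delta$; an integral doubled curve over $\Delta$), so $R^0\gamma_*\rat = \rat_{\ogb}$, which forces the perverse degree $-1$ summand $\rat_{\ogb}[5]$. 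By Relative Hard Lefschetz applied to a $\gamma$-relatively ample class, the paired perverse degree $+1$ summand is $\rat_{\ogb}[5](-1)$. After de-shifting, these yield the first and last summands of (\ref{dtb'2o}).

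For the perverse degree $0$ piece, write it as $\bigoplus_\alpha IC_{Z_\alpha}(L_\alpha)$ by semisimplicity. The dimension of each support satisfies $\dim Z_\alpha \geq 4$: the deepest cohomology sheaf $\mathcal{H}^{-\dim Z_\alpha}(IC_{Z_\alpha}(L_\alpha))$ contributes non-trivially to $R^{6 - \dim Z_\alpha}\gamma_*\rat$, and the latter vanishes whenever $6 - \dim Z_\alpha \geq 3$ since the fibers are curves. The only possible $5$-dimensional support is $\ogb$, whose generic local system is $R^1\gamma_*\rat|_{\ogb^o} = \Lambda^1_{\ogb}$; as in the argument around (\ref{z1}) in Lemma \ref{dtb''}, $IC_{\ogb}(\Lambda^1_{\ogb})$ is identified with $({i_{\ogb^o}})_*\Lambda^1_{\ogb}[5]$, yielding the second summand $({i_{\ogb^o}})_*\Lambda^1_{\ogb}[-1]$. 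The only possible $4$-dimensional support is $\Sigma$: elsewhere the fibers are integral and $R^2\gamma_*\rat|_{\ogb\setminus\Sigma}=\rat(-1)$ is already exhausted by the perverse degree $+1$ summand, so any other $4$-dimensional support would produce an unmatched contribution. Over $\Sigma\setminus\Delta$, by (\ref{kvr4}) the additional contribution to $R^2\gamma_*\rat$ is the $\pm 1$-monodromy local system $\ms{L}(-1)$; applying (\ref{ldf}), we deduce $IC_{\Sigma}(\ms{L}^o)=({i_{\Sigma\setminus\Delta}})_*\ms{L}[4]$, producing the remaining summand ${i_{\Sigma\setminus\Delta}}_*\ms{L}[-2](-1)$.

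The main obstacle is two-fold. First, one must identify $IC_{\ogb}(\Lambda^1_{\ogb})$ with the $*$-extension $({i_{\ogb^o}})_*\Lambda^1_{\ogb}[5]$ and likewise identify $IC_{\Sigma}(\ms{L}^o)$ with $({i_{\Sigma\setminus\Delta}})_*\ms{L}[4]$; both identifications follow along the lines of (\ref{z1}) by comparing the cohomology sheaves of the candidate decomposition against the known $R^j\gamma_*\rat$ on the stratification $\ogb\setminus\Sigma$, $\Sigma\setminus\Delta$, $\Delta$, using in the second case property (\ref{ldf}), which reflects the vanishing of monodromy invariants of $\ms{L}$ around $\Delta$. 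Second, one must rule out any additional $IC$-summand: the dimension bound eliminates supports of dimension $\leq 3$ at once, while the precise matching of the full $R^\bullet\gamma_*\rat$ with the candidate right-hand side of (\ref{dtb'2o}) prevents any further $4$- or $5$-dimensional summand.
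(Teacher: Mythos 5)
Your proof is correct and follows essentially the same route as the paper's: Deligne's theorem over the regular locus, Relative Hard Lefschetz together with the vanishing $R^{j}\gamma_*\rat=0$ for $j\geq 3$ to confine any extra summand to perverse degree zero and to supports of dimension $\geq 4$, and the stratum-by-stratum comparison of $R^2\gamma_*\rat$ (via (\ref{kvr4}) and (\ref{ldf})) to produce the $\Sigma$-summand ${i_{\Sigma\setminus\Delta}}_*\ms{L}[-2](-1)$ and to show that $IC_{\ogb}(\Lambda^1_{\ogb})$ is a shifted sheaf. The only cosmetic point is that connectedness of the fibers pins down just the full-support constituent of the perverse degree $-1$ piece; excluding lower-dimensional supports there uses the same vanishing of $R^{j}\gamma_*\rat$ for $j\geq3$ transported through the Relative Hard Lefschetz pairing, which your final paragraph indeed supplies.
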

\begin{proof}
We prove the equivalent:
\beq\la{dtb'2}
R\gamma_* \rat_{\m{C}} [6] \simeq  \rat_{\ogb}[5][1] \oplus \left({i_{B^0}}_* \Lambda^1_{{\ogb}} [5] \oplus {i_{\Sigma
\setminus \Delta}}_* \ms{L} [4] (-1) \right) [0] \oplus \rat_{\ogb}[5][-1](-1),
\eeq
where the summands are grouped by perversity $(-1,0,1)$.

By looking at the regular part of $\gamma$, we deduce that
the following summands appear in the Decomposition Theorem for $R\gamma_* \rat_{\m{C}} [6]$:
\[
 \rat_{\ogb}[5][1] \oplus IC_{\ogb} (\Lambda^1_{{\ogb}})[0] \oplus \rat_{\ogb}[5][-1] (-1).
\]
By the Relative Hard Lefschetz symmetries, any additional summand   would need to be in perverse degree zero:
else, they would contribute nontrivially to $R^j\gamma_*\rat =0$ for some $j \geq 3$, a contradiction (the fibers of $\gamma$ are curves).

The direct summand contribution of $IC_{\ogb} (\Lambda^1_{\ogb})[0]$ to $R^2 \gamma_* \rat$ is the sheaf 
$\m{H}^{-4} (IC_{\ogb} (\Lambda^1_{\ogb^o}))$. By the condition of support for intersection complexes,
this sheaf is supported in dimension $\leq 3$.  Recall that the curves over the  points of the four dimensional
$\Sigma^o$ have two irreducible components. It follows that there must be a contribution of $\Sigma$, to the l.h.s. of (\ref{dtb'2}), of the form
$IC_{\Sigma} (\frak{L})[0]$,
where $\frak{L}$ is some rank one local system on some open dense subset $\Sigma'$ of $\Sigma$.
Since it contributes to $R^2 \gamma_* \rat$,  as a polarizable variation of pure Hodge structures, the local systems $\frak L$ is necessarily of pure Hodge-Tate type
with weight $2$.  The intersection  complex $IC_{\Sigma} (\frak{L})$ has non zero cohomology sheaves in degrees 
contained in the interval $[-4,-1]$, where $i_{\Sigma'}: \Sigma \to \ogb$ is the evident locally closed embedding. The sheaf  $\m{H}^{-4} (IC_{\Sigma} (\frak{L}))$ contributes to $R^2 \gamma_*\rat$.
The remaining cohomology  sheaves in degrees in the interval $[-3,-1]$  contribute to  $R^j \gamma_* \rat $ for $j \geq 
3$, and are thus zero.  It follows that $IC_\Sigma (\frak{L})$ is the  sheaf  ${i_{\Sigma'}}_*\frak{L}$ placed in degree $-4$. The local system $\frak{L}$ agrees with $\ms{L}$ where it is defined. 
Since the two local systems are both of rank one and of  Hodge-Tate type, it  follows that
$\frak L = \ms{L}(-1)$, where they are both defined. It follows that   ${i_{\Sigma'}}_*\frak{L}= {i_{\Sigma \setminus \Delta}}_*\ms{L}(-1)$, so that $IC_\Sigma (\frak{L}) = {i_{\Sigma
\setminus \Delta}}_* \ms{L} [4](-1)$.

Note that the stalks of $R^2\gamma_* \rat$ on $\Delta$ are one dimensional. Note also that 
${i_{\Sigma \setminus \Delta}}_*L= {i_{\Sigma \setminus \Delta}}_!L$ has zero stalks on $\Delta$ (no local invariants
near the diagonal).

By  inspecting the cohomology sheaves of $IC_\ogb (\Lambda^1_{\ogb})$, we deduce that this perverse sheaf is a sheaf
placed in degree $-5$, hence of the predicated form:
\beq\la{z2}
IC_\ogb (\Lambda^1_{\ogb}) = {i_{B^o}}_*  \Lambda^1_{\ogb^o} [4]
\qquad
\mbox{(equivalently, $\ms{IC}_\ogb (\Lambda^1_{\ogb}) = {i_{\ogb^o}}_*  \Lambda^1_{\ogb}$)}.
\eeq
\end{proof}

\subsection{The Ng\^o strings  for  the Lagrangian fibration $\pogmp:  \ogmp \to \ogbp$}\la{kt}$\;$

Recall that $\ogmp$ is a four-dimensional moduli space which admits  the Lagrangian fibration 
$\pogmp:\ogmp\to \ogbp$ (\ref{notm}), that we have the local systems of type $\Lambda$ (\ref{notm}), the strings (\ref{sics}) and the isomorphisms (\ref{plds}).

\begin{lm}\la{m0} {\rm ({\bf Ng\^o strings  for $\pogmp:  \ogmp \to \ogbp$})}
The Decomposition Theorem  for $\pogmp:  \ogmp \to \ogbp$ has the following form:
\beq\la{z4b}
R {\pogmp}_* \rat_{\ogmp} \simeq \ms{S}_{\Delta}= \rat_{\ogbp} \bigoplus
\bigoplus_{b=1}^3 \ms{IC}_{\ogbp} (\Lambda^{b}_{\Delta}) [-b] \bigoplus \rat_{\ogbp} [-4](-2).
\eeq
where, moreover, the following intersection cohomology complex is a sheaf:
\beq\la{z5b}
\ms{IC}_{\ogbp} (\Lambda^{2\pm 1}_{\Delta}) = {i_{{\ogbp}^o}}_* R^1_{\m{C'}} \left(-\frac{1}{2} \mp
\frac{1}{2}\right).
\eeq
\end{lm}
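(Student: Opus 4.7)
The plan is to apply the Ng\^o Strings Theorem \ref{nstabis} to the Lagrangian fibration $m': M' \to B'$, verifying its hypotheses via Proposition \ref{tadaw}, which establishes that $(M', B', P')$ is a $\delta$-regular weak abelian fibration meeting the assumptions of Theorem \ref{nstabis}. This yields a decomposition of $Rm'_* \rat_{M'}$ as a direct sum of Ng\^o strings, each supported on a closed subvariety $Z \subseteq B'$ that is a support of the morphism.

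The main reduction is to show that $B'$ itself is the only support. For this, I will compute the top direct image sheaf $R^4 m'_* \rat_{M'}$ and invoke Ng\^o's Support Theorem to conclude that every support of $m'$ must appear in this top-degree sheaf. Since $(S,H)$ is general of genus $2$, by Remark \ref{nodeandcusps} every curve in the linear system $|H|$ is integral (with at worst nodes and cusps). By \cite{re80} (as used repeatedly in \S\ref{sect irr comp}), the compactified Jacobian of each such curve is integral, so every fiber of $m'$ is integral of dimension $2$. Combining Remark \ref{piattezza} (flatness with Cohen--Macaulay, $2$-dimensional fibers) with Fact \ref{fatti da usare}.(3), one obtains $R^{4}m'_*\rat_{M'} \simeq \rat_{B'}(-2)$, and thus $B'$ is the unique support.

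Consequently the decomposition takes the shape $Rm'_*\rat_{M'} \simeq \bigoplus_{b=0}^{4}\ms{IC}_{B'}(L_b)[-b]$ for polarizable variations of pure Hodge structure $L_b$ on a dense open subset of $B'$. To identify each $L_b$ with $\Lambda^b_\Delta$, I restrict to the open subset $B'^o \subseteq B' = \Delta$ of smooth curves. Over $B'^o$, Lemma \ref{taat} (together with the fact that a line bundle on a smooth genus $2$ curve of the appropriate degree exists, so $P_{(0,1,1)}$ surjects onto $B'^o$ with torsor fibers) implies that the entire preimage $m'^{-1}(B'^o)$ coincides with $P_{(0,1,1)}|_{B'^o}$, a torsor under ${P'}|_{B'^o} = \mathrm{Pic}^0_{\m{C}'/B'}|_{B'^o}$. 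Torsors share cohomology with the structure group, so $R^b m'_*\rat|_{B'^o} \simeq \wedge^b R^1\gamma'_*\rat|_{B'^o} = \Lambda^b_\Delta$ by (\ref{isowedge}), matching (\ref{z4b}).

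For the sheaf statement (\ref{z5b}), I invoke Lemma \ref{dtb''}: the decomposition (\ref{dtb'}) exhibits $IC_{B'}(\Lambda^1_\Delta)$ in perverse degree zero as the shifted sheaf $i_{B'^{o}*}\Lambda^1_\Delta[2]$, so $\ms{IC}_{B'}(\Lambda^1_\Delta) = IC_{B'}(\Lambda^1_\Delta)[-2] = i_{B'^{o}*}R^1_{\m{C'}}$. The corresponding statement for $\Lambda^3_\Delta$ follows by Relative Hard Lefschetz (\ref{plds}), which identifies $\Lambda^3_\Delta \simeq \Lambda^1_\Delta(-1)$ as polarizable variations, giving $\ms{IC}_{B'}(\Lambda^3_\Delta) \simeq i_{B'^{o}*}R^1_{\m{C'}}(-1)$; this packages into the formula $\ms{IC}_{B'}(\Lambda^{2\pm 1}_\Delta) = {i_{B'^{o}}}_* R^1_{\m{C'}}(-\tfrac12 \mp \tfrac12)$. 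The only delicate point is the Hodge-theoretic refinement of the BBD decomposition inside $D^b MHM_{alg}$, but since each $\Lambda^b_\Delta$ carries a unique structure of polarizable pure variation of Hodge structure of weight $b$ extending the local system $\wedge^b R^1\gamma'_*\rat|_{B'^o}$, and the IC extension of a polarizable VHS is unique, the isomorphism automatically lifts to mixed Hodge modules via the formalism of \S\ref{gennot}.
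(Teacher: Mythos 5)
Your proposal is correct and follows essentially the same route as the paper: Proposition \ref{tadaw} plus Theorem \ref{nstabis}, with Rego's irreducibility of compactified Jacobians of integral planar curves forcing $\ogbp$ to be the only support and the twist $L$ to be trivial, and then Lemma \ref{dtb''} together with Relative Hard Lefschetz for (\ref{z5b}). The only difference is that you re-derive two things Theorem \ref{nstabis} already hands you (the exclusion of smaller supports via $R^4$, and the identification of the coefficients with $\Lambda^\bullet_\Delta$ via the torsor structure over $\ogbp^o$), which is harmless but redundant.
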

\begin{proof}
By Proposition \ref{tadaw}, the morphism $\pogmp:  \ogmp \to \ogbp$ is part of a $\delta$-regular weak Abelian fibration.
 We can thus apply Theorem \ref{nstabis}, so that,  in view of the fact that the fibers of $\pogmp$ are irreducible
 (they are the compactified Jacobians of the integral locally planar curves in $\ogbp$ \cite{re80}),
we have that the Decomposition Theorem has the desired  form (\ref{z4b}). 

Since the general fiber of the group scheme ${\rm Pic}^0_{\m{C'}/\ogbp}$ is the Jacobian of a nonsingular curve of genus
two, by (\ref{plds})   $\Lambda^1_{\Delta}  \simeq \Lambda^3_{\Delta}(1)$, so that
in order to prove  (\ref{z5b})  it is enough to prove that:
\[
\ms{IC}_{\ogbp} (\Lambda^{1}_{\Delta}) = {i_{{\ogbp}^o}}_* \left((R^1 \gamma'_* \rat_{\m{C'}})_{| {B'}^o}\right).
\]
which follows from 
(\ref{z1}) and the fact, already observed at the beginning of \S\ref{dtb't},  that $\Lambda^{1}_{\Delta} = (R^1 \gamma'_* \rat_{\m{C'}})_{| {B'}^o}$.
\end{proof}

\subsection{The Ng\^o strings for  the Lagrangian fibration $\pogmp^2: \ogmp \times \ogmp \to \ogbp\times \ogbp$}\la{kt1}$\;$

Let $\ogmp$ be as in the  beginning of \S\ref{kt}. Given a product with the projections onto the factors, the box product is the ordinary tensor product of the pull-backs via the projections.
We consider the Lagrangian fibration $\pogmp^2=\pogmp \times \pogmp: \ogmp \times \ogmp \to \ogbp\times \ogbp$. Recall (\ref{notm})  that for simplicity, we are denoting $\Lambda^\bullet_{\ogbp^o}$ simply by $\Lambda^\bullet_{\Delta}$. We denote by $\Lambda^\bullet_{\Delta \times \Delta}$ the analogous local systems on $\ogbp^o \times \ogbp^o$ associated with $\ppr \times \ppr \to \ogbp \times \ogbp$.

\begin{lm}\la{m11}
The Decomposition Theorem for $\pogmp^2$ has the following form:
\beq\la{sabB}
\begin{aligned}
R\pogmp^2_* \rat_{\ogmp\times \ogmp}  \simeq & \ms{S}_{\Delta} \boxtimes \ms{S}_\Delta 
\\
= &\bigoplus_{b=0}^8 \bigoplus_{b'+b''=b} \left(   \ms{IC}_{\ogbp}(\Lambda^{b'}_{\Delta}) \boxtimes
\ms{IC}_{\ogbp}(\Lambda^{b''}_{\Delta})    \right)     [-b] \\
= & \bigoplus_{b=0}^8  \ms{IC}_{\ogbp \times \ogbp}(\Lambda^{b}_{\Delta \times \Delta}) [-b].
\end{aligned}
\eeq
Moreover, the following  summands are  sheaves: \beq\la{sab0B}
\ms{IC}_{\ogbp \times \ogbp}(\Lambda^{4\pm 4}_{\Delta \times \Delta}) (2\pm 2)
= \rat_{\ogbp \times \ogbp},
\eeq
\beq\la{tassB}
\ms{IC}_{\ogbp \times \ogbp} (\Lambda^{4\pm 3}_{\Delta \times \Delta}) 
= \bigoplus_{j=1}^2 {\rm pr}_j^* \ms{IC}_{\ogbp}(\Lambda^1_{\Delta})  (3\pm 3)
=  \bigoplus_{j=1}^2 {\rm pr}_j^*  \, {i_{{\ogbp}^o}}_* \Lambda^1_{\ogbp^o} (3\pm 3).
\eeq
\end{lm}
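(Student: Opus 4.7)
The plan is to derive (\ref{sabB}), (\ref{sab0B}) and (\ref{tassB}) as formal consequences of Lemma \ref{m0} via K\"unneth. Since $\pogmp^2 = \pogmp \times \pogmp$ is a product of proper morphisms and the coefficients are constant, the K\"unneth formula in $D^b MHM_{alg}(\ogbp \times \ogbp)$ yields $R\pogmp^2_* \rat_{\ogmp \times \ogmp} \simeq R\pogmp_* \rat_{\ogmp} \boxtimes R\pogmp_* \rat_{\ogmp}$. Substituting Lemma \ref{m0} and expanding the definition of $\ms{S}_{\Delta}$ in (\ref{sics}) produces the middle line of (\ref{sabB}).

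For the passage to the third line of (\ref{sabB}) one uses the fact that, on the smooth irreducible bases $\ogbp$ and $\ogbp \times \ogbp$, there is a canonical identification
\[
\ms{IC}_{\ogbp}(F) \boxtimes \ms{IC}_{\ogbp}(G) \simeq \ms{IC}_{\ogbp \times \ogbp}(F \boxtimes G)
\]
for polarizable variations $F, G$ on Zariski-dense opens of $\ogbp^{\rm reg}$ (this is K\"unneth for $IC$ on a product of smooth varieties, rewritten through the normalization $\ms{IC} = IC[-\dim]$). Combined with the K\"unneth identity
\[
\bigoplus_{b' + b'' = b} \Lambda^{b'}_{\Delta} \boxtimes \Lambda^{b''}_{\Delta} \simeq \Lambda^{b}_{\Delta \times \Delta}
\]
(which is $\wedge^{b}(V_1 \oplus V_2) = \bigoplus_{b'+b''=b} \wedge^{b'}V_1 \otimes \wedge^{b''}V_2$ applied to $V_j = R^1\pogmpr_* \rat$ on the two factors, via (\ref{isowedge})), this produces the third line of (\ref{sabB}).

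To identify the specific summands in (\ref{sab0B}): $\Lambda^0_{\Delta \times \Delta} = \rat$ gives $\ms{IC}_{\ogbp \times \ogbp}(\rat) = \rat_{\ogbp \times \ogbp}$ because the base is smooth; the top exterior power $\Lambda^8_{\Delta \times \Delta}$ is a rank-one Hodge-Tate summand pure of weight $8$, hence forced to be $\rat(-4)$, so that after Tate twisting by $(4)$ one recovers $\rat_{\ogbp \times \ogbp}$. For (\ref{tassB}) with $b = 1$, the K\"unneth decomposition above yields $\Lambda^1_{\Delta \times \Delta} \simeq {\rm pr}_1^*\Lambda^1_{\Delta} \oplus {\rm pr}_2^*\Lambda^1_{\Delta}$; since each ${\rm pr}_j$ is smooth, ${\rm pr}_j^* \ms{IC}_{\ogbp}(L) \simeq \ms{IC}_{\ogbp \times \ogbp}({\rm pr}_j^* L)$, and Lemma \ref{m0} tells us that $\ms{IC}_{\ogbp}(\Lambda^1_{\Delta}) = {i_{\ogbp^o}}_*\Lambda^1_{\ogbp^o}$ is in fact a sheaf; hence so is the direct sum. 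The case $b = 7$ follows from $b = 1$ by applying (\ref{plds}) on each factor, up to the indicated Tate shift.

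The argument is entirely formal once Lemma \ref{m0} is in hand; the only point requiring a moment of care is the shift bookkeeping that reconciles the $[-\dim]$-normalization of $\ms{IC}$ with the K\"unneth formula for the unshifted $IC$, and this is the main (minor) technical obstacle.
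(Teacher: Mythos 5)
Your proposal is correct and follows essentially the same route as the paper: K\"unneth applied to Lemma \ref{m0}, compatibility of $\boxtimes$ with $\ms{IC}$ on the smooth product, the exterior-algebra identity for $\Lambda^\bullet_{\Delta\times\Delta}$, and then (\ref{sab0B}), (\ref{tassB}) via the Relative Hard Lefschetz isomorphisms (\ref{plds}) together with the sheaf-theoretic identification (\ref{z1})/(\ref{z5b}) pulled back along the smooth projections. The only cosmetic difference is that for $\Lambda^8_{\Delta\times\Delta}$ you invoke rank-one purity and Hodge--Tate type where the paper uses $\Lambda^8 \simeq \Lambda^0(-4)$ from (\ref{plds}) plus irreducibility of the fibers; these are interchangeable.
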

\begin{proof} For reasons of bookkeeping, we prove the theorem for the shifted $R\pogmp^2_* \rat_{\ogmp\times \ogmp} [8]$.

The projections ${\rm pr}_j: \ogbp \times \ogbp$ are smooth of relative dimension $2$, so that the functors 
${\rm pr}_j^*[2]$
preserve pure Hodge complexes and intersection complexes (the coefficients are the pulled-back coefficients).
The desired conclusions (\ref{sabB}) follows by: taking 
Lemma \ref{m0}; applying the K\"unneth formula keeping track of the perversities;
organizing the summands.  The identities (\ref{sab0B}) follow from the
isomorphisms $\Lambda^{0}_{\Delta \times \Delta} \simeq\Lambda^{8}_{\Delta \times \Delta}
(4) \simeq \rat_{{\ogbp}^o \times {\ogbp}^o}$ (use (\ref{plds}) and the irreducibility of the fibers) and the fact that $\ogbp \times \ogbp$ is nonsingular.
The identities  (\ref{tassB}) follow from the natural   isomorphism (\ref{plds})
$\Lambda^1_{\Delta \times \Delta} \simeq \Lambda^7_{\Delta \times \Delta} (3)$ (the general fiber
of the group scheme is the product of the Jacobians of two nonsingular projective curves of genus two), as well as
(\ref{z1}), with the shift changed from $[2]$ to $[4]$ to take into account
that we are pulling back along  smooth morphisms, the projections, of relative dimension $2$. 
\end{proof}

\subsection{The Ng\^o strings  for the Lagrangian fibration
$\pogmp^{(2)}: {\rm Sym}^2 \ogmp   \to  {\rm Sym}^2\ogbp$}\la{kt2}$\;$

The morphism $\ogmp^2: \ogmp^2 \to \ogbp^2$ is equivariant for the action of the symmetric group in two letters, so that it induces the natural  morphism $\pogmp^{(2)}: {\rm Sym}^2 \ogmp   \to  {\rm Sym}^2\ogbp$
on the two-fold symmetric products. 
We have the commutative diagram:
\beq\la{symsym}
\xymatrix{
\ogmp \times \ogmp \ar[r]^-{\pogmp^2} \ar[d]^-a  \ar[rd]^-c& \ogbp \times \ogbp \ar[d]^-b \\
{\rm Sym}^2 \ogmp \ar[r]^-{\pogmp^{(2)}}  &{\rm Sym}^2 \ogbp.
}
\eeq

Recall that we have the the local systems
$\Lambda^\bullet_{\Sigma}$ (\ref{norm}) and $\ms{L}^o$  (\ref{ldf})  defined on $\Sigma^o={\rm Sym}^2 \ogbp^o \setminus \Delta_{\ogbp}$
and the strings  $\ms{S}^\pm_\Sigma$  
 defined in (\ref{sics}).
\begin{pr}\la{dtforsym}

The Decomposition Theorems for the morphisms $c$ and $\pogmp^{(2)}$ take the following form:
\beq\la{idbB}
Rc_* \rat_{\ogmp \times \ogmp}  = \ms{S}^+_\Sigma \oplus \ms{S}^-_\Sigma;
\qquad
R\pogmp^{(2)}_* \rat_{{\rm Sym}^2 \ogmp}  \simeq \ms{S}^+_\Sigma. 
\eeq
Moreover, 
we have that the following terms  are sheaves: 
\beq\la{z6B}
\begin{aligned}
&\ms{IC}_\Sigma (\Lambda^{4\pm 4}_{\Sigma})=\rat_\Sigma   (2\pm2),  \quad 
& \ms{IC}_\Sigma (\Lambda^{4\pm 4}_{\Sigma} \otimes \ms{L}^o) &={i_{\Sigma^o}}_* \ms{L}_{\ogbp} 
(2\pm 2) = {i_{\Sigma^o}}_! \ms{L}_{\ogbp} (2\pm 2),
\\
& \ms{IC}_\Sigma \left(\Lambda^{4\pm 3}_{\Sigma}\right)
= {i_{\Sigma^o}}_* \Lambda^{4\pm 3}_{\Sigma} (3\pm 3),     \quad 
& \ms{IC}_\Sigma (\Lambda^{4\pm 3}_{\Sigma} \otimes \ms{L}^o) &= 
{i_{\Sigma^o}}_* \left(\Lambda^{4\pm 3}_{\Sigma} \otimes \ms{L}^o \right) (3 \pm 3).
\end{aligned}
\eeq
\end{pr}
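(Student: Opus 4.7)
The plan is to use the commutative diagram (\ref{symsym}), the factorization $c = b\circ\pogmp^2$, and the $S_2$-equivariance of everything in sight. Since $b:\ogbp\times\ogbp\to\Sym^2\ogbp$ is a finite morphism (a double cover, étale over $\Sigma^o$ and ramified along $\Delta_{\ogbp}$), the functor $Rb_*=b_*$ is exact, $t$-exact for both the standard and perverse $t$-structures, and sends IC sheaves to direct sums of IC sheaves. Starting from Lemma \ref{m11}, which gives
\[
R\pogmp^2_*\rat_{\ogmp\times\ogmp}=\bigoplus_{b=0}^8 \ms{IC}_{\ogbp\times\ogbp}(\Lambda^{b}_{\Delta\times\Delta})[-b],
\]
we apply $Rb_*$ termwise to compute $Rc_*\rat_{\ogmp\times\ogmp}$.

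The key step is to identify the decomposition of $b_*\ms{IC}_{\ogbp\times\ogbp}(\Lambda^b_{\Delta\times\Delta})$ into $S_2$-isotypic components. Over $\Sigma^o$, the morphism $b$ is étale with monodromy group $S_2$; the generator acts as the flip swapping the two factors. By the identity $\wedge^b(V\oplus W)=\bigoplus_{i+j=b}\wedge^i V\otimes\wedge^j W$, applied fiberwise to the Tate module $\Lambda^1_{\Delta\times\Delta}=\Lambda^1_\Delta\boxtimes 1\oplus 1\boxtimes\Lambda^1_\Delta$, we obtain
\[
b^{\ast}\Lambda^{b}_{\Sigma}\;\simeq\;\Lambda^{b}_{\Delta\times\Delta}\qquad\text{on}\quad \ogbp^o\times\ogbp^o\setminus\text{diag},
\]
and hence $b_{\ast}\Lambda^{b}_{\Delta\times\Delta}\simeq\Lambda^{b}_{\Sigma}\oplus(\Lambda^{b}_{\Sigma}\otimes\ms{L}^o)$, the decomposition into $S_2$-invariant and $S_2$-anti-invariant parts. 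Since finite pushforward commutes with intermediate extension from an open subset, this yields
\[
b_{\ast}\ms{IC}_{\ogbp\times\ogbp}(\Lambda^{b}_{\Delta\times\Delta})\;\simeq\;\ms{IC}_{\Sigma}(\Lambda^{b}_{\Sigma})\oplus\ms{IC}_{\Sigma}(\Lambda^{b}_{\Sigma}\otimes\ms{L}^o),
\]
and summing over $b$ gives the first assertion $Rc_*\rat_{\ogmp\times\ogmp}=\ms{S}^+_\Sigma\oplus\ms{S}^-_\Sigma$. For the second assertion, one uses that the quotient map $a:\ogmp\times\ogmp\to\Sym^2\ogmp$ is a finite $S_2$-quotient, so $\rat_{\Sym^2\ogmp}=(a_{\ast}\rat_{\ogmp\times\ogmp})^{S_2}$. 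Taking $S_2$-invariants on both sides of the decomposition of $Rc_*\rat=R\pogmp^{(2)}_{\ast}Ra_{\ast}\rat$ singles out precisely the summand $\ms{S}^+_\Sigma$, proving $R\pogmp^{(2)}_*\rat_{\Sym^2\ogmp}\simeq \ms{S}^+_\Sigma$.

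For the statements in (\ref{z6B}), each identity is inherited from the corresponding sheaf-theoretic statement in Lemma \ref{m11} via exactness of $b_{\ast}$. The identities (\ref{sab0B}) give that $\ms{IC}_{\ogbp\times\ogbp}(\Lambda^{4\pm 4}_{\Delta\times\Delta})$ is the shifted constant sheaf; pushing forward via $b$ and splitting by isotypic components produces $\rat_\Sigma$ and $i_{\Sigma^o\ast}\ms{L}_{\ogbp}={i_{\Sigma^o}}_{!}\ms{L}_{\ogbp}$ (the last equality by (\ref{ldf}) and Remark \ref{onl}(1)), up to the indicated Tate twist. Similarly, (\ref{tassB}) shows that $\ms{IC}_{\ogbp\times\ogbp}(\Lambda^{4\pm 3}_{\Delta\times\Delta})$ is a honest sheaf (it is a direct sum of pullbacks of sheaves from the factors), hence its image under $b_{\ast}$ remains concentrated in a single cohomological degree; isolating the $S_2$-invariant and $S_2$-anti-invariant parts then yields the two IC sheaves of (\ref{z6B}) in the middle row. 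The main obstacle is the bookkeeping at the ramification locus $\Delta_{\ogbp}\subset\Sigma$: on $\Delta_{\ogbp}$ the local monodromy of $\ms{L}^o$ is $-1$, so $(i_{\Sigma^o})_{\ast}\ms{L}^o=(i_{\Sigma^o})_{!}\ms{L}^o$, and one must check that no stalks on $\Delta_{\ogbp}$ are lost or double-counted when separating the $\pm$ summands; this is handled by the explicit identification $b^{\ast}\Lambda^{b}_{\Sigma}\simeq \Lambda^{b}_{\Delta\times\Delta}$ above together with the fact that, by Remark \ref{onl}, the intermediate extension of $\ms{L}^o$ agrees with its ordinary extension to all of $\Sigma$.
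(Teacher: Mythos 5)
Your proposal is correct and follows essentially the same route as the paper: both arguments push the K\"unneth decomposition of Lemma \ref{m11} forward along the finite double cover $b$, observe that everything is an intermediate extension from the dense open $\Sigma^o$ where $b$ is \'etale, and there split $b_*b^*\Lambda^\bullet_\Sigma\simeq\Lambda^\bullet_\Sigma\oplus(\Lambda^\bullet_\Sigma\otimes\ms{L}^o)$ by the projection formula, with (\ref{z6B}) then following because $b_*$ of a sheaf is a sheaf. The only cosmetic difference is how the summand $R\pogmp^{(2)}_*\rat_{{\rm Sym}^2\ogmp}$ is matched with $\ms{S}^+_\Sigma$: you take $S_2$-invariants of $a_*\rat$, whereas the paper splits off $({i_{{\rm Sym}^2\ogmp\setminus\Delta_{\ogmp}}})_*\ms{L}_{\ogmp}$ on the source and identifies the restriction of $R\pogmp^{(2)}_*\rat$ to $\Sigma^o$ via Deligne's theorem and the torsor structure over $\Sigma^o$; both are valid.
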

\begin{proof} 
Denote by $\ms{L}_{\ogmp}$ the rank one local system on ${\rm Sym}^2 \ogmp \setminus \Delta_{\ogbp}$
with monodromy $-1$ around the diagonal; it is pure of Hodge-Tate type, with weight zero.
We have $Ra_* \rat_{\ogmp \times \ogmp} = a_* \rat_{\ogmp \times \ogmp} =
\rat_{{\rm Sym}^2 \ogmp} \oplus ({i_{{\rm Sym}^2 \ogmp \setminus \Delta_{\ogmp}}})_* \ms{L}_{\ogmp}$.
It follows that: 
\beq\la{rosso}Rc_* \rat_{\ogmp \times \ogmp}  = R\pogmp^{(2)}_* \rat_{{\rm Sym}^2 \ogmp} \oplus 
R\pogmp^{(2)}_*{i_{{\rm Sym}^2 \ogmp \setminus \Delta_{\ogmp}}}_* \ms{L}_{\ogmp}.
\eeq

By the functoriality of the direct image, the fact that $b$ is finite and (\ref{sabB}), we have $Rc_* \rat_{\ogmp \times \ogmp}  \simeq Rb_*   R\pogmp^{2}_*
\rat_{\ogmp \times \ogmp}= b_* (\ms{S}_\Delta \boxtimes \ms{S}_\Delta)$. Note
that $\ms{S}_{\Delta}$, being the direct sum  of shifted intersection complexes supported exactly on $\ogbp$,
is determined, via the intermediate extension functor, by its restriction to any dense open subset
of $\ogbp$. The analogous fact remains true for $\ms{S}_\Delta \boxtimes \ms{S}_\Delta$
on $\ogbp \times \ogbp$. It follows that the analogous fact
remains true for $b_*(\ms{S}_\Delta \boxtimes \ms{S}_\Delta) \simeq 
R\pogmp^{(2)}_* \rat_{{\rm Sym}^2 \ogmp} \oplus 
R\pogmp^{(2)}_*{i_{{\rm Sym}^2 \ogmp \setminus \Delta_{\ogmp}}}_* \ms{L}_{\ogmp}$
on ${\rm Sym}^2 \ogbp$: 
in fact,  the morphism $b$ being finite and surjective, the  direct image $b_*$ sends intersection complexes
supported exactly  on $\ogbp^2$ to ones supported exactly  on ${\rm Sym}^2 \ogbp$.

It is thus enough to verify the desired assertions (\ref{idbB}) concerning (\ref{rosso})
on any Zariski-dense open subset
$U$ of ${\rm Sym}^2 \ogbp$. On any such open subset $U$,  (\ref{rosso})
takes the form:
\beq\la{lart} 
\left(R\pogmp^{(2)}_* \rat_{{\rm Sym}^2 \ogmp}\right)_{| U} \oplus 
\left( R\pogmp^{(2)}_*{i_{{\rm Sym}^2 \ogmp \setminus \Delta_{\ogmp}}}_* \ms{L}_{\ogmp}\right)_{| U}.
\eeq

We set $U:= \Sigma^o:= {\rm Sym}^2 \ogbp^o \setminus \Delta_{\ogbp^o}$. 
In view of Remark \ref{onl}.(3),  recalling that $\ms{L}$ is defined on ${\rm Sym}^2 \ogbp \setminus 
\Delta_{\ogbp}$,  and that $\ms{L}_{\ogbp} ={\pogmp^{(2)}}^*\ms{L}$, and by using the projection formula,
 we have that (\ref{lart}) reads as follows:
\beq\la{lartb} 
\left(R\pogmp^{(2)}_* \rat_{{\rm Sym}^2 \ogmp}\right)_{| \Sigma^o} \oplus 
\left( 
\left(R\pogmp^{(2)}_* \rat_{{\rm Sym}^2 \ogmp}\right)_{| \Sigma^o}
\otimes
\ms{L}_{|\Sigma^o}
\right).
\eeq

 The proper morphism 
$\pogmp^{(2)}$ is smooth over $\Sigma^o$, so that, by Deligne's theorem \ci[Theorem 1.5.3]{depcmi}, i.e. the Decomposition Theorem for proper smooth morphisms, we have that:
\beq\la{betoee}
\left(R\pogmp^{(2)}_* \rat_{{\rm Sym}^2 \ogmp}\right)_{| \Sigma^o} \simeq
\bigoplus_{\bullet =0}^8 
\left(R^\bullet\pogmp^{(2)}_* \rat_{{\rm Sym}^2 \ogmp}\right)_{| \Sigma^o} [-\bullet]
\eeq
Now, the rhs of (\ref{betoee}) coincides with ${\ms{S}_{\Sigma}^+}_{|U}$:
this follows from the fact that the morphism $(\pogmp^{(2)})_{|\Sigma^o}$ is a torsor
for the group scheme (\ref{epigs}) $({\rm Pic}^0_{\widehat{\m{C}_{\Sigma^o}}/\Sigma^o})$.
This can also be seen to follow directly  from the Ng\^o String Theorem \ref{nstabis}. 

We  have thus proved
  (\ref{idbB}).

All the summands in $\ms{S}^+_\Sigma$ and $\ms{S}^-_\Sigma$ have been shown to be direct summands
of terms of the form $b_* \ms{IC}_{\ogbp^o \times \ogbp^o} (\Lambda^\bullet_{\Delta \times \Delta})$.
Since the direct image functor $b_*$ sends sheaves to sheaves, in view of (\ref{sabB}) and (\ref{tassB}),  the  intersection complexes 
in (\ref{z6B}) are shifted sheaves, and thus have the predicated form. 
\end{proof}

\subsection{Ng\^o strings  for the Lagrangian fibrations $\ogm, \ogn  \to \ogb $}\la{dtmn}$\;$

This section is devoted to proving Proposition \ref{dtwe}.   While it falls falls short of establishing the exact shape of the Decomposition Theorem
for the complexes $R\pogm_* \rat_{\ogm}$ and $R\pogn_* \rat_{\ogn}$,
the fact that  this shortcoming is measured by the same integer $\e$  in both
expressions (\ref{dtmnt}),  is key to proving our main Theorem A in \S\ref{pmt}.

Before embarking in the proof of Proposition \ref{dtwe}, we list some facts we need.

\begin{fact}\la{1e2}$\;$

\ben
\item
By the very definition (\ref{sics}),  the cohomology sheaf in degree $10$  of the string $\ms{S}_\ogb$
 takes the form:
$\m{H}^{10} (\ms{S}_{\ogb})=
\rat_B \oplus \oplus_{i=1}^4 \m{H}^i(\ms{IC}_B (\Lambda^{10-i}_{\ogb^o}))
$, where the direct sum ends at $4$ by the conditions of support for intersection complexes. These same conditions
tell us that $\dim {\rm support}\,
\m{H}^i(\ms{IC}_B (\Lambda^{10-i}_{\ogb^o})) \leq 4-i$, for $1 \leq i \leq 4$.
The Relative Hard Lefschetz  isomorphism $\Lambda^1_{\ogb^o}=\Lambda^9_{\ogb^o}(4)$, combined with Lemma \ref{dtbo} (cf. (\ref{z2})),
implies that $\m{H}^1(\ms{IC}_B (\Lambda^{9}_{\ogb^o}))=0$. In summary,  we have:
\beq\la{kio}
\m{H}^{10} (\ms{S}_{\ogb})= \rat_B 
\oplus  
{^{\leq 2}\m{H}^2(\ms{IC}_B (\Lambda^{8}_{\ogb^o}))}
\oplus  
{^{\leq 1}\m{H}^3(\ms{IC}_B (\Lambda^{7}_{\ogb^o}))}
\oplus  
{^{\leq 0}\m{H}^4(\ms{IC}_B (\Lambda^{6}_{\ogb^o}))},
\eeq
where the exponents on the left are the upper bounds on the dimensions of the respective supports.

\item
In the same vein, but by using Proposition \ref{dtforsym} in place of Lemma \ref{dtbo},
we have that the cohomology sheaf in degree $8$  of the string $\ms{S}^+_\Sigma$
-i.e. in degree $0$ for $S^+_{\Sigma}$- takes the form:
\beq\la{kiob}
\m{H}^{8}(\ms{S}^+_{\Sigma}) = \rat_\Sigma 
\oplus  
{^{\leq 1}\m{H}^2(\ms{IC}_\Sigma (\Lambda^{6}_{\Sigma^o}))}
\oplus  
{^{\leq 0}\m{H}^3(\ms{IC}_B (\Lambda^{5}_{\Sigma^o}))}.
\eeq
where the exponents on the left are the upper bounds on the dimensions of the respective supports.
This is improved in  (\ref{nip}).
\item
Similarly, the cohomology sheaf in degree $8$  of the string $\ms{S}^-_\Sigma$ takes the form:
\beq\la{kiob-}
\m{H}^{8}(\ms{S}^-_{\Sigma}) = {i_{\Sigma \setminus \Delta}}_* \ms{L} 
\oplus  
{^{\leq 1}\m{H}^2(\ms{IC}_\Sigma (\Lambda^{6}_{\Sigma^o} \otimes \ms{L}^o))}
\oplus  
{^{\leq 0}\m{H}^3(\ms{IC}_B (\Lambda^{5}_{\Sigma^o}\otimes \ms{L}^o))},
\eeq
where the exponents on the left are the upper bounds on the dimensions of the respective supports.
This is  improved to (\ref{kiob-p}).

\item
In the same vein, but by using Lemma  \ref{m0} in place of Lemma \ref{dtbo}, coupled with the fact,
due to the conditions of support, that
$\m{H}^2 \ms{IC}(\frak L) =0$ for any system of coefficients $\frak L$ on a surface,
we have that the cohomology sheaf in degree $4$  of the string $\ms{S}_\Delta$ reads as follows:
\beq\la{kioboo}
\m{H}^{4}(\ms{S}_{\Delta}) = \rat_\Delta 
\eeq

\item
The following is clear: if $T$ is an irreducible variety and $F\simeq \rat_T^{\oplus r}$ is a constant sheaf 
on $T$, then $F$ does not admit a direct summand supported on any  proper  subvariety.

\item The local system $\ms{L}$ on $\Sigma \setminus \Delta$ of Proposition  \ref{rtop}
satisfies the following properties   Fact \ref{onl} (cf. the proof of Proposition 
\ref{dtbo}): (i) ${i_{\Sigma \setminus \Delta}}_* \ms{L} = 
{i_{\Sigma \setminus \Delta}}_! \ms{L}$ (because there are no local monodromy invariants around the diagonal);
(II) for every Zariski dense open subset $j: U \subseteq  \Sigma \setminus  \Delta$, we have that
$\ms{L} = j_* j^*\ms{L}$ 
 (true for any local system, due to the normality of the varieties involved);
 (IIi) ${i_{U}}_* (\ms{L}_{| U}) =  {i_{\Sigma \setminus \Delta}}_* \ms{L}$ on $\ogb$ (follows immediately from (II)
 by functoriality).
\een

\end{fact}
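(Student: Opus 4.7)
The plan is to verify each of the identities (\ref{kio}), (\ref{kiob}), (\ref{kiob-}), and (\ref{kioboo}) by a uniform application of three tools: (i) the support conditions for intersection complexes, i.e.\ $\m{H}^i(\ms{IC}_Z(L))$ has support of dimension $\leq \dim Z - i - 1$ for $i \geq 1$; (ii) the Relative Hard Lefschetz symmetries (\ref{plds}); and (iii) the previously established ``sheaf-ness'' statements (\ref{z2}), (\ref{z5b}), (\ref{tassB}), and (\ref{z6B}) asserting that certain intersection complexes are concentrated in cohomological degree zero. Items (5) and (6) are either immediate or essentially recorded elsewhere.

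For item (1), I would first expand
\[
\m{H}^{10}(\ms{S}_\ogb) = \bigoplus_{b=0}^{10} \m{H}^{10-b}(\ms{IC}_\ogb(\Lambda^b_{\ogb})).
\]
Since $\ms{IC}_\ogb(-)$ has cohomology concentrated in degrees $[0,4]$, only the summands with $b \in [6,10]$ can contribute. The top-degree summand $b=10$ gives $\m{H}^0(\ms{IC}_\ogb(\Lambda^{10})) = {i_{\ogb^o}}_*\rat = \rat_\ogb$, using that the generic fiber of the group scheme is the Jacobian of a smooth genus-$5$ curve (so $\Lambda^{10} \simeq \rat$ up to Tate twist) and that $\ogb$ is smooth. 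For $b=9$: Relative Hard Lefschetz gives $\Lambda^9 \simeq \Lambda^1(4)$, and (\ref{z2}) says $\ms{IC}_\ogb(\Lambda^1_\ogb)$ is a sheaf placed in degree zero, so its $\m{H}^1$ vanishes. For $b=8,7,6$, the support inequality $\dim\,{\rm supp}\,\m{H}^{10-b}(\ms{IC}_\ogb(-)) \leq b-6$ is exactly what is claimed in (\ref{kio}).

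Items (2) and (3) proceed identically on the $4$-dimensional $\Sigma$, using the normality of $\Sigma = {\rm Sym}^2 \ogbp$. The top-degree ($b=8$) summand yields $\rat_\Sigma$ for (2) and, for (3), the sheaf ${i_{\Sigma\setminus\Delta}}_*\ms{L} = {i_{\Sigma\setminus\Delta}}_!\ms{L}$ of item (6)(i); the equality of the two extensions uses the absence of local monodromy invariants around $\Delta$. The $b=7$ summand in each case vanishes by combining Relative Hard Lefschetz with (\ref{z6B}), which asserts that $\ms{IC}_\Sigma(\Lambda^7_\Sigma)$ and $\ms{IC}_\Sigma(\Lambda^7_\Sigma \otimes \ms{L}^o)$ are shifted sheaves placed in degree zero. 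The remaining bounds for $b=5,6$ come from the support condition on the surface $\Sigma$. Item (4) is the simplest case: since $\Delta$ is a surface, the support condition already forces $\m{H}^i(\ms{IC}_\Delta(-)) = 0$ for $i \geq 2$; the $b=3$ summand then vanishes via Relative Hard Lefschetz together with (\ref{z5b}), and the $b=4$ summand contributes $\rat_\Delta$.

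Item (5) is a one-line observation: $\End(\rat_T^{\oplus r}) = M_r(\rat)$, so every direct summand of $\rat_T^{\oplus r}$ on the irreducible $T$ is itself a constant sheaf on all of $T$ and cannot be supported on a proper subvariety. Item (6) is a restatement of content already present in Remark \ref{onl}; the underlying inputs are the normality of the varieties in play (giving surjectivity of fundamental groups from open dense subsets) and the fact that the local monodromy of $\ms{L}$ around $\Delta$ is $-1$. There is no serious obstacle in any of this; the only point requiring care is the bookkeeping for $\ms{IC}$ versus $IC$ shift conventions and for the Tate twists, so that the Relative Hard Lefschetz identifications actually hand off the sheaf-ness results from $\Lambda^1$ to $\Lambda^{{\rm top}-1}$ cleanly.
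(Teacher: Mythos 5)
Your proposal is correct and follows essentially the same route as the paper's own (inline) justification: expand each string degree by degree, use the support conditions for intersection complexes to bound the surviving terms, and kill the degree-one contributions by combining Relative Hard Lefschetz with the previously established facts that the $\Lambda^1$-type intersection complexes are sheaves placed in degree zero ((\ref{z2}), (\ref{z5b}), (\ref{tassB}), (\ref{z6B})). The only cosmetic caveat is that for the top-degree term over $\Sigma$ the relevant input is (\ref{z6B}) (equivalently, that ${\rm Sym}^2\ogbp$ is a rational homology manifold), not normality alone, but since you list (\ref{z6B}) among your tools this does not affect the argument.
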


\begin{pr}\la{dtwe}
The Decomposition Theorem for $R\pogm_* \rat_{\ogm},  R\pogn_* \rat_{\ogn}$ in $D^b MHM_{alg} (\ogb)$ takes  the following form: 
\beq\la{dtmnt}
\begin{aligned}
R\pogm_* \rat_{\ogm} \simeq & \ms{S}_B \bigoplus \ms{S}^+_\Sigma [-2] (-1)
 \bigoplus \ms{S}_\Delta^{\oplus (1+\e_{\ogm})}[-6] (-3) , \\
R\pogn_* \rat_{\ogn} \simeq & \ms{S}_B \bigoplus \ms{S}^-_\Sigma [-2]  (-1) \bigoplus 
\ms{S}_\Delta^{\oplus \e_{\ogn}}[-6] (-3),
\end{aligned}
\eeq
where $\e_{\ogm}=\e_{\ogn}=0$, or $1$.
\end{pr}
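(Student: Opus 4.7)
The plan is to apply the Ng\^o Strings Theorem \ref{nstabis} to each of the two $\delta$-regular weak abelian fibrations $(\ogm,\ogb,P)$ and $(\ogn,\ogb,P)$ from Proposition \ref{tadaw}, and then to pin down the precise strings and multiplicities by matching against the top-degree direct image computed in Proposition \ref{rtop}.

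First I would identify the set of supports. By the Ng\^o Support Theorem, every support must arise as the support of a nontrivial direct summand of the top-degree direct image $R^{10}$; combined with the description of $R^{10}\pogm_*\rat$ and $R^{10}\pogn_*\rat$ on the three strata $\ogb\setminus\Sigma$, $\Sigma\setminus\Delta$, $\Delta$ from Proposition \ref{rtop}, this restricts the supports to $\ogb$, $\Sigma$, and $\Delta$. Over each stratum the abelian quotient of $P$ in \eqref{epigs} is respectively the genus-five Jacobian on $\ogb^o$, the product of two genus-two Jacobians on $\Sigma^o$ (after normalizing the family of broken curves), and the genus-two Jacobian on $\Delta^o$. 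The Ng\^o Strings Theorem therefore constrains the only possible strings to be $\ms{S}_\ogb$ on $\ogb$; one of $\ms{S}^+_\Sigma$ or $\ms{S}^-_\Sigma$ on $\Sigma$, distinguished by which character of the $\Sigma_2$-monodromy on the double cover $\ogbp\times\ogbp\to\Sigma$ acts on the cohomology; and $\ms{S}_\Delta$ on $\Delta$, each appearing with the shifts and Tate twists prescribed by the $\delta$-invariants of the corresponding strata.

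Second I would fix the $\ogb$- and $\Sigma$-contributions. On the regular locus $\ogb^o$ both $\ogm$ and $\ogn$ contain a $P$-torsor, so exactly one copy of $\ms{S}_\ogb$ appears in each decomposition. On $\Sigma\setminus\Delta$, Proposition \ref{rtop} yields $R^{10}\pogm_*\rat\simeq\rat^{\oplus 2}$ and $R^{10}\pogn_*\rat\simeq\rat\oplus\ms{L}$; subtracting the constant summand contributed by $\ms{S}_\ogb$ leaves residuals $\rat$ and $\ms{L}$, respectively. Since only $\ms{S}^+_\Sigma[-2](-1)$ can contribute a constant sheaf to $R^{10}$ on $\Sigma$, and only $\ms{S}^-_\Sigma[-2](-1)$ can contribute $\ms{L}$, this forces exactly one copy of $\ms{S}^+_\Sigma[-2](-1)$ for $\ogm$ and one copy of $\ms{S}^-_\Sigma[-2](-1)$ for $\ogn$.

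The main obstacle is the final step, determining the multiplicity of the $\Delta$-string $\ms{S}_\Delta[-6](-3)$. For $\ogm$, Lemma \ref{dtb} combined with Lemma \ref{m0} yields a canonical summand $\ms{S}_\Delta[-6](-3)$ in $R\pogm_*\rat_\ogm$, coming from the exceptional factor $\rat_{\Delta_{\ogmp}}[-6](-3)$ of the blow-up, so the multiplicity is at least $1$; for $\ogn$ no such contribution exists a priori, so the multiplicity is at least $0$. To bound the multiplicities from above, I would restrict to $\Delta$ and use $R^{10}\pogm_*\rat|_\Delta\simeq\rat_\Delta^{\oplus 4}$ and $R^{10}\pogn_*\rat|_\Delta\simeq\rat_\Delta^{\oplus 2}$. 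Via the conditions of support for intersection complexes, as summarized in Fact \ref{1e2}, the top-degree contributions from $\ms{S}_\ogb$ and $\ms{S}^\pm_\Sigma[-2](-1)$ at $\Delta$ split as one canonical copy of $\rat_\Delta$ each plus additional pieces supported on proper subvarieties of $\Delta$ whose total rank at a general point of $\Delta$ is bounded. Since each $\ms{S}_\Delta[-6](-3)$ contributes precisely $\rat_\Delta$ at top degree by \eqref{kioboo}, numerical balance on $\Delta$ forces multiplicities of the predicted form $1+\e_\ogm$ and $\e_\ogn$, with $\e_\ogm,\e_\ogn\in\{0,1\}$. That the exact values of $\e_\ogm$ and $\e_\ogn$ are not individually determined here is precisely the feature to be exploited in \S\ref{pmt}, where the two indeterminacies cancel when the decompositions for $\ogm$ and $\ogn$ are compared in the appropriate Grothendieck group.
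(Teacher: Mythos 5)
Your proposal follows the paper's overall strategy (Ng\^o strings for the two $\delta$-regular weak abelian fibrations, pinned down by the $R^{10}$ computation of Proposition \ref{rtop}), and the identification of the supports, of the single copy of $\ms{S}_\ogb$, and of $\ms{S}^+_\Sigma$ versus $\ms{S}^-_\Sigma$ is correct. But there is a genuine gap at the final step: the proposition does not merely assert that each of $\e_{\ogm}$, $\e_{\ogn}$ lies in $\{0,1\}$ — it asserts the \emph{equality} $\e_{\ogm}=\e_{\ogn}$, and your argument never establishes it. You write that the two indeterminacies "cancel when the decompositions are compared in the Grothendieck group," but they cancel in (\ref{k1122}) precisely \emph{because} the same $\e$ appears in both formulas; if $\e_{\ogm}$ and $\e_{\ogn}$ could differ, the difference $H^*(\ogm)-H^*(\ogn)$ would retain a term $(\e_{\ogm}-\e_{\ogn})H^*(\ms{S}_\Delta)[-6](-3)$ and the main computation would not go through. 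So the equality cannot be deferred to \S\ref{pmt}; it is the substantive content of the "$\e$" clause of the proposition.

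The mechanism the paper uses to prove $\e_{\ogm}=\e_{\ogn}$ is that the ambiguity lives entirely inside the string $\ms{S}_\ogb$, which is \emph{common to both fibrations}. Concretely, the only term of $\m{H}^{10}(\ms{S}_\ogb)$ beyond $\rat_\ogb$ that can survive is $\m{H}^2(\ms{IC}_\ogb(\Lambda^8_{\ogb^o}))\simeq\rat_\Delta^{\oplus\e^{2,8}_\ogb}$ (see (\ref{nip}); the $\m{H}^3$ and $\m{H}^4$ terms are killed by combining the support conditions with Proposition \ref{rtop} over $\ogb\setminus\Sigma$ and $\Sigma\setminus\Delta$). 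Counting ranks of $R^{10}$ on $\Delta$ then gives $1+\e^{2,8}_\ogb+1+(1+\e_{\ogm})=4$ for $\ogm$ (the middle $1$ coming from $\m{H}^8(\ms{S}^+_\Sigma)=\rat_\Sigma$) and $1+\e^{2,8}_\ogb+0+\e_{\ogn}=2$ for $\ogn$ (the $0$ because $\m{H}^8(\ms{S}^-_\Sigma)={i_{\Sigma\setminus\Delta}}_*\ms{L}$ has vanishing stalks along $\Delta$, there being no local monodromy invariants). Both equations yield $\e_{\ogm}=1-\e^{2,8}_\ogb=\e_{\ogn}$. Your proof needs this comparison through the shared quantity $\e^{2,8}_\ogb$; without it, the statement as written is not proved.
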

\begin{proof}
By virtue of Proposition \ref{tadaw} about  $\delta$-regularity, we can apply Theorem \ref{nstabis} on Ng\^o strings.

We first deal with $R\pogm_* \rat_{\ogm}$. 

By Corollary \ref{dtbr}, the subvarieties $B,\Sigma$ and $\Delta$
are supports; in particular, each one of them must contribute the associated Ng\^o string as a direct summand of 
 $R\pogm_* \rat_{\ogm}[10]$.
 
By Proposition \ref{rtop}, the local systems of type  $L$ appearing in Theorem \ref{nstabis} applied to each of these three supports
are constant of some strictly positive ranks $r_{\ogm, \ogb}, r_{\ogm,\Sigma}, r_{\ogm,\Delta}$, and the same is true after the push-forward (\ref{dirsumdbis}).

It follows that the  direct sum of the Ng\^o strings associated with these three supports takes the following form:
\beq\la{am}
\ms{S}_\ogb^{\oplus r_{\ogm, \ogb}} \oplus {\ms{S}_\Sigma^+}^{\oplus r_{\ogm, \Sigma}}
[-2] (-1)  \oplus \ms{S}_\Delta^{\oplus r_{\ogm,\Delta}} [-6] (-3) ,
\eeq 
so that, according to Fact \ref{1e2}(1,2,4),  the combined contribution to the direct image
 $R^{10} \pogm_* \rat_{\ogm}$ stemming from the local systems of type $L$ for these three Ng\^o strings
 takes the form: 
\beq\la{am1}
\left( \rat_{\ogb}^{\oplus r_{\ogm, \ogb}} \oplus  \rat_\Sigma^{\oplus r_{\ogm, \Sigma}} \oplus 
\rat_\Delta^{\oplus r_{\ogm, \Delta}}\right)  (-5).
\eeq
Proposition \ref{rtop} implies that $r_{\ogm, \ogb}=r_{\ogm, \Sigma}=1$ and that $1 \leq r_{\ogm, \Delta} \leq 2$,
Moreover: 

\ben
\item
$r_{\ogm ,\Delta} =2$ if an only if the combined contribution of the two  Ng\^o strings $\ms{S}_\ogb$ and $\ms{S}_\Sigma^+$ to ${R^{10}_{\ogm}}_{|  \Delta }$,
i.e. the direct sum of the l.h.s. of  (\ref{kio}) and (\ref{kiob}) restricted to $\Delta$,   is
exactly $\rat_\Delta^{\oplus 2}$;

\item
$r_{\ogm ,\Delta} =1$   if an only if  the above contribution is exactly $\rat_\Delta^{\oplus 3}$.
\een

We define $\e_{\ogm}$ so that $1+\e_{\ogm} =r_{\ogm,\Delta}$:
\beq\la{rt}
\e_{\ogm}  := r_{\ogm,\Delta} -1.
\eeq
The l.h.s. of (\ref{dtmnt}) has been established.

Before studying  $R\pogn_* \rat_{\ogn}$,  we note in passing that the above analysis, Fact \ref{1e2}.(1,2,5) and Proposition \ref{rtop} imply the following improvements of (\ref{kio}) and  of  (\ref{kiob}):
\beq\la{nip}
\m{H}^{10} (\ms{S}_{\ogb})= \rat_B
\oplus  
\m{H}^2(\ms{IC}_B (\Lambda^{8}_{\ogb^o})),
\quad
\m{H}^2(\ms{IC}_B (\Lambda^{8}_{\ogb^o})) \simeq \rat_{\Delta}^{\oplus \e^{2,8}_{\ogb}},
\quad 
\m{H}^{8}(\ms{S}^+_{\Sigma}) = \rat_\Sigma,
\eeq
where $\e^{2,8}_{\ogb} = 1- \e_{\ogm}$. Note that 
$r_{\ogm,\Delta}=1,2$, and $r_{\ogm,\Delta}=2$ iff $\e_{\ogm}=1$ iff $\e^{2,8}_{\ogb}=0$.

We now study $R\pogn_* \rat_{\ogn}$.  

By virtue of Proposition \ref{tadaw} about  $\delta$-regularity, we can apply
Theorem \ref{nstabis} on Ng\^o strings.
By Proposition \ref{rtop}, we have that ${R^{10}_{\ogn}}_{|\ogb \setminus \Sigma} \simeq \rat_{\ogb \setminus \Sigma} (-5)$, which, jointly with  Corollary \ref{nstabis} on the shape of Ng\^o strings and the associated local system contributing to $R^{10}_{|\ogb \setminus \Sigma}$, implies that $(R\pogn_* \rat_{\ogn})_{|\ogb \setminus \Sigma} \simeq (\ms{S}_\ogb)_{|\ogb \setminus \Sigma}$. It follows that the Ng\^o string for $R\pogn_* \rat_{\ogn}$ associated with $\ogb$ is $\ms{S}_\ogb$. Moreover, any additional support must be contained in $\Sigma$. 

As shown earlier, the contribution of $\ms{S}_{\ogb}$ to $R^{10}_{\ogn}$ is
$
\rat_B 
\oplus \m{H}^2(\ms{IC}_B (\Lambda^{8}_{\ogb^o})),
$
where the second summand is supported on $\Delta$.  In particular, this contribution restricted to $\Sigma \setminus 
\Delta$ is $\rat_{\Sigma \setminus \Delta}$,   which does not yield the whole 
${R^{10}_{\ogn}}_{| \Sigma \setminus \Delta} = (\rat_{| \Sigma \setminus \Delta}  \oplus \ms{L})(-5)$, as it is prescribed by Proposition 
\ref{rtop}.

It follows  that $\Sigma$ is a support and that the local system of type $L$,
defined on a suitable  Zariski dense open subset of $\Sigma$,
prescribed by Theorem \ref{nstabis} is the restriction of $\ms{L}$ to this open subset.
It follows, also by virtue of Fact \ref{1e2}.(6),  that the the associated Ng\^o string   is $\ms{S}^-_{\Sigma}[-2](-1)$.

It also follows that the only other possible Ng\^o strings are supported at closed subvarieties of $\Delta$.
Since ${R^{10}_{\ogn}}_{|\Delta} \simeq \rat_{\Delta}^{\oplus 2}(-5)$,  Corollary \ref{nstabis}
and Fact \ref{1e2}.(6) imply that the only remaining possible support is $\Delta$.

In particular, the only possible additional Ng\^o string is $S_{\Delta}^{\oplus \e_{N} }$, 
with $\e_{N}=0,1$. 

By combining  (\ref{kiob-}), the first equality in (\ref{nip}),  and the sentence following (\ref{nip}), we see that 
$\e_{N}=1$ 
iff $\e^{2,8}_B=0$ iff $\e_{\ogm}=1$.
In particular, $\e_{N}=\e_{\ogm}$ and the r.h.s. of (\ref{dtmnt}) follows, with the same value of $\e$ on both sides.

The proposition is proved.
\end{proof}

\begin{rmk}\la{imrq}
In analogy with (\ref{nip}), we note that  the analysis carried out so-far implies that (\ref{kiob-}) can be improved to: 
\beq\la{kiob-p}
\m{H}^{8}(\ms{S}^-_{\Sigma}) = {i_{\Sigma \setminus \Delta}}_* \ms{L}. 
\eeq
\end{rmk}

\section{Proofs of Theorems A, B and B\texorpdfstring{$'$}{'}}\la{pfs ab}

We are now ready to combine the results  proved so far with Theorem  \ref{nstabis}, the refined version of Ng\^o's Support Theorem. We prove Theorem A, then Theorem B and B'. The statement of Theorem B implies the statement of Theorem A. We chose however to first prove Theorem A and then Theorem B, as the extra layer given by the Hodge structures may make the topological arguments yielding Theorem A less clear. We then prove Theorem B, which implies Theorem B' thanks to a density argument using the period mapping.

\subsection{Proof of the main Theorem A}\la{pmt}$\;$

\begin{fact}\la{fattoka}
Let $\m{A}$ be a semisimple Abelian category where every object has finite length and the isomorphism
classes of simple objects form a set $\frak{S}$. Recall that by definition the zero object is not simple. Every object $a \in \m{A}$ is isomorphic
to a unique finite direct sum of simple objects with multiplicities:
 \beq\la{fdswm}
 a \simeq \bigoplus_{\frak s \in \frak S} \frak s^{\oplus n_{\frak s} (a)}.
 \eeq
 If we have an identity $[a] = [b]-[c]$ in the Grothendieck group $K(\m{A})$, with $a,b, c \in \m{A}$,
 then 
 \beq\la{gkokg-}
 n_{\frak s}(a) = n_{\frak s}(b) - n_{\frak s}(c).
 \eeq
Let  $\phi: {\rm Obj}(\m{A}) \to \frak M$  be  an assignment into a  commutative group which  is additive in exact sequences, then, if $[a]=[b]-[c]$ are as above, then 
\beq\la{gkokg}
\phi (a) =  \phi (b)- \phi (c).
 \eeq
\end{fact}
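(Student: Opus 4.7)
The plan is to build the result on a single structural observation: in a semisimple abelian category $\m{A}$ with finite length, the Grothendieck group $K(\m{A})$ is canonically the free abelian group on the set $\frak{S}$ of isomorphism classes of simple objects, with basis isomorphism given by $[\frak{s}] \mapsto \frak{s}$ and with the coordinate of $[a]$ along $\frak{s}$ equal to $n_{\frak{s}}(a)$.

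First I would establish the existence and uniqueness of the decomposition (\ref{fdswm}). Existence is immediate by induction on the length of $a$ using semisimplicity: any nonzero $a$ has a simple subobject, which by semisimplicity is a direct summand, and the complement has strictly smaller length. Uniqueness of the multiplicities $n_{\frak{s}}(a)$ follows from the Jordan--H\"older theorem (which applies because objects have finite length), since the $n_{\frak{s}}(a)$ coincide with the Jordan--H\"older multiplicities.

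Next I would show that the map $K(\m{A}) \to \bigoplus_{\frak{s} \in \frak{S}} \zed$ sending $[a] \mapsto (n_{\frak{s}}(a))_{\frak{s}}$ is a well-defined isomorphism of abelian groups. Additivity on short exact sequences $0 \to a' \to a \to a'' \to 0$ comes from Jordan--H\"older, which gives $n_{\frak{s}}(a) = n_{\frak{s}}(a') + n_{\frak{s}}(a'')$ (equivalently, in the semisimple setting, every short exact sequence splits, so $a \simeq a' \oplus a''$ and additivity is immediate). This shows the map is well-defined on $K(\m{A})$; it is surjective since each $[\frak{s}]$ is in the image, and injective since $[a] = [b]$ in $K(\m{A})$ forces $n_{\frak{s}}(a) = n_{\frak{s}}(b)$ for all $\frak{s}$, hence $a \simeq b$. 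Given this isomorphism, the hypothesis $[a] = [b] - [c]$ in $K(\m{A})$ translates coordinate-by-coordinate into (\ref{gkokg-}).

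Finally, for (\ref{gkokg}), the additivity of $\phi$ on short exact sequences (together with splitting of such sequences in $\m{A}$) shows that $\phi(a \oplus a') = \phi(a) + \phi(a')$, so $\phi$ factors through $K(\m{A})$: the assignment $[a] \mapsto \phi(a)$ is a well-defined group homomorphism $K(\m{A}) \to \frak{M}$. Applying this homomorphism to the identity $[a] = [b] - [c]$ yields $\phi(a) = \phi(b) - \phi(c)$. There is no genuinely hard step here; the only point requiring care is the identification of multiplicities with Grothendieck-group coordinates, which rests on Jordan--H\"older, and this is standard.
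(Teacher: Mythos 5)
The paper states this as a \emph{Fact} without proof, treating it as standard, and your argument is exactly the implicit standard one: identify $K(\m{A})$ with the free abelian group on $\frak{S}$ via Jordan--H\"older multiplicities (using semisimplicity and finite length to get existence and uniqueness of (\ref{fdswm})), read off (\ref{gkokg-}) coordinate-wise, and obtain (\ref{gkokg}) by observing that an assignment additive in exact sequences factors through $K(\m{A})$. Your proof is correct and complete; the only remark worth making is that for (\ref{gkokg-}) and (\ref{gkokg}) you need only the well-definedness of the two homomorphisms out of $K(\m{A})$, not the injectivity of the multiplicity map, though proving the latter costs nothing.
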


In the remainder of this section, we let $\rat$GPPHS  be the category of rational graded polarizable
pure Hodge structures, we let  $b_{*}$ denotes the graded dimension of the corresponding graded vector spaces (Betti numbers), and we let $h^{\bullet \star}$ denote the bi-graded dimension of the corresponding bi-graded vector spaces (Hodge numbers).  If we set $\m{A}=\rat$GPPHS and $\phi=b_*, h^{\bullet \star}$, then 
we  are in the situation of Fact \ref{fattoka}, with  $\frak M = \zed^{\nat}, ({\zed^2})^\nat$, respectively.

\begin{pr}\la{okkay} {\rm  ({\bf Cut and paste of polarizable graded pure Hodge structures})}
\beq\la{okayb}
b_*(\ogm)=b_{*}(\ogn)  +
b_{*}\left( ({\rm Sym}^2 \ogmp)^{\oplus 2} [-2]\right) -
b_{*}\left( ({\ogmp}^2)[-2]\right) + b_{*}\left( (\ogmp)[-6]\right);
\eeq
\beq\la{okayh}
h^{\bullet \star}(\ogm) = h^{\bullet \star}(\ogn)  +
h^{\bullet \star}\left( ({\rm Sym}^2 \ogmp)^{\oplus 2} [-2](-1)\right) -
h^{\bullet \star}\left( ({\ogmp}^2)[-2](-1)\right) + h^{\bullet \star}\left( (\ogmp)[-6](-3)\right).
\eeq
\end{pr}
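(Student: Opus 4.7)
The plan is to read the identities off the Decomposition Theorems already established, and then to match summands using the isomorphisms recorded in Remark \ref{rovo}.

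First, I would pass to cohomology in Proposition \ref{dtwe}. Since the morphisms $\pogm, \pogn$ are proper, taking $H^\bullet(\ogb, -)$ of the two splittings produces the following identities in the Grothendieck group $K(\rat\text{GPPHS})$ of rational graded polarizable pure Hodge structures, where $H^*(\ms{S}_?)$ denotes the total cohomology of the relevant string:
\begin{align*}
[H^*(\ogm)] &= [H^*(\ms{S}_\ogb)] + [H^*(\ms{S}^+_\Sigma)[-2](-1)] + (1+\e_{\ogm})\,[H^*(\ms{S}_\Delta)[-6](-3)], \\
[H^*(\ogn)] &= [H^*(\ms{S}_\ogb)] + [H^*(\ms{S}^-_\Sigma)[-2](-1)] + \e_{\ogn}\,[H^*(\ms{S}_\Delta)[-6](-3)].
\end{align*}
Subtracting, and using the crucial fact from Proposition \ref{dtwe} that $\e_{\ogm}=\e_{\ogn}$ (so the $\ms{S}_\Delta$-contributions collapse to a single copy), I get
\[
[H^*(\ogm)] - [H^*(\ogn)] = [H^*(\ms{S}^+_\Sigma)[-2](-1)] - [H^*(\ms{S}^-_\Sigma)[-2](-1)] + [H^*(\ms{S}_\Delta)[-6](-3)].
\]

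Next, I would translate the three string terms into cohomologies of genuine varieties using the statements already collected in Remark \ref{rovo}: Lemma \ref{m0} gives $H^*(\ogmp)=H^*(\ms{S}_\Delta)$; Proposition \ref{dtforsym} gives $H^*({\rm Sym}^2\ogmp)=H^*(\ms{S}^+_\Sigma)$; and combining $H^*(\ogmp\times\ogmp)=H^*(\ms{S}_\Delta)^{\otimes 2}=H^*(\ms{S}^+_\Sigma)\oplus H^*(\ms{S}^-_\Sigma)$ (Lemma \ref{m11}, via K\"unneth and Proposition \ref{dtforsym}) yields $[H^*(\ms{S}^-_\Sigma)] = [H^*(\ogmp^2)] - [H^*({\rm Sym}^2\ogmp)]$. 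Substituting these three identifications into the displayed identity gives
\[
[H^*(\ogm)] - [H^*(\ogn)] = 2\,[H^*({\rm Sym}^2\ogmp)[-2](-1)] - [H^*(\ogmp^2)[-2](-1)] + [H^*(\ogmp)[-6](-3)],
\]
which is precisely the identity (\ref{okayh}) once read in $K(\rat\text{GPPHS})$.

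Finally, to pass from a $K$-theoretic identity to the stated equalities of Hodge and Betti numbers, I would apply Fact \ref{fattoka} with $\m{A}=\rat\text{GPPHS}$. This category is abelian semisimple, every object has finite length (Hodge numbers being finite), and the assignments $h^{\bullet\star}$ and $b_*$ are both additive in short exact sequences; hence (\ref{gkokg}) transforms the Grothendieck-group identity above into the corresponding numerical identities, giving (\ref{okayh}) on the nose and (\ref{okayb}) after forgetting the Tate shifts (which affect only the Hodge bigrading, not the total Betti degree). There is no real obstacle beyond careful bookkeeping of the shifts $[-2](-1)$ and $[-6](-3)$; the one delicate input, namely $\e_{\ogm}=\e_{\ogn}$, has already been dispatched in the proof of Proposition \ref{dtwe}.
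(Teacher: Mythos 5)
Your argument is correct and follows essentially the same route as the paper: take cohomology of the splittings in Proposition \ref{dtwe}, exploit the cancellation $\e_{\ogm}=\e_{\ogn}$, identify the strings with $H^*(\ogmp)$, $H^*({\rm Sym}^2\ogmp)$ and $H^*(\ogmp^2)$ via Remark \ref{rovo}, and conclude by the additivity statement of Fact \ref{fattoka}. The intermediate Grothendieck-group identity you reach is exactly the paper's (\ref{k1122}), so nothing further is needed.
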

\begin{proof}
According to Proposition \ref{dtwe},  to Remarks \ref{rovo} and \ref{nondipende},  we have isomorphisms of 
finite dimensional rational graded vector spaces, even of rational polarizable graded pure Hodge structures:
\beq\la{lasting}
\xymatrix{
H^*(\ogm) &\simeq& H^*(\ms{S}_B) \oplus 
H^*(\ms{S}_\Sigma^+)[-2](-1) \oplus H^*(\ms{S}_\Delta)^{\oplus 1 + \e}[-6](-3),\\
H^*(\ogn) &\simeq& H^*(\ms{S}_B) \oplus 
H^*(\ms{S}_\Sigma^-)[-2](-1) \oplus H^*(\ms{S}_\Delta)^{\oplus  \e}[-6](-3),
\\
H^* ({\ogmp}^2) &\simeq & H^*(\ms{S}_\Sigma^+) \oplus  H^*(\ms{S}_\Sigma^-).
}
\eeq
By working  with the Abelian category  of finite dimensional graded vector spaces, or even of rational polarizable graded pure Hodge structures, we obtain the identities in the corresponding Grothendieck groups:
(note the important fact that
the $\e$'s cancel out) 
\beq\la{k1122}
H^*(\ogm)  = H^*(\ogn)  + \left(2 H^*({\rm Sym}^2 \ogmp) - H^* \left({\ogmp}^2\right)\right) [-2](-1) 
+ H^* \left(\ogmp\right)[-6](-3).
\eeq

It remains to apply the identity (\ref{gkokg}).
\end{proof}

\medskip

{\bf Proof of Theorem A.} It is enough to  compute the Betti and Hodge number of $\ogm$.

The Betti and Hodge numbers of all the varieties appearing on the r.h.s. of (\ref{okayb}) and  
(\ref{okayh}), namely, of $\ogn, \ogmp, \ogmp^2$, and of ${\rm Sym}^2 \ogmp$ are known thanks to G\"ottsche's and Macdonald's formulae \cite{Gottsche}. The odd Betti numbers of these varieties are zero, and their Hodge numbers in even degree  are as follows.
The Hodge numbers of $\ogmp$:
\beq \label{hpq M'}
\begin{array}{ccccc}
 && 1 && \\
    & 1 & 21   & 1 &     \\
1 & 21 & 232 & 21 & 1 \\
\end{array}
\eeq
From the Hodge diamond of $\ogmp$, a direct computation gives the  Hodge numbers of $\ogmp^2$:
\beq \label{hpq M'2}
\begin{array}{ccccccccc}
&&& &1&&&&\\
&&& 2 & 42   & 2 &&&\\
 && 3 & 84 & 907 & 84 & 3 && \\
    & 2 & 84   & 1350 & 9870 & 1350   & 84   & 2 &     \\
1 & 42 & 907 & 9870 & 55596 & 9870 & 907 & 42 & 1. \\
\end{array}
\eeq
and those of $\Sym^2\ogmp$:
\beq \label{hpq sym2M'}
\begin{array}{ccccccccc}
&&& &1&&&&\\
&&& 1 & 21   & 1 &&&\\
 && 2 & 42 & 464 & 42 & 2 && \\
    & 1 & 42   & 675 & 4935 & 675   &42   & 1 &     \\
1 & 21 & 464 & 4935 & 27914 & 4935 & 464 & 21 & 1. \\
\end{array}
\eeq
The Hodge numbers of $\ogn$ can be recovered from the generating series of \cite{Gottsche}\footnote{We used the script found at \url{https://pbelmans.ncag.info/blog/2017/11/18/hodge-numbers-hilb/} to compute theses numbers.}:
\beq \label{hpq N}
\begin{array}{ccccccccccc}
&&& & & 1 &  &&&&\\
&&& &1 & 21 & 1 &&&&\\
&&& 1 & 22   & 254 & 22 & 1 &&&\\
 && 1 & 22 & 276 & 2277 & 276  & 22 &1 && \\
    & 1 & 22 & 276 & 2530 & 16469 & 2530   & 276 & 22 & 1 &     \\
1 & 21 & 254 & 2277 & 16469 & 87560 & 16469 &2277 &254 & 21 & 1 . \\
\end{array}
\eeq
In view of (\ref{k1122}), Theorem A now follows by  direct calculation.

\begin{rmk}\la{shenandyin} 
The proof of Theorem A combines the determination of the N\^go strings (Proposition \ref{dtwe}) in $D^b MHM_{alg} (\ogb)$ with a cancelation occurring in a Grothendieck group (Proposition \ref{okkay}). J. Shen and Q. Yin have informed us that they can obtain the computation of the Hodge numbers as in Theorem A using  the validity of Proposition \ref{dtwe} in $D^b (\ogb, \rat)$ (which can be considered the main technical
result of this paper) and without invoking its validity at the level of mixed Hodge modules, and combine it
with the main result in  \ci{shen-yin}. This approach can then replace the last part of the proof of Theorem A given above.
\end{rmk}

\subsection{The main Theorem B$'$}\la{pmtB}$\;$

Using Remarks \ref{caso0m0} and \ref{non general} we can formulate the following, slightly more general, version of Theorem B:

\begin{thmBp} \label{thmbprimo}
Let $(\ks,H)$ be a polarized  $K3$ surface of genus $2$. Let  $w=(0,2H,\chi)$,  $\chi$ odd, and $2v'=(0,2H,2\chi')$ be Mukai vectors (cf. (\ref{2vprimo}) and (\ref{w})) which are positive in the sense of Remark \ref{caso0m0}  .
Let $L$, resp. $L'$, be a polarization which is $w$-generic, resp. $2v'$-generic (cf. Remark \ref{non general}).
Finally, let $\ogn=M_{w, L}(\ks)$ be the moduli space of $L$-stable sheaves on $\ks$ with Mukai vector $w$ and let $\ogm= \widetilde M_{2v', L'}(\ks)$ be a symplectic resolution of the moduli space $M_{2v', L'}(\ks)$ of $L'$-semistable sheaves on $\ks$ with Mukai vector $2v'$. Then, using the notation as in the statement of Theorem B, we have isomorphisms:
\beq\la{motivep}
\begin{aligned}
\ogm = & \,\,  \ks^{(5)} \oplus \left[\ks^{(4)} \langle -1 \rangle \right]^{\oplus 2}
\oplus \mathbb S_{(2,2)} (\ks) \langle -1 \rangle \oplus \left[ \ks^{(3)} \langle -2 \rangle \right]^{\oplus 2} \oplus \\
& \oplus  \left[ \mathbb S_{(2,1)} (\ks) \langle -2 \rangle \right]^{\oplus 2}
\oplus \left[\ks \otimes \ks \right]  \langle -3 \rangle \oplus \left[ \ks^{(2)} \langle -3 \rangle \right]^{\oplus 3} \oplus \left[ \ks \langle -4 \rangle\right]^{\oplus 2};
\end{aligned}
\eeq
\beq\la{motivenp}
\xymatrix{
N=  S^{(5)} \oplus \left[S^{(3)}\otimes S \right] \langle -1 \rangle 
 \oplus \left[S \otimes S^{(2)}\right]^{\oplus 2} \langle -2 \rangle
\oplus \left[S^2\right]^{\oplus 2} \langle -3 \rangle \oplus S\langle -4 \rangle.
}
\eeq
\end{thmBp}

The graded pure polarizable Hodge structures of all the varieties on the r.h.s. of (\ref{k1122}) are known
and can be expressed as follows in terms of the Hodge structure of the underlying K3 surface $\ks$.
Let $V=H^*(S)$ be the rational Hodge structure of $\ks$. Using (\ref{notm}), Lemma \ref{lemma birazionali}, as well as \cite{Gottsche Sorgel} and MacDonald's formula, we can write:
\beq \label{hs M'}
H^*(\ogmp)=V^{(2)}\oplus V[-2](-1)\\
\eeq
\beq \label{hs tutte}
\begin{aligned}
&H^*(\ogmp^2)=V^{(2)} \otimes V^{(2)}\oplus 2V^{(2)} \otimes V [-2](-1)\oplus V \otimes V [-4](-2)\\
&H^*(\Sym^2 \ogmp)=\Sym^2 V^{(2)}\oplus V^{(2)} \otimes V [-2](-1)\oplus V^{(2)}[-4](-2)\\
&H^*(N)=V^{(5)}\oplus V^{(3)}\otimes V[-2](-1)\oplus 2V \otimes V^{(2)}[-4](-2)\oplus 2V \otimes V [-6](-3) \oplus  V[-8](-4).
\end{aligned}
\eeq

For a partition $\lambda$  of an integer $k$, we denote by $\mathbb S_{(\lambda)}(-)$ the corresponding Schur functor. For the notation and basic properties of Schur functors we refer the reader to \cite[Ch. 6]{Fulton-Harris}. Let $W$ be a vector space. By \cite[Thm 6.3]{Fulton-Harris}, the $GL(W)$--representation $W^{\otimes k}$ decomposes into a sum of irreducible subrepresentations, each of which is a Schur module $\mathbb S_{(\lambda)}(W)$. Moreover, if $W$ is a $\mathbb Q$--vector space, so are all of its Schur modules. It is well known that if $W$ is a $\mathbb Q$--vector space endowed with a polarizable Hodge structure, then $W^{\otimes k}$, as well as each of its irreducible representations, inherit compatible rational polarizable Hodge structures.  This gives a rational polarizable Hodge structures on each of the Schur modules $\mathbb S_{(\lambda)}(W)$ which is such that the decomposition of $W^{\otimes k}$ into the direct sum of irreducible representations is also a decomposition into a direct sum of Hodge structures.

In what follows, we need the following Lemma.

\begin{lm} \label{lemma schur} The following isomorphisms of rational polarizable Hodge structures hold:
\[
\begin{aligned}
&V^{(2)} \otimes  V=  \mathbb S_{(2,1)}(V) \oplus V^{(3)}\\
&V^{(3)} \otimes  V=  \mathbb S_{(3,1)}(V) \oplus V^{(4)}\\
&V^{(2)} \otimes V^{(2)}= \mathbb S_{(2,2)}(V) \oplus  \mathbb S_{(3,1)}(V) \oplus  V^{(4)}\\
&\Sym^2(V^{(2)})=\mathbb S_{(2,2)}(V) \oplus V^{(4)}\\
\end{aligned}
\]
\end{lm}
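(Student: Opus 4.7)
The plan is to treat all four identities as formal consequences of well-known decomposition identities for polynomial $GL$-functors applied to an abstract vector space, and then observe that they transfer verbatim to the category of rational polarizable graded pure Hodge structures. Concretely, Schur functors, symmetric and exterior powers, and tensor products are all constructed from $\mathbb{Q}$-linear idempotents in the group rings of symmetric groups acting on tensor powers; hence any $\mathbb{Q}$-linear pseudo-abelian symmetric monoidal category (such as the category of rational polarizable Hodge structures, with the usual Koszul sign rule, which is trivial here since $H^{\ast}(S)$ is concentrated in even degrees) admits these operations, and canonical decompositions stemming from orthogonality of these idempotents descend unchanged.

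The first three identities I would derive as direct applications of Pieri's rule: for a partition $\lambda$ and an integer $k \geq 0$ one has $\mathbb{S}_{\lambda}(V) \otimes \Sym^{k} V \cong \bigoplus_{\mu} \mathbb{S}_{\mu}(V)$, summed over those $\mu$ obtained from $\lambda$ by adding $k$ boxes with no two in the same column. Taking $(\lambda,k) = ((2),1)$ yields $\Sym^{2} V \otimes V = V^{(3)} \oplus \mathbb{S}_{(2,1)}(V)$; taking $((3),1)$ yields $\Sym^{3} V \otimes V = V^{(4)} \oplus \mathbb{S}_{(3,1)}(V)$; and taking $((2),2)$ the horizontal strips of size $2$ attached to $(2)$ are exactly $(4)$, $(3,1)$, $(2,2)$, which gives the third identity.

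For the fourth identity I would split $\Sym^{2} V \otimes \Sym^{2} V = \Sym^{2}(\Sym^{2} V) \oplus \Lambda^{2}(\Sym^{2} V)$ under the $\mathbb{Z}/2$-action swapping the two factors. By the third identity the summands $V^{(4)}$, $\mathbb{S}_{(3,1)}(V)$, $\mathbb{S}_{(2,2)}(V)$ each appear with multiplicity one, so by Schur's lemma each lies entirely in one of the two eigenspaces. The natural diagonal inclusion $\Sym^{4} V \hookrightarrow \Sym^{2} V \otimes \Sym^{2} V$ is swap-invariant, which places $V^{(4)}$ in $\Sym^{2}(\Sym^{2} V)$; specializing to $\dim V = 2$ and comparing $\dim \Sym^{2}(\Sym^{2} V) = 6$ with $\dim V^{(4)} + \dim \mathbb{S}_{(2,2)}(V) = 5 + 1$ and $\dim \Lambda^{2}(\Sym^{2} V) = 3 = \dim \mathbb{S}_{(3,1)}(V)$ forces the remaining distribution, yielding $\Sym^{2}(\Sym^{2} V) = V^{(4)} \oplus \mathbb{S}_{(2,2)}(V)$. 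There is no real obstacle here: all the content is classical plethysm, and the only mildly delicate point is checking that the decompositions are given by central idempotents (hence canonical), so that they remain valid in the Hodge-theoretic setting.
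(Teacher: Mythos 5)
Your proposal is correct and follows essentially the same route as the paper: the first three identities are instances of Pieri's rule (the paper cites the corresponding formulas on pp.~79 and 81 of Fulton--Harris), and the reduction to $GL(V)$-representations is justified exactly as in the discussion preceding the lemma, the Koszul sign being irrelevant since $H^*(S,\rat)$ sits in even degrees. The only difference is that for $\Sym^2(V^{(2)})$ the paper simply cites \cite[Ex.~6.16]{Fulton-Harris}, whereas you prove it directly via the multiplicity-one/Schur's lemma argument and the dimension count at $\dim V=2$ (legitimate here since all partitions occurring have at most two rows); this is a correct, self-contained substitute for the cited exercise.
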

\begin{proof}  By the observation on the compatibility of the Hodge structures that proceeds the lemma, it is enough to prove that these isomorphisms hold as $GL(V)$--subrepresentations $V^{(k)}$, for an appropriate integer $k$.
The first two statements follow then from the formula on page 79 of \cite{Fulton-Harris}, the second from the formula on page 81 of \cite{Fulton-Harris}, while the last statement from \cite[Ex. 6.16]{Fulton-Harris}. 
\end{proof}

Using (\ref{hs tutte}) and Lemma \ref{lemma schur}  we find the following isomorphism of rational polarizable Hodge structures:

\beq \label{hs M'2}
\begin{aligned}
H^*(\ogmp^2)&=\underbrace{\mathbb S_{(2,2)}(V)\oplus \mathbb S_{(3,1)}(V)\oplus \mathbb S_{(4)}(V)}_{V^{(2)} \otimes V^{(2)}}\oplus \underbrace{V^{(3)}[-2](-1)^{\oplus 2}\oplus \mathbb S_{(2,1)}(V)[-2](-1)^{\oplus 2}}_{(V^{(2)} \otimes V [-2](-1))^{\oplus 2}}\oplus V^{\otimes 2}[-4].
\end{aligned}
\eeq
\beq \label{hs sym2M'}
\begin{aligned}
H^*(\Sym^2 \ogmp)&=\underbrace{V^{(4)}\oplus \mathbb S_{(2,2)}(V)}_{\Sym^2 V^{(2)}}\oplus \underbrace{V^{(3)}[-2](-1)\oplus \mathbb S_{(2,1}(V)[-2](-1)}_{V^{(2)} \otimes V [-2](-1)}\oplus V^{(2)}[-4](-2)
\end{aligned}
\eeq
\beq \label{hs N}
\begin{aligned}
H^*(N)&=V^{(5)}\oplus  \underbrace{V^{(4)}\oplus S_{(3,1}(V)[-2](-1)}_{V^{(3)}\otimes V[-2](-1)}\oplus \underbrace{V^{(3)}[-4](-2)\oplus \mathbb S_{(2,1}(V)[-4](-2)}_{(V \otimes V^{(2)}[-4](-2))^{\oplus 2}}\oplus \\
&\oplus V \otimes V [-6](-3)^{\oplus 2} \oplus  V[-8](-4).
\end{aligned}
\eeq

\begin{proof}[Proof of Theorem B$'$] 
We first prove the Theorem in the case when the degree two K3 surface $\ks$ is general in moduli, i.e., $\ks$ has Picard rank $1$ (this is the statement of Theorem B). Then (\ref{k1122}) holds. We thus substitute formulas (\ref{hs M'}),(\ref{hs M'2}) (\ref{hs sym2M'}) and (\ref{hs N}), into equation (\ref{k1122}) in order to express the Hodge structure of $\ogm$ in terms of that of $\ks$. Since the category of rational polarizable Hodge structures is semisimple, in view of Fact \ref{fattoka} we can make cancellations and find:
\[
\begin{aligned}
H^*(\ogm)&=V^{(5)}\oplus V^{(4)}[-2](-1)^{\oplus 2}\oplus \mathbb S_{(2,2)}(V)[-2](-1)\oplus V^{(3)}[-4](-2)^{\oplus 2}\oplus \mathbb S_{(2,1)}(V)[-4](-2)^{\oplus 2}\oplus \\
&\oplus V \otimes V [-6](-3)\oplus V^{(2)}[-6](-3)^{\oplus 3}\oplus V[-8](-4)^{\oplus 2}.
\end{aligned}
\]
This proves the theorem in case the degree two K3 surface $\ks$ has Picard rank $1$.

Before proving the general case we recall that, since birational irreducible holomorphic symplectic manifolds have isomorphic integral Hodge structures, we can prove the theorem for any holomorphic symplectic birational model of $\ogn$ or of $\ogm$. Thanks to Remark \ref{non general}, it is thus sufficient to prove the statement for an arbitrary symplectic resolution of $M_{w, H}(\ks)$ or of $M_{2v', H}(\ks)$. 

Let $(S_0,H_0)$ be a polarized K3 surface of genus $2$.  Choose a one-parameter deformation $(\m{S}, \m{H})$ of 
$(S_0,H_0)$ parameterized by a disk $(D,t_0)$ such that the subset of points $t \in D^*$ such that
$NS(S_t)$ has rank one is dense.
Let $v_t=(0, 2H_t, 2 \chi')$  be the corresponding Mukai vector.
We consider the relative moduli space $\m{M} \to D$, where $M_t = M_{v_t,H_t} (S_t)$.
By \ci[Prop 2.20]{Perego-Rapagnetta}, up to shrinking $(D,t_0)$, we may assume that, for every $t \neq t_0$, the polarization  $H_t$ is $v_t$-generic (cf. Remark \ref{non general}).
By \ci[Prop 2.23 and its proof]{Perego-Rapagnetta}, by blowing up the relative moduli space over the punctured disk $D^*$ along its singular locus results in a resolution $\w{\m{M}}_{D^*} \to \m{M}_{D^*} $ which is point-by-point $t\in D^*$ a symplectic resolution.
By Remark \ref{non general}, $M_{t_o}$ admits a symplectic resolution $\w{M}_{t_0} \to M_{t_0}$. By \ci[Thm 0.6, Cor 4.2]{KLSV}, up to passing to a branched cover of $(D,t_0)$, we may assume that
there is a smooth proper family  of projective manifolds $\w{M}_D \to D$ such that 
the restriction to $D^*$ coincides with  the family $\w{M}_{D^*} \to D^*$ introduced above and such that the central fiber is birational to $\w{M}_{t_0}$.

Consider the two period mappings \cite{Griffiths} associated with the following  two variations of (un-polarized)  graded rational  Hodge structures: the graded cohomology of the fibers of $\w{M}_D \to D$ over $D$; the one on the
r.h.s. of (\ref{motive}) as the K3 surface varies over $D$.
By the first part of the proof, these two holomorphic period mappings coincide on the dense subset of $D$ seen above where the N\'eron-Severi of the K3 has rank one, so that
the two mappings coincide on $D$.
\end{proof}

\section{Appendix: Hodge-theoretic  Ng\^o Strings}\la{vnstbis}$\;$

The goal of this appendix is to state Theorem \ref{nstabis} (MHM Ng\^o strings over $\comp$),
which is a refinement of the Ng\^o Support Theorem \ci[Th\'eor\`eme 7.2.1]{ngofl}
valid in the Hodge-theoretic context of M. Saito's mixed Hodge modules \ci{Saito 1990, Schnell on Saito}.  We do not write out a detailed proof
since, as it is explained below, it can obtained by repeating B.-C. Ng\^o's proof, with only two minor additional observations.

Theorem \ref{nstabis} admits several variants: $(a)$ varieties over  finite fields  and constructible $\oql$-adic sheaves for the \'etale topology;
varieties over the complex numbers and:  $(b')$ constructible  $\oql$-sheaves for the \'etale topology; $(b'')$ constructible $\rat$-sheaves  for the classical topology, with algebraic strata; $(b''')$, i.e. Theorem \ref{nstabis}
per se,  mixed Hodge modules for the classical topology with algebraic strata.
By the standard spreading out techniques of  \ci{bbd}[\S 6.1], version $(a)$ implies versions
$(b',b'')$.

Ng\^o support Theorem \ci[Th\'eor\`eme 7.2.1]{ngofl} is the $(a)$-version of (\ref{dirsumdbis}),
which roughly speaking, asserts that a support contributes a direct summand to the sheaf $R^{2d}f_* \rat_M$.
The paper \ci{ngofl} does not mention explicitly the complete collection of direct summands of $Rf_*{ \oql}_M$ on $S$  associated with a support $Z \subseteq S$. In fact, somewhat surprisingly, it seems a bit vague concerning this point; see  \ci[p.113,  second sentence of last paragraph]{ngofl}.  
 Even though this is not explicitly mentioned in \ci{ngofl}, it is not difficult to make the Ng\^o Support Theorem 
more complete by making precise -building in an essential way on Ng\^o's method, especially
\ci[Proposition 7.4.10]{ngofl}- all the direct summands 
supported on a given support appearing in the Decomposition Theorem for $Rf_* {\oql}_M$.  This leads to
the versions $(a,b',b'')$ of Theorem \ref{nstabis}. 

In this paper, we need Theorem \ref{nstabis}, i.e.  the Hodge-theoretic version  $(b''')$. Again, this is not difficult to achieve, by repeating B.-C. Ng\^o's proof of the support theorem by working in the
categories $D^b MHM_{alg}(-)$. There are only two minor points to consider: 

\begin{fact}\la{2minor}$\,$

\ben
\item
The weight argument \ci[p.120-121]{ngofl} involving the $\oql$-adic Tate module and Frobenius weights,
goes through on the nose  for the Tate module defined as an object in $D^b MHM_{alg}(S)$ (cf. \S \ref{drwaf} and \S\ref{gennot}):  we only need to work generically on a support, and the Tate module, being a direct image, is generically an admissible polarizable variation of mixed Hodge structures,
of weights $-1$ and $0$.

\item
The ``Libert\'e" statement  \ci[Proposition 7.4.10]{ngofl}. B.-C. Ng\^o proves this  by using finite fields. Since it is a topological statement concerning a graded module over a graded algebra  in the category of 
lisse $\oql$ sheaves in a geometric context, we can use the spreading out
techniques mentioned above to carry over the ``Libert\'e" statement  from the algebraic closure of a finite field
and graded semisimple lisse $\oql$-sheaves,
to the situation over the complex numbers and graded semisimple local systems. One then promotes the
``Libert\'e"  statement from the category of graded semisimple local systems, to the category of graded polarizable variations of pure Hodge structures by using the proof of 
\ci[Lemma 7.4.11]{ngofl}, where the graded object $E$ one obtains there is automatically a graded polarizable
variation of pure Hodge structures. 
\een
\end{fact}

Let us introduce the principal players in Theorem \ref{nstabis} by first giving the set-up and then by discussing
some related objects.

\begin{setup}\la{setubis}
Let  $(M,S, P)$ be a $\delta$-regular weak abelian fibration (Definition \ref{waf}) of relative dimension $d:=
\dim (M/S)=\dim (P/S)$, and 
such that $M/S$ is projective, 
 $S$ is irreducible, and  $M$ is a rational homology manifold ($\ms{IC}_M=\rat_M$). 
 \end{setup}
 
 We have assumed $\dim (M/S)=\dim (P/S)$ only to simplify the numerology. In this paper,
 we apply  Theorem \ref{nstabis}  
 to Lagrangian fibrations, so that 
 the numerology gets simplified  even further, since then one  also has $d:=\dim{(M/S)} = \dim {(P/S)} = \dim (S)$.
 
 Recall the notation fixed in \S\ref{gennot}. In  particular, the supports of $Rf_* \rat_M$ are the integral subvarieties of $S$ that  support a non-zero  direct summand of $Rf_* \rat_M$.
 Let $\ms{A}$ be a finite set enumerating the supports  
 of $Rf_* \rat_M$, so that the supports are denoted
 $Z_\alpha \subseteq S$, $\alpha \in \ms{A}$.  Let $g_\alpha: A_\alpha \to V_\alpha$ be the Abelian scheme (\ref{ghj})
 associated with $P_{|Z_\alpha}$, where  $V_\alpha \subseteq Z_\alpha$   is a suitable Zariski open and dense
 subvariety contained in the regular locus of $Z_\alpha$, 
 and we are free to shrink it if it is useful/necessary.
 Let $\delta^{\rm ab}_\alpha$ be the relative dimension of $A_\alpha/V_\alpha$; then $\delta^{\rm ab}_\alpha =
 d- \delta(Z_\alpha)$.
 Let $\Lambda^\bullet_\alpha:= R^\bullet {g_\alpha}_* \rat_{A_\alpha}$, $0 \leq \bullet \leq 2 \delta^{\rm ab}_\alpha$; these are polarizable variations of pure Hodge structures of weight $\bullet$ on $V_\alpha$.
 Recall that  a polarizable variation of pure Hodge structures   $L$  of weight zero and of Hodge-Tate type on a nonsingular subvariety $V \subseteq S$  is determined by its underlying local system, and it gives rise to a pure Hodge module of weight $\dim (V)$   on the closure $Z$ of $V,$ namely $IC_Z (L)$.

 We can now state the main result of this appendix.

\begin{tm}\la{nstabis} {\rm ({\bf MHM Ng\^o Strings Over $\comp$})}
In the Set-up \ref{setubis},
there is an  isomorphism in $D^b MHM_{alg} (S)$ of pure objects of weight zero:
\beq\la{dtngostrbis}
Rf_*\rat_M \simeq 
\bigoplus_{\alpha \in \ms{A}} 
\left\{\bigoplus_{\bullet=0}^{2\delta^{\rm ab}_{\alpha}}
\ms{IC}_{Z_\alpha} \left( \Lambda^\bullet_{\alpha} \otimes L_\alpha  \right) \left[-\bullet\right]  \left[-2\left(d-\delta^{\rm ab}_\alpha   \right)\right] 
\left(\delta^{\rm ab}_\alpha -d\right)
\right\},
\eeq
where  $L_\alpha$ is a polarizable variation of pure Hodge structures of weight zero, and of Hodge-Tate type,
 on   $V_\alpha$ (we may need to shrink $V_\alpha$ to achieve this).

Moreover, we have:
\ben
\item
The identities:
\beq\la{basiceqbis}
\delta^{\rm ab}_\alpha = \dim (M/S)  - \dim (S) + \dim (Z_\alpha), \qquad \forall \alpha \in \ms{A}.
\eeq
\item
Canonical isomorphisms:
\beq\la{sdrhl0bis}
\Lambda^{\delta ^{\rm ab}_\alpha -\bullet}_{A_\alpha} =\Lambda^{\delta ^{\rm ab}_\alpha + \bullet}_{A_\alpha} (\bullet), \quad \Lambda^{\delta ^{\rm ab}_\alpha -\bullet}_{A_\alpha} \otimes L_\alpha  = \Lambda^{\delta ^{\rm ab}_\alpha + \bullet}_{A_\alpha} (\bullet) \otimes L_\alpha,
\qquad \forall   \;  0\leq \bullet \leq \delta^{\rm ab}_\alpha.
\eeq
In addition, one can realize other such isomorphisms, with associated Primitive Lefschetz Decompositions, via the 
Relative Hard Lefschetz Theorem  for the projective morphisms $g_\alpha$.

\item
The object $R^{2d}f_* \rat_M $ on $S$ admits  the following pure direct summand,
where $i_{V_\alpha}: V_\alpha \to Z_\alpha$ is the natural open immersion, and the equalities are summand-by-summand:
\beq\la{dirsumdbis}
\bigoplus_\alpha{ i_{V_\alpha}}_* L_\alpha (-d) = \bigoplus_\alpha  \ms{IC}_{Z_\alpha} (L_\alpha (-d)).
\eeq

If the general fiber  of $M/S$ is integral and if we take $Z=S$,  with $S$ normal, then ${i_{V_S}}_*L =\rat_S$.

 If all the fibers of $M/S$ are integral, then the only support is $S$ and, if $S$ is normal, then ${i_{V_S}}_*L=\rat_S$.
 \item
By taking degree $\star$ cohomology in (\ref{dtngostrbis}),  we have an isomorphism in the category of graded polarizable mixed Hodge structures (polarizable
pure Hodge structures if $S$, hence $M$, is complete):
\beq\la{rombo}
H^\star(M, \rat) \simeq \bigoplus_\alpha \left\{\bigoplus_{b=0}^{2 \delta^{\rm ab}_\alpha} 
I\!H^{\star -b -2d +2 \delta^{\rm ab}_\alpha}  (Z_\alpha, \Lambda^b_\alpha \otimes L_\alpha)  ) 
(\delta^{\rm ab}_\alpha - d)\right\}, \qquad \forall \star.
\eeq
\een

\end{tm}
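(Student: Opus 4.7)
The plan is to execute B.-C. Ng\^o's proof of the Support Theorem \ci[Th\'eor\`eme 7.2.1]{ngofl} line by line, but in the category $D^b MHM_{alg}(S)$ instead of the $\oql$-\'etale setting, and to harvest the \emph{full} list of direct summands attached to each support rather than stopping at the top-degree direct image sheaf. First I would apply Saito's Decomposition Theorem to $Rf_*\rat_M$. Since $M$ is a rational homology manifold and $f$ is projective, this is pure of weight $d$ (after the appropriate shift) and splits canonically into a direct sum of shifted intersection cohomology modules $\ms{IC}_{Z_\alpha}(L_{\alpha,\bullet})[-\bullet]$, indexed by the supports $Z_\alpha$ and a perverse-cohomological degree $\bullet$, where each $L_{\alpha,\bullet}$ is a polarizable variation of pure Hodge structures on some open $V_\alpha \subseteq Z_\alpha^{\rm reg}$. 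The goal is to identify these coefficient systems as $\Lambda^\bullet_\alpha \otimes L_\alpha$ for a single Hodge--Tate variation $L_\alpha$ of weight zero.

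The main structural input is the action of $P$ on $M$: pulling back along $a:P\times_S M\to M$ and the projection, one endows $Rf_*\rat_M$ with a module structure over the graded object $\bigoplus_\bullet R^\bullet g_*\rat_P[-\bullet]$, and by restriction over the exterior algebra generated by $\Lambda^1_\alpha$ once one passes to the Abelian quotient $A_\alpha \to V_\alpha$ given by the relative Chevalley devissage of \S\ref{pab}. Ng\^o's ``Libert\'e'' result \ci[Prop.~7.4.10]{ngofl} then asserts that, restricted to a suitable $V_\alpha$, the total contribution of $Z_\alpha$ is a free graded module over $\Lambda^\bullet \Lambda^1_\alpha$; the generator of this free module is, by definition, the Hodge--Tate variation $L_\alpha$, and the free-module description reads exactly as the summand in (\ref{dtngostrbis}). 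Intermediate extension then promotes this identification from $V_\alpha$ to the whole of $Z_\alpha$.

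To pin down which $Z_\alpha$ can actually occur, one combines $\delta$-regularity with a weight argument on the Tate module. Polarizability of $T_{{\rm et},\oql}(P/S)$ implies, generically on any putative support, a perfect pairing whose radical is exactly the affine-radical part; this forces the top non-vanishing weight in the $P$-action on the generic stalk of the summand to be $2\delta^{\rm ab}_\alpha$, and $\delta$-regularity (\ref{dereg}) then yields ${\rm codim}(Z_\alpha)\le\delta(Z_\alpha)$, which combined with $\dim A_\alpha=d-\delta(Z_\alpha)$ gives the numerology in item (1). The Hodge-theoretic version of the weight argument is identical to Ng\^o's, except that ``weight'' now refers to Saito's weight filtration on the Tate module $R^{2d-1}g^o_!\rat_{P^o}(d)$ viewed as an object of $D^b MHM_{alg}(S)$; since over a dense open on each support this object is an admissible variation of mixed Hodge structures, the argument carries through verbatim. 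Item (2) is the Relative Hard Lefschetz theorem for $g_\alpha$ in $MHM$, item (3) reads off the $\bullet=2\delta^{\rm ab}_\alpha$ summand using $\Lambda^{2\delta^{\rm ab}_\alpha}_\alpha\simeq\rat(-\delta^{\rm ab}_\alpha)$ plus the purity/extension facts of \S\ref{gennot}, and item (4) is simply taking hypercohomology.

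The one genuine obstacle is importing the ``Libert\'e'' proposition, which in \ci{ngofl} is established over $\ov{\mathbb{F}}_q$ using Frobenius weights. My proposal is to run the standard spreading-out package of \ci[\S6.1]{bbd}: over a dense open $V_\alpha$ the relevant sheaves are lisse, so they, the morphism $f$, and the $P$-action all descend to a finitely generated $\zed$-algebra and thence specialize to a finite field; freeness of a graded module over a graded algebra in the category of lisse $\oql$-sheaves is a purely topological statement invariant under this specialization, so Ng\^o's finite-field result transfers to lisse $\oql$-sheaves over $\comp$, hence to local systems of $\rat$-vector spaces. To upgrade the resulting free-module splitting to the Hodge-theoretic level one invokes the argument of \ci[Lemma~7.4.11]{ngofl}: the graded generator $E$ produced there is a direct summand of the polarizable variation of pure Hodge structures underlying the $\bullet=0$ piece, hence inherits a canonical structure of polarizable variation, which is forced to be Hodge--Tate of weight zero by the weight considerations of the preceding paragraph. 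This gives the desired $L_\alpha$ and completes the proof.
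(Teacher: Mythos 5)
Your proposal follows essentially the same route as the paper: run Ng\^o's proof of \ci[Th\'eor\`eme 7.2.1]{ngofl} inside $D^b MHM_{alg}(-)$, importing the ``Libert\'e'' statement \ci[Prop.~7.4.10]{ngofl} via the spreading-out package of \ci[\S 6.1]{bbd} and upgrading it to polarizable variations via \ci[Lemma~7.4.11]{ngofl}, and replacing the Frobenius-weight argument on the Tate module by Saito's weights on the generically admissible variation $T(P/S)$ --- these are precisely the two modifications the paper singles out. The only quibble is a swapped inequality in your numerology paragraph ($\delta$-regularity gives ${\rm codim}\,Z_\alpha \geq \delta(Z_\alpha)$, while the polarizability/amplitude argument gives the reverse bound, and the two together force the equality in item (1)), but the ingredients you invoke are the correct ones.
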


\begin{proof}
Recall that $d:=\dim (M/S)= \delta(Z_\alpha) + \delta_{\alpha}^{\rm ab}$.
The identity (\ref{basiceqbis})  follows by combining the inequality in \ci[Proposition 7.2.2]{ngofl}
(which is stated and proved in the \'etale context over the closure of an algebraically closed field, and, by \ci{bbd}[\S 6.1], remains valid for the classical topology  over $\comp$)
 taken together with  the $\delta$-regularity hypothesis, which is precisely the opposite inequality. This proves (1)
 
 Note that (2) is a statement about polarizable variations of pure Hodge structures (pvphs)
 The first canonical isomorphism in (\ref{sdrhl0bis}) is standard and due to Lieberman (see \ci[\S7.4.4]{ngofl}).
 Clearly,  the second one follows from the first by twisting by any pvphs. The remaining statements in (2), concerning the Hard Lefschetz Theorem, are also standard. This proves (2).
 
 We are left with proving  (\ref{dtngostrbis}) and (3) and (4).
 Recall that we are free to shrink the Zariski-dense open subset $V_\alpha \subseteq Z_\alpha$ if useful/necessary. In what follows, we do so to meet our needs without explicit mention.

The Decomposition Theorem in $D^b MHM_{alg}(S)$ for the projective morphism $f:M\to S$ implies that there is  an isomorphism
\beq\la{bboo11}
Rf_* \rat_M \simeq \bigoplus_{\alpha \in \ms{A}} \left(\bigoplus_{i\in o^-_\alpha}^{o_\alpha^+}
{\ms IC}_{Z_\alpha} (\mathcal K_\alpha^i)[-i],
\right)
\eeq
where  the
${\mathcal K_\alpha^i}$ are  pvphs of weight $i$  on $V_\alpha$, and where $o^-_\alpha \geq 0$ marks the first occurrence of a non-zero direct summand
supported on $Z_\alpha$, and $o^+_\alpha \geq o_\alpha^-$ marks the last.

We fix $\alpha \in \ms{A}$.  

The graded object  
${\mathcal K_\alpha}:= \oplus_{i\geq 0} {\mathcal K^i_\alpha}$ 
lives in degrees in the interval $[o_\alpha^-, o^+_\alpha]$, and has graded weight zero, i.e. weight $i$ in degree $i.$

By \ci{ngofl}[Proposition 7.2.2], which  (by \ci{bbd}[\S6.1], again), remains valid in $D^b(S,\rat)$,  the local system $\mathcal K_\alpha^{o^+_\alpha}$ is a direct summand of $(R^{2d}f_* \rat_M)_{|V_\alpha}$.
It follows that $o^+_\alpha=2d$; in particular,  the last occurrence $o^+_\alpha$ is independent of $\alpha$.

By combining (\ref{bboo11}) with the symmetries stemming from Verdier Duality, we deduce easily that 
$o^-_\alpha= 2d  - 2\delta_\alpha^{\rm ab}$, i.e. the interval $[o_\alpha^-, o_\alpha^+]=
[2d - 2\delta_\alpha^{\rm ab}, 2d]$ has length twice the relative dimension $\delta_\alpha^{\rm ab}$ of the 
abelian scheme $A_{\alpha}/V_\alpha$.

The graded object 
\[\wedge_\alpha = \bigoplus_{i=-2\delta_\alpha^{\rm ab}}^0  \wedge^i_{\alpha}
:= \bigoplus_{i=-2\delta_\alpha^{\rm ab}}^0 R^{2\delta_\alpha^{\rm ab}+i}{g_\alpha}_* 
\rat_{A_{V_\alpha}}(\delta_\alpha^{\rm ab}):
\]  is concentrated in semi-negative degrees in the interval $[-2\delta_\alpha^{\rm ab},0]$;
has weight zero (semi-negative weight $i$ in degree $i$);
 has, via Poincar\'e Duality, as stalks the homology of the fibers of $A_{\alpha}/V_{\alpha}$;  it admits,
 via the Pontryagin product operation, a natural structure of graded algebra $\wedge^i_\alpha \otimes \wedge^j_\alpha \to \wedge^{i+j}_\alpha$.

The graded object of pure weight zero
 \[
 \Lambda_\alpha:=\bigoplus_{i=0}^{2\delta_\alpha^{\rm ab}} \Lambda^i_\alpha:= \bigoplus_{i=0}^{2\delta_\alpha^{\rm ab}} R^i{g_\alpha}_* \rat_{A_\alpha}
 \]  is a free graded $\wedge_\alpha$-module
 generated in degree $2\delta_\alpha^{\rm ab}$ by $\Lambda_\alpha^{2\delta_\alpha^{\rm ab}}
 \simeq \rat_{V_\alpha}(-\delta_\alpha^{\rm ab})$, via the Pontryagin product operation which, again via Poincar\'e Duality, is expressed by operations (recall that $\star \leq 0$ and $\bullet \geq 0$)
 \[\wedge_\alpha^\star \otimes \Lambda_\alpha^\bullet \to \Lambda_\alpha^{\bullet +\star}.\]
 
 By \ci{ngofl}[Proposition 7.4.10], we have that $\mathcal K_\alpha$ is
 a free graded $\wedge_\alpha$-module. In fact,  to be precise, we need to make sure that the action
 $\wedge_\alpha$ on $\mathcal K_\alpha$ (cf. \ci{ngofl}[pp. 120-121]
is defined  and free  in the category of pvphs on $V_\alpha$; for this we use  Fact \ref{2minor}, parts (1) (action) and (2) (freeness).

 Since $\mathcal K_\alpha$ has non trivial entries only in the interval $[2d- 2 \delta_\alpha^{\rm ab}, 2d]$, which
 has length $2\delta^{\rm ab}_\alpha$, we deduce that, as a graded object, we have 
 $\mathcal K_\alpha= (\Lambda_\alpha \otimes L_\alpha)[-2(d-\delta_\alpha^{\rm ab})]$, where $L_\alpha :=
{\mathcal K^{2d}_\alpha} (\delta_\alpha^{\rm ab})$. 

At this point, by taking care of the bookkeeping of weights and Tate shifts, 
(\ref{dtngostrbis}) follows, and $L_\alpha$ is as predicated since 
$L_\alpha (-d)$ 
 is  a pvphs   direct summand
of  the pvphs $(R^{2d}f_* \rat_M)_{|V_\alpha}$ which, having stalks   generated by the fundamental classes of the irreducible components of the fibers,  is pure, of Hodge-Tate type with weights $2d$. 

At this point, (3)  and (4) follows easily: the latter by taking cohomology;  the former by inspecting the contributions
of each support to the top cohomology sheaf $R^{2d}f_* \rat_M$, and recalling that normality implies uni-branch,
and this latter implies that the sheaf-theoretic direct image of the constant sheaf from a Zariski-dense open is also constant.
\end{proof}

\begin{defi}\la{defngostrbis}
We call the  $\alpha$-summands in (\ref{dtngostrbis})  the  Ng\^o strings of $Rf_*\rat_M$.
If we set:
\beq\la{ngozz}
\ms{S}_\alpha:= \bigoplus_{b=0}^{2\delta^{\rm ab}_{\alpha}}
\ms{IC}_{Z_\alpha} \left( \Lambda^b_{\alpha} \otimes L_\alpha  \right) \left[-b\right],
\eeq
then we may re-write
(\ref{dtngostrbis}) as follows:
\beq\la{reorenbis}
Rf_*\rat_M \simeq \bigoplus_\alpha \ms{S}_\alpha [-2(d - \delta^{\rm ab}_\alpha) ] (\delta^{\rm ab}_\alpha -d).
\eeq
We also call the $\ms{S}_\alpha$'s the Ng\^o strings of $Rf_*\rat_M$; these start in cohomological degree zero and are pure of weight zero.

\end{defi}

Each Ng\^o string in  (\ref{dtngostrbis}) is
 pure of weight zero and 
a direct sum of a collection of intersection complexes supported precisely at $Z_\alpha$,
placed in cohomological degree in the interval $[2d - 2\delta^{\rm ab}_\alpha, 2d]$. Up to the twist
by $L_\alpha$, the coefficients of the term indexed by $\bullet$  are the direct image $\Lambda^\bullet_\alpha=R^b {g_\alpha}_* \rat_{A_\alpha}$. These latter direct images
on $V_\alpha$ depend on the group scheme $P/S$, not on $M/S$. 
We record this fact in the following Remark which plays an important role 
 in our proof of our main Theorems A and B, where it is crucial to be able to relate the Decomposition Theorem
for the $\delta$-regular weak abelian fibration structure on   $\ogm$, to the Decomposition Theorems for the  auxiliary $\delta$-regular weak abelian fibrations we consider; see 
Lemmata \ref{m0} and \ref{m11}, and Propositions \ref{dtforsym} and \ref{dtwe}.

\begin{rmk}\la{nondipende}
As an object in $D^b MHM_{alg} (S)$, the Ng\^o string $\ms{S}_\alpha$ associated with the support $Z_\alpha$
depends only on the Abelian group scheme $A_\alpha$ and on the topological local system $L_\alpha$ (because
it can be enriched uniquely to a polarizable variation of Hodge structures of Hodge-Tate type and weight zero).
The same holds for the graded polarizable mixed Hodge structure (pure and polarizable if $Z_\alpha$ is complete)
$H^\star(Z_\alpha, \ms{S}_\alpha)$.
Clearly, the $L_\alpha$'s do depend on $M/S$.
\end{rmk}

\end{document}